\title{Boundary amenability of $\Out(F_N)$}
\author{Mladen Bestvina, Vincent Guirardel and Camille Horbez}
\newcommand{\calk}{\mathcal{K}}
\newcommand{\Prob}{\text{Prob}}
\newcommand{\baro}{\overline{\calo}}
\newcommand{\ra}{\rightarrow}
\newcommand{\m}{^{-1}}
\newcommand{\dunion}{\sqcup}
\newcommand{\eps}{\varepsilon}
\renewcommand{\epsilon}{\varepsilon}
\newcommand{\calr}{\mathcal{R}}
\newcommand{\calf}{\mathcal{F}}
\newcommand{\calt}{\mathcal{T}}
\newcommand{\caly}{\mathcal{Y}}
\newcommand{\calb}{\mathcal{B}}
\newcommand{\calp}{\mathcal{P}}
\newcommand{\cala}{\mathcal{A}}
\newcommand{\calo}{\mathcal{O}}
\newcommand{\cals}{\mathcal{S}}
\newcommand{\bbP}{\mathbb{P}}
\newcommand{\bbQ}{\mathbb{Q}}
\newcommand{\bbR}{\mathbb{R}}
\newcommand{\bbN}{\mathbb{N}}
\newcommand{\bbZ}{\mathbb{Z}}
\newcommand{\actson}{\curvearrowright}
\newcommand{\es}{\emptyset}
\edef\@tempa#1#2{\def#1{\mathaccent\string"\noexpand\accentclass@#2 }}
\@tempa\rond{017}
\newcommand{\cale}{\mathcal{E}}
\newcommand{\Out}{\mathrm{Out}}
\newcommand{\Aut}{\mathrm{Aut}}
\newcommand{\Inn}{\mathrm{Inn}}
\newcommand{\Simp}{\mathrm{Simp}}
\newcommand{\NonG}{\mathcal{NG}eom}
\newcommand{\Os}{\calo} 
\newcommand{\AT}{\mathcal{AT}}
\newcommand{\ATf}{\mathcal{AT}^{\mathrm{free}}}
\newcommand{\Geom}{{\mathcal{G}eom}}
\newcommand{\GI}{{\mathcal{GI}}}
\newcommand{\Mor}{\text{Mor}}
\newcommand{\Opt}{\text{Opt}}
\newcommand{\VOpt}{\text{VOpt}}
\newcommand{\MCG}{\mathrm{MCG}}
\newcommand{\calz}{\mathcal{Z}}
\newcommand{\calu}{\mathcal{U}}
\newcommand{\BBT}{\mathrm{BBT}}
\newcommand{\covol}{\mathrm{covol}}
\newcommand{\vol}{\mathrm{vol}}
\newtheorem{de}{Definition} [section]
\newtheorem{theo}[de]{Theorem} 
\newtheorem{prop}[de]{Proposition}
\newtheorem{lemma}[de]{Lemma}
\newtheorem{cor}[de]{Corollary}
\newtheorem*{fact*}{Fact}
\theoremstyle{remark}
\newtheorem{rk}[de]{Remark}
\newtheorem{question}[de]{Question}
\newcommand{\turn}[1]{{#1}^{\mathrm{turn}}}
\newcommand{\Name}{\mathrm{Name}}
\newcommand{\oturn}{\baro^{\mathrm{turn}}}
\newcommand{\FGeom}{\mathcal{FG}eom}
\newcommand{\ad}{\text{ad}}
\newcommand{\amen}{\mathrm{amen}}
\newcommand{\ZS}{\calz\mathrm{S}}
\begin{document}
\maketitle

\normalsize

\begin{flushright}
\emph{Kaum nennt man die Dinge beim richtigen Namen, so verlieren sie ihren gefährlichen Zauber.}

Elias Canetti
\end{flushright}

\begin{flushright}
  \emph{What's in a name? That which we call a rose by any other name
    would smell as sweet.}

  William Shakespeare
\end{flushright}

\begin{flushright}
\emph{Mal nommer un objet, c'est ajouter au malheur de ce monde.}

Albert Camus
\end{flushright}

\selectlanguage{english}

\begin{abstract}
We prove that $\Out(F_N)$ is boundary amenable. This also holds more generally for $\Out(G)$, where $G$ is either a toral relatively hyperbolic group or a  finitely generated right-angled Artin group. As a consequence, all these groups satisfy the Novikov conjecture on higher signatures.
\end{abstract}

\tableofcontents

\section{Introduction}

Boundary amenability -- also known as \emph{exactness} or \emph{coarse amenability}, and also equivalent to Yu's \emph{Property A} from \cite{Yu} (as was shown by Higson and Roe in \cite{HR}) -- is a property of a countable group that has important applications in K-theory, operator algebras and measured group theory. The reader is referred to \cite{AD,Oza2} for general introductions.
 The definition is as follows.     
 
\begin{de}[Boundary amenability]\label{dfn_boundary}
  A  countable  discrete group $\Gamma$ is \emph{boundary amenable} if there exist a   nonempty compact Hausdorff space $X$ equipped with 
  an action of $\Gamma$ by homeomorphisms and a sequence of continuous maps $$\mu_n: X \to\mathrm{Prob}(\Gamma)$$ such that for all $\gamma\in \Gamma$, one has $$\sup_{x\in X}||\mu_n(\gamma.x)-\gamma.\mu_n(x)||_1\to 0$$ as $n$ goes to $+\infty$. 
\end{de}

  In this definition, $\mathrm{Prob}(\Gamma)$ denotes the space of probability measures on 
  $\Gamma$,  
  equipped with the topology of pointwise convergence, or equivalently, subspace topology from
  $\ell^1(\Gamma)$ -- the continuity of the maps $\mu_n$ in the above definition is understood with respect to this topology. An action $\Gamma\actson X$ as in Definition \ref{dfn_boundary} is called \emph{topologically amenable}.

Boundary amenability has already been established for several important classes of groups. Guentner, Higson and Weinberger proved in \cite{GHW} that all linear groups are   boundary amenable. Campbell and Niblo proved in \cite{CN} that every group acting properly and cocompactly on a   finite-dimensional CAT(0) cube complex is   boundary amenable. Boundary amenability is also known for many groups satisfying `hyperbolic-like' properties: this was established by Adams \cite{Ada} for hyperbolic groups and extended by Ozawa \cite{Oza} to the case of relatively hyperbolic groups with   boundary amenable parabolic subgroups. It was then established for mapping class groups of orientable surfaces of finite type by Kida \cite{Kid} and Hamenstädt \cite{Ham}, and for automorphism groups of locally finite buildings by Lécureux \cite{Lec}.     On the other hand, finitely generated groups whose Cayley graphs contain a properly embedded expander are not boundary amenable; examples of such groups were constructed by Gromov \cite{Gromov} (see also \cite{ArDe}),  and more recently Osajda constructed residually finite examples \cite{Osajda}. We also mention that work of Arzhantseva, Guentner and Spakula \cite{AGS} provides examples of non-coarsely amenable metric spaces of different nature. 

The goal of the present paper is to establish   the boundary amenability of $\Out(F_N)$,   the outer automorphism group of a finitely generated free group $F_N$. This group has been the subject of intensive research for a while, see e.g.\ \cite{Vog} for a recent survey.   More generally, we prove the following theorem.

\begin{theo}[see Corollaries \ref{cor-toral} and \ref{raag}]\label{main-0}
Let   a finitely generated group $G$ be either 
\begin{enumerate}
\item a free group,
\item a torsion-free Gromov hyperbolic group, 
\item a torsion-free toral relatively hyperbolic group, 
\item a right-angled Artin group.
\end{enumerate}
\noindent Then $\Out(G)$ is boundary amenable. 
\end{theo}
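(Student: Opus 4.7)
The plan is to construct, for each $G$ in the statement, a compact Hausdorff $\Out(G)$-space $X$ on which the action is topologically amenable; exactness of $\Out(G)$ then follows directly from Definition~\ref{dfn_boundary}. I would proceed by induction on a complexity invariant of $G$: the rank in the free case, and the complexity of a canonical JSJ decomposition (over cyclic or abelian subgroups, resp.\ a canonical Servatius-type splitting) in the remaining cases. The base cases reduce to situations where $\Out(G)$ is either finite (rigid one-ended hyperbolic groups), virtually abelian, or isomorphic to an arithmetic group such as $\mathrm{GL}_n(\bbZ)$, all of which are exact by classical results (for the arithmetic case, by Guentner--Higson--Weinberger).

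For the inductive step, the natural candidate for $X$ in the $F_N$ case is the boundary $\partial\calo$ of Outer space, viewed up to projective rescaling; in the relatively hyperbolic and RAAG settings, the appropriate analogue is the boundary of the deformation space of $G$-trees relative to the canonical peripheral structure. The combination mechanism is the one familiar from the treatment of mapping class groups: it suffices to verify that (a) the action of $\Out(G)$ on $X$ is topologically amenable, and (b) point stabilizers are themselves exact. A boundary tree $T\in X$ carries a canonical Levitt--Guirardel decomposition into simplicial and minimal parts; its $\Out(G)$-stabilizer is built, up to finite index and amenable kernels, out of mapping class groups of the surface pieces (exact by Kida and Hamenst\"adt), outer automorphism groups of the vertex groups (exact by the inductive hypothesis), and an abelian group of twists about the simplicial edges. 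Exactness of point stabilizers then follows from the standard closure properties of the class of exact groups (passing to subgroups, amenable extensions, and countable directed unions).

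The main obstacle is ingredient (a): topological amenability of the boundary action itself, which is not automatic and requires delicate analysis precisely on the \emph{arational} stratum, where the canonical decomposition degenerates and the usual inductive scheme breaks down. Here my plan would be to produce the asymptotically equivariant maps $\mu_n : X \to \Prob(\Out(G))$ by pushing forward locally defined averaging data, exploiting the fact that each arational tree is canonically associated with a finite-dimensional simplex of projective length functions on a fixed dual object (a train track, or an $\bbR$-tree considered up to equivariant homeomorphism), on which the stabilizer acts through a virtually abelian quotient of stretching factors; amenability then reduces to that of an abelian group, as in the uniquely ergodic case for $\MCG$. Extending this argument from $F_N$ to torsion-free hyperbolic groups, toral relatively hyperbolic groups, and RAAGs will require additional equivariant bookkeeping to preserve peripheral subgroups and the vertex groups of the canonical splitting, but I expect no new conceptual input beyond the $F_N$ case once the arational analysis is in place.
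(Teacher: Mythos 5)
Your high-level framework is recognizable: reduce via Ozawa-type combination to an analysis of the boundary of outer space, split into arational and non-arational strata, and induct. But there are two genuine gaps, one logical and one technical, and the technical one is exactly where the paper's new ideas live.

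First, your combination mechanism is stated incorrectly. You propose to verify that ``(a) the action of $\Out(G)$ on $X$ is topologically amenable, and (b) point stabilizers are exact.'' If (a) held, you would be done and would not need (b). What Ozawa's proposition actually gives you is weaker and subtler: you need asymptotically equivariant maps $\mu_n : X \to \Prob(K)$ for some \emph{countable} discrete $\Out(G)$-set $K$ (not $\Prob(\Out(G))$), together with exactness of stabilizers of points of $K$. The paper takes $K$ to be the set $\Simp$ of simplices of outer space on the arational stratum, and the set of free factors on the non-arational stratum; this is essential, because the action of $\Out(G,\calf)$ on $\bbP\baro$ is in fact \emph{not} topologically amenable (dual trees of non-filling laminations on surfaces have non-amenable stabilizers), so your intended $\mu_n : X \to \Prob(\Out(G))$ cannot exist on the full boundary.

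Second, and more seriously, your proposed construction of the $\mu_n$ on the arational stratum — ``pushing forward locally defined averaging data'' via a ``virtually abelian quotient of stretching factors'' — does not produce the required Borel, asymptotically equivariant family. Knowing that the stabilizer of an individual arational tree is virtually cyclic (true, for $F_N$) tells you that $\Stab(T)$ is amenable, but it does not give you a way to choose, continuously or Borel-measurably in $T$, a probability measure on anything; there is no analogue of tight geodesics or subsurface projection available off the shelf. The paper's substitute is the \emph{turning class} of an optimal morphism $f : S \to T$ and the \emph{factorization lemma} (Lemma~\ref{lem_blow_up}): two optimal morphisms to an arational $T$ with the same turning class factor through the same small-covolume trees, and the set $\cals_t(f)$ of simplices at covolume $e^{-t}$ through which $f$ factors is finite (Proposition~\ref{sphere-finite}). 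Averaging uniform measures on $\cals_t(f)$ over a window of covolumes gives $\mu_n(f)$, and the remaining difficulty — that a general arational tree has infinitely many turning classes — is handled by the $\Name$ machinery (Proposition~\ref{name-intro}), with separate treatments of nongeometric trees (strong approximation exhibits a morphism whose turning class is full) and geometric trees (ubiquitous turns and a Whitehead-graph angle). None of this is present or substituted for in your sketch.

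Finally, the paper's induction is not on the rank of $F_N$ in the free case, but on the complexity $\xi(G,\calf)$ of a free factor system, proving the general free-product statement $\Out(G,\{G_i\}^{(t)})$ is exact (Theorem~\ref{main}) with $G_i$ exact; the free, hyperbolic, relatively hyperbolic and RAAG cases are then all obtained as corollaries. This relative framework is what makes the induction close, since the free factors arising from non-arational trees come with their own peripheral structures. Your JSJ-based reduction for the one-ended hyperbolic and relatively hyperbolic cases and the Charney--Vogtmann reduction for RAAGs are in line with what the paper does in Sections~\ref{sec_RH}--\ref{sec_RAAG}, but both hinge on having the free-product theorem in hand, which is exactly what your sketch lacks.
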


Since $\Aut(G)$ embeds in $\Out(G*\bbZ)$ and boundary amenability is stable under taking subgroups, boundary amenability of $\Aut(G)$ follows in these cases:

\begin{cor}
Let $G$ be a group as in Theorem~\ref{main-0}. Then $\Aut(G)$ is boundary amenable.
\end{cor}

We expect that the torsion-freeness assumption in Theorem \ref{main-0} is not necessary.
But if $G$ is a virtually torsion-free hyperbolic group, one can deduce from Theorem \ref{main-0} that $\Out(G)$ and $\Aut(G)$ are boundary amenable, see Corollary~\ref{cor_virtually_tf} (whether or not there exists a hyperbolic group which is not virtually torsion-free is a famous open question).
\\

  When $G=F_N$, an explicit compact space equipped with a topologically amenable action of $\Out(F_N)$ is constructed in Section~\ref{sec-compact} of the present paper: this space is an infinite-dimensional product space involving all boundaries of relative Outer spaces associated to free factors $A\subseteq F_N$ and free factor systems of $A$, and all boundaries of free factors of $F_N$. But our general proof strategy does not consist in working directly with this compact space; instead we rely on an inductive argument coming from Ozawa's work on boundary amenability of relatively hyperbolic groups \cite{Oza}, as will be explained later in this introduction.

  Notice that every identification between the fundamental group of a surface $\Sigma$ obtained from a closed connected surface by removing a finite non-empty set of points, and a finitely generated free group $F_N$, yields an embedding of the mapping class group of $\Sigma$ into $\Out(F_N)$. Since boundary amenability passes to subgroups, this gives a new proof of the boundary amenability of the mapping class group of every  punctured surface $\Sigma$ as above.

\paragraph*{Applications.} 
A key motivation behind the study of   boundary amenability comes from a theorem that follows from work of Yu \cite{Yu}, Higson--Roe \cite{HR} and Higson \cite{higson}, stating that   boundary amenability of $\Gamma$ implies the injectivity of the Baum--Connes assembly map, which in turn implies the Novikov conjecture on higher signatures for $\Gamma$ (this theorem builds on the fact that boundary amenability of $\Gamma$ is equivalent to $\Gamma$ satisfying Yu's property A, which implies in turn that $\Gamma$ admits a   coarse embedding in a Hilbert space). Since boundary amenability passes to subgroups \cite{Oza2}, we get the following corollary to Theorem~\ref{main-0}.

\begin{cor}\label{cor-0}
Let $G$ be a group as in Theorem~\ref{main-0}. Then $\Out(G)$ and any of its subgroups satisfy the
Novikov conjecture.
\end{cor}

Another application of the   boundary amenability of a group $\Gamma$ comes from the study of certain operator algebras associated to $\Gamma$: for example,   boundary amenability of a   countable group is equivalent to the exactness of its reduced $C^\ast$-algebra, see \cite{AD,Oza3}.

\paragraph*{Boundary amenability of the automorphism group of a free product.} In order to establish Theorem~\ref{main-0}, we actually work in the more general setting of groups   coming with a decomposition as a free product.   Let $k,N$ be non-negative integers, let $\{G_1,\dots,G_k\}$ be a finite   family of countable groups, and let $$G:=G_1\ast\dots\ast G_k\ast F_N.$$   We let $\calf=\{G_1,\dots,G_k\}$, naturally viewed as a family of subgroups of $G$. We denote by $\Out(G,\calf)$ the subgroup of $\Out(G)$ made of all automorphisms which preserve the conjugacy  class of each subgroup $G_i$, and by   $\Out(G,\calf^{(\mathrm{t})})$ the subgroup made of all automorphisms that act as the conjugation by an element $g_i\in G$ on each subgroup $G_i$. 
Our main theorem is the following.

\begin{theo}[see Theorem \ref{main-2}]\label{main}
  Let $k,N$ be non-negative integers. Let   $\calf=\{G_1,\dots,G_k\}$ be a finite   family of countable groups, and let $$G:=G_1\ast\dots\ast G_k\ast F_N.$$
\\ Assume that for all $i\in\{1,\dots,k\}$, the group $G_i$ is   boundary amenable. 
\\ Then $\mathrm{Out}(G,\calf^{(\mathrm{t})})$ is   boundary amenable.
\end{theo}

We would now like to make a few comments on the statement. First, the same statement also holds for the larger group $\Out(G,\calf)$ (instead of $\Out(G,\calf^{(\mathrm{t})})$) if we make the additional assumption that
$\Out(G_i)$ is   boundary amenable for each $i\in\{1,\dots,k\}$ (see Corollary~\ref{cor-main}). Second, we mention that (unless the given decomposition of $G$ is trivial, i.e.\ $G=G_1$), our assumption that the groups $G_i$ be   boundary amenable is necessary: indeed,   for every $i\in\{1,\dots,k\}$ the group $G_i/Z(G_i)$ embeds into $\Out(G,\calf^{(\mathrm{t})})$ as the subgroup of all partial conjugations of the $G_i$ factor, and   boundary amenability of $G_i$ is equivalent to that of $G_i/Z(G_i)$ (see Section \ref{sec-amenability}).

We briefly explain the strategy to derive Theorem~\ref{main-0} from
Theorem~\ref{main}. The particular case where $k=0$ shows that
$\Out(F_N)$ is   boundary amenable. If $G$ is torsion-free and hyperbolic or toral relatively
hyperbolic, then Theorem~\ref{main} basically reduces the proof of
Theorem~\ref{main-0} to the case where $G$ is one-ended,  in
which case JSJ theory implies that $\Out(G)$   can be, in a sense, built from
mapping class groups of surfaces and free abelian groups
 \cite{Sela_JSJ,Lev,GL4} (see Section~\ref{sec_RH}). The case where $G$ is a right-angled Artin group is proved by induction on the number of vertices of the underlying graph, using work of Charney--Vogtmann \cite{CV} (see  Section~\ref{sec_RAAG}).

\paragraph*{On the proof of Theorem~\ref{main}.}
Let us first introduce the objects that we need.
The group $\Out(G,\calf)$ has a natural action on a compact space, namely the 
compactified Outer space $\mathbb{P}\baro$  for the free product $(G,\calf)$.
Points in this space correspond to certain actions of $G$ on $\bbR$-trees \cite{Hor}.
Unfortunately, except in a few exceptional cases, the $\Out(G,\calf)$-action on $\mathbb{P}\baro$ is not topologically amenable.
Indeed, some points in $\mathbb{P}\baro$ have a non-amenable stabilizer which is a general obstruction for an action to be topologically amenable (see Corollary \ref{cor-subgroup} for instance).
For example, every tree in the boundary of Culler--Vogtmann's Outer space which is dual to a non-filling measured lamination on a surface with boundary is stabilized by every automorphism coming from a mapping class supported on the subsurface that avoids the lamination.  

We use the decomposition of $\mathbb{P}\baro=\bbP\AT\sqcup \bbP\AT^c$ into arational and non-arational trees introduced by Reynolds \cite{Rey}:  
a tree $T$ in $\bbP\baro$ is \emph{arational} if for every proper free factor $A$ of $G$ relative to $\calf$, the group $A$ is not elliptic in $T$ and the restriction of the $A$-action to its minimal subtree in $T$ is relatively free and simplicial  (see Section \ref{sec-arat-back} for more details).

\paragraph*{The factorization lemma.}
The boundary amenability of a free group $F_N$ can be obtained by showing that the action on its boundary $\partial_\infty F_N$ is topologically amenable. The key geometric feature used in the proof 
is that any two rays converging to a common point in $\partial_\infty F_N$ have the same tail.
Lemma~\ref{intro-factor} below is inspired by this phenomenon and is crucial in our proof of Theorem~\ref{main}.

Let $T$ be an arational $(G,\calf)$-tree, and let $S$ be a simplicial $(G,\calf)$-tree coming with a morphism $f:S\to T$ (see Section~\ref{sec-back1} for definitions). We define the \emph{turning class} of $f$ as the set of turns (i.e.\ pairs of directions) at branch points in $T$ which lift to $S$.  Our key observation is the following.

\begin{lemma}[Factorization lemma]\label{intro-factor}
Let $T\in\AT$, let $S,S'\in\calo$, and let $f:S\to T$ and $f':S'\to T$ be two optimal morphisms having the same turning class.
\\ Then there exists $\epsilon>0$ such that if $U\in\calo$ is a tree of covolume at most $\epsilon$ and $f$ factors through $U$, then $f'$ also factors through $U$.
\end{lemma}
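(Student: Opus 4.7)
The plan is to construct the factorization $f' = g \circ h'$ with $h' : S' \to U$ a morphism, building $h'$ directly from the given factorization $f = g \circ h$ and the equality of turning classes. The key algebraic observation comes first: whenever $f = g \circ h$ is any factorization through $U \in \calo$ with $h$ an optimal morphism, the turning class of $g$ coincides with that of $f$. The inclusion ``turning class of $g \subseteq $ turning class of $f$'' follows from surjectivity on germs of the optimal morphism $h$: a lift $(e_1,e_2)$ at $q \in U$ of a turn $(d_1,d_2)$ at $p$ can be pulled back to any $r \in h^{-1}(q)$ to give a lift in $S$. The reverse inclusion is the crux: if a non-degenerate turn $(d_1,d_2)$ at $p$ lifts to distinct germs $\tilde e_1, \tilde e_2$ at $r \in S$, then the images $h(\tilde e_i)$ at $h(r) \in U$ must be distinct, because $g(h(\tilde e_i)) = f(\tilde e_i) = d_i$ and $d_1 \ne d_2$. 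Combined with the hypothesis on turning classes, the turning class of $f'$ also equals that of $g$.

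I would then choose $\epsilon > 0$ small enough that any $U \in \calo$ of covolume at most $\epsilon$ equipped with a morphism $g : U \to T$ has rigid combinatorial structure: each branch point of $U$ maps to a branch point of $T$, $g$ is injective on germs at each branch point of $U$, and the interiors of edges of $U$ embed isometrically into $T$ without meeting branch points of $T$. Under this rigidity, the turning class of $g$ at each branch point $p \in T$ decomposes as a union of cliques indexed by the preimages $q \in g^{-1}(p)$, the clique attached to $q$ being the set of $g$-images of germs at $q$. Crucially, this decomposition is determined by the turning class itself and records the fold pattern of $g$ without ambiguity.

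The construction of $h'$ then proceeds vertex by vertex. For each vertex $x' \in S'$ with $p := f'(x')$ a branch point of $T$, the set of $f'$-images of germs at $x'$ forms a clique in the turning class at $p$, hence lies inside the clique of a unique $q \in g^{-1}(p)$; I set $h'(x') := q$, and lift each germ at $x'$ to the unique germ at $q$ sharing the same $g$-image. Away from branch points of $T$, the map $h'$ is determined by isometric lifting of $f'$ along $g$, which is possible by the rigidity of $g$ from the previous step. Consistency of local lifts along edges, the morphism property, and $G$-equivariance then follow from the equivariance of $f, f', g, h$ together with the intrinsic nature of the assignment $x' \mapsto q$.

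The main obstacle I anticipate is the rigidity of the clique decomposition and the canonicity of the assignment $x' \mapsto q$: without it, the same clique in the turning class at $p$ could be realized by germs of several distinct preimages in $U$, blocking a coherent equivariant choice. The smallness of the covolume of $U$ is precisely what enforces enough combinatorial rigidity of $g$ to single out the correct $q$ from the turning class alone, and hence to make the inductive vertex-by-vertex construction of $h'$ go through.
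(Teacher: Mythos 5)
Your proposal has a genuine gap at its foundation. The first key claim — that whenever $f = g \circ h$ with $h : S \to U$ optimal, the turning class of $g : U \to T$ coincides with that of $f$ — is false in general. The inclusion ``turning class of $f$ $\subseteq$ turning class of $g$'' is correct (and is what you actually argue correctly: a legal segment $I \subseteq S$ has a legal image $h(I) \subseteq U$). But the reverse inclusion is wrong, and the justification ``surjectivity on germs of the optimal morphism $h$'' does not hold. An optimal morphism $h : S \to U$ can fold: when $h$ identifies two edges emanating from a vertex $v$, a new branch point $q$ appears in $U$ at the far end of the folded segment, and $h^{-1}(q)$ consists of several points of $S$ \emph{none} of which carries all the germs at $q$. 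A turn at $q$ (hence a turn of $T$ at $g(q)$) formed by germs coming from two different preimages in $S$ is then in the turning class of $g$ but lifts to no legal segment in $S$, so it is not in the turning class of $f$. Everything downstream of the equality ``turning class of $g$ = turning class of $f' $'' is unsupported.

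Even setting that aside, the clique-decomposition step is not justified, and you flag this yourself. The clique formed by the $f'$-images of germs at a vertex $x' \in S'$ certainly sits in the turning class of $f'$, which is contained in the turning class of $g$, but there is no reason it should be contained in the clique of a single $q \in g^{-1}(p)$: distinct pairs of germs in that clique could lift at distinct preimages $q$, and a single germ of $T$ at $p$ can lift at several preimages $q$, so the ``turning class of $g$'' need not be a disjoint union of cliques. The small-covolume hypothesis does not obviously enforce this rigidity (a morphism $g : U \to T$ with small covolume can still fold at a branch point of $U$), and you give no argument for it. Finally, your construction makes no provision for vertices of infinite valence (present as soon as $\calf \neq \emptyset$), which is a nontrivial part of the problem. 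The paper sidesteps all of this by never attempting to read off the fold pattern of $g$ from a turning class: it instead covers $S$ by short legal segments that are $S'$-liftable (this is where the turning class hypothesis enters), transports those lifts to $U$ via the known factorization $S' \to U \to T$, and then proves, using the small BBT/covolume bound, that the resulting local definitions of $S \to U$ glue consistently, with a separate argument for infinite-valence vertices.
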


In fact, the factorization lemma holds more generally under the assumption that $T$ has trivial arc stabilizers (see Lemma \ref{lem_blow_up}).

 The idea of  turning classes and its consequences on factorization appear in an unpublished paper by Los--Lustig \cite{LL} (where these were called \emph{blowup classes}).

This factorization lemma has an interpretation that parallels the case of the free group mentioned above.
Indeed, given an optimal morphism $f:S\ra T$,
each folding path from $S$ to $T$ guided by $f$ is a geodesic in Outer space for the asymmetric Lipschitz metric \cite{FM}.
The fact that $f$ and $f'$ factor through the same trees of small enough volume
means that the union of tails of all such geodesic paths associated to $f$ and $f'$ coincide.

It is noticeable that here, factorization occurs ``on the nose'', while in the proofs of boundary amenability of hyperbolic groups \cite{Oza} or mapping class groups \cite{Ham,Kid}, one appeals to averaging arguments on finite collections of rays going to the same boundary point (in the case of mapping class groups, Kida uses \emph{tight geodesics} to get this finiteness).\footnote{Here we mention that after the first version of our paper was released, new proofs of the boundary amenability of hyperbolic groups and mapping class groups were obtained by Marquis--Sabok \cite{MS} and Przytycki--Sabok \cite{PS}, yielding in each case the stronger conclusion of hyperfiniteness of a certain boundary action; in these proofs some form of  factorization ``on the nose'' is established.}

We also prove a ``finite width'' statement about the collection of rays going to a given  arational tree. This is given by the following important fact, established in Section~\ref{sec-finite-width}: given $t\in\mathbb{R}$, the set $\cals_t(f)$ of all simplices
of relative Outer space that contain a tree of covolume $e^{-t}$ through which the morphism $f$ factors is finite.

\paragraph*{Amenability on sets of arational actions.}
Denoting by $\Simp$ the (countable) set of simplices of relative Outer space, we define probability measures on $\Simp$
as follows.
Given $t\in\mathbb{R}$, we define $\nu_t(f)$ to be the uniform probability measure on the finite set $\cals_t(f)$, and we then average
it on long segments by defining  $$\mu_n(f):=\frac{1}{n}\int_{n}^{2n}\nu_t(f)dt.$$  
The factorization lemma shows that as long as $f:S\ra T$ and $f':S'\ra T$ have the same turning class,  we have $\nu_t(f)=\nu_t(f')$ for all sufficiently large $t\in \bbR$.
If now $T$ and $T'$ are in the same projective class, and $f:S\ra T$ and $f':S'\ra T'$ 
have the same turning class, then
 $\mu_n(f)$ and $\mu_n(f')$ get arbitrarily close as $n$ goes to infinity.

This is enough to deduce the Borel amenability of the action of $\Out(F_N)$ on
the set $\bbP\ATf\subset \bbP\baro$ of \emph{free} arational $F_N$-trees.
In contrast with topological amenability,
Borel amenability asks for a sequence of maps $\mu_n:\bbP\ATf\ra \Prob(\Out(F_N))$ that are asymptotically equivariant as in Definition \ref{dfn_boundary}, but that are only required to be measurable (see Definition \ref{Borel-def}).

\begin{theo}[see Theorem \ref{thm:FN-case}]\label{thm:FN-case-intro} 
  The action of $\Out(F_N)$ on $\bbP\ATf$ is Borel amenable.
\end{theo}

This result is significantly easier than the general case (Theorem \ref{arat} below)
which is needed even for the proof of boundary amenability of $\Out(F_N)$.
The main simplification is that freeness of $T$ implies that there are finitely many orbits of turns
at each branch point, so the number of turning classes of morphisms targeting $T$ is finite.
We can therefore define a probability measure $\mu_n(T)$  by averaging the probability measures $\mu_n(f)$
for a finite set of morphisms $f$ representing all possible turning classes.
Since stabilizers of simplices of the Culler--Vogtmann Outer space of a free group are finite, it is easy
to transfer these probability measures to $\Out(F_N)$ so one deduces Borel amenability of the action.

\paragraph*{The general case and the inductive argument.}
The key statement that we prove in general is the following. 

\begin{theo}[see Theorem \ref{arat-2}]\label{arat}
  Let $k,N$ be non-negative integers. Let $\calf=\{G_1,\dots,G_k\}$ be a finite family of countable groups, and let $$G=G_1\ast\dots\ast G_k\ast F_N.$$ Then there exists a sequence of Borel maps $$\mu_n:\mathbb{P}\AT\to\mathrm{Prob}(\Simp)$$ 
such that for all $\Phi\in\Out(G,\calf)$ and all $T\in\mathbb{P}\AT$, one has $$||\Phi.\mu_n(T)-\mu_n(\Phi.T)||_1\to 0$$ as $n$ goes to $+\infty$.
\end{theo}

Using an additional argument, we actually prove that the
$\Out(G,\calf^{(\mathrm{t})})$-action on $\AT$ is Borel amenable
under the (necessary) additional assumption that  
 for each $i\leq k$, nontrivial elements of $G_i$ 
have amenable centralizers (see Theorem \ref{at-amen}).
\\

Before commenting on the proof, let us first explain how to deduce boundary amenability of $\Out(G,\calf^{(\mathrm{t})})$ (Theorem \ref{main}).
Note that the stabilizers of elements of $\Simp$ 
are usually infinite.
In addition, the set $\bbP\AT$ of projective arational trees is not compact.
We therefore rely on an inductive argument inspired from Kida's proof of   boundary amenability of mapping class groups \cite{Kid}.
Indeed, $\bbP\AT$ is contained in the compact space $\bbP\baro$;
moreover, to any tree in $\bbP\baro\setminus\bbP\AT$, one can canonically associate a   nonempty finite set of   conjugacy classes of   relative free factors \cite{Rey,Hor2}.
Then using a theorem by Ozawa \cite{Oza} (see Corollary \ref{ozawa-kida}),
it suffices to prove boundary amenability
of the stabilizer of each relative free factor, and of the stabilizer of each element of $\Simp$.
This allows to argue by induction on the complexity of the free product decomposition.

The base cases of the induction correspond to either 
\begin{itemize}
\item $G=G_1\ast G_2$ and $\calf=\{G_1,G_2\}$, in which case $\Out(G,\calf^{(\mathrm{t})})$ is isomorphic to $G_1/Z(G_1)\times G_2/Z(G_2)$, or 
\item $G=G_1\ast\mathbb{Z}$ and $\calf=\{G_1\}$, in which case $\Out(G,\calf^{(\mathrm{t})})$ has an index $2$ subgroup isomorphic to $(G_1\times G_1)/Z(G_1)$,   where $Z(G_1)$ is diagonally embedded in $G_1\times G_1$. 
\end{itemize}

\paragraph*{Names and proof of Theorem \ref{arat}.}
In comparison to Theorem \ref{thm:FN-case-intro}, the main difficulty in the proof
comes from the fact that given $T\in\AT$, the set of turning classes of morphisms targeting $T$
may be infinite, so one cannot average over them any more.

The rough idea to bypass this difficulty is to enumerate the possible turning classes in $T$ by giving them ``names'' in an $\Out(G,\calf)$-invariant way. Only finitely many turning classes should share a given name.
We then define $\mu_n(T)$ by averaging $\mu_n(f)$ over a finite set of morphisms $f$ whose turning classes coincide with the set of turning classes having the first possible name.

Our enumeration of turning classes is made differently for geometric and nongeometric trees in $\AT$. If $T\in\AT$ is nongeometric (in the sense of Levitt--Paulin \cite{LP}), then it can be strongly approximated by trees in $\calo$. It follows that there exists a morphism $f:S\to T$ with $S\in\calo$, such that every turn in $T$ lifts to $S$. The turning class of $f$ (which contains all turns) is then our preferred turning class, i.e.\ the first from the enumeration.

When $T$ is geometric, the band complex that resolves $T$ enables us to analyze 
a very particular set of turns that we call \emph{ubiquitous}.
These are defined as turns $(d,d')$ such that for every nondegenerate segment $I\subseteq T$, there exists $g\in G$ such that $gI$ contains $(d,d')$.
We show in particular that each direction is contained in only finitely many ubiquitous turns. 
These turns can therefore be used to 
define an \emph{angle} between two directions $d,d'$ at $x$ by counting the number of overlapping ubiquitous turns at $x$ needed to go from $d$ to $d'$ (see Definition \ref{def_angle}).
Using a Whitehead graph argument, we also prove that there are sufficiently many turns so that the angle between any pair of directions coming from the same minimal component of the band complex
is finite.
The ``name'' of a turning class is then defined from the angles of the turns it contains. 

The above discussion is made formal in the following technical statement, which is key in our proof of Theorem~\ref{arat}. Here $\turn{\AT}$ is the set of all turning classes on arational trees (this space is equipped with a $\sigma$-algebra). The symbol $\perp$ is a special name for turning classes that we want to ignore.

\begin{prop}[see Proposition \ref{prop_namable}]\label{name-intro}
There exists a measurable map $$\Name:\turn{\AT}\to\mathbb{N}\cup\{\perp\}$$ such that
\begin{itemize}
\item if $\calb$ is a turning class on a tree $T$, and $\lambda\calb$ is the turning class on the tree $\lambda T$ for some $\lambda>0$ containing the same turns as $\calb$, then $\Name(\lambda\calb)=\Name(\calb)$,
\item $\Name$ is $\Out(G,\calf)$-invariant, i.e.\ $\Name(\Phi.\calb)=\Name(\calb)$ for all $\calb\in\turn{\AT}$ and all $\Phi\in\Out(G,\calf)$,
\item for all $T\in \AT$ and all $n\in\mathbb{N}$, there are only finitely many turning classes on $T$ whose name is $n$, and
\item for all $T\in \AT$, there exists an optimal morphism with range $T$ whose turning class with respect to $T$ has a name different from $\perp$.
\end{itemize}          
\end{prop}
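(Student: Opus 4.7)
The plan is to partition $\AT$ into the geometric and nongeometric subsets $\AT_{\Geom}$ and $\AT_{\NonG}$ --- both being $\Out(G,\calf)$-invariant Borel sets, stable under rescaling --- and to define $\Name$ separately on each piece, using the two different enumeration schemes outlined in the introduction.

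On $\AT_{\NonG}$ the construction is almost cosmetic. For each such $T$, strong approximation by simplicial trees in $\calo$ produces an optimal morphism $f:S\to T$ whose turning class contains \emph{every} turn at a branch point of $T$. This ``full'' turning class is manifestly the only turning class on $T$ of this form, so the plan is to assign it name $0$ and to give every other turning class on a nongeometric tree the name $\perp$. The four properties are then immediate: scale- and $\Out(G,\calf)$-invariance hold by canonicity, the fiber of $\Name$ over $0$ on each $T$ is a singleton, and property (4) holds by the very existence of $f$.

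On $\AT_{\Geom}$ the plan is to exploit the ubiquitous-turn machinery and the associated angle function $\angle(\cdot,\cdot)$ sketched before the proposition. The facts to use are that both ubiquity and the integer $\angle(d,d')$ are defined canonically from $T$ (hence are scale- and $\Out(G,\calf)$-equivariant), that each direction lies in only finitely many ubiquitous turns, and that $\angle$ is finite on pairs of directions lying in the same minimal component of the resolving band complex. From a turning class $\calb$ on $T$ I would extract a combinatorial \emph{profile}: for each $G$-orbit of branch points of $T$ (finitely many, since $T$ is geometric), the list of pairs of directions contributed by $\calb$ together with their pairwise angles, truncated at angle $\leq n$. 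Composing the level-$n$ profile with a fixed canonical injection of finite labeled data into $\mathbb{N}$, I would set $\Name(\calb)$ to be the smallest $n$ at which the full profile is captured, and $\perp$ otherwise.

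The main obstacle is property (3), the finiteness of each fiber of $\Name$ on a given $T \in \AT_{\Geom}$. This reduces to the finiteness, at each branch point, of the number of directions that can be reached from a fixed one via a path of at most $n$ ubiquitous turns --- a direct iteration of the fact that a direction is contained in only finitely many ubiquitous turns --- combined with the finiteness of $G$-orbits of branch points in $T$. Property (4) on $\AT_{\Geom}$ will require exhibiting an optimal morphism $f:S\to T$ whose turning class has finite name, which I would produce by taking a sufficiently fine resolution of the band complex of $T$ so that only turns of angle $\leq 1$ lift to $S$; the resulting turning class has bounded profile and hence a finite name. Measurability of $\Name$ should then follow from the measurability of the Levitt--Paulin geometric/nongeometric partition and a measurable-selection argument for resolutions of geometric trees.
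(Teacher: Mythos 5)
Your nongeometric branch is essentially the paper's (Proposition~\ref{nongeom}/Corollary~\ref{cor-nongeom}): assign name $0$ to the full turning class and $\perp$ to all others. That part is fine.

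The geometric branch has a genuine gap, and it is precisely the place where the real work lies. You propose to record, for each orbit of branch points, the turns of $\calb$ truncated at angle $\leq n$ and take the smallest $n$ at which the profile is ``captured.'' The problem is that a turning class coming from a morphism $f:S\to T$ with $S\in\calo$ is generically \emph{not} angle-bounded. At any branch point $x$ of $T$ whose stabilizer $G_x$ is infinite (which always happens in the relatively free geometric case, since the peripheral groups fix points in $T$), the image of a vertex of $S$ with nontrivial stabilizer produces a set $\calb_{[d],[d']}$ that is \emph{full}, i.e.\ it contains every pair in $[d]\times[d']$; since the Whitehead graph of ubiquitous turns at $x$ is infinite and locally finite, the angles realized in such a full set are unbounded. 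Your scheme would therefore assign $\perp$ to exactly the turning classes one needs to keep, so property~(4) fails. You cannot escape this by taking a finer resolution of the band complex: the equivariance of $f$ forces $\calb_{[d],[d']}$ to be full whenever a pair of adjacent edges of $S$ at a peripheral vertex maps to $([d],[d'])$, regardless of how fine the resolution is. The paper's fix is to build the dichotomy explicitly into the definition (Definition~\ref{sett} and the notion of ``turning class of finite type''): a block $\calb_{[d],[d']}$ is required to be either \emph{full} or $G_x$-finite, and full blocks are given the name $0$; the name of $\calb$ is then the maximum of the angles appearing in the non-full blocks. Lemma~\ref{lem_finite_type} is exactly the statement that turning classes of morphisms do satisfy this dichotomy, and it relies crucially on $T$ being relatively free.

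Two more issues, of the same nature. First, you do not treat the surface arational trees $\SAT$ separately, but the finite-type lemma breaks for them (its proof uses relative freeness to locate a unique preimage vertex with nontrivial stabilizer), so the paper handles $\SAT$ by a dedicated construction (the turning class $\calb_{slit}$ obtained by slitting along the unused boundary curve, Proposition~\ref{sat-name}). Second, when a relatively free geometric tree $T$ is decomposable, turns can ``cross'' the transverse covering $\caly$; the paper needs the notion of an \emph{acceptable} turning class (Proposition~\ref{name-dec}) to reduce to the indecomposable case and to keep the fibers of $\Name$ finite. Your profile does not see this distinction. In short, the angle idea is the right ingredient for the indecomposable relatively free case, but without the full/finite dichotomy, the $\SAT$ special case, and the acceptability condition, the construction does not satisfy conditions~(3) and~(4).
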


We finish by mentioning that Theorem~\ref{main} does not yield a concrete compact space equipped with a topologically amenable action of $\Out(G,\calf^{(\mathrm{t})})$. 
However, in the case where $G$ is a free group, we describe an explicit such space
by unraveling our inductive argument. This is a metrizable compact space obtained as a product space involving in particular all boundaries of the relative Outer spaces associated to all free factors $A\subseteq F_N$ and all free factor systems of $A$, and all Gromov boundaries of free factors of $F_N$ (see Section \ref{sec-compact}).  

\paragraph*{Structure of the paper.} The paper is organized as follows. In Section~\ref{sec-back}, we review necessary background on free products and their Outer spaces, arational trees, geometric and nongeometric trees, and general facts concerning boundary amenability of groups. 
 
In Section~\ref{sec-scheme}, we prove the factorization lemma (Lemma~\ref{intro-factor}). We then define the  probability measures associated to any arational tree,
 and deduce the amenability of the action of $\Out(F_N)$ on the set of free arational $F_N$-trees (Theorem \ref{thm:FN-case-intro}).
We then explain  how this argument extends to derive Theorem~\ref{arat} from Proposition~\ref{name-intro}.

In Section~\ref{sec-name}, we give names to turning classes on arational trees to prove Proposition~\ref{name-intro}; we treat nongeometric and geometric trees separately.

In Section~\ref{sec-appl}, we complete the proof of our main theorem (Theorem~\ref{main}), and we explain how to derive boundary amenability of $\Out(G)$ when $G$ is either a toral relatively hyperbolic group or a right-angled Artin group.

In Section~\ref{sec-complements}, we establish two complementary results to our main theorem. First, we unravel the inductive argument to build an explicit compact metrizable space equipped with a topologically amenable action of $\Out(F_N)$. Second, we prove that if the centralizer of every nontrivial peripheral element is amenable, then the $\Out(G,\calf^{(\mathrm{t})})$-action on $\AT$ is Borel amenable.  

\paragraph*{Acknowledgments.} The second and third authors would like
to thank Martin Lustig for explaining to us how turning classes can be used to get factorization results during a conference held in Carcassonne in Fall 2014, supported by the ANR LAM. All three authors were supported by the National Science Foundation under
Grant No. DMS-1440140 while the authors were in residence at the
Mathematical Sciences Research Institute in Berkeley, California,
during the Fall 2016 semester. The first author was supported by the
NSF under Grant No. DMS-1607236.
The second author acknowledges support from the Institut Universitaire de France and from the Centre Henri Lebesgue ANR-11-LABX-0020 LEBESGUE. 
The third author acknowledges support from the Agence Nationale de la Recherche under Grant ANR-16-CE40-0006.

\section{Background}\label{sec-back}

\subsection{General background on free products}\label{sec-back1}

\paragraph*{Generalities on free products.} Let $\calf:=\{G_1,\dots,G_k\}$ be a finite collection of countable groups, and let $$G:=G_1\ast\dots\ast G_k\ast F_N,$$ where $F_N$ is a free group of rank $N$. Elements or subgroups of $G$ that are conjugate into one of the subgroups in $\calf$ are called \emph{peripheral}. We define the \emph{complexity} of $(G,\calf)$ as the pair $\xi(G,\calf):=(N+k-1,N)$.   Complexities are ordered lexicographically. 
We say that $(G,\calf)$ is \emph{sporadic} if $\xi(G,\calf)\leq (1,1)$ which happens exactly if $G=G_1$, $G=\bbZ$, $G=G_1*G_2$, or $G=G_1*\bbZ$.

A \emph{$(G,\calf)$-tree} is an $\mathbb{R}$-tree $T$ equipped with a $G$-action   by isometries, such that every subgroup in $\calf$ acts elliptically on $T$ (i.e.\ it fixes a point in $T$). A \emph{$(G,\calf)$-free splitting} is a simplicial $(G,\calf)$-tree with trivial edge stabilizers. A \emph{$(G,\calf)$-free factor} is a subgroup of $G$ that arises as a point stabilizer in some $(G,\calf)$-free splitting. A \emph{proper} $(G,\calf)$-free factor is a free factor distinct from $G$ and non-peripheral.
A \emph{$(G,\calf)$-free factor system} is the collection of all conjugacy classes of point stabilizers in some $(G,\calf)$-free splitting.

\newcommand{\bigast}{\mathop{\scalebox{1.5}{\raisebox{-0.2ex}{$\ast$}}}}
A theorem of Kurosh \cite{Kur} asserts that every subgroup $A\subseteq G$ decomposes as  $(\bigast_j H_j)\ast F$,
where $F$ is a free group and
a subgroup of $A$ is peripheral in $(G,\calf)$ if and only if it is $A$-conjugate into some $H_j$. 
We denote by $\calf_{|A}$ the collection of all $A$-conjugacy classes of the subgroups $H_j$ from the above decomposition of $A$.

\paragraph*{Directions and branch points in trees.} 

A \emph{direction} $d$ at $x$ in a $(G,\calf)$-tree $T$ is a germ of isometric maps $\eta:[0,\eps]_{\bbR}\ra T$ with $\eta(0)=x$.
We say that a subtree $Y\subset T$ \emph{contains} the direction $d$ if it is represented by a map $\eta$ whose image is contained in $Y$.

A \emph{turn} at a point $x\in T$ is a pair $(d,d')$ where $d,d'$ are two distinct directions at $x$ (a turn is formally defined   as an ordered pair, but the ordering will not be important).
The point $x$ is an \emph{inversion} point if there are exactly two directions at $x$, 
and there exists $g\in G$ fixing $x$ and swapping the two directions at $x$.
A \emph{branch point} $x\in T$ is a point such that $T\setminus \{x\}$ has at least 3 connected components. A point that is either a branch point or an inversion point is called a \emph{generalized branch point}.
Note that in this paper, 
inversion points will occur only if one of the factors in $\calf$ is isomorphic to $\bbZ/2\bbZ$. A \emph{branch direction} in $T$ is a direction based at a branch point.

\paragraph*{Outer space and its closure.}

A $(G,\calf)$-tree $T$ is \emph{relatively free} if every element of $G$ elliptic in $T$ is conjugate into one of the subgroups in $\calf$. A \emph{Grushko $(G,\calf)$-tree} is a simplicial metric relatively free $(G,\calf)$-tree. 
We note that $(G,\calf)$ is non-sporadic if and only if every Grushko tree has at least 2 orbits of edges.
The \emph{unprojectivized relative Outer space} $\calo$ is the space of all $G$-equivariant isometry classes of Grushko $(G,\calf)$-trees   \cite{CuV,GL}. We denote by $\mathbb{P}\calo$ the projectivized Outer space, where trees are considered up to homothety instead of isometry. Given a tree $S\in\mathcal{O}$, the set of all trees $S'$ obtained by keeping the same underlying simplicial structure but varying the metrics (keeping all edge lengths positive) projects to an open simplex in $\mathbb{P}\calo$. We denote by $\Simp$ the countable collection of all these open simplices. Given an open simplex $\Delta\in\Simp$, we denote by $\Tilde{\Delta}$ the preimage of $\Delta$ in $\calo$.

The closure $\baro$ of $\calo$ in the space of all $(G,\calf)$-trees (equipped with the Gromov--Hausdorff topology introduced in \cite{Pau}) was identified in \cite{Hor} with the space of all \emph{very small} $(G,\calf)$-trees, i.e.\ those trees $T$ for which tripod stabilizers are trivial, and arc stabilizers are either trivial, or maximally cyclic and nonperipheral (see \cite{CL,BF} for free groups).
 We will let $\partial\calo:=\baro\setminus\calo$.

\paragraph*{Levitt decomposition of a very small tree.}

Very small trees  with dense $G$-orbits have trivial arc stabilizers, see \cite[Proposition~4.17]{Hor}. By \cite{Lev} (see also \cite[Theorem~4.16]{Hor} for free products),
every tree $T\in\baro$ splits in a unique way as a graph of actions
(in the sense of \cite{Lev}), such that vertices of this decomposition correspond to orbits of 
 connected components of the closure
of the set of generalized branch points. Edges of this decomposition correspond to orbits of maximal arcs whose interiors contain no generalized branch points.
In particular, vertex groups act with dense orbits on the corresponding subtree (maybe a point).
The Bass--Serre tree of the underlying graph of groups is very small (maybe trivial). 
This decomposition is called the \emph{Levitt
  decomposition} of $T$.

\paragraph*{Morphisms.} Given two trees $S,T\in\baro$, a \emph{morphism} $f:S\to T$ is a $G$-equivariant map such that every segment in $S$ can be subdivided into finitely many subsegments, in such a way that $f$ is an isometry when restricted to any of these subsegments. We denote by $\text{Mor}$ the space of all morphisms between trees in $\baro$, equipped with the Gromov--Hausdorff topology (see \cite[Section 3.2]{GL} for a definition of the Gromov--Hausdorff topology on the set of morphisms). A morphism is \emph{optimal} if every point $x\in S$ is contained in the interior of a nondegenerate segment $I_x$ such that $f_{|I_x}$ is injective. We denote by $\Opt$ the subspace of $\Mor$ consisting of all optimal morphisms. Given subsets $\sigma\subseteq\calo$ and $\tau\subseteq\baro$, we will denote respectively by $\Opt_\sigma$, $\Opt_{\ra \tau}$ and $\Opt_{\sigma\ra \tau}$ 
the set of optimal morphisms whose source tree lies in $\sigma$, or whose range tree lies in $\tau$, or both.
Given $T\in \baro$, we will also use the notation $\Opt_{\ra T}$ for the set of optimal morphisms whose range tree is $T$.

\begin{lemma}\label{opt-bor}
Let $\sigma\subseteq\calo$ and $\tau\subseteq\baro$ be Borel subsets. Then $\Opt_{\sigma\to\tau}$ is a Borel subset of $\Mor$.
\end{lemma}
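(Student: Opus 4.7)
The plan is to write $\Opt_{\sigma \to \tau} = \Opt \cap s^{-1}(\sigma) \cap r^{-1}(\tau)$, where $s, r : \Mor \to \baro$ are the source and range projections, and to verify separately that (i) $s$ and $r$ are continuous, and (ii) $\Opt \cap s^{-1}(\calo)$ is Borel in $\Mor$. Item (i) is immediate from the construction of the Gromov--Hausdorff topology on $\Mor$: basic neighborhoods of $f : S \to T$ constrain a nearby morphism to have its source and range close to $S$ and $T$ in $\baro$. Since $\sigma \subseteq \calo$ and $\calo$ is Borel in $\baro$, once (ii) is established, $\Opt_{\sigma \to \tau} = (\Opt \cap s^{-1}(\calo)) \cap s^{-1}(\sigma) \cap r^{-1}(\tau)$ is Borel.

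For (ii), I would quantitatively reformulate optimality. For each positive rational $\epsilon$, let $\calu_\epsilon \subseteq \Mor$ be the set of $f : S \to T$ such that every $x \in S$ lies on a segment $[y, z] \subseteq S$ with $d(y, x) = d(x, z) = \epsilon$ and $f|_{[y, z]}$ injective. I claim $\Opt \cap s^{-1}(\calo) = \bigcup_\epsilon (\calu_\epsilon \cap s^{-1}(\calo))$. The inclusion $\supseteq$ is immediate. For $\subseteq$, note that $S \in \calo$ is simplicial with finitely many $G$-orbits of edges, and optimality provides, at each vertex of $S$, a pair of incident edges whose initial germs in $T$ disagree; fixing such a choice equivariantly, every point of $S$ admits an injective extension to a segment of length $2\epsilon$ once $\epsilon$ is taken less than the minimum edge length of $S$ (staying within one edge if $x$ is interior, or crossing a vertex via the chosen partner edges if $x$ is at or near a vertex).

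It remains to show that each $\calu_\epsilon$ is closed in $\Mor$. If $f_n : S_n \to T_n$ converges to $f : S \to T$ with $f_n \in \calu_\epsilon$, then given $x \in S$, one approximates $x$ by points $x_n \in S_n$ via the Gromov--Hausdorff convergence and picks $y_n, z_n \in S_n$ at distance exactly $\epsilon$ from $x_n$ with $x_n \in [y_n, z_n]$ and $f_n|_{[y_n, z_n]}$ injective. Since these points remain in a bounded neighborhood of $x_n$, a standard diagonal extraction gives a subsequence with $y_n \to y$ and $z_n \to z$ in $S$, so $d(y, x) = d(x, z) = \epsilon$ and $x \in [y, z]$. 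An injective morphism from a segment is automatically an isometric embedding, so each $f_n|_{[y_n, z_n]}$ is an isometric embedding into $T_n$; passing to the limit shows $f|_{[y, z]}$ is an isometric embedding, in particular injective, yielding $f \in \calu_\epsilon$. Thus $\Opt \cap s^{-1}(\calo)$ is $F_\sigma$, hence Borel, completing the argument.

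The principal technical point is the closedness of $\calu_\epsilon$, which requires carefully extracting Gromov--Hausdorff limits of auxiliary segments in varying trees and passing injectivity through the limit. Both steps are standard but call for unpacking the precise form of the Gromov--Hausdorff topology on $\Mor$ of Guirardel--Levitt, and in particular for controlling the balls of radius $\epsilon$ around the distinguished points $x_n$. Everything else in the argument consists of routine operations on Borel sets.
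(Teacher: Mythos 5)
Your decomposition $\Opt \cap s^{-1}(\calo) = \bigcup_\epsilon \calu_\epsilon \cap s^{-1}(\calo)$ is correct (the $\subseteq$ inclusion does require noticing that an optimal $f : S \to T$ with $S \in \calo$ is automatically isometric on each edge, and that at every vertex every direction participates in a legal turn, but these follow readily from optimality), and the overall plan of writing $\Opt \cap s^{-1}(\calo)$ as an $F_\sigma$ set is a genuinely different route from the paper's. However, there is a real gap in your closedness argument for $\calu_\epsilon$, precisely at the step you dismiss as ``standard diagonal extraction.'' A bounded subset of a tree in $\calo$ is \emph{not} precompact when the tree has vertices of infinite valence, which is exactly the situation when the peripheral factors $G_i$ are infinite: if $x$ is (or is close to) a vertex of $S$ with infinite peripheral stabilizer, the $\epsilon$-balls $B_\epsilon(x_n) \subset S_n$ have infinitely many branches, and the points $y_n, z_n$ can ``rotate'' through infinitely many $G_{x_n}$-translates of directions without admitting a convergent subsequence in the equivariant Gromov--Hausdorff sense. (You can renormalize $y_n$ into a fixed fundamental domain using equivariance, but this does not simultaneously control $z_n$.) So the extraction is not automatic, and indeed this is the crux of the statement rather than a routine detail.

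The paper's proof is designed around exactly this difficulty, and it uses two ingredients your argument lacks. First, it works one closed simplex $F_\Delta$ at a time; after passing to a subsequence one may assume all $S_n$ share the same underlying combinatorial tree, so that points of $S_n$ and $S$ can be compared concretely rather than only up to Gromov--Hausdorff approximation. Second, and more importantly, it argues by contradiction and invokes the very-small property of $T$ (arc stabilizers trivial or nonperipheral): if the limit $f$ were non-optimal at a vertex $v$, then all directions at $v$ would fold, so a nontrivial element of $G_v$ would fix a germ in $T$, forcing $G_v$ to be trivial and hence $v$ to have \emph{finite} valence. This confines the problematic behavior to a compact neighborhood where the kind of extraction you want is actually available. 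To salvage your approach you would need an analogous observation — for example, that $f$ is automatically in $\calu_\epsilon$ near infinite-valence vertices for the same arc-stabilizer reason — but as written the proof does not address this, and without it the closedness of $\calu_\epsilon$ does not follow.
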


\begin{proof}
Since the source and range maps from $\Mor$ to $\baro$ are continuous, it is enough to check that $\Opt$ is a Borel subset of $\Mor$. Since $\Simp$ is countable, it is enough to show that for all $\Delta\in\Simp$, letting $F_\Delta\subseteq\calo$ be the closure of the subset made of all trees projecting to $\Delta$, the space $\Opt_{F_\Delta}$ is a closed subset of $\Mor$. Let $S\in F_\Delta$, let $T\in\baro$, let $f:S\to T$ be a morphism, and let $(f_n)_{n\in\mathbb{N}}\in\Opt^\mathbb{N}$ be a sequence of optimal morphisms converging to $f$, with sources in $F_\Delta$; we will prove that $f$ is optimal. Assume not. Then there exists a vertex $v\in S$ such that any two segments $[v,v_1]$ and $[v,v_2]$ in $S$ with extremity $v$ have initial subsegments that are identified by $f$. Since arc stabilizers in $T$ are  either trivial or nonperipheral, this implies that the stabilizer $G_v$ of $v$ is trivial because $S\in \calo$. So $v$ has finite valence. 
In particular, there exists $\epsilon>0$ such that we can find subsegments $[v,x_i]$ of length $\epsilon$ on each of the edges $e_i$ incident on $v$, which are all identified by $f$. For all $n\in\mathbb{N}$, the source $S_n$ of $f_n$ is in $F_\Delta$. Up to passing to a subsequence, we can assume that all trees $S_n$ have the same underlying simplicial tree, and one passes from $S_n$ to $S$ by slightly changing the edge lengths and collapsing a $G$-invariant forest. Let $F_n$ be the preimage of $v$ in $S_n$ under the collapse map. The edges $e_i^n$ incident on $F_n$ in $S_n$ are in natural bijection with the edges $e_i$ in $S$. For all $n\in\mathbb{N}$ sufficiently large, any approximation $x_i^n$ of the point $x_i$ in $S_n$ given by the equivariant Gromov--Hausdorff topology lies on the edge $e_i^n$, any approximation $v_n$ of $v$ lies at distance at most $\epsilon/100$ from the forest   $F_n$, and all images $f_n(x_i^n)$ lie in the same connected component of $T\setminus\{f_n(v_n)\}$. This implies that there exists a point $y_n$ in the $\epsilon/100$-neighborhood of $F_n$ such that all directions at $y_n$ in $S_n$ are mapped to the same direction in $T$ by $f_n$. In other words $f_n$ is non-optimal, a contradiction.  
\end{proof}

\subsection{Arational trees}\label{sec-arat-back}

A tree $T\in\partial\calo$ is \emph{arational} if for every proper $(G,\calf)$-free factor $A$,   the group $A$ is not elliptic in $T$, and the $A$-action on its minimal subtree $T_A$ is equivariantly isometric to a Grushko $(A,\calf_{|A})$-tree (\cite{Rey,Hor2}). We denote by $\AT$ the subspace of $\baro$ made of arational trees; we mention that it is a Borel subset of $\baro$, see \cite[Lemma~5.5]{Hor2}. Notice that all arational trees have dense $G$-orbits.
Indeed, a tree $T$ without dense orbits has a nontrivial Levitt decomposition $\Lambda$. 
Assume first that all edge stabilizers of $\Lambda$ are trivial.
Then if all the vertex stabilizers of the Levitt decomposition are peripheral, then $T$ is a Grushko tree, contrary to the assumption $T\in \partial\calo$.
Otherwise, some vertex stabilizer of the Levitt decomposition is a proper free factor, and its action on its minimal subtree (maybe a point)
has dense orbits, so $T$ is not arational.
Finally, if some edge stabilizer of $\Lambda$ is non-trivial,
 then some edge stabilizer of $\Lambda$ is contained in a proper $(G,\calf)$-free factor by \cite[Lemma~5.11]{Hor}, so $T$ is not arational.

\paragraph*{Arational surface trees.} 

One can construct examples of arational $(G,\calf)$-trees coming from trees that are dual to arational laminations on certain $2$-orbifolds. In the context of free products, arational surface trees were introduced in \cite[Section 4.1]{Hor2}, and are defined in the following way (see Figure~\ref{fig-arat-surf} for an illustration of the construction).

\begin{figure}
\begin{center}
\includegraphics[width=.5\textwidth]{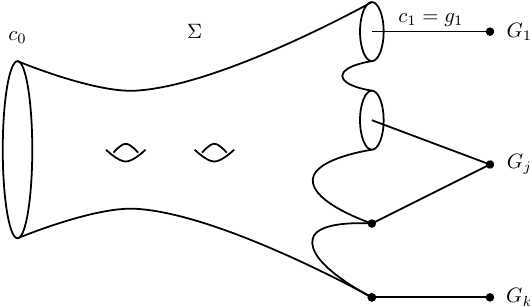}
\caption{The underlying graph of groups of an arational surface tree.}
\label{fig-arat-surf}
\end{center}
\end{figure}

Let $\Sigma$ be a $2$-orbifold with only conical singularities, having $s+1$ boundary components $c_0,\dots,c_s$ and $q$ conical points $c_{s+1},\dots,c_{s+q}$. Let $\Gamma$ be a graph of groups having a \emph{central} vertex with vertex group isomorphic to $\pi_1(\Sigma)$, and $k$ other vertices with vertex groups isomorphic to the peripheral subgroups $G_i$. For all $j\in\{1,\dots,s+q\}$, we choose a peripheral subgroup $G_{i_j}$, and we amalgamate it with the cyclic subgroup of $\pi_1(\Sigma)$ generated by $c_j$, identifying $c_j$ with an element of $G_{i_j}$ having the same order
 (notice that we allow the case where $c_j$ generates $G_{i_j}$, in this case the Bass--Serre tree of $\Gamma$ may fail to be minimal). The boundary curve $c_0$ is left unused. Choices are made so that the graph $\Gamma$ is connected, i.e.\ every peripheral subgroup is joined to the central vertex by an edge. We assume that the fundamental group of $\Gamma$ is isomorphic to $G$, and we choose such an isomorphism. We now build an action of $G$ on an $\bbR$-tree from 
a \emph{graph of actions} on $\Gamma$ as defined in \cite{Lev1} (except that edges are given length $0$): choose a $\pi_1(\Sigma)$-action on an $\bbR$-tree  dual to an arational lamination on $\Sigma$;
then take a copy $Y_v$ of this tree for each vertex $v$ of the Bass--Serre tree $S_\Gamma$ of $\Gamma$  that projects to the central vertex of $\Gamma$, and take a point $x_u$ for 
each vertex $u$ of $S_\Gamma$  that does not project to the central vertex; then for each edge $uv$ of $S_\Gamma$, glue the point $x_u$ to the unique point in $Y_v$
that is fixed by the stabilizer of $uv$. Every tree obtained by this construction is called an \emph{arational surface} tree.

It was proved in \cite[Section 4.1]{Hor2} that arational surface trees are indeed arational. In addition, every arational $(G,\calf)$-tree is either relatively free or arational surface: this was first proved by Reynolds \cite{Rey} for free groups, and extended to free products in \cite[Lemma~4.6]{Hor2}.

\paragraph*{Maps towards arational trees.}

Given $T,T'\in\baro$, the \emph{bounded backtracking constant} $\BBT(f)$ of a Lipschitz map $f:T\ra T'$ is the supremum of $d_{T'}(f(y),[f(x),f(z)])$
over all points $x,y,z\in T$ aligned in this order.
By \cite[Lemma 3.1]{BFH}, \cite[Proposition 3.12]{Hor3}, if $T'$ is very small and $T$ is a Grushko tree, then for every Lipschitz map $f:T\to T'$, we have  $\BBT(f)\leq \mathrm{Lip}(f)\covol(T)$. The following fact extends \cite[Corollary~3.9]{Hor1} to the case of free products.

\begin{lemma}\label{morphism-arat-0}
Let $T,T'\in\baro$, and assume that $T'$ has dense orbits.
\\ Then every $1$-Lipschitz map $f:T'\to T$ preserves alignment.
\end{lemma}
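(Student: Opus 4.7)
The plan is to combine the bounded backtracking estimate recalled just above with an approximation argument using density of $G$-orbits in $T'$. This extends the argument for $F_N$ (\cite[Corollary~3.9]{Hor1}) to the setting of free products.

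First, since $T'$ is very small, every peripheral subgroup acts elliptically on $T'$, so by choosing edge lengths in some Grushko $(G,\calf)$-tree $S$ large enough to dominate the translation lengths of a generating set of $G$ in $T'$, I can construct an equivariant $1$-Lipschitz map $h\colon S\to T'$. Applying the BBT bound to the composition $f\circ h\colon S\to T$, which is equivariant and $1$-Lipschitz with Grushko source and very small target, I get $BBT(f\circ h)\leq \text{covol}(S)$. Given $x,y,z\in T'$ with $y\in[x,z]$, the dense orbit hypothesis implies $h(S)$ is dense in $T'$, so I can pick $x_n,z_n\in S$ with $h(x_n)\to x$ and $h(z_n)\to z$. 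The image $h([x_n,z_n])$ is connected and contains $h(x_n),h(z_n)$, hence contains the segment $[h(x_n),h(z_n)]$, whose Hausdorff distance to $[x,z]$ tends to $0$; therefore I can find $y_n\in[x_n,z_n]$ with $h(y_n)\to y$. Applying the BBT bound to the aligned triple $(x_n,y_n,z_n)$ in $S$ and passing to the limit using continuity of $f$ yields $d_T(f(y),[f(x),f(z)])\leq \text{covol}(S)$.

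The key step is to upgrade this bound to $0$. I would argue by contradiction: suppose $\delta:=d_T(f(y),[f(x),f(z)])>0$. The $1$-Lipschitz property and triangle inequality in $T$ give $d_T(f(x),f(z))\leq d_{T'}(x,z)-2\delta$, so $f$ strictly contracts $[x,z]$. Using that dense orbits in $T'$ imply that axes of hyperbolic elements of $G$ are dense in $T'$ and that the translation spectrum of $T'$ is dense in $[0,+\infty)$, I would approximate $(x,y,z)$ by a nearby aligned triple $(x',y',z')$ lying on the axis of some hyperbolic element $g\in G$ with $0<\ell_{T'}(g)<\delta$, while preserving (by continuity of $f$) a detour of depth at least $\delta/2$ at $y'$. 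The segments $g^k[x',z']$ then tile the axis of $g$ in $T'$ with large overlap.

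Concatenating $n$ such translates produces a segment $[x',g^{n-1}z']$ of length $d_{T'}(x',z')+(n-1)\ell_{T'}(g)$; by equivariance, $f(g^k y')$ for $k=0,\dots,n-1$ provides $n$ detours off the geodesic $[f(x'),f(g^{n-1}z')]$ in $T$, each of depth at least $\delta/2$ and based at the distinct projections $g^k p$ (or near the fixed point of $g$, if $g$ is elliptic in $T$). Comparing the total length of the path $f([x',g^{n-1}z'])$, bounded above by $d_{T'}(x',z')+(n-1)\ell_{T'}(g)$ via the $1$-Lipschitz hypothesis, with its unavoidable lower bound (direct distance plus the excess from the $n$ detours, growing as $n\delta+O(1)$), yields an inequality that forces $\ell_{T'}(g)\geq\delta$ in the limit $n\to\infty$, contradicting our choice. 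The main technical step is the approximation of $(x,y,z)$ by $(x',y',z')$ aligned with an axis of small translation length, which uses the density of axes and of the translation spectrum in very small trees with dense orbits, together with the triviality of arc stabilizers in $T'$.
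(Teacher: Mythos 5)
Your first step is essentially correct and aligned with the paper's strategy: build an equivariant $1$-Lipschitz map $h\colon S\to T'$ from a Grushko tree, compose with $f$, and use $BBT(f\circ h)\le\mathrm{covol}(S)$ to bound the detour $d_T(f(y),[f(x),f(z)])$ by $\mathrm{covol}(S)$. The problem is that this bound is not useful as stated: to make $h$ $1$-Lipschitz you scale the edges of $S$ \emph{up} to dominate $T'$, so $\mathrm{covol}(S)$ is forced to be large. The paper instead invokes \cite[Theorem~5.3]{Hor}: because $T'$ has dense orbits (hence trivial arc stabilizers), there is a sequence $S_n\in\calo$ converging to $T'$ with $1$-Lipschitz morphisms $f_n\colon S_n\to T'$ and $\mathrm{covol}(S_n)\to 0$. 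These $S_n$ do not dominate $T'$, they approximate it, so there is no tension between Lipschitzness and small covolume. Setting $g_n=f\circ f_n$, one gets $BBT(g_n)\le\mathrm{covol}(S_n)\to 0$ while the fixed detour at $y$ (pulled back via $f_n$-preimages of $x,y,z$) forces $BBT(g_n)\ge\epsilon$, an immediate contradiction. This is the missing ingredient in your proof; with it, your ``bootstrap'' is unnecessary.

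The bootstrap itself has genuine gaps. First, the claim that $(x,y,z)$ can be approximated by an aligned triple on the axis of a hyperbolic $g$ with $0<\ell_{T'}(g)<\delta$ is not justified: dense orbits do not directly give a hyperbolic element with \emph{both} small translation length \emph{and} axis passing near a prescribed segment (one must rule out that all the nearby short elements are elliptic, which uses structure of $T'$ and typically goes through the very approximation by low-covolume Grushko trees you are trying to avoid). Second, the tiling/detour-count is circular: you need the detour of $f(g^ky')$ off the long geodesic $[f(x'),f(g^{n-1}z')]$ to be $\ge\delta/2$, but this is controlled by the detour off the translated local geodesic $[f(g^kx'),f(g^kz')]$ only if $f$ maps the axis of $g$ to $T$ without substantial folding, i.e. only under exactly the kind of bounded-backtracking/alignment control on $f$ that the lemma is trying to establish. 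As written, the long $T$-geodesic may not contain, or even pass near, the images of the local segments, and the $n$ detours need not add up. So the second half of the argument does not go through.
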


\begin{proof}
Assume by contradiction that it does not: there exist $x,y,z\in T'$ aligned in this order so that $f(y)$ is at positive distance (which we denote by $\epsilon$) from $[f(x),f(z)]$. Since $T'$ has dense orbits, it has trivial arc stabilizers, and therefore we can find a sequence of trees $S_n\in\calo$ converging to $T'$ and coming with $1$-Lipschitz morphisms $f_n:S_n\to T'$, see \cite[Theorem~5.3]{Hor}. By postcomposing $f_n$ by the map $f:T'\to T$, we get $1$-Lipschitz maps $g_n:S_n\to T$. The covolumes of the trees $S_n$ converge to $0$; since $\BBT(g_n)\le \covol(S_n)$, we deduce that $\BBT(g_n)$ converges to $0$. For all $n\in\mathbb{N}$, let $x_n$ (resp. $z_n$) be an $f_n$-preimage of $x$ (resp. $z$) in $S_n$. Then there exists $y_n\in [x_n,z_n]$ such that $f_n(y_n)=y$. Now we have $d_{T}(g_n(y_n),[g_n(x_n),g_n(z_n)])=d_{T}(f(y),[f(x),f(z)])=\epsilon$. Therefore $\BBT(g_n)\ge \epsilon$, which contradicts the fact that $\BBT(g_n)$ converges to $0$. 
\end{proof}

\begin{lemma}\label{morphism-arat-1}
Let $S\in\baro$ be a tree without dense orbits, let $T\in\AT$. If there exists a $1$-Lipschitz map from $S$ to $T$, then $S\in\calo$.
\end{lemma}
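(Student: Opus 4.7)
I would argue by contradiction: suppose $S\notin\calo$. Since $S$ has no dense $G$-orbits, its Levitt decomposition has a non-trivial Bass--Serre tree, with vertex groups $G_v$ acting on subtrees $Y_v$ with dense orbits (or $Y_v$ a point). The goal is to show that all edge stabilizers of this decomposition are trivial, every $Y_v$ is a point, and every $G_v$ is peripheral, which would exhibit $S$ as a Grushko tree in $\calo$.

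First I would show every Levitt edge stabilizer is trivial. If some edge $e$ had non-trivial stabilizer $\langle c\rangle$, then $c$ would be non-peripheral (since $S$ is very small), hence contained in a proper $(G,\calf)$-free factor $A$ by \cite[Lemma~5.11]{Hor}. Arationality of $T$ gives that $T_A$ is a Grushko, in particular relatively free, $(A,\calf_{|A})$-tree, so $c$ is loxodromic on $T_A$ and hence on $T$ (using that translation lengths on a $\langle c\rangle$-invariant subtree agree with those on the ambient tree). But $c$ fixes $f(e)\subseteq T$ pointwise, so $c$ is elliptic in $T$---a contradiction.

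The main obstacle is showing that no $Y_v$ is non-trivial. Suppose for contradiction that some $Y_v$ is not a point; then $G_v$ must be non-peripheral (a peripheral $G_v$ would be elliptic in $S$, and one checks using the triviality of Levitt edge stabilizers that its fixed point must lie in $Y_v$, after which density of the $G_v$-orbits forces $Y_v$ to collapse to that fixed point). Hence $G_v$ is a proper $(G,\calf)$-free factor, and by arationality $T_{G_v}\subseteq T$ is a non-trivial, \emph{simplicial}, relatively free Grushko tree. Restricting $f$ yields a $1$-Lipschitz $G_v$-equivariant map $f|_{Y_v}\colon Y_v\to T$; since $f(Y_v)$ is connected and $G_v$-invariant in $T$, it is a subtree containing $T_{G_v}$. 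Composing with the nearest-point projection $\pi\colon T\to T_{G_v}$---which is $G_v$-equivariant, $1$-Lipschitz, and fixes $T_{G_v}$ pointwise---produces a continuous surjective $G_v$-equivariant map $\pi\circ f|_{Y_v}\colon Y_v\to T_{G_v}$. The decisive observation is then elementary: since $Y_v$ has dense $G_v$-orbits, continuity and surjectivity force the image of any such orbit to be dense in $T_{G_v}$. But $T_{G_v}$ is simplicial with relatively free $G_v$-action, so every $G_v$-orbit in $T_{G_v}$ is discrete---a contradiction.

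Finally, once no $Y_v$ is non-trivial, any vertex group $G_v$ with $Y_v$ a point must be peripheral: otherwise $G_v$ would fix $f(Y_v)\in T$ by equivariance, and projecting to $T_{G_v}$ would produce a $G_v$-fixed point there, collapsing $T_{G_v}$ to a point by minimality and contradicting that a non-peripheral proper free factor has non-trivial Grushko tree. Putting these together exhibits $S$ as simplicial with trivial edge stabilizers and peripheral vertex stabilizers, so $S\in\calo$, completing the proof. The crux is the density-versus-discreteness step in the non-trivial $Y_v$ case, where arationality of $T$ (ensuring that the minimal $G_v$-subtree is genuinely simplicial) conflicts with the dense-orbit structure of $Y_v$ that persists under $1$-Lipschitz projection.
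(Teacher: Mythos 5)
Your proof is correct and follows essentially the same route as the paper: you work from the Levitt decomposition of $S$ and derive contradictions with arationality first from a non-trivial edge stabilizer (via the cited lemma placing it in a proper free factor) and then from a non-peripheral vertex stabilizer, concluding that $S$ is a Grushko tree. The one spot where you elaborate beyond the paper's terse statement is the claim that dense $G_v$-orbits on $Y_v$ force dense $G_v$-orbits on $T_{G_v}$; your justification via the nearest-point projection $\pi\colon T\to T_{G_v}$ together with the elementary fact that continuous surjections carry dense sets to dense sets is clean and correct, and fills in exactly the detail the paper compresses into ``Since $f$ is $1$-Lipschitz\dots''
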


\begin{proof}
Let $\Lambda$ be the  Levitt decomposition of $S$. If $\Lambda$ contains an edge with nontrivial (whence nonperipheral) stabilizer, then there exists an edge $e\subseteq\Lambda$ whose stabilizer is a cyclic subgroup $G_e$ which is contained in a proper $(G,\calf)$-free factor $A$. Then $G_e$ is elliptic in $T$, so the minimal $A$-tree in $T$ is not Grushko, contradicting arationality of $T$. Therefore $\Lambda$ only has edges with trivial stabilizer. If $\Lambda$ has a nonperipheral vertex stabilizer $G_v$, then $G_v$ is a proper $(G,\calf)$-free factor and acts with dense orbits on its minimal subtree in $S$ (which can be a point). Since $f$ is $1$-Lipschitz, the free factor $G_v$ acts with dense orbits on its minimal subtree in $T$ (which can be a point), again contradicting arationality of $T$. Therefore all vertex stabilizers of $\Lambda$ are peripheral, in other words $S$ is a Grushko tree.
\end{proof}

\begin{cor}\label{morphism-arat}
Let $T\in\AT$, and let $T'\in\partial\Os$. If $T'\neq T$, then there is no morphism from $T'$ to $T$.
\end{cor}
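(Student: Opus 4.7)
The plan is to reduce to the two preceding lemmas by splitting cases according to whether or not $T'$ has dense orbits, after first observing that any morphism is automatically $1$-Lipschitz. Indeed, given a morphism $f:T'\to T$ and a segment $[x,y]\subseteq T'$, one may subdivide $[x,y]=[x_0,x_1]\cup\dots\cup[x_{n-1},x_n]$ on which $f$ is isometric, and then the triangle inequality in $T$ gives $d_T(f(x),f(y))\le\sum d_T(f(x_i),f(x_{i+1}))=\sum d_{T'}(x_i,x_{i+1})=d_{T'}(x,y)$. This lets us feed $f$ into either Lemma \ref{morphism-arat-0} or Lemma \ref{morphism-arat-1}.

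First I would treat the case where $T'$ does not have dense orbits. Since $f:T'\to T$ is $1$-Lipschitz and $T\in\AT$, Lemma \ref{morphism-arat-1} directly yields $T'\in\calo$, which contradicts the hypothesis $T'\in\partial\calo$.

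Next I would handle the case where $T'$ has dense orbits. Lemma \ref{morphism-arat-0} shows that $f$ preserves alignment. Combined with $f$ being a morphism, this upgrades $f$ to an equivariant isometry: subdividing $[x,y]\subseteq T'$ into segments on which $f$ is isometric, alignment-preservation ensures the images $f(x_0),\dots,f(x_n)$ occur in this order along $[f(x),f(y)]$, so the subsegment lengths add exactly, giving $d_T(f(x),f(y))=d_{T'}(x,y)$. Then $f(T')$ is an equivariantly isometric, $G$-invariant subtree of $T$. Both $T'$ and $T$ have dense orbits ($T'$ by assumption, $T$ because every arational tree has dense orbits, as recalled in Section~\ref{sec-arat-back}), hence both are minimal, and $f(T')=T$. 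Therefore $f$ realizes an equivariant isometry $T'\to T$, so $T'=T$ in $\baro$, contradicting $T'\neq T$.

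There is no real obstacle: the work has already been done in Lemmas \ref{morphism-arat-0} and \ref{morphism-arat-1}, and the only mildly non-trivial point is checking that an alignment-preserving morphism between dense-orbit trees is an isometry, which is a one-line consequence of the definitions.
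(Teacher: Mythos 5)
Your proof is correct and follows essentially the same approach as the paper: both proofs feed the morphism through Lemma~\ref{morphism-arat-1} (to rule out the non-dense-orbit case) and then Lemma~\ref{morphism-arat-0} (to get alignment-preservation), and conclude that an alignment-preserving morphism is an isometry. Your write-up just makes explicit a couple of points the paper leaves implicit, namely that a morphism is automatically $1$-Lipschitz and that equivariance plus minimality of $T$ upgrades the isometric embedding to a surjective equivariant isometry.
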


\begin{proof}
Assume that there exists a morphism $f:T'\to T$. Lemma~\ref{morphism-arat-1} implies that $T'$ has dense orbits, and Lemma~\ref{morphism-arat-0} then implies that $f$ preserves alignment. Since every alignment-preserving morphism is an isometry, we have $T'=T$.  
\end{proof}

\subsection{Geometric and nongeometric trees}\label{sec-geom-back}

\paragraph*{Definition of geometric trees.} Geometric trees were first defined by Levitt--Paulin \cite{LP}, see also \cite{Hor} for the case of free products. Fix $R\in\calo$ a Grushko tree. Let $T\in\baro$.   Let $\calk=(K_v)_{v\in R}$ be an equivariant family of subtrees $K_v\subset T$ (called \emph{base trees}) indexed by vertices $v\in R$, such that for each vertex $v$, we have $K_v=G_v.K'$ for some finite subtree $K'\subset K_v$. 
 We assume that for each edge $vv'$ of $R$, the intersection $K_v\cap K_{v'}$ is non-empty. We construct a band complex $\Sigma_{\calk}(T,R)$ in the following way. Starting from the disjoint union of the base trees, the foliated band complex $\Sigma_\calk(T,R)$ is obtained 
by gluing for each edge $vv'$ of $R$ 
a band foliated by vertical segments, joining the copy of $K_v\cap K_{v'}$ in $K_v$ to its copy in $K_{v'}$. 
The space of leaves of 
$\Sigma_\calk(T,R)$ has a natural structure of an $\mathbb{R}$-tree $T_\calk$, see \cite{LP}.  
Notice that since $\Sigma_\calk(T,R)$ is modeled on the Grushko tree $R$, all leaves of $\Sigma_\calk(T,R)$ are trees. A tree $T\in\baro$ is \emph{geometric} if there exists a family of finite subtrees $\calk$ as above such that $T=T_\calk$.

The quotient $\Sigma_\calk(T,R)/G$ is a compact foliated 2-complex. Indeed, each $K_v/G_v$ is a finite tree by assumption,
there are only finitely many orbits of bands in  $\Sigma_\calk(T,R)$, and each band $B$ in $\Sigma_\calk(T,R)$ 
is a product $K_B\times [0,1]$ of a finite tree by an interval,
and $K_B\times (0,1)$ embeds in the quotient because edges of $R$ have trivial stabilizer 
(the quotient map may however fold $K_B\times \{0\}$ or $K_B\times\{1\}$).
One can then associate to this foliated $2$-complex a system of partial isometries on a finite tree (see \cite[Section~3.3]{Hor}). This will allow us to apply standard results such as \cite{GLP}.

\paragraph*{Characterization using strong limits.} Geometric trees can be characterized as those that do not occur as nonstationary strong limits of trees in $\baro$. Precisely, a sequence of trees $S_n\in\baro$ is a \emph{direct system} if it comes with a collection of morphisms $f_{nn'}:S_n\ra S_{n'}$ (for all $n\le n'$)
 and morphisms $f_n:S_n\ra T$ (for all $n\in\mathbb{N}$) making the following diagram commute: 
$$\xymatrix@C=2cm@R=1cm{
&&&T\\
S_n\ar@/_1pc/[rr]_{f_{nn''}} \ar[r]|{f_{nn'}} \ar[rrru]^{f_n}
& S_{n'} \ar[r]|{f_{n'n''}} \ar[rru]|{f_{n'}} &S_{n''} \ar[ru]_{f_{n''}}&
}
$$
The direct system $(S_n)_{n\in\mathbb{N}}$ \emph{converges strongly} to $T$ if for every finite subtree $X\subseteq S_n$, there exists $n'>n$ such that $f_{n'}$ is  isometric when restricted to $f_{nn'}(X)$.

It was proved in \cite{LP} that a tree $T\in\baro$ is nongeometric if and only if it is a nonstationary strong limit of a direct system of trees in $\baro$ 
 (this is proved for finitely presented groups in \cite{LP}, but this generalizes without difficulty
to any finite free product when the factors are elliptic).
If additionally $T$ is arational, then Corollary~\ref{morphism-arat} enables us to add the requirement that $S_n\in \Os$, as opposed to being in the boundary of Outer space. We record this in form of the following lemma.

\begin{lemma}\label{strong-cv}
For all $T\in\AT$, the following statements are equivalent.
\begin{itemize}
\item The tree $T$ is nongeometric.
\item There exists a nonstationary direct system of trees $(S_n)_{n\in\mathbb N}\in\Os^{\mathbb{N}}$ that converges strongly to $T$.
\item For every finite subset $F\subseteq G$, there exists $S\in\calo$  with a morphism $S\ra T$ such that $||g||_{S}=||g||_T$ for all $g\in F$.
\end{itemize}
\end{lemma}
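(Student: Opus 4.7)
The plan is to derive the three-way equivalence in the order (1)$\Leftrightarrow$(2), (2)$\Rightarrow$(3), (3)$\Rightarrow$(1), where the first equivalence is essentially a direct consequence of the Levitt--Paulin theorem recalled just above and of Corollary \ref{morphism-arat}, and the closing implication uses rigidity of geometric trees.

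For (1)$\Leftrightarrow$(2), the direction (2)$\Rightarrow$(1) is immediate from Levitt--Paulin, since any nonstationary strong limit of a direct system in $\baro$ (a fortiori in $\calo$) is nongeometric. For the reverse direction, Levitt--Paulin produces a nonstationary strong direct system $(S_n)_{n\in\mathbb{N}}\in\baro^{\mathbb{N}}$ with morphisms $f_n:S_n\to T$. Nonstationarity guarantees $S_n\neq T$ on a cofinal set of indices, and for each such $n$ Corollary \ref{morphism-arat} (applicable because $T\in\AT$ and $f_n$ is a morphism to $T$) forces $S_n\in\calo$ rather than $S_n\in\partial\calo$. Restricting to this cofinal subsystem preserves both strong convergence and nonstationarity.

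For (2)$\Rightarrow$(3), fix a strong direct system $(S_n)\in\calo^{\mathbb{N}}$ and a finite $F\subseteq G$. Since morphisms never decrease translation lengths, the sequence $(\|g\|_{S_n})_n$ is non-increasing and bounded below by $\|g\|_T$ for every $g$. Strong convergence forces this sequence to stabilize eventually (any further strict decrease would require a fold along the axis of $g$, which is ruled out by strong convergence applied to a finite subtree containing a fundamental domain of this axis in $S_n$), and the Gromov--Hausdorff convergence implicit in strong convergence identifies the stable value with $\|g\|_T$. Taking $n$ large enough that this stabilization has occurred simultaneously for every $g\in F$ and setting $S:=S_n$ produces a witness; $S\neq T$ because $S\in\calo$ while $T\in\partial\calo$ (arational trees have dense orbits).

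For (3)$\Rightarrow$(1), suppose towards a contradiction that $T$ is geometric, so $T=T_K$ for a finite subtree $K\subseteq T$. The quotient band complex $\Sigma_K(T,R)/G$ is a compact foliated $2$-complex whose associated system of partial isometries carries only finitely many orbits of branch points, singularities, and direction germs. I would exploit this finitary encoding to exhibit a finite subset $F_0\subseteq G$ with the following rigidity property: any $S\in\calo$ admitting a morphism $f:S\to T$ with $\|g\|_S=\|g\|_T$ for all $g\in F_0$ must already have $f$ an isometry, forcing $S$ to be isometric to $T$. Since $T$ has dense orbits while $S\in\calo$ is simplicial, no such $f$ can exist, contradicting (3) applied to $F_0$; hence $T$ is nongeometric, and the already-established (1)$\Rightarrow$(2) closes the loop. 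The main obstacle is formalizing this rigidity step --- translating the finite combinatorial data of $\Sigma_K$ into a specific finite subset of $G$ whose translation lengths pin the morphism down up to isometry --- and I would follow the finite-state analysis of systems of partial isometries in \cite{GLP}.
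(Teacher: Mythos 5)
Your treatment of (1)$\Leftrightarrow$(2) matches the paper: invoke Levitt--Paulin and use Corollary~\ref{morphism-arat} to push the approximating trees from $\baro$ into $\calo$. The sketch of (2)$\Rightarrow$(3) is also fine in spirit (the paper dismisses it as ``easily implies''), though the phrase ``forces this sequence to stabilize eventually'' should be justified by applying strong convergence to a finite subtree of some fixed $S_n$ containing a fundamental domain of the axis of each $g\in F$, and noting that once $f_{n'}$ is isometric on the image of such a subtree one gets $\|g\|_{S_{n'}}=\|g\|_T$.

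The genuine gap is in (3)$\Rightarrow$(1). You propose a rigidity statement --- for a geometric $T$ there is a finite $F_0$ such that no $S\in\calo$ with a morphism to $T$ can satisfy $\|g\|_S=\|g\|_T$ on $F_0$ --- but this statement is precisely the contrapositive of what is to be proved, so asserting it does not make progress. You defer the work to ``a finite-state analysis of systems of partial isometries in [GLP],'' but the link between the finite combinatorial data of a band complex and a \emph{finite set of translation lengths} that pins down the morphism is exactly the nontrivial content here, and it is far from immediate: the partial-isometry bookkeeping lives in the transverse dynamics of $\Sigma_K/G$, not in a finite sublist of the length function. Establishing the rigidity you want would amount to re-proving a quantitative version of the Levitt--Paulin theorem, which is strictly more than the lemma requires.

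The paper sidesteps this entirely by proving (3)$\Rightarrow$(2): starting from any $S_1\in\calo$ with a morphism to $T$, it inductively enlarges a finite $F_i$ to contain the Lipschitz candidates for $S_i$, invokes (3) to produce $S_{i+1}$ with $\|g\|_{S_{i+1}}=\|g\|_T$ on $F_i$, deduces a morphism $S_i\to S_{i+1}$ from the candidate criterion, and then uses diagonal extraction on the compositions $f_i^j$ to make the diagram commute and obtain a bona fide strong direct system. This way the implication reduces cleanly to the already-established (2)$\Rightarrow$(1), and no new structural input about band complexes is needed. You should adopt that route (or else actually prove the rigidity statement, which would be substantially more work).
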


\begin{proof}
  The equivalence between the first two statements follows from \cite{LP} as mentioned above.
The second assertion easily implies the third.

Assume that the third assertion holds. 
We are first going to construct trees with morphisms $S_1\ra S_2\ra S_3\dots$ and morphisms
$f_j:S_j\ra T$ without any commutation requirement.
We choose an exhaustion of $G$ by finite subsets $B_1\subset B_2\subset\dots$.
We start with $S_1\in \calo$ with any morphism $f_1:S_1\ra T$.
Assuming that $S_i$ has already been constructed, consider $F_i$ a finite subset of $G$
containing $B_i$ and a finite set of \emph{candidates} for $S_i$; 
this means that there is a morphism $S_i\ra S'$ if and only if $||g||_{S_i}\geq ||g||_{S'}$ for all $g\in F_i$.
By assumption, there exist $S_{i+1}$ with a morphism $f_{i+1}:S_{i+1}\ra T$ such that  $||g||_{S_{i+1}}= ||g||_{T}$ for all $g\in F_i$.
Since $||g||_{S_i}\geq  ||g||_{T} = ||g||_{S_{i+1}} $ for all $g\in F_i$, there exists a morphism $S_i\ra S_{i+1}$, and our inductive construction is complete.

In order to make the diagram commutative,  define $f_i^j:S_i\ra T$ for $i\leq j$ as the composition of the map $S_i\ra S_j$ with $f_j:S_j\ra T$.
For every $j$, the following diagram commutes by construction:
$$\xymatrix{
S_1\ar[r]  \ar[rrrd]_{f_1^j}
& S_{2} \ar[r] \ar[rrd]|{f_{2}^j} &  \cdots  \ar[r] &S_{j} \ar[d]^{f_{j}^j}\\
&&&T 
}
$$
By diagonal extraction, one can find a subsequence such that 
for every $i$, $f_i^j$ converges to $f_i^\infty$ (in the topology on the set of morphisms) and the following infinite diagram commutes:
$$\xymatrix{
S_1\ar[r]  \ar[rrrd]_{f_1^\infty}
& S_{2} \ar[r] \ar[rrd]|{f_{2}^\infty} &  \cdots  \ar[r] &S_{i}\ar[r] \ar[d]^{f_i^\infty}&\cdots\\
&&&T 
}
$$
This gives us a direct system of trees $(S_i)_{i\in\bbN}$ with morphisms to $T$ such that for every finite subset $F\subset G$,
there exists $i$ such that $||g||_{S_i}=||g||_{T}$ for all $g\in F$. It easily follows that $S_i$ converges strongly to $T$.
\end{proof}

\paragraph*{Skeleton of a geometric tree.} 
Recall that a \emph{transverse family} $\mathcal{Y}$ in a tree $T\in\baro$ is a $G$-invariant collection of nondegenerate subtrees such that any two distinct subtrees in the collection intersect in at most one point. A tree $T\in\baro$ is \emph{indecomposable} if it does not admit any transverse family of non-degenerate subtrees
apart from the trivial one (i.e.\ $\caly=\{T\}$).  A transverse family in $T$ is a \emph{transverse covering} if in addition, every segment in $T$ is covered by finitely many subtrees $Y_1,\dots, Y_n$ in $\mathcal{Y}$ with $Y_i\cap Y_{i+1}\neq \es$ (see \cite[Definition~4.6]{Gui04}). 
The \emph{skeleton} of a transverse covering of $T$ is the bipartite
simplicial $G$-tree $S$ having one vertex $v_Y$ for each subtree $Y$
in the family (the stabilizer of $v_Y$ in $S$ is the stabilizer of $Y$
in $T$), and one vertex $v_x$ for each point $x\in T$ belonging to at
least two subtrees of the family (the stabilizer of $v_x$ in $S$ is the stabilizer of $x$ in $T$); the vertex $v_x$ is joined to $v_Y$ by an edge if and only if $x\in Y$.

Every geometric tree with dense orbits has a unique transverse covering by a family
$\caly$ of non-degenerate indecomposable subtrees with finitely
generated stabilizers, see
e.g. \cite[Proposition~1.25]{Gui}. Moreover, each $Y\in \caly$ is
itself dual to a band complex $\Sigma_Y$ such that $\Sigma_Y/G_Y$ is a
finite band complex where every leaf is dense.
The \emph{skeleton} of $T$ is the skeleton of this transverse covering. 
The band complexes $\Sigma_Y$ are called the \emph{minimal components} of $\Sigma$.

\subsection{Boundary amenability}\label{sec-amenability}

We now review a construction, due to Ozawa, that allows for an inductive argument to prove that a discrete countable group $\Gamma$ is boundary amenable. We first recall the definition of topological amenability of a group action on a compact space. The space $\mathrm{Prob}(\Gamma)$ of probability measures on $\Gamma$ is equipped with the topology of pointwise convergence, or equivalently, the subspace topology from
$\ell^1(\Gamma)$.

\begin{de}[Topologically amenable action]\label{dfn_topo_amenable}
Let $\Gamma$ be a   countable group.  An action of $\Gamma$ by homeomorphisms on a compact Hausdorff space $X$ is \emph{topologically amenable} if there exists a sequence of continuous maps $$\mu_n: X \to\mathrm{Prob}(\Gamma)$$ such that for all $\gamma\in\Gamma$, one has $$\sup_{x\in X}||\mu_n(\gamma.x)-\gamma.\mu_n(x)||_1\to 0$$ as $n$ goes to $+\infty$.  
\end{de}

\begin{de}[Boundary amenability]
  A countable group $\Gamma$ is \emph{boundary amenable}   if it admits a topologically amenable action on a   nonempty compact Hausdorff space.
\end{de}

In particular, the trivial action of $\Gamma$ on a point is topologically amenable if and only if $\Gamma$ is amenable,   so amenable groups are always boundary amenable. There is a similar notion in the   measurable category, see \cite[Definition~2.11]{JKL}.   In the following definition, the space $\Prob(\Gamma)$ is equipped with its Borel $\sigma$-algebra, which (as $\Gamma$ is assumed countable) is also the $\sigma$-algebra generated by all evaluation maps $\mu\mapsto\mu(\gamma)$ with $\gamma\in\Gamma$, see e.g.\ \cite[Theorem~17.24]{Kec}.

\begin{de}[Borel amenable action]\label{Borel-def}
A $\Gamma$-action on a   measurable space $X$ is \emph{Borel amenable} if there exists a sequence of measurable maps $$\mu_n: X \to\mathrm{Prob}(\Gamma)$$ such that for all $\gamma\in \Gamma$ and all $x\in X$, one has $$||\mu_n(\gamma.x)-\gamma.\mu_n(x)||_1\to 0$$ as $n$ goes to $+\infty$.  
\end{de}

  It turns out that an action of a countable group $\Gamma$ by homeomorphisms on a compact Hausdorff topological space $X$ is topologically amenable if and only if it is Borel amenable, when $X$ is equipped with its Borel $\sigma$-algebra (see \cite{Ren} or Remark \ref{rk_eqv} below).

\paragraph*{The inductive procedure for proving that a group is   boundary amenable.}
In the sequel, all topological spaces will always be equipped with their Borel $\sigma$-algebra, when viewed as  measurable spaces.

\begin{prop}(Ozawa \cite[Proposition 11]{Oza})\label{ozawa} 
Let $\Gamma$ be a countable group, let $X,Y$ be two compact Hausdorff spaces equipped with  actions of $\Gamma$ by homeomorphisms, and let $K$ be a countable discrete space equipped with  an action of $\Gamma$. Assume that there exists a sequence of   measurable maps $$\mu_n:X\to\mathrm{Prob}(K)$$ such that for all $\gamma\in\Gamma$ and all $x\in X$, one has $$||\mu_n(\gamma.x)-\gamma.\mu_n(x)||_1\to 0$$ as $n$ goes to $+\infty$. Assume in addition that for all $k\in K$, the restriction of the $\Gamma$-action on $Y$ to the stabilizer $\mathrm{Stab}(k)\subseteq\Gamma$ is a topologically amenable $\mathrm{Stab}(k)$-action.
\\ Then the $\Gamma$-action on the compact space $X\times Y$ is topologically amenable, in particular $\Gamma$ is   boundary amenable.
\end{prop}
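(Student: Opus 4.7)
The plan is to splice the $\Prob(K)$-valued maps $\mu_n$ together with the amenability data on $Y$ coming from each stabilizer, producing Borel asymptotically equivariant maps $X\times Y\to\Prob(\Gamma)$, and then to invoke Renault's equivalence between Borel amenability and topological amenability of a continuous action on a compact Hausdorff space (recalled just above the statement). In this way the stabilizer amenability on $Y$ and the $K$-valued amenability data on $X$ are packaged into genuine $\Gamma$-valued amenability data on $X\times Y$.

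To build the maps, decompose the countable $\Gamma$-set $K$ into orbits $K=\bigsqcup_i \Gamma\cdot k_i$, set $H_i:=\Stab(k_i)$, and fix a Borel section $\sigma:K\to\Gamma$ with $\sigma(k)\cdot k_i = k$ for $k\in\Gamma\cdot k_i$. Topological amenability of $H_i\actson Y$ furnishes continuous maps $\eta^{(i)}_n:Y\to\Prob(H_i)$ with $\sup_{y\in Y}\|\eta^{(i)}_n(hy)-h\cdot\eta^{(i)}_n(y)\|_1\to 0$ for every $h\in H_i$. Viewing $\eta^{(i)}_n(y)$ as a measure on $\Gamma$ supported on $H_i$, define
$$\nu_n(k,y):=\sigma(k)\cdot\eta^{(i)}_n(\sigma(k)^{-1}y)\qquad(k\in\Gamma\cdot k_i),$$
which is supported on the coset $\sigma(k)H_i$, and then set
$$\rho_n(x,y):=\sum_{k\in K}\mu_n(x)(k)\,\nu_n(k,y)\in\Prob(\Gamma).$$

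For asymptotic equivariance at fixed $\gamma\in\Gamma$, split $\rho_n(\gamma x,\gamma y)-\gamma\cdot\rho_n(x,y)$ into (a) a term of $\ell^1$-norm at most $\|\mu_n(\gamma x)-\gamma\cdot\mu_n(x)\|_1$, which vanishes by hypothesis, and (b) a weighted error $\sum_k(\gamma\mu_n(x))(k)\bigl[\nu_n(k,\gamma y)-\gamma\nu_n(\gamma^{-1}k,y)\bigr]$. Writing $s_k:=\sigma(k)^{-1}\gamma\sigma(\gamma^{-1}k)\in H_i$ and $y':=\sigma(\gamma^{-1}k)^{-1}y$, a direct computation identifies the inner bracket with $\sigma(k)\bigl(\eta^{(i)}_n(s_k y')-s_k\cdot\eta^{(i)}_n(y')\bigr)$, whose $\ell^1$-norm is the $H_i$-equivariance defect of $\eta^{(i)}_n$ at $s_k$.

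The main obstacle is uniformity in $k$: as $k$ varies, $s_k$ may range over an infinite subset of $H_i$, whereas the $H_i$-asymptotic equivariance of $\eta^{(i)}_n$ is only pointwise-in-$s$. I would circumvent this by exploiting tightness of the compact subset $\mu_n(X)\subset\Prob(K)$: for each $n$ and each $\epsilon>0$, one finds a finite $F_n\subset K$ with $\sup_x\mu_n(x)(K\setminus F_n)<\epsilon$; the tails then contribute $O(\epsilon)$ to the weighted error, while on $F_n$ only finitely many $s_k$ appear, so the pointwise $H_i$-amenability suffices. A standard diagonal argument over an exhaustion of $\Gamma$ and shrinking $\epsilon$ then delivers the required uniform-in-$(x,y)$ decay for each $\gamma\in\Gamma$. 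Once the $\rho_n$ are seen to witness Borel amenability of $\Gamma\actson X\times Y$, Renault's theorem produces a continuous asymptotically equivariant family, proving topological amenability of $\Gamma\actson X\times Y$ and in particular exactness of $\Gamma$.
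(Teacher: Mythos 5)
Your proposal takes a genuinely different route from the paper. The paper's entire proof is a citation to Ozawa's Proposition 11 plus the observation that Ozawa's hypothesis of uniform-in-$x$ convergence can be relaxed to pointwise, because Ozawa's argument only consumes the estimate $\int_X\|\mu_n(\gamma.x)-\gamma.\mu_n(x)\|_1\,dm(x)\to 0$ for every probability measure $m$ on $X$, and this follows from pointwise convergence via dominated convergence (the integrand is bounded by $2$). By contrast, you attempt a self-contained reconstruction of Ozawa's result. Your algebra is correct: with $\nu_n(k,y):=\sigma(k).\eta_n^{(i)}(\sigma(k)^{-1}y)$ and $\rho_n=\sum_k\mu_n(x)(k)\nu_n(k,y)$, the inner error term is indeed $\sigma(k)\bigl(\eta_n^{(i)}(s_ky')-s_k.\eta_n^{(i)}(y')\bigr)$ with $s_k=\sigma(k)^{-1}\gamma\sigma(\gamma^{-1}k)\in H_i$, and you correctly identify the essential difficulty, namely that the $s_k$ range over an infinite subset of $H_i$ while the $\eta_n^{(i)}$ are only pointwise-in-$s$ asymptotically equivariant.

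However, the step you propose to close that gap does not work under the stated hypotheses. You invoke ``tightness of the compact subset $\mu_n(X)\subset\Prob(K)$,'' but $\mu_n$ is assumed Borel, not continuous, so $\mu_n(X)$ need not be relatively compact in $\Prob(K)$: a Borel map on a compact space can have image $\{\delta_m:m\in\mathbb N\}\subset\Prob(\mathbb N)$, which is closed but not compact. So the uniform choice of finite $F_n\subset K$ with $\sup_x\mu_n(x)(K\setminus F_n)<\epsilon$ is unavailable. If instead one retreats to the trivial tightness of each single measure $\mu_n(x)$, the truncating set $F$ then depends on $x$ (and $\gamma$), which breaks the diagonal argument: the index $m(n)$ one would want to pick for $\eta_{m(n)}^{(i)}$ would then have to depend on $x$ as well, and $\rho_n$ would no longer be a single well-defined Borel map. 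Even with continuity of $\mu_n$, the ``standard diagonal argument'' you invoke needs to decouple the $\mu$-index from the $\eta$-index, since $F_n$ grows with $n$ and the $s_k$'s appearing are not seen in advance; that decoupling is the content, not a formality. In short: the core construction is sound and matches the expected strategy, but the control of the unbounded $s_k$ is exactly where Ozawa's actual proof does work that your sketch does not replicate, and the tightness shortcut is not available under the paper's (Borel, pointwise) hypotheses.
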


\begin{rk}\label{rk_eqv}
  The proposition implies that if $X$ is compact Hausdorff, and $\Gamma\actson X$ is Borel amenable then it is topologically amenable by taking
$K=\Gamma$ and $Y$ a point.
\end{rk}

\begin{proof}[Proof of Proposition~\ref{ozawa}]
The statement given in \cite[Proposition~11]{Oza} requires the convergence $$||\mu_n(\gamma.x)-\gamma.\mu_n(x)||_1\to 0$$ to be uniform in $x$, but the proof only requires that $$\int_{X}||\mu_n(\gamma.x)-\gamma.\mu_n(x)||_1 dm(x)\to 0$$ for all probability measures $m$ on $X$ and all $\gamma\in \Gamma$, see also \cite[Proposition~5.2.1]{BO}. By noticing that $||\mu_n(\gamma.x)-\gamma.\mu_n(x)||_1\le 2$ and using Lebesgue's dominated convergence theorem, this hypothesis can be replaced by pointwise convergence.
\end{proof}

The following consequence of Proposition~\ref{ozawa} is established in \cite[Proposition~C.1]{Kid}, by noticing that the restricted action of any   boundary amenable subgroup of $\Gamma$ on the Stone--\v{C}ech compactification $\beta\Gamma$ is topologically amenable.

\begin{cor}(Ozawa, Kida \cite[Proposition C.1]{Kid})\label{ozawa-kida}
Let $\Gamma$ be a countable group, let $X$ be a compact Hausdorff space equipped with  an action of $\Gamma$ by homeomorphisms, and let $K$ be a countable space equipped with  an action of  $\Gamma$. Assume that there exists a sequence of   measurable maps $$\mu_n:X\to\mathrm{Prob}(K)$$ such that for all $\gamma\in\Gamma$ and all $x\in X$, one has $$||\mu_n(\gamma.x)-\gamma.\mu_n(x)||_1\to 0$$ as $n$ goes to $+\infty$. Assume in addition that for all $k\in K$, the stabilizer $\mathrm{Stab}(k)\subseteq\Gamma$ is   boundary amenable.
\\ Then $\Gamma$ is   boundary amenable.
\end{cor}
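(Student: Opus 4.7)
The plan is to reduce Corollary~\ref{ozawa-kida} to Proposition~\ref{ozawa} by exhibiting a single compact Hausdorff $\Gamma$-space $Y$ on which every stabilizer $\Stab(k)$ acts topologically amenably. The natural candidate is the Stone--\v{C}ech compactification $Y := \beta\Gamma$ of $\Gamma$ (as a discrete space), equipped with the $\Gamma$-action obtained by extending left multiplication. If every exact subgroup of $\Gamma$ acts topologically amenably on $\beta\Gamma$, then Proposition~\ref{ozawa} applied to $X$, $Y = \beta\Gamma$ and $K$ will give topological amenability of $\Gamma\curvearrowright X\times\beta\Gamma$, and hence exactness of $\Gamma$.

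The key intermediate step is therefore the following claim: if $H\leq\Gamma$ is exact, then the restricted $H$-action on $\beta\Gamma$ is topologically amenable. To establish this, I would first recall that exactness of $H$ is equivalent to topological amenability of $H\curvearrowright\beta H$. Indeed, given any topologically amenable $H$-action on a compact Hausdorff space $Z$, a choice of base point $z_0\in Z$ yields an $H$-equivariant map $H\to Z$; since $Z$ is compact Hausdorff and $H$ is discrete, this extends by the universal property of $\beta H$ to a continuous $H$-equivariant map $\beta H\to Z$, which can be used to pull back the amenability maps to $\beta H$.

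Next, I would choose a set of representatives $\{g_i\}_{i\in I}$ for the right cosets $H\backslash\Gamma$, so that $\Gamma=\bigsqcup_i Hg_i$ as an $H$-set. Define $p:\Gamma\to H$ by $p(hg_i):=h$; this is an $H$-equivariant map between discrete spaces, and hence extends uniquely to a continuous $H$-equivariant map $\bar p:\beta\Gamma\to\beta H$. Composing $\bar p$ with amenability maps $\nu_n:\beta H\to\Prob(H)\subseteq\Prob(\Gamma)$ gives continuous maps $\mu_n':\beta\Gamma\to\Prob(\Gamma)$ which witness topological amenability of $H\curvearrowright\beta\Gamma$. Applying this to each $H=\Stab(k)$, together with the hypothesis on $X$ and $K$, the assumptions of Proposition~\ref{ozawa} are met and the conclusion follows.

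The only point requiring genuine care is the existence of the $H$-equivariant extension $\bar p:\beta\Gamma\to\beta H$ and the verification that it is continuous and equivariant for the $\beta$-extended left translation actions; this is, however, a routine consequence of the universal property of the Stone--\v{C}ech compactification, so I do not expect any serious obstacle. The argument is essentially a repackaging of Ozawa's proposition in which the space $Y$ is chosen universally rather than case-by-case.
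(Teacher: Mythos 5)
Your proposal is correct and takes exactly the same route as the paper, which proves the corollary by applying Proposition~\ref{ozawa} with $Y=\beta\Gamma$ and observing that every exact subgroup of $\Gamma$ acts topologically amenably on $\beta\Gamma$ (citing Kida's appendix for the details). Your expansion of the key claim, via a choice of right-coset representatives to get an $H$-equivariant retraction $p:\Gamma\to H$ and the universal property of the Stone--\v{C}ech compactification to extend it to $\bar p:\beta\Gamma\to\beta H$, is precisely the standard argument those details amount to.
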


Arguing as in \cite[Proposition~11]{Oza} without the hypothesis that $X$ is compact, one also gets 
that the analogous statement holds in   the measurable setting. In particular, we get the following.

\begin{prop}\label{amen}
  Let $\Gamma$ be a countable group, let $X$ be a   topological space equipped with an  action of $\Gamma$ by homeomorphisms,
  and let $K$ be a countable space equipped with  an action of $\Gamma$. Assume that there exists a sequence of   measurable maps $$\mu_n:X\to\mathrm{Prob}(K)$$ such that for all $\gamma\in\Gamma$ and all $x\in X$, one has $$||\mu_n(\gamma.x)-\gamma.\mu_n(x)||_1\to 0$$ as $n$ goes to $+\infty$. Assume in addition that for all $k\in K$, the stabilizer $\mathrm{Stab}(k)\subseteq\Gamma$ is amenable.
\\ Then the $\Gamma$-action on $X$ is Borel amenable.
\qed
\end{prop}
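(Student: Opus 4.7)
My plan is to follow Ozawa's proof of Proposition~\ref{ozawa} in the Borel amenable setting, specializing the auxiliary compact $\Stab(k)$-space $Y$ to a single point: then the topologically amenable $\Stab(k)$-action on $Y$ collapses to the mere amenability of $\Stab(k)$, which is exactly our hypothesis. Since we only aim at Borel amenability, pointwise convergence in $x$ and $\gamma$ is enough, and the compactness of $X$ is unnecessary.

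Concretely, I would construct $\nu_n:X\to\Prob(\Gamma)$ by combining $\mu_n$ with fiberwise almost-invariant measures from the stabilizers. Pick a basepoint $k_\mathcal{O}$ in each $\Gamma$-orbit $\mathcal{O}\subseteq K$, write $\mathcal{O}_k$ for the orbit of $k\in K$, and fix a Borel section $s:K\to\Gamma$ with $s(k)\cdot k_{\mathcal{O}_k}=k$. Amenability of $\Stab(k_\mathcal{O})$ yields a F{\o}lner-type sequence $\eta_n^\mathcal{O}\in\Prob(\Stab(k_\mathcal{O}))\subseteq\Prob(\Gamma)$ with $\|h\cdot\eta_n^\mathcal{O}-\eta_n^\mathcal{O}\|_1\to 0$ for each $h\in\Stab(k_\mathcal{O})$. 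Set $\xi_n^k:=s(k)\cdot\eta_n^{\mathcal{O}_k}\in\Prob(\Gamma)$; the identity $g\,s(k)=s(gk)\,\sigma(g,k)$, with $\sigma(g,k):=s(gk)^{-1}\,g\,s(k)\in\Stab(k_{\mathcal{O}_k})$, gives
\[
\|g\cdot\xi_n^k-\xi_n^{gk}\|_1=\|\sigma(g,k)\cdot\eta_n^{\mathcal{O}_k}-\eta_n^{\mathcal{O}_k}\|_1\xrightarrow[n\to\infty]{}0
\]
for each fixed $g\in\Gamma,\,k\in K$. Taking $\nu_n(x):=\sum_{k\in K}\mu_n(x)(k)\,\xi_n^k$, one has
\[
\|g\cdot\nu_n(x)-\nu_n(gx)\|_1\le\|g\cdot\mu_n(x)-\mu_n(gx)\|_1+\sum_{k\in K}\mu_n(x)(k)\,\|g\cdot\xi_n^k-\xi_n^{gk}\|_1,
\]
and the first term vanishes asymptotically by the hypothesis on $\mu_n$.

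The main obstacle is to force the second summand to zero for each fixed $g\in\Gamma$ and $x\in X$. The fiberwise errors $\|g\cdot\xi_n^k-\xi_n^{gk}\|_1$ tend to zero pointwise in $k$, but $\mu_n(x)$ may disperse mass over arbitrarily far portions of the countable set $K$ as $n$ varies, so pointwise convergence alone is not enough. The remedy is a diagonal argument: enumerate $K=\{k_1,k_2,\dots\}$ and $\Gamma=\{g_1,g_2,\dots\}$, let $J(n,x)$ be the least integer $J$ with $\mu_n(x)(\{k_1,\dots,k_J\})>1-1/n$ (a Borel function of $x$), and replace the approximation index $n$ in the definition of $\nu_n(x)$ by a Borel integer $M(n,x)$ large enough that $\|g_i\cdot\xi_{M(n,x)}^{k_j}-\xi_{M(n,x)}^{g_ik_j}\|_1<1/n$ for every $i\le n$ and $j\le J(n,x)$. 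Passing to Ces{\`a}ro averages of the $\eta_n^\mathcal{O}$ makes consecutive approximations $\ell^1$-close, so the discrepancy between $M(n,x)$ and $M(n,gx)$ can be absorbed; the almost equivariance of $\mu_n$ then forces $J(n,x)$ and $J(n,gx)$ to be asymptotically compatible, yielding the desired sequence of Borel maps $\nu_n:X\to\Prob(\Gamma)$ with $\|g\cdot\nu_n(x)-\nu_n(gx)\|_1\to 0$ pointwise.
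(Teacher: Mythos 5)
Your overall strategy — specialize Ozawa's argument to the case where the auxiliary $\Stab(k)$-space $Y$ is a point, transport F{\o}lner measures from stabilizers to $\Gamma$ via a Borel section, and compose with $\mu_n$ — is exactly the route the paper takes (the paper itself only points to \cite[Proposition~11]{Oza} and says ``without the hypothesis that $X$ is compact''). The setup through the estimate
\[
\|g\cdot\nu_n(x)-\nu_n(gx)\|_1\le\|g\cdot\mu_n(x)-\mu_n(gx)\|_1+\sum_{k}\mu_n(x)(k)\,\|g\cdot\xi_n^k-\xi_n^{gk}\|_1
\]
is correct, and you correctly identify the two-parameter diagonal issue.

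The gap is in the last paragraph, where you claim the discrepancy between $M(n,x)$ and $M(n,gx)$ can be absorbed because ``the almost equivariance of $\mu_n$ forces $J(n,x)$ and $J(n,gx)$ to be asymptotically compatible.'' This is not true. The almost equivariance only gives $\mu_n(gx)\approx g\cdot\mu_n(x)$, so the mass of $\mu_n(gx)$ lives essentially on $g\cdot\{k_1,\dots,k_{J(n,x)}\}$; but the chosen enumeration of $K$ is not $\Gamma$-equivariant, so $g\cdot\{k_1,\dots,k_{J(n,x)}\}$ need not be an initial segment, and $J(n,gx)$ can be arbitrarily large compared to $J(n,x)$ (it is bounded below by the largest index occupied by $g k_1,\dots,g k_{J(n,x)}$). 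Consequently $M(n,gx)$ can dwarf $M(n,x)$, and the Ces{\`a}ro averaging only controls $\|\bar\eta_m-\bar\eta_{m'}\|_1$ when the \emph{ratio} $m/m'$ tends to $1$, so the cross term $\sum_k\mu_n(gx)(k)\,\|\xi^k_{M(n,x)}-\xi^k_{M(n,gx)}\|_1$ is not shown to vanish. Moreover, even if the $J$'s were comparable, $M$ depends also on how fast each $\eta_m^{\mathcal O}$ stabilizes, so comparability of $J$'s would not yield comparability of $M$'s. What is needed is an $x$-independent (or at least $\Gamma$-invariant-modulo-small-error) choice of approximation index, not a pointwise diagonalization of the kind you propose; making this precise is the actual content Ozawa's proof supplies, and it is missing here.
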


\paragraph*{Stability under subgroups.} 
The following well-known fact says that boundary amenability is stable under passing to subgroups. It can be viewed as a consequence of Ozawa's result (Proposition~\ref{ozawa}), applied with $K=\Gamma$ and $Y$ a point.

\begin{cor}\label{cor-subgroup}
Let $\Gamma$ be a countable group, let $\Delta\subseteq\Gamma$ be a subgroup, and let $X$ be a compact Hausdorff space equipped with a topologically amenable $\Gamma$-action.
\\ Then the $\Delta$-action on $X$ 
obtained by restriction of the $\Gamma$-action is topologically amenable, so in particular $\Delta$ is   boundary amenable.
\\ In particular, every point stabilizer for the $\Gamma$-action on $X$ is amenable.
\qed
\end{cor}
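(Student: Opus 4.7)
The plan is to derive both assertions directly from Proposition~\ref{ozawa}, using $K=\Gamma$ (with the $\Delta$-action by left translation) as the auxiliary countable space and a point as the auxiliary compact space.

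\textbf{Restriction of the action.} Suppose we are given continuous maps $\mu_n\colon X\to\text{Prob}(\Gamma)$ witnessing topological amenability of $\Gamma\actson X$. View $\Gamma$ as a countable discrete $\Delta$-space via left multiplication $\delta\cdot g:=\delta g$. For every $k\in\Gamma$, the $\Delta$-stabilizer of $k$ is trivial, so its (trivial) action on a one-point space $Y$ is tautologically topologically amenable. The maps $\mu_n$ are continuous hence Borel, and for every $\delta\in\Delta\subseteq\Gamma$ we have $\sup_{x\in X}\|\mu_n(\delta.x)-\delta.\mu_n(x)\|_1\to 0$ simply by restricting the hypothesis on $\Gamma$ to the subgroup $\Delta$. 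Applying Proposition~\ref{ozawa} to $\Delta$, with the space $X$, the point $Y$, the countable $\Delta$-space $K=\Gamma$, and the maps $\mu_n$, yields a topologically amenable $\Delta$-action on $X\times Y=X$. This proves the first statement, and exactness of $\Delta$ follows immediately by the definition.

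\textbf{Amenability of point stabilizers.} Let $x_0\in X$ and let $\Delta:=\Stab_\Gamma(x_0)$. By the previous step there exist continuous maps $\nu_n\colon X\to\text{Prob}(\Delta)$ such that $\sup_{x\in X}\|\nu_n(\delta.x)-\delta.\nu_n(x)\|_1\to 0$ for every $\delta\in\Delta$. Evaluating at the $\Delta$-fixed point $x_0$ gives a sequence $\nu_n(x_0)\in\text{Prob}(\Delta)$ satisfying $\|\delta.\nu_n(x_0)-\nu_n(x_0)\|_1\to 0$ for every $\delta\in\Delta$. This is Reiter's condition, hence $\Delta$ is amenable.

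The argument is entirely formal; there is no real obstacle, as both assertions are immediate specializations of Proposition~\ref{ozawa} once one observes that the left translation action of a subgroup on the ambient group has trivial (in particular amenable) stabilizers.
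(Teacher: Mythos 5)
Your argument is correct and matches the paper's intended route exactly: the paper itself only says the corollary "can be viewed as a consequence of Ozawa's result (Proposition~\ref{ozawa}), applied with $K=\Gamma$ and $Y$ a point," and you have spelled out precisely this application (noting that the $\Delta$-action on $K=\Gamma$ by left translation has trivial stabilizers) and supplied the standard Reiter-condition argument for the amenability of point stabilizers. Nothing to change.
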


\begin{rk}\label{rk-subgroup}
The same statement applies in the   measurable setting. In particular, 
if the $\Gamma$-action on $X$ is Borel amenable, then every point stabilizer in $\Gamma$ is amenable.
\end{rk}

\paragraph*{Stability under extensions.}  This was proved by Kirchberg--Wassermann in \cite{KW}.

\begin{prop}[Kirchberg--Wassermann \cite{KW}]\label{ext-exact}
Any extension of two countable groups   which are boundary amenable is   boundary amenable. 
\qed
\end{prop}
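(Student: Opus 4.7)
The plan is to derive this directly from Corollary~\ref{ozawa-kida}. Write the extension as $1\to N\to\Gamma\xrightarrow{\pi} Q\to 1$ with $N$ and $Q$ countable and exact. The idea is to use exactness of the quotient $Q$ to produce almost-equivariant maps into $\text{Prob}(K)$ for a well-chosen countable $\Gamma$-space $K$, and then invoke Ozawa's machinery to reduce the remaining problem to the exactness of the kernel $N$.

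Since $Q$ is exact, fix a compact Hausdorff space $X$ equipped with a continuous $Q$-action together with continuous maps $\mu_n:X\to\text{Prob}(Q)$ such that
$$\sup_{x\in X}\|\mu_n(q\cdot x)-q\cdot\mu_n(x)\|_1\longrightarrow 0$$
for every $q\in Q$. View $X$ as a $\Gamma$-space through $\pi$, and let $K:=Q$ carry the $\Gamma$-action obtained from left translation via $\pi$. Each $\mu_n$ is then a continuous, and hence Borel, map $X\to\text{Prob}(K)$, and for every $\gamma\in\Gamma$ and every $x\in X$ one has
$$\|\mu_n(\gamma\cdot x)-\gamma\cdot\mu_n(x)\|_1=\|\mu_n(\pi(\gamma)\cdot x)-\pi(\gamma)\cdot\mu_n(x)\|_1\longrightarrow 0.$$

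For each $q\in K=Q$, the stabilizer of $q$ in $\Gamma$ equals $\pi^{-1}(\text{Stab}_Q(q))=\pi^{-1}(\{e\})=N$, since the action of $Q$ on itself by left translation is free. By hypothesis $N$ is exact, so all point stabilizers of the $\Gamma$-action on $K$ are exact. All the hypotheses of Corollary~\ref{ozawa-kida} are now in place, and its conclusion yields that $\Gamma$ is exact.

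There is no serious obstacle to this argument: once one notices that the discrete coefficient space $K=Q$ carries a natural $\Gamma$-action whose point stabilizers are all equal to the kernel $N$, the statement is a direct assembly of the two given exactness hypotheses through the inductive tool packaged in Corollary~\ref{ozawa-kida}. The only point to verify with some care is the stabilizer computation, which follows from freeness of the left translation action of $Q$ on itself.
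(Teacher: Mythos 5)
Your proof is correct. The paper does not actually prove Proposition~\ref{ext-exact}; it simply cites Kirchberg--Wassermann, and the proposition carries a bare \verb|\qed|. Your derivation via Corollary~\ref{ozawa-kida} is nonetheless exactly the kind of argument the paper has in mind: it is the same mechanism as the paper's own proof of the neighboring Corollary~\ref{cor-xy}, which applies Proposition~\ref{ozawa} with $K=Q$ (there, the authors carry along a second compact space $Y$ to keep track of a concrete amenable action; you instead invoke Corollary~\ref{ozawa-kida}, which has absorbed the choice of $Y=\beta\Gamma$ into its statement). The two steps that need checking — that the almost-equivariance estimate passes along $\pi$ because the $\Gamma$-action on $X$ factors through $Q$, and that every $\Gamma$-stabilizer of a point of $K=Q$ is precisely $N$ by freeness of the left regular action of $Q$ on itself — are both handled correctly. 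One small remark: there is no circularity, since Corollary~\ref{ozawa-kida} is established in \cite{Kid} by running Proposition~\ref{ozawa} with the Stone--\v{C}ech compactification and does not presuppose closure of exactness under extensions.
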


If one wants to keep track of spaces, one can use Ozawa's result to prove the following.

\begin{cor}\label{cor-xy}
Let $$1\to H\to\Gamma\to Q\to 1$$ be a short exact sequence of countable groups. Let $X$ be a compact Hausdorff space equipped with an  action of $Q$ by homeomorphisms, and $Y$ be a compact Hausdorff space equipped with  an action of $\Gamma$ by homeomorphisms. Assume that the action $Q\actson X$ and the restricted action $H\actson Y$ are topologically amenable.
\\ Then the $\Gamma$-action on $X\times Y$ is topologically amenable (where the $\Gamma$-action on $X$ is the one factoring through $Q$).  
\end{cor}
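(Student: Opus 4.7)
The plan is to apply Proposition \ref{ozawa} directly, with a careful choice of the countable discrete $\Gamma$-set. Take the space $X$ in Proposition \ref{ozawa} to be our $X$ (viewed as a $\Gamma$-space via the quotient $\pi\colon\Gamma\to Q$), the space $Y$ in the proposition to be our $Y$, and let $K:=Q$, equipped with the $\Gamma$-action $\gamma\cdot q:=\pi(\gamma)q$ (left multiplication after projection). Both $X$ and $Y$ are compact Hausdorff with continuous $\Gamma$-actions, and $K$ is countable discrete, so the setup matches.

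First I verify the hypothesis on $X$. Topological amenability of $Q\actson X$ gives a sequence of continuous maps $\mu_n^X\colon X\to\text{Prob}(Q)$ with $\sup_{x\in X}\|\mu_n^X(q.x)-q.\mu_n^X(x)\|_1\to 0$ for every $q\in Q$. Since the $\Gamma$-action on both $X$ and $K=Q$ factors through $\pi$, these same maps satisfy $\sup_{x\in X}\|\mu_n^X(\gamma.x)-\gamma.\mu_n^X(x)\|_1\to 0$ for every $\gamma\in\Gamma$, and they are continuous, hence Borel. In particular the pointwise convergence required by Proposition \ref{ozawa} holds.

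Next I verify the hypothesis on $Y$. For any $q\in K=Q$, the $\Gamma$-stabilizer is
\[
\text{Stab}(q)=\{\gamma\in\Gamma:\pi(\gamma)q=q\}=\{\gamma\in\Gamma:\pi(\gamma)=1\}=H,
\]
independently of $q$. By assumption, the restricted action $H\actson Y$ is topologically amenable, so the stabilizer hypothesis of Proposition \ref{ozawa} is satisfied at every $k\in K$.

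Applying Proposition \ref{ozawa} yields that the diagonal $\Gamma$-action on $X\times Y$ is topologically amenable, as desired. There is no real obstacle: the only substantive observations are (i) that a $Q$-topologically amenable action pulls back to a $\Gamma$-action satisfying the hypothesis of Proposition \ref{ozawa}, and (ii) that the $\Gamma$-stabilizer of every point of $Q$ under left translation through $\pi$ is exactly $H$.
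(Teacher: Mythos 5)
Your proof is correct and follows the same route as the paper: deduce from topological amenability of $Q\actson X$ the existence of Borel (in fact continuous) maps $X\to\text{Prob}(Q)$, observe that the $\Gamma$-actions involved factor through $\pi$, and apply Proposition~\ref{ozawa} with $K=Q$. You merely spell out the two observations (invariance under $\Gamma$ via $\pi$, and that every $\Gamma$-stabilizer of a point of $Q$ equals $H$) which the paper's terser proof leaves implicit.
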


\begin{proof}
Since the $Q$-action on $X$ is topologically amenable, there exists a sequence of   measurable maps $$\mu_n:X\to\mathrm{Prob}(Q)$$ such that for all $x\in X$ and all $\gamma\in\Gamma$, we have $$||\mu_n(\gamma.x)-\gamma.\mu_n(x)||_1\to 0$$ as $n$ goes to $+\infty$. The corollary then follows from Proposition~\ref{ozawa} applied to $K=Q$.
\end{proof}

\paragraph*{Stability under finite-index overgroups.} The following well-known easy fact implies that boundary amenability is a commensurability invariant; its proof is left to the reader.

\begin{prop}\label{fidx-exact}
Let $\Gamma$ be a group, and let $X$ be a compact Hausdorff space equipped with an action of $\Gamma$ by homeomorphisms. Let $\Gamma^0$ be a finite index subgroup of $\Gamma$, such that the $\Gamma^0$-action on $X$ is topologically amenable.
\\ Then the $\Gamma$-action on $X$ is topologically amenable.
\end{prop}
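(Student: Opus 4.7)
The plan is to build the desired approximating maps for $\Gamma$ by averaging the approximating maps for $\Gamma^0$ over a set of coset representatives. Let $m := [\Gamma:\Gamma^0] < \infty$ and fix a complete set $\{\gamma_1,\dots,\gamma_m\}$ of left coset representatives, so that $\Gamma = \bigsqcup_{i=1}^m \gamma_i \Gamma^0$. Let $(\mu_n^0:X\to\text{Prob}(\Gamma^0))_{n\in\bbN}$ be the sequence of continuous maps witnessing topological amenability of $\Gamma^0 \actson X$. Using the natural inclusion $\text{Prob}(\Gamma^0) \hookrightarrow \text{Prob}(\Gamma)$, I will view $\mu_n^0$ as taking values in $\text{Prob}(\Gamma)$, and define
$$\mu_n : X \to \text{Prob}(\Gamma), \qquad \mu_n(x) := \frac{1}{m}\sum_{i=1}^m \gamma_i . \mu_n^0(\gamma_i^{-1}.x).$$
Each $\mu_n$ is continuous because it is a finite sum of compositions of continuous maps with the (isometric, hence continuous) $\Gamma$-action on $\text{Prob}(\Gamma)$.

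It remains to verify the almost-equivariance. Given $\gamma \in \Gamma$, for each $i$ there is a unique $\tau(i)\in\{1,\dots,m\}$ and a unique $h_i \in \Gamma^0$ with $\gamma \gamma_i = \gamma_{\tau(i)} h_i$, and the assignment $i\mapsto \tau(i)$ is a permutation of $\{1,\dots,m\}$. Reindexing the sum defining $\mu_n(\gamma.x)$ along $\tau$ (which uses $\gamma_{\tau(i)}^{-1}\gamma = h_i \gamma_i^{-1}$), one computes
$$\mu_n(\gamma.x) - \gamma.\mu_n(x) = \frac{1}{m}\sum_{i=1}^m \gamma_{\tau(i)}.\bigl[\mu_n^0(h_i.\gamma_i^{-1}.x) - h_i.\mu_n^0(\gamma_i^{-1}.x)\bigr].$$
Since each $\gamma_{\tau(i)}$ acts isometrically on $\ell^1(\Gamma)$, the triangle inequality yields
$$\sup_{x\in X}\|\mu_n(\gamma.x) - \gamma.\mu_n(x)\|_1 \le \frac{1}{m}\sum_{i=1}^m \sup_{y\in X}\|\mu_n^0(h_i.y) - h_i.\mu_n^0(y)\|_1.$$
For fixed $\gamma$, the finitely many elements $h_1,\dots,h_m$ lie in $\Gamma^0$, so topological amenability of $\Gamma^0\actson X$ makes each term on the right tend to $0$, and hence the left-hand side tends to $0$ as $n\to\infty$. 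This proves topological amenability of $\Gamma\actson X$. There is no real obstacle here; the only thing to be careful about is the bookkeeping of cosets versus the permutation $\tau$, which is the standard induction-of-representations device.
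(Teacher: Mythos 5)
The paper leaves this proof to the reader, and your argument is exactly the standard one: induce the approximating sequence from $\Gamma^0$ to $\Gamma$ by averaging over a fixed set of left coset representatives, and use that left multiplication by $\gamma$ permutes the cosets to reindex the sum. The computation is correct — in particular, for fixed $\gamma$ the elements $h_i$ lie in $\Gamma^0$ and there are only finitely many of them, so the uniform-in-$x$ bound you derive does tend to $0$ — and your final estimate is already uniform over $X$, so the $\sup_x$ in Definition~\ref{dfn_topo_amenable} is handled without further comment.
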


\paragraph*{Quotient by an amenable subgroup.}

\begin{prop}[Nowak \cite{Now}]\label{prop_quotient_amenable}
Let $G$ be a countable group, and $N\triangleleft G$ an amenable normal subgroup. Then $G$ is   boundary amenable if and only if $G/N$ is   boundary amenable.  
\\ In particular, $G$ is   boundary amenable if and only if $G/Z(G)$ is   boundary amenable.
\end{prop}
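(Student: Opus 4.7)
The plan is to prove both implications. The ``if'' direction is immediate from the tools in the excerpt: since $N$ is amenable it is exact, so if $G/N$ is exact as well, then $G$ is an extension of exact groups, whence exact by the Kirchberg--Wassermann theorem (Proposition~\ref{ext-exact}).

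For the ``only if'' direction, I would start with a topologically amenable $G$-action on a compact Hausdorff space $X$, witnessed by continuous maps $\mu_n : X \to \Prob(G)$, and build a topologically amenable action of $Q := G/N$. The obstacle to a direct pushforward argument is that $N$ may act nontrivially on $X$, so $Q$ does not act on $X$ itself. My plan is to replace $X$ by the compact Hausdorff space $Y := \Prob(X)^N$ of $N$-invariant Borel probability measures in the weak-$*$ topology, which is non-empty by Day's fixed point theorem applied to the amenable $N$-action on the compact convex set $\Prob(X)$. Because $N$ is normal in $G$, the formula $gN \cdot \nu := g_* \nu$ defines a continuous $Q$-action on $Y$ (well-definedness uses $N$-invariance of $\nu$, and the image lands in $Y$ by normality), and I would define continuous maps $\overline\mu_n : Y \to \Prob(Q)$ by
\[
\overline\mu_n(\nu) := \pi_* \int_X \mu_n(x)\, d\nu(x),
\]
where $\pi : G \to Q$ is the quotient homomorphism; continuity follows from continuity of $\mu_n$ combined with the equivalence of pointwise and $\ell^1$ topologies on $\Prob(G)$.

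Approximate $Q$-equivariance is then a short computation. Using the pushforward contraction $\|\pi_*\alpha - \pi_*\beta\|_1 \le \|\alpha-\beta\|_1$, the intertwining $\pi_*(g \cdot \mu) = gN \cdot \pi_*\mu$, and $N$-invariance of $\nu$, one obtains
\[
\|\overline\mu_n(gN \cdot \nu) - gN \cdot \overline\mu_n(\nu)\|_1 \le \int_X \|\mu_n(g \cdot x) - g \cdot \mu_n(x)\|_1 \, d\nu(x) \le \sup_{x \in X} \|\mu_n(g \cdot x) - g \cdot \mu_n(x)\|_1,
\]
which tends to $0$ as $n \to \infty$ by the defining property of $\mu_n$, uniformly in $\nu$. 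The main subtlety I expect is keeping the continuity and the vector-valued integration honest; the overall construction is essentially forced by the obstruction noted above. The final assertion follows by applying the first statement to $N = Z(G)$, which is abelian and hence amenable.
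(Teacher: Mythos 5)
Your proof is correct, and it does something the paper doesn't: the paper handles the ``only if'' direction entirely by citation to Nowak's article (plus a remark that Nowak's hypothesis of finite generation can be dropped because exactness passes to increasing unions), whereas you give a self-contained argument. Your construction is the natural one: to descend a topologically amenable $G$-action on $X$ to a $Q := G/N$-action, you need to make $N$ invisible, and passing to the compact convex space $Y = \Prob(X)^N$ of $N$-invariant measures (nonempty by Day's fixed-point theorem applied to the amenable group $N$) does exactly that. All the technical steps check out: normality of $N$ together with $N$-invariance of $\nu$ make $gN\cdot\nu := g_*\nu$ well defined; $Y$ is a closed, hence compact, subset of $\Prob(X)$; $\pi_*$ is an $\ell^1$-contraction that intertwines $g$-translation on $\Prob(G)$ with $gN$-translation on $\Prob(Q)$; the vector-valued integral $\int_X\mu_n(x)\,d\nu(x)$ makes sense coordinatewise and lands in $\Prob(G)$; continuity of $\overline\mu_n$ follows since the pointwise and $\ell^1$ topologies agree on $\Prob(G)$; and the estimate reduces to $\sup_{x}\|\mu_n(gx)-g\mu_n(x)\|_1\to 0$. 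One minor presentational point: you list ``$N$-invariance of $\nu$'' among the tools used in the final estimate, but it is in fact not needed there — it is used only (and crucially) to make the $Q$-action on $Y$ well defined; the $\ell^1$-estimate itself holds for any $\nu\in\Prob(X)$. What the paper's approach buys is brevity by leaning on a reference; what yours buys is a complete, transparent proof requiring only Day's fixed-point theorem and the tools already present in Section~\ref{sec-amenability}, and in particular it avoids the need to address the finitely-generated-versus-countable caveat that the paper has to mention.
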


\begin{proof}
  One implication follows from stability under extension. The other implication was proved in \cite{Now}.
This is stated for $G$ finitely generated but extends to the general case since   boundary amenability is closed under increasing unions.
\end{proof}

\section{The factorization lemma, the case of free actions, and reduction to naming}\label{sec-scheme}

In the present section, we prove amenability of the action of $\Out(F_N)$ on the set of free arational $F_N$-trees (Theorem \ref{thm:FN-case}).
This result is in fact a particular case of Proposition~\ref{name-enough}
which is the key result
 allowing us to reduce the proof of boundary amenability of $\Out(F_N)$
to a technical statement about arational trees that will be established in the next section.

 We first prove a \emph{factorization lemma} (Lemma~\ref{lem_blow_up} below) saying that given two morphisms $f$ and $f'$ having their sources in Outer space and targeting the same arational tree $T$,
if $f$ and $f'$ take the same set of turns in $T$ (in which case we say that $f$ and $f'$ have the same \emph{turning class}),
then $f$ and $f'$ factor through the same trees in Outer space close enough to $T$.
We then define probability measures $\mu_n(f)$ on the set of simplices of the Outer space by considering the trees through which $f$ factors, and the factorization lemma together with some finiteness statements will ensure that $\mu_n(f)$ asymptotically only depends on the turning class of $f$.

 For free arational $F_N$-trees, there are only finitely many orbits of turns, so finitely many possible turning classes.
We can therefore define probability measures $\mu_n(T)$ by averaging the measures $\mu_n(f)$ for a finite set
of morphisms representing these turning classes, and deduce amenability of the action of $\Out(F_N)$ on the set of free arational $F_N$-trees.

In a more general situation, we have to assume that we have a finite set of preferred turning classes on every arational tree $T$
to run the same argument. 
This is the contents of Proposition~\ref{name-enough}: turning classes are given a name,
and the first name of all turning classes of all morphisms with target $T$ defines our preferred set.
These names 
will be constructed in Section \ref{sec-name} via a case-by-case analysis of geometric and non-geometric trees arational trees. 

\subsection{Turning classes and the factorization lemma}

The following definition, which is central in the present work, was inspired by a preprint of Los--Lustig \cite{LL} (where it is called a \emph{blow-up class}).
Recall that a turn at a point $x$ in a tree $T$ is a pair $(d,d')$ where $d,d'$ are two distinct directions at $x$.

\begin{de}[Turning class]
Let $T\in\baro$. A \emph{turning class} in $T$ is a $G$-invariant collection of turns at the branch points of $T$. 
\end{de}

An important example of turning classes is the following. Given a morphism $f:S\to T$, recall that a subtree $Y\subseteq S$ is \emph{legal} 
if $f$ is an isometry when restricted to $Y$. 
A turn $(d,d')$ in $S$ based at $x\in S$ is \emph{legal} if there exist small intervals $[x,y],[x,y']$
representing respectively the directions $d,d'$ such that the interval $[y,x]\cup [x,y']$ is legal.

\begin{de}[Turning class of a morphism]
Let $S\in\Os$, let $T\in\baro$, and let $f:S\to T$ be an optimal morphism.
\\ The \emph{turning class of $f$} is the collection of all orbits of turns $(d,d')$ at branch points of $T$ such that there exists a legal segment $I\subseteq S$ whose $f$-image crosses the turn $(d,d')$. 
\end{de}

The first important ingredient in our proof of boundary amenability of $\Out(G,\calf^{(\mathrm{t})})$ is the following factorization lemma. 

\begin{lemma}[Factorization lemma]\label{lem_blow_up}
Let $S,S'\in\Os$, let $T\in\baro$ with trivial arc stabilizers, and let $f:S\to T$ and $f':S'\to T$ be optimal morphisms. Assume that $f$ and $f'$ have the same turning class.
\\ Then there exists $\epsilon>0$ such that whenever $U\in \Os$ is a tree such that $f'$ factors through $U$, and such that the induced morphism $f'_U:U\to T$ has BBT smaller than $\epsilon$, then $f$ also factors through $U$.
\end{lemma}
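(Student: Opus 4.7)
The plan is to exploit two features: the turning class hypothesis, which encodes a combinatorial folding pattern shared by $f$ and $f'$, and the smallness of $BBT(f'_U)$, which forces $U$ to be geometrically close to $T$. First I would verify that the turning class of $f'_U : U \to T$ coincides with that of $f'$. Since $\pi : S' \to U$ is a morphism with $f' = f'_U \circ \pi$, any legal segment $J \subseteq U$ for $f'_U$ can be lifted through $\pi$ to a segment $\tilde J \subseteq S'$ of equal length; a sandwich of Lipschitz inequalities then forces $\pi|_{\tilde J}$ and $f'|_{\tilde J}$ to be isometric, so $\tilde J$ is legal for $f'$ with the same image in $T$. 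Conversely, legal segments in $S'$ project isometrically to legal segments in $U$ with matching image. Hence $f$ and $h := f'_U$ share a common turning class, and it suffices to construct a morphism $g : S \to U$ with $h \circ g = f$.

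Next I would analyze the fiber structure of $h$ using the BBT bound. Because $h$ is a morphism (hence locally nondegenerate), each preimage $h^{-1}(y)$ is a finite discrete subset of $U$, and the BBT bound forces its diameter to be at most $2\epsilon$. For $y$ a branch point of $T$, at each $u \in h^{-1}(y)$ the directions in $U$ push forward under $h$ to directions at $y$ in $T$, and two directions at $u$ form a legal turn iff they have distinct images; consequently, the turning class of $h$ at $y$ equals the union, over $u \in h^{-1}(y)$, of legal turns at $u$. I would choose $\epsilon$ small enough in terms of the geometry of $S$ and $T$, for example smaller than half the minimum distance between images of distinct branch points of $S$ under $f$, so that fibers over distinct branch points of $T$ stay disjoint and $h$ acts as a local isometry off their union.

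With these preparations, I would construct $g$ by first assigning values at branch points of $S$, then extending along legal pieces. For a branch point $x \in S$ with $f(x) = y$ a branch point of $T$, the directions at $x$ partition under $f$ into groups indexed by a subset $D_x$ of directions at $y$; by optimality, each pair of distinct elements of $D_x$ is realized by a legal turn at $x$ and so lies in the common turning class. Using the structural description of $h^{-1}(y)$ from the previous step, I would take $g(x)$ to be a point of $h^{-1}(y)$ whose set of realized exit-directions contains $D_x$. Extending by local isometry along edges of $S$ (justified by the local-isometry property of $h$ away from fibers) and by $G$-equivariance then yields the required morphism.

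The main obstacle is establishing that a single point $u \in h^{-1}(y)$ simultaneously realizes every direction in $D_x$, ensuring a coherent definition of $g$. A priori, the turns at $x$ might be distributed across different points of the fiber, forcing a splitting of $g$ and breaking well-definedness. The resolution relies on combinatorial rigidity: since $U \in \calo$, $h$ is optimal, and $BBT(h) < \epsilon$, the fiber $h^{-1}(y)$ is essentially the minimal discrete configuration compatible with the turning class at $y$, so each admissible direction set arises at a canonical point. Making this rigidity precise, and extracting from it an explicit sufficient bound on $\epsilon$, is the technical heart of the argument.
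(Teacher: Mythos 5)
Your plan has a genuine gap, and one of its supporting claims is false.

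First, the assertion that the $BBT$ bound forces $\mathrm{diam}\, h^{-1}(y)\le 2\epsilon$ for $h=f'_U$ is incorrect. The BBT bound says that for $u_1,u_2\in h^{-1}(y)$, the image of $[u_1,u_2]$ under $h$ stays within distance $\epsilon$ of $y$; it does \emph{not} bound the length of that image path. Since $h$ is a morphism, $d_U(u_1,u_2)$ equals the length of the path $h([u_1,u_2])$, and a path confined to the $\epsilon$-ball around $y$ can still be arbitrarily long by oscillating back and forth. So the fiber can have large (indeed infinite, since $h^{-1}(y)$ is only $G_y$-finite when $G_y\neq 1$) diameter. This undermines the ``geometrically close'' picture you invoke. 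Separately, your claim that every $f'_U$-legal segment in $U$ lifts to an $f'$-legal segment of equal length in $S'$ is exactly the nontrivial containment between turning classes and is not justified by a Lipschitz sandwich alone; $\pi$ may fold, and it is not clear that a legal segment in $U$ is the $\pi$-image of a single $f'$-legal segment in $S'$.

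Second, and more importantly, you explicitly acknowledge that the ``main obstacle''---finding a single $u\in h^{-1}(y)$ that simultaneously realizes all directions in $D_x$ so that $g(x)$ is well defined---is the technical heart of the argument, but you only gesture at a ``combinatorial rigidity'' without actually establishing it. There is no evident reason a priori why the turns coming out of $x$ must all be realized at one point of the fiber rather than distributed among several; this is precisely the hard part. Without it, the map $g$ is not constructed.

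The paper's argument avoids both difficulties by not attempting to understand the fibers $h^{-1}(y)$ globally at all. Instead it works with \emph{$S'$-liftable} legal segments in $S$ (segments whose $f$-image lifts isometrically to $S'$; these are automatically $U$-liftable since $f'$ factors through $U$). The turning-class hypothesis gives a uniform $\epsilon_0>0$ such that every point of $S$ sits in the middle of an $S'$-liftable segment of length $2\epsilon_0$, with appropriate variants at vertices of finite and of infinite valence. One then defines $g(x)$ as the midpoint of a lift to $U$ of such a segment. Well-definedness is where the BBT bound is used: the key claim is that two lifts to $U$ of the same sufficiently long $S'$-liftable segment must agree away from a small neighbourhood of the endpoints, because otherwise the bridge between the two lifts would force a large backtracking in $T$, contradicting $BBT(f'_U)<\epsilon$. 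A second claim handles vertices of infinite valence by showing every lift of a long legal segment through such a vertex must pass through the unique $G_v$-fixed point of $U$. This local, BBT-controlled uniqueness statement is exactly what replaces the fiber-matching and rigidity you were hoping for, and it is what you are missing.
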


\begin{rk}
It is actually enough to assume that the turning class of $f$ is contained in the turning class of $f'$.
\end{rk}

\begin{rk}\label{rk-vol}
The hypothesis $\BBT(f'_U)<\epsilon$ can be replaced by the assumption that the volume of the quotient graph $U/G$ is at most $\epsilon$: indeed, the Lipschitz constant of the morphism $f'_U$ is equal to $1$, so we have $\BBT(f'_U)\le \vol(U/G)$ by \cite[Lemma~3.1]{BFH} or \cite[Proposition~3.12]{Hor3}.
\end{rk}

\begin{proof}
We say that a nondegenerate legal segment $I\subseteq S$ is \emph{$S'$-liftable} if its image in $T$ lifts isometrically to $S'$, i.e.\ there exists an $f'$-legal segment $I'\subseteq S'$ such that $f(I)=f'(I')$.
\\
\\
\textbf{Claim 1:} There exists $\epsilon_0>0$ such that 
\begin{itemize}
\item every point in $S$ is the midpoint of an $S'$-liftable legal segment of length $2\epsilon_0$,
\item for every vertex $v$ of finite valence in $S$, every legal segment of length $2\epsilon_0$ centered at $v$ is $S'$-liftable,
\item for every vertex $v$ of infinite valence in $S$ and every legal interval $I$ of length $\epsilon_0$ with endpoint $v$, there exists a nontrivial element $g\in G_v$ such that $I\cup gI$ is $S'$-liftable (here $G_v$ is the stabilizer of the vertex $v$).
\end{itemize}  

\noindent \textbf{Proof:} We first observe that for every $x\in S$,
every nondegenerate legal segment $I$ in $S$ centered at $x$ contains
a nondegenerate $S'$-liftable subsegment centered at $x$. Indeed, the
turn in $T$ defined by the image of the two directions at $x$ in $I$
is either based at a point of valence $2$ in $T$, or else it is
contained in the turning class of $f$, hence of $f'$. So there exists
a neighborhood $V_x\subseteq I$ of $x$ in $I$ whose image in $T$ lifts
isometrically to $S'$.

The second assertion of the claim follows immediately since there are only finitely many turns involved. To prove the third, let $e$ be an edge incident on a vertex $v$ of infinite valence in $S$. Let $g\neq 1$ be an arbitrary element of $G_v$. The turn $(e,ge)$ is legal because $T$ has trivial arc stabilizers. Applying the above observation shows that there exists $\epsilon_0>0$ such that the $\epsilon_0$-neighborhood of $v$ in the segment $e\cup ge$ is $S'$-liftable. The third assertion follows because there are only finitely many orbits of edges. 

We now prove the first assertion. Let $l$ be the minimal length of an edge in $S$. Since $f$ is optimal, we can find legal segments $J_1,\dots,J_k$ obtained by concatenation of two edges of $S$ such that the orbit of every point in $S$ intersects some $J_i$ in a point at distance at least $l/10$ from $\partial J_i$. We are going to use a compactness argument in the disjoint union $J_1\dunion\dots\dunion J_k$. For every $i\in\{1,\dots,k\}$ and every $x\in J_i$, the observation made in the first paragraph of the proof provides a neighborhood $V_x$ of $x$ in $J_i$ that is $S'$-liftable. Let $\epsilon_1$ be a Lebesgue number for this open covering of $J_1\dunion\dots\dunion J_k$. Without loss of generality, we can assume that $\epsilon_1<l/10$. Since every orbit meets one of the segments $J_i$ at distance at most $l/10$ from $\partial J_i$, this proves the first assertion with $\epsilon_0=\epsilon_1/2$.   
\\
\\  
\indent Let now $U\in \Os$ be a tree such that there exist morphisms $f'_{S'U}:S'\to U$ and $f'_U:U\to T$, with $\BBT(f'_U)\le\epsilon_0/100$, such that the following diagram commutes:
$$\xymatrix@C=1cm@R=1cm{
S'\ar[r]^{f'_{S'U}}\ar@/_1pc/[rr]_{f'}&U\ar[r]^{f'_U}&T.
}
$$
We are going to define a map $f_{SU}$ from $S$ to $U$ in the following way. We first note that any $S'$-liftable segment is $U$-liftable. Now given $x\in S$, choose $I$ a $U$-liftable legal segment of length $2\epsilon_0$ centered at $x$, and choose a lift $\tilde I$ of $f(I)$ in $U$ (which naturally comes with an isometry $j:I\to\tilde I$); we want to map $x$ to the midpoint of $\tilde I$. We will prove that this definition is independent of choices, and that this defines an  optimal morphism from $S$ to $U$. It will then be obvious that $f$ factors through this map.
\\
\\
\textbf{Claim 2:} If $I\subseteq S$ is an $S'$-liftable segment of
length greater than $2\epsilon_0/100$, and if $\tilde I_1$ and $\tilde
I_2$ are two lifts of $f(I)$ in $U$, then the isometries
$j_1:I\to\tilde I_1$ and $j_2:I\to \tilde I_2$ coincide on the
complement of the $\epsilon_0/100$-neighborhood of the endpoints of
$I$.
\\ \textbf{Proof:} Note that for all $x\in I$, the points $j_1(x)$ and $j_2(x)$ have the same $f'_U$-image in $T$. Write $I=[a,b]$. Let $x\in I$ be a point at distance greater than $\epsilon_0/100$ from $\partial I$. For all $i\in\{1,2\}$, we denote by $a_i,b_i,x_i$ the $j_i$-images of $a,b,x$. Let us prove that $x_1=x_2$.  
If $x_1\neq x_2$, then $x_1\notin \tilde I_2$ and $x_2\notin \tilde I_1$ because
$x_1$ and $x_2$ have the same image in $T$, and both segments $\tilde I_i$ embed in $T$.
It follows that $x_1$ lies in $[a_1,a_2]$ or $[b_1,b_2]$ (say $[a_1,a_2]$): indeed, if $[a_1,a_2]\cup [b_1,b_2]$ differs from $K=\mathrm{Hull}(a_1,a_2,b_1,b_2)$, then the complement
of $[a_1,a_2]\cup [b_1,b_2]$ in $K$ is contained in $[a_1,b_1]\cap [a_2,b_2]$, which cannot contain $x_1$. 
 However $f'_U(a_1)=f'_U(a_2)$, while $d_T(f'_U(x_1),f'_U(a_1))=d_U(x_1,a_1)>\BBT(f'_U)$ by assumption, contradicting the definition of the BBT.
\\
\\
\textbf{Claim 3:} Let $v$ be a vertex of infinite valence in $S$, and let $\tilde v$ be the point fixed by $G_v$ in $U$. Let $I\subseteq S$ be a legal segment that contains $v$, and such that $v$ is at distance at  least $2\epsilon_0/100$ from $\partial I$. Then any lift of $I$ to $U$ passes through $\tilde v$.
\\\textbf{Proof:} Write $I=I_1\cup I_2$, where $I_1$ and $I_2$ are two segments of length at least $2\epsilon_0/100$ with endpoint $v$. By Claim~1, there exists an element $g_1\neq 1$ in $G_v$ such that $I_1\cup g_1I_1$ is $S'$-liftable (hence $U$-liftable); let $I^*_1:=g_1I_1$, and $\Tilde J_1$ 
be a lift of $I_1\cup I^*_1$ in $U$ with  an isometry $j_1:I_1\cup I^*_1\ra \Tilde J_1$, and let $\Tilde I_1=j_1(I_1)$, $\Tilde I^*_1=j_1(I^*_1)$. 
Denote by $m$ the midpoint of $\Tilde I_1$.  
Notice that $g_1\Tilde I_1$ is also a lift of $I_1^*=g_1I_1$ in $U$, so by Claim~2 it has the same midpoint as $\Tilde I^*_1$. In other words $g_1$ sends the midpoint of $\Tilde I_1$ to the midpoint of $\Tilde I^*_1$. Therefore, the common endpoint of $\tilde I_1$ and $\Tilde I^*_1$ (which is also the midpoint of $[m,g_1m]$) is equal to $\tilde{v}$. This shows that $I_1$ has a lift in $U$ with endpoint $\tilde v$, and by symmetry $I_2$ also has a lift in $U$ with endpoint $\tilde v$. Their concatenation gives a lift $\tilde I$ of $I$ in $U$ that contains $\tilde v$, and such that $\tilde v$ is at distance at least  $2\epsilon_0/100$ from $\partial\tilde I$. Claim~2 then implies that all lifts of $I$ in $U$ pass through $\tilde v$,  so Claim~3 is proved.
\\
\\
\indent Let now $x\in S$, and let $I_1$ and $I_2$ be two segments of length  $\epsilon_0$ centered at $x$. We are left showing that if $j_1:I_1\to\tilde I_1$ and $j_2:I_2\to\tilde I_2$ are lifts of $I_1$ and $I_2$ in $U$, then $j_1(x)=j_2(x)$. The fact that $f_{SU}$ is locally isometric when restricted to legal segments, hence an  optimal morphism, will then follow from Claim~2.  From now on, we will assume that $\epsilon_0$ has been chosen smaller than the lengths of the edges of $S$
and small enough so that if two directions $d_1,d_2$ form an illegal turn at a vertex $v\in S$, then there are subsegments $[v,x_1]\subseteq d_1$ and $[v,x_2]\subseteq d_2$ of length at least $\epsilon_0$ that have the same image in $T$: this is possible because there are only finitely many orbits of illegal turns in $S$, otherwise $T$ would have nontrivial arc stabilizers. 

If $x$ is at distance at least $\epsilon_0/100$ from all branch points of $S$, then $I_1$ and $I_2$ contain a common subsegment of length $2\epsilon_0/100$ centered at $x$, and it follows from Claim~2 that all lifts $j_1:I_1\to\tilde I_1$ and $j_2:I_2\to\tilde I_2$ in $U$ have the same midpoint. 

We can thus assume that $x$ is at distance at most $\epsilon_0/100$ from some branch point $v$ of $S$. We write $I_1=J_1\cup J^*_1$ with $J_1\cap J^*_1=\{v\}$, and $x\in J_1$. Similarly, we write $I_2=J_2\cup J^*_2$ with $J_2\cap J^*_2=\{v\}$, and $x\in J_2$ (in particular $J_1=J_2$). If the tripod formed by $J_1,J^*_1$ and $J^*_2$ is illegal, then  the hypothesis we made on $\epsilon_0$ ensures that $I_1$ and $I_2$ have the same image in $T$, and hence they have common lifts in $U$. The result then follows as above from Claim~2. We now assume that this tripod is legal. 

For all $i\in\{1,2\}$, denote by $m_i,m^*_i$ the midpoints of $J_i,J^*_i$ respectively, and let $\tilde m_i,\Tilde m^*_i$ be their images in $\tilde I_i$. 
Since all three segments $J_1$, $J_1^*$ and $J^*_2$ have length at least $2\epsilon_0/100$, Claim~2 implies that for all $i\in\{1,2\}$, the lifts $\tilde m_i$ and $\tilde m_i^*$ do not depend on the choice of a lift $\tilde I_i$ of $I_i$, and in addition $\tilde m_1=\tilde m_2$. Denote by $\tilde c$ the center of the tripod $[\tilde m_1,\tilde m_1^*,\tilde m_2^*]$ in $U$. To complete the proof, it is enough to check that $d_{U}(\tilde m_1,\tilde c)\ge d_{S}(m_1,v)$,  because this implies that the images of $x$ in $\tilde I_1$ and in $\tilde I_2$ are the same. If $v$ has finite valence, this follows from the second assertion of Claim~1 which implies that $J^*_1\cup J^*_2$  
is also $U$-liftable and contains $[m^*_1,m^*_2]$, so that $(\Tilde m_1,\Tilde m^*_1,\Tilde m^*_2)$ is an isometric lift of $(m_1,m^*_1,m^*_2)$. 
If $v$ has infinite valence, then this is a consequence of Claim~3, showing that $\Tilde I_1$ and $\Tilde I_2$ both contain the point $\Tilde v$ of $U$ fixed by $G_v$ so $\Tilde v\in [\tilde m_1,\tilde c]$.    
\end{proof}

\subsection{Probability measures associated to a morphism}\label{sec-finite-width}

Given a morphism $f:S\to T$ from a simplicial metric tree $S\in\calo$ to an $\mathbb{R}$-tree $T\in\AT$, and a real number $t$, we let $\mathcal{S}_t(f)$ be the collection of all simplices $\Delta\in\Simp$ such that there exists a tree $U\in\calo$ of covolume $e^{-t}$, whose image in $\mathbb{P}\Os$ is contained in $\Delta$, such that $f$ factors through $U$. 
The goal of the present section is to associate to the morphism $f$ a sequence of probability measures $\mu_n(f)$ on $\Simp$, obtained by averaging uniform measures on the sets $\cals_t(f)$. The key proposition we need to establish is the following.

\begin{prop}\label{sphere-finite}
Let $T\in\AT$, let $S\in\Os$, and let $f:S\to T$ be a morphism.
\\ Then for all $t\in\mathbb{R}$, the set $\cals_t(f)$ is finite.
\end{prop}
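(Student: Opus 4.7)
The plan is to prove the stronger statement that the set
\[
\cals(f) \;:=\; \bigcup_{t\in\mathbb{R}}\cals_t(f)
\]
of \emph{all} simplices containing a tree through which $f$ factors (regardless of covolume) is already finite; this immediately yields the proposition. The key input is that $f$ is a morphism and $S/G$ is a finite graph, so the combinatorial complexity of $f$ is bounded.

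I would begin by fixing the coarsest $G$-equivariant subdivision $\Sigma_f$ of $S$ on which $f$ is piecewise isometric. Its non-trivial subdivision vertices are exactly the \emph{folding points} of $f$, i.e.\ the points of $S$ at which $f$ identifies two distinct incident directions into a single direction in $T$. By definition of a morphism, folding points are locally finite in $S$, and cocompactness of the $G$-action on $S$ then forces $\Sigma_f$ to have only finitely many orbits of vertices, hence of pieces. The decisive observation is combinatorial: for any factorization $f=q\circ p$ with $p:S\to U$ and $q:U\to T$ morphisms and $U\in\calo$, every folding point of $p$ is automatically a folding point of $f$, since whenever $p$ identifies two directions at a point $v$ the composition $f=q\circ p$ identifies them as well. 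Thus the minimal $p$-subdivision $\Sigma_p$ is a coarsening of $\Sigma_f$, and each piece of $\Sigma_p$ is a $G$-equivariant union of $\Sigma_f$-pieces.

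Next I would enumerate the possibilities. Modulo $G$ there are only finitely many coarsenings of $\Sigma_f$. Once $\Sigma_p$ is fixed, the morphism $p$ is determined up to equivariant isomorphism by a $G$-equivariant equivalence relation on the orbits of pieces of $\Sigma_p$ (specifying which pieces $p$ maps onto the same edge of $U$, together with the isometry used to match them), subject to the constraint that any such identification must already be realized by $f$ (otherwise $q$ would have to unfold it). Since all of this data lives in a finite set, there are only finitely many factorization patterns, and each pattern reconstructs uniquely a marked Grushko tree $U$---its marking being induced from that of $S$ via the quotient---and hence a single simplex $\Delta\in\Simp$. Therefore $\cals(f)$ is the image of a finite combinatorial data set, and in particular $|\cals_t(f)|\le|\cals(f)|<\infty$.

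The step I expect to require the most care is verifying that after replacing $\Sigma_p$ by its minimal refinement that includes the preimages of all branch points of $U$, each piece of $\Sigma_p$ really maps onto a full edge of $U$ (rather than partially overlapping with the image of another piece), so that the identification pattern is a truly discrete object; one must also check that this refinement still has its vertex set contained in the folding points of $f$, so that $\Sigma_p$ remains a coarsening of $\Sigma_f$. Once this combinatorial finiteness is established, the finiteness of $\cals_t(f)$ for every $t$ is automatic.
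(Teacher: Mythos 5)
The proposal sets out to prove the stronger statement that $\cals(f):=\bigcup_{t\in\mathbb{R}}\cals_t(f)$ is finite, but this stronger statement is false whenever $T\in\AT$. Since $T$ has dense orbits, a folding path guided by $f$ produces trees $U_t\in\calo$ of covolume decreasing to $0$ through which $f$ factors, and these trees must visit infinitely many simplices: if not, there would be a simplex $\Delta$ such that $[U_t]\in\Delta$ for all large $t$, and then $[T]=\lim[U_t]$ would lie in the closure of $\Delta$ in $\mathbb{P}\baro$. But that closure consists of (projective classes of) simplicial trees obtained by collapsing some edges of the underlying graph, and no such tree is arational. So $\cals(f)$ is infinite, and an argument that proves its finiteness must be wrong.

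The concrete gap is at the step you yourself flagged as requiring care. You want to refine $\Sigma_p$ so that it contains the $p$-preimages of all branch points of $U$, and you need this refined subdivision still to be a coarsening of $\Sigma_f$, i.e.\ to have its vertex set contained among the folding points of $f$. This is precisely what fails. When $p$ partially folds two directions at a vertex $v$ of $S$, the fold terminates at a new branch point of $U$ lying at distance $\ell$ from $v$, where $\ell$ is a continuous parameter taking any value in $(0,L]$ for an appropriate $L>0$; the morphism $q:U\to T$ generically folds past this point, so its $p$-preimages are \emph{not} folding points of $f$ (they are not even branch points or preimages of branch points of $T$). Thus the refined $\Sigma_p$ has vertices that vary with the continuous fold lengths, the ``identification pattern'' is not a discrete object, and the resulting trees $U$ range over infinitely many simplices. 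This is exactly why the proposition must use the fixed-covolume hypothesis, and why the paper's proof goes through the systole control of Lemmas~\ref{lem_systole} and~\ref{lem_systole2} together with the limiting argument via Lemma~\ref{morphism-arat-1} rather than a purely combinatorial count of factorizations.
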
 

Before proving the proposition, we will prove two preliminary lemmas.
Given a tree $S\in\calo$, we define the \emph{systole} of $S$ as the smallest translation length in $S$ of a nonperipheral element of $G$.

\begin{lemma}\label{lem_systole}
  Let $\lambda>0$. Let $T\in\baro$ be a tree with trivial arc stabilizers.
 Let $f:S\ra T$ be a morphism, and let $(U_i)_{i\in\mathbb{N}}\in \calo^{\mathbb{N}}$ be a sequence of trees of systole at least $\lambda$ such that
$f$ factors through all trees $U_i$.
\\ Then the set of simplices in $\Simp$ to which the trees $U_i$ project is finite.
\end{lemma}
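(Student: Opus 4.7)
The strategy combines a Gromov--Hausdorff compactness argument with local finiteness of outer space. Since $f = g_i \circ f_i$ with $f_i:S\to U_i$ and $g_i:U_i\to T$ morphisms (hence $1$-Lipschitz), for every $g\in G$ we have the two-sided bound
\[ \|g\|_T \le \|g\|_{U_i} \le \|g\|_S, \]
together with $\mathrm{covol}(U_i) \le \mathrm{covol}(S)$. Combined with the systole hypothesis $\|g\|_{U_i} \ge \lambda$ for nonperipheral $g$, this places the length functions of $(U_i)$ in a compact range, so by the Paulin / Culler--Morgan compactness of very small $(G,\calf)$-trees, every subsequence of $(U_i)$ admits a further subsequence converging in the equivariant Gromov--Hausdorff topology to some $U \in \baro$.

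I will then argue that $U \in \calo$. Since $U$ is very small, any nontrivial arc stabilizer is maximally cyclic and nonperipheral; its generator $g$ would be elliptic in $U$, contradicting $\|g\|_U \ge \lambda > 0$ obtained by passing to the limit---so arc stabilizers in $U$ are trivial. Using compactness of the space $\Mor$ of morphisms, I extract along a further subsequence limit morphisms $f_\infty:S\to U$ and $g_\infty:U\to T$ with $g_\infty\circ f_\infty=f$. Because $S/G$ is a finite graph and $f_\infty:S\to U$ is piecewise isometric, the image $f_\infty(S)$---which contains, hence equals, the minimal $G$-subtree $U$---is covered by finitely many orbits of arcs coming from the finite subdivision of $S$; consequently $U/G$ is a finite graph and $U$ is simplicial. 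Peripheral elements remain elliptic as limits of elliptic elements, while nonperipheral elements have translation length $\ge\lambda$, so $U$ is a Grushko $(G,\calf)$-tree and lies in $\calo$.

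The conclusion then follows from local finiteness of $\calo$. Grushko quotients have a uniformly bounded number of edges (first Betti number $N$, $k$ peripheral vertices, and free vertices of valence $\ge 3$ by minimality), so only finitely many simplices of $\Simp$ have $U$ in their closure---they correspond to the finitely many combinatorial blowups of $U/G$ of bounded additional complexity. Since $U_i\to U$ in $\calo$, eventually each $U_i$ projects into this finite set of simplices; applying the argument to any hypothetical subsequence of $(U_i)$ visiting infinitely many distinct simplices yields a contradiction. The main technical obstacle will be verifying that the GH-limit $U$ indeed lies in $\calo$ rather than $\partial\calo$---a priori a limit of Grushko trees could have dense-orbit Levitt components---and the key input is to exploit the limit morphism $f_\infty$ together with finiteness of the quotient $S/G$ to preclude this.
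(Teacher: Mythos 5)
Your route is genuinely different from the paper's. The paper proves the lemma by a purely combinatorial folding argument: it assumes infinitely many simplices arise, looks at the $f_i$-preimage in $S$ of the vertex sets of $U_i$, bounds the number of subdivision points per edge of $S$ using the systole, passes to a subsequence with constant combinatorics, and then iteratively folds pairs of edgelets --- producing a sequence $S^0, S^1, S^2, \dots$ with strictly decreasing numbers of edgelet orbits, a contradiction. (This is where the hypothesis that $T$ has trivial arc stabilizers is used: to show only finitely many combinatorial folds can occur at each stage.) Your approach instead extracts a Gromov--Hausdorff limit $U\in\baro$ of the $U_i$ and tries to argue $U\in\calo$, then invokes local finiteness of $\bbP\calo$.

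The compactness route can be made to work, but the step you flag as ``the main technical obstacle'' is in fact where the argument as written fails. You claim that since $f_\infty:S\to U$ is a morphism from a cocompact simplicial tree, the image $U = f_\infty(S)$ is ``covered by finitely many orbits of arcs\dots consequently $U/G$ is a finite graph and $U$ is simplicial.'' This implication is false: every very small tree with trivial arc stabilizers --- in particular, every tree with dense orbits --- admits an optimal morphism from some Grushko tree $S\in\calo$ (this is exactly how such trees are approximated; see the paper's use of \cite[Theorem~5.3]{Hor} in the proof of Lemma~\ref{morphism-arat-0}). Being covered by finitely many $G$-orbits of finite arcs does not force the quotient to be a finite graph, because the translates may overlap in a non-discrete pattern. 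So finiteness of $S/G$ plus the existence of $f_\infty$ does not by itself preclude dense-orbit Levitt components. The correct way to close the gap is available to you already and makes $f_\infty$ unnecessary for this step: you have already observed that $\|g\|_U = \lim\|g\|_{U_i}\ge\lambda$ for every nonperipheral $g\in G$, and used this to kill arc stabilizers. The same inequality rules out any nontrivial dense-orbit vertex tree in the Levitt decomposition of $U$ (such a component contains nonperipheral hyperbolic elements of arbitrarily small positive translation length), and it rules out nonperipheral elliptics, so $U$ is simplicial and relatively free, i.e.\ $U\in\calo$. With that repair, and the local finiteness of $\bbP\calo$, the argument goes through. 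A side remark: once fixed, your argument never uses that $T$ has trivial arc stabilizers (indeed it uses nothing about $T$ beyond giving the upper bound $\|\cdot\|_{U_i}\le\|\cdot\|_S$), whereas the paper's folding argument uses that hypothesis in an essential way; on the other hand, the paper's argument is self-contained and elementary, while yours relies on compactness of $\baro$, the Levitt decomposition, the behaviour of dense-orbit actions, and local finiteness of the outer space.
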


\begin{proof}
Assume by contradiction that the set of simplices spanned by $\{U_i\}_{i\in \bbN}$ is infinite.
Let $f_i:S\ra U_i$ be such that $f$ factors through $f_i$.
Consider the $f_i$-preimage $V_i\subset S$ of the set of vertices of $U_i$.

We claim that for each edge $e$ of $S$   (for the natural simplicial structure on $S$ coming where edges correspond to complementary components of generalized branch points), $\# (e\cap V_i)$ is bounded.
Otherwise, there exists oriented subedges $I,J\subset e$ (bounded by points in $V_i$)
that are at distance at most $\lambda/2$ from one another and whose images in $U_i$ are two oriented edges in the same orbit under some element $g_i$.
Since $f_i$ is an isometry when restricted to $e$, the translation length of $g_i$ in $U_i$ is positive and at most $\lambda/2$, contradicting our the hypothesis on the systole. This proves the claim.

Up to passing to a subsequence, we can therefore assume that the combinatorics of the subdivision defined by $V_i$ does not depend on $i$.
We denote by $S^0$ this combinatorial tree.

For each $i\in \bbN$, choose a pair of adjacent edgelets folded by $f_i$, and let $S^1_i$ be the combinatorial tree obtained by folding these two edgelets together. We claim that only finitely many combinatorial trees $S^1_i$ appear.
Otherwise, there exist  two edgelets $e,e'\subset S$ sharing a vertex $v$, and infinitely many elements $g_i\in G_v$ such that $e$ is identified with $g_i e'$.
Note that the precise (metric) subsegment of $S$ corresponding to $e$ depends on $i$. We denote by $E,E'$ the edges of $S$ (for its natural set of vertices) containing
$e$ and $e'$ respectively ($E$ and $E'$ do not depend on $i$).
Then $f(E)$ and $f(g_i E')$ share a common subsegment, so for all $i,j\in\mathbb{N}$, the element $g_ig_j\m$ fixes an arc in $T$, a contradiction.

Thus, up to extracting a subsequence, we can assume that $S^1_i=S^1$ does not depend on $i$ (as a combinatorial tree).
We now assume that we have constructed by induction combinatorial trees $S^1$, $S^2$, \dots, $S^k$
such that  for all $j< k$, the tree $S^{j+1}$ is obtained from $S^j$ by folding two edgelets,
and such that for each $i$,  
$f_i$ factors through 
these folds. 
A coherent set of metrics on $S^0,S^1,\dots,S^k$ is a metric for each $S^j$ such that
$S^0$ becomes isometric to $S$, and each $S^j\ra S^{j+1}$ is a morphism.
By induction we will also have that for each $i$ there exists a coherent set of metrics such that 
$f_i:S\ra U_i$ factors through $S^1,\dots,S^k$, hence so does $f:S\ra T$. 

To construct $S^{k+1}$, for each $i\in \bbN$, choose a pair of adjacent edgelets of $S^k$ folded by $f_i$ (they exist for infinitely many $i$ because we assume that
the set of simplices spanned by $\{U_i\}_{i\in \bbN}$ is infinite).
For each $i$, let $S^{k+1}_i$ be the combinatorial tree obtained by folding these two edgelets together.
As above, we claim that only finitely many combinatorial trees $S^{k+1}_i$ appear.
Indeed, if not, there exist two edgelets $e,e'\subset S^k$ sharing a vertex $v$, and infinitely many elements $g_i\in G_v$ such that $e$ is identified with $g_i e'$.
Let $\Tilde E, \Tilde E'$ be natural edges of $S$ containing a preimage of $e$ and $e'$ respectively.
Then $f(\tilde E)$ and $f(g_i \tilde E')$ share a common subsegment. Since $g_i\in G_v$ and arc stabilizers of $T$ are trivial, there are at most 4 indices $i$ such that $f(\tilde E)\cap g_if( \tilde E')$ is non-degenerate, a contradiction.
This proves the claim and allows to construct $S^{k+1}$. 

This allows to construct an infinite sequence of trees $S^1,\dots,S^k,\dots$.
Since $S^j$ has fewer orbits of edgelets than $S^{j-1}$, this is a contradiction. This completes the proof of the lemma.
\end{proof}

\begin{lemma}\label{lem_systole2} There exist $K,K'>0$ such that for every $\lambda>0$, the following holds.
\\ Let $U\in\calo$ be a tree of systole at least $\lambda>0$, and let $f:U\ra U'$ be a morphism with
$\vol(U'/G)\geq \vol(U/G)-\frac{\lambda}{K}$.
\\ Then the systole of $U'$ is at least $\lambda/K'$.
\end{lemma}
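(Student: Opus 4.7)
The plan is to argue by contrapositive: I will show there exist universal constants $K,K'>0$ (in fact $K=4$, $K'=2$ will suffice) such that if some non-peripheral $g\in G$ satisfies $\|g\|_{U'}=:s'<\lambda/K'$, then $vol(U/G)-vol(U'/G)\geq \lambda/K$. Since the systole of $U$ is at least $\lambda$, such a $g$ automatically has $\|g\|_U\geq \lambda$. After possibly replacing $g$ by a root (which remains non-peripheral since peripheral subgroups are closed under roots in a Grushko tree), we may assume that $g$ generates the setwise stabilizer of its axis $A\subset U$; a small amount of casework handles a potential order-two inversion, which in $\calo$ can only occur via a peripheral involution fixing a vertex of $A$.

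First I would verify that the quotient loop $A/\langle g\rangle$ embeds edge-wise into $U/G$. The point is that for $h\in G$, the subtree $hA\cap A$ is either all of $A$ (in which case $h\in \mathrm{Stab}(A)=\langle g\rangle$, since a longer overlap forces $hA=A$) or reduces to a single point. Consequently, any two distinct open sub-arcs of $A/\langle g\rangle$ remain disjoint in $U/G$, even if isolated vertices may be identified.

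Next, I would use the morphism $f$ to produce disjoint cancelling pairs. Since $f$ is equivariant and surjective (the image is a $G$-invariant subtree of the minimal tree $U'$), it descends to a surjective, piecewise isometric map $\bar f:U/G\to U'/G$. The image $\bar f(A/\langle g\rangle)$ is a parametrised loop of total length $\|g\|_U$ (each isometric piece of the morphism preserves length) representing the conjugacy class of $g$, whose shortest representative in $U'/G$ is the image axis loop, of length $s'$. Applying cyclic reduction in the graph $U'/G$, I would extract a finite family of disjoint pairs of sub-arcs $(I_j,J_j)$ of $A/\langle g\rangle$, with $\bar f(I_j)$ and $\bar f(J_j)$ the same arc of $U'/G$ traversed in opposite directions, and with
\[
\sum_j |I_j| \;=\; \frac{\|g\|_U - s'}{2}.
\]
The edge-wise embedding established above guarantees that the $I_j$ and $J_j$ are pairwise disjoint segments of $U/G$.

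Finally I would convert this into a volume bound. Writing $m(v)=|\bar f^{-1}(v)|$, the piecewise-isometric nature of $\bar f$ gives $vol(U/G)=\int_{U'/G} m(v)\,dv$ and hence $vol(U/G)-vol(U'/G)=\int_{U'/G}(m-1)\,dv$. On $\bigcup_j \bar f(I_j)$ we have $m\geq 2$, and an easy estimate accounting for possible overlap of pair images (using $(m-1)\geq m/2$ when $m\geq 2$) yields
\[
vol(U/G)-vol(U'/G) \;\geq\; \sum_j |I_j| \;=\; \frac{\|g\|_U - s'}{2} \;\geq\; \frac{\lambda - s'}{2} \;\geq\; \frac{\lambda}{4},
\]
when $s'<\lambda/2$. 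This proves the lemma with $K=4$ and $K'=2$.

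The main obstacle is the embedding step and the bookkeeping in Step~3: one must check carefully that even if several cancelling pairs overlap in $U'/G$, the contribution to $\int(m-1)$ still dominates $\sum|I_j|$; the inequality $(m-1)\geq m/2$ for $m\geq 2$ handles this. A secondary subtlety is controlling inversions in $\mathrm{Stab}(A)$ and non-primitivity of $g$, both of which can be absorbed by replacing $g$ by a generator of the cyclic subgroup of its axial stabilizer.
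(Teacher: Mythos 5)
Your claim that the quotient loop $A/\langle g\rangle$ embeds edge-wise into $U/G$ is false, and the supporting assertion --- that for $h\in G$ the intersection $hA\cap A$ is either all of $A$ or a single point, ``since a longer overlap forces $hA=A$'' --- is incorrect: two distinct translates of the axis of a root-free element in a simplicial tree can share a nondegenerate segment. Concretely, take $G=F_2=\langle a,b\rangle$ with $\calf=\emptyset$, let $U$ be the universal cover of the rose with unit edge lengths, and let $g=a^2b$ (a root-free, non-peripheral element). Its axis $A$ has fundamental domain $v_0\to av_0\to a^2v_0\to a^2bv_0$ (where $v_0$ is a lift of the vertex), which crosses two $a$-edges and one $b$-edge; thus $A/\langle g\rangle$ has length $3>2=vol(U/G)$ and cannot embed, and indeed $A\cap aA$ is exactly the full edge $[av_0,a^2v_0]$ even though $A\neq aA$. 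This breaks the heart of your argument: the disjointness in $U/G$ of the cancelling sub-arcs $(I_j,J_j)$ is precisely what you use to conclude $m\geq 2$ on $\bigcup_j \bar f(I_j)$ and hence to bound $vol(U/G)-vol(U'/G)$ from below by $\sum_j |I_j|$, and that disjointness fails once the loop revisits an edge of $U/G$. One might hope to rescue the count by noting that an \emph{innermost} cancelling pair necessarily emanates from a vertex in two distinct directions of $U/G$ and therefore lies in disjoint edges, but controlling the deeper levels of the cancellation forest is a genuinely new problem that the argument as written does not address.

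For comparison, the paper's proof avoids this global difficulty entirely by a local argument: using a pigeonhole on the oriented edge orbits (there are $2N$ of them, where $N$ is the number of edge orbits of $U$), it finds a \emph{single} edge $e$ on the axis of $g$ in $U$ of length at least $\lambda/4N$. The hypothesis $vol(U/G)-vol(U'/G)\leq\lambda/16N$ is then used to show that the central subsegment $[u,v]\subset e$ of length $\lambda/8N$ is not identified by $f$ with any other point of $U$, which forces $\|g\|_{U'}\geq\lambda/8N$. This gives $K=16N$, $K'=8N$; the universal constants $K=4$, $K'=2$ you propose would be a strengthening, but they are not established by the argument you gave.
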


\begin{proof}
  Let $g\in G$ be an element which is hyperbolic in $U'$. Its axis $l$ in $U$ contains 
an edge of length at least $\lambda/4N$, where $N$ is the number of orbits of edges of $U$.
Otherwise, $l$ would contain two oriented edges in the same orbit, and separated by at most $2N$ edges, so $S$ would have
a hyperbolic element of translation length at most $\lambda/2$, a contradiction.

Let $e$ be an edge in $l$ of length at least $\lambda/4N$, and let $K=16N$, and $K'=8N$.
Let $[u,v]\subset e$ be the central subsegment of length $\lambda/8N$, oriented so that $u,v,gu$ are in this order.
Since $\vol(U'/G)\geq \vol(U/G)-\frac{\lambda}{16N}$, no point in $[u,v]$ is identified with any other point in $U$.
In particular, $f([v,gu])\cap f([u,v])=\{f(v)\}$ and $f([g\m v,u])\cap f([u,v])=\{f(u)\}$.
This implies that $g\m f(v), f(u) ,f(v),gf(u)$ are aligned in this order in $U'$. Therefore $[f(u),f(v)]$ is contained in a fundamental domain
of the axis of $g$ in $U'$, so $||g||_{U'}\geq \lambda/8N$.
\end{proof}

\begin{proof}[Proof of Proposition \ref{sphere-finite}]
Assume towards a contradiction that $\cals_t(f)$ is infinite for some $t\in\mathbb{R}$.
Given a set of trees $\calu\subset \calo$, we denote by $\Simp(\calu)\subset \Simp$ the set of simplices 
obtained by forgetting the metric on the trees in $\calu$.

Let $\calu_0$ be the set of trees of covolume $e^{-t}$ through which $f$ factors.  
By assumption, the set $\Simp(\calu_0)$ is infinite. We denote by $\Delta_0$ the simplex containing $S$. For each $U\in\calu_0$, we let $S_{U,0}:=S$, and we choose a morphism $f_{U,0}:S\to U$ through which $f$ factors.

For every $i\in\mathbb{N}$, we are going to define inductively (see Figure~\ref{fig-newton}) a simplex $\Delta_i$, real numbers $v_i>e^{-t}$, $\lambda_i>0$, 
a set of trees $\calu_i\subseteq \calu_{i-1}$, and for each $U\in \calu_i$,  
a tree $S_{U,i}$ projecting to $\Delta_i$ and a morphism $f_{U,i}:S_{U,i}\to U$ such that the following holds:
\begin{itemize}
\item $\Simp(\calu_i)$ is infinite,
\item $\vol(S_{U,i}/G)\leq v_i$ and $\mathrm{systole}(S_{U,i})\in[ \lambda_i,2\lambda_i]$ for all $U\in \calu_i$,
\item $f_{U,i-1}:S_{U,i-1}\ra U$ factors through $f_{U,i}:S_{U,i}\ra U$.
\end{itemize}
We take for $v_0$ and $\lambda_0$ the covolume and the systole of $S$.

\begin{figure}[htb]
\begin{center}
\includegraphics[width=.8\textwidth]{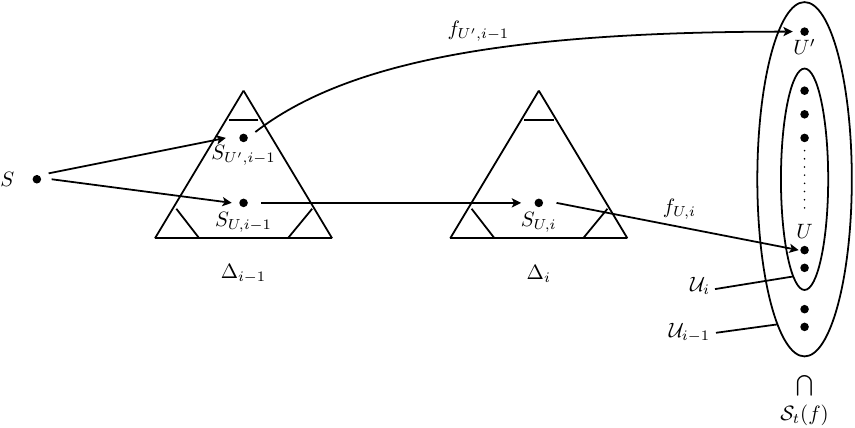}
\caption{The construction from the proof of Proposition~\ref{sphere-finite}. The corners in the simplices are drawn to represent the subspace made of trees with a systole bounded from below.}
\label{fig-newton}
\end{center}
\end{figure}

Let $v_{i+1}=v_i-\frac{\lambda_i}{K}$, where $K$ is the integer from Lemma~\ref{lem_systole2}.
If $e^{-t}\geq v_{i+1}$, then Lemma~\ref{lem_systole2} implies that the systole of the trees in $\calu_i$ is bounded from below.
Thus Lemma \ref{lem_systole} implies that $\Simp(\calu_i)$ is finite, a contradiction.

Thus $e^{-t}< v_{i+1}$, so
for each $U\in \calu_i$, one can choose a tree $S_{U,i+1}$ of covolume $v_{i+1}$ through which $f_{U,i}$ factors,
and let $f_{U,i+1}: S_{U,i+1}\to U$ be the induced morphism. 
Lemma~\ref{lem_systole2} implies that  the systole of the trees $S_{U,i+1}$ is bounded from below as $U$ varies in $\calu_i$.
  Lemma \ref{lem_systole} shows that as $U$ varies in $\calu_i$, the trees $S_{U,i+1}$ project to finitely many simplices of $\mathbb{P}\calo$.
 Let $\Delta_{i+1}$ 
be a simplex such that $S_{U,i+1}\in \Tilde \Delta_{i+1}$ for all trees $U$ in a set $\calu'_{i+1}\subset \calu_i$ with $\Simp(\calu'_{i+1})$ infinite. 
By the pigeonhole principle, there exists $\lambda_{i+1}>0$ such that the set $\calu_{i+1}$ of trees $U\in\calu'_{i+1}$
such that $\mathrm{systole}(S_{U,i+1})\in[\lambda_{i+1},2\lambda_{i+1}]$ is such that $\Simp(\calu_{i+1})$ is infinite.
This concludes our inductive construction.

Since $v_{i+1}=v_i-\lambda_i/K$ and $v_i\geq e^{-t}$ for all $i$, we have $\lambda_i\ra 0$.
Consider for each $i\in \bbN$, $U_i\in\calu_i$. The tree $S_{U_i,j}$ is defined for $i\geq j$.  Its length function is bounded from below by the length function of $T$, and bounded from above   when $j$ is fixed because $\vol(S_{U_i,j}/G)\le v_j$. Therefore, up to diagonal extraction of a subsequence,
we can assume that for each $j$, $S_{U_i,j}$ converges to a tree $S_{\infty,j}\in \baro$.
By continuity of the systole and the covolume on the closure of $\Tilde \Delta_j$, we have $\mathrm{systole}(S_{\infty,j})\leq 2\lambda_j$, and $\vol(S_{\infty,j}/G)\geq e^{-t}$.
There are $1$-Lipschitz maps $S_{\infty,j}\ra S_{\infty,j+1}$ and $S_{\infty,j}\ra T$. 
The trees $S_{\infty,j}$ converge to some tree $S_{\infty,\infty}$ as $j\ra \infty$ and there is a $1$-Lipschitz map $S_{\infty,\infty}\ra T$.
By semi-continuity of the volume, we have $\vol(S_{\infty,\infty}/G)\geq e^{-t}$, 
so Lemma~\ref{morphism-arat-1} implies that $S_{\infty,\infty}$ is a Grushko tree. In particular, the systole of  $S_{\infty,\infty}$ is positive and 
bounds $2\lambda_j$ from below, a contradiction.
\end{proof}

\begin{rk}\label{rk-covol}
Notice that the same proof would also have worked if in the definition of $\cals_t(f)$, we had replaced the condition that the covolume of $U$ is equal to $e^{-t}$  by the condition that this covolume is at least $e^{-t}$.  

Notice also that for all $t$ 
sufficiently large (so that the source of $f$ has covolume at least $e^{-t}$), the set $\cals_t(f)$ is nonempty.
\end{rk}

Fix an arational tree $T\in \AT$ and $f\in \Opt_{\ra T}$ an optimal morphism with range $T$.
For all $t>0$ such that $\cals_t(f)$ is nonempty, we let $\nu_t(f)$ be the uniform probability measure on the finite set $\cals_t(f)$. For all $n\in\mathbb{N}$, we then define a probability measure $\mu_n(f)$   on $\Simp$ by letting $$\mu_n(f):=\frac{1}{n}\int_{n}^{2n}\nu_t(f)d\mathrm{Leb}(t)$$ if $\mathcal{S}_{e^{-n}}(f)\neq\emptyset$, and otherwise we just let $\mu_n(f)$ be some fixed probability measure $\mu_0$ on $\Simp$. As a consequence of the factorization lemma (Lemma~\ref{lem_blow_up}, see also Remark~\ref{rk-vol}),  if $f,g\in\Opt_{\ra T}$ have the same turning class, then $\cals_t(f)=\cals_t(g)$ for all sufficiently large $t$, and therefore we obtain the following fact.

\begin{cor}\label{cor-deco}
Let $T\in \AT$ be an arational tree. Let $f,g\in \Opt_{\ra T}$ be two optimal morphisms with range $T$ and having the same turning class.  
\\ Then $\mu_n(f)=\mu_n(g)$ for all sufficiently large $n\in\mathbb{N}$.
\qed
\end{cor}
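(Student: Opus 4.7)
The plan is to deduce the corollary directly from the factorization lemma (Lemma~\ref{lem_blow_up}) combined with Remarks~\ref{rk-vol} and~\ref{rk-covol}. The key point is that having the same turning class is a symmetric condition, so applying the factorization lemma twice (swapping the roles of $f$ and $g$) will give us equality of the sets $\cals_t(f)$ and $\cals_t(g)$ for all sufficiently large $t$.

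First, I would apply Lemma~\ref{lem_blow_up} to the pair $(f,g)$ and obtain some $\epsilon_1 > 0$ such that any tree $U \in \calo$ through which $g$ factors with $BBT(g_U) < \epsilon_1$ is also a tree through which $f$ factors; by Remark~\ref{rk-vol}, this hypothesis is implied by $\mathrm{vol}(U/G) \le \epsilon_1$. Swapping the roles of $f$ and $g$ (which is legitimate since both morphisms have the same turning class, and since $T \in \AT$ has trivial arc stabilizers because arational trees have dense orbits), I obtain $\epsilon_2 > 0$ with the symmetric property. Setting $\epsilon := \min(\epsilon_1, \epsilon_2)$, I conclude that for every $U \in \calo$ with $\mathrm{vol}(U/G) \le \epsilon$, the morphism $f$ factors through $U$ if and only if $g$ does.

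Translating this into the language of $\cals_t$: pick $t_0$ large enough that $e^{-t_0} \le \epsilon$. Then for every $t \ge t_0$, the sets $\cals_t(f)$ and $\cals_t(g)$ of simplices coincide, and therefore the uniform probability measures $\nu_t(f)$ and $\nu_t(g)$ coincide (with the convention that both are undefined simultaneously when the common set is empty, but by Remark~\ref{rk-covol} both are in fact nonempty once $t$ is large enough). Consequently, for all $n \ge t_0$ large enough to also guarantee nonemptiness of the relevant sphere sets (so that we are in the integral regime of the definition rather than the fallback $\mu_0$),
\[
\mu_n(f) \;=\; \frac{1}{n}\int_n^{2n} \nu_t(f)\, d\mathrm{Leb}(t) \;=\; \frac{1}{n}\int_n^{2n} \nu_t(g)\, d\mathrm{Leb}(t) \;=\; \mu_n(g),
\]
which is the desired equality.

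I do not expect any serious obstacle: the only points to be careful about are (i) applying the factorization lemma in both directions to turn an inclusion of factorizations into an equality, (ii) translating the $BBT$ hypothesis into a covolume hypothesis via Remark~\ref{rk-vol} so as to match the definition of $\cals_t$, and (iii) ensuring that both measures $\mu_n(f)$ and $\mu_n(g)$ are computed using the integral formula rather than the fallback $\mu_0$, which is handled by Remark~\ref{rk-covol}.
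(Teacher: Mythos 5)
Your proof is correct and takes essentially the same route as the paper, which immediately before the corollary states that the factorization lemma (applied symmetrically, with Remark~\ref{rk-vol} translating BBT to covolume) gives $\cals_t(f)=\cals_t(g)$ for all large $t$, whence the equality of the averaged measures $\mu_n$. Your spelling out of the two-sided application of Lemma~\ref{lem_blow_up}, the role of Remark~\ref{rk-covol} in avoiding the fallback measure, and the fact that arational trees have dense orbits (hence trivial arc stabilizers) is exactly the content the paper leaves implicit.
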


In the sequel, in order to have an action of  $\Out(G,\calf)$ on a compact space, we will need to work with projective classes of trees instead of isometry classes of trees. The next lemma will ensure that the measures $\mu_n$ do not depend too strongly on a choice of representative in a projective class.  Given a tree $T\in\baro$ and $\lambda>0$, we denote by $\lambda.T$ the tree obtained from $T$ by dilating the metric by $\lambda$. Given an optimal morphism $f:S\to T$, we let $\lambda f:\lambda.S\to\lambda.T$ be the corresponding morphism. 

\begin{lemma}\label{shift}
Let $T\in \AT$, $f\in\Opt_{\ra T}$, and let $\lambda>0$. 
\\ Then $||\mu_n(f)-\mu_n(\lambda.f)||_1\to 0$ as $n$ goes to $+\infty$.
\end{lemma}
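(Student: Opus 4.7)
The plan is to translate the rescaling $T\mapsto \lambda T$ into a translation on the parameter $t$ that indexes the uniform measures $\nu_t$, and then exploit that the averaging is done over an interval of length $n$.

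\textbf{Key observation.} Since simplices in $\Simp$ are projective classes of trees, scaling a tree by $\lambda$ does not change the simplex it belongs to. On the other hand, covolume is rescaled: $covol(\lambda U)=\lambda\, covol(U)$. Moreover, $\lambda f:\lambda S\to \lambda T$ factors through $U'$ (with intermediate morphism $\lambda S\to U'\to \lambda T$) if and only if $f$ factors through $\lambda^{-1}U'$. Taking $U=\lambda^{-1}U'$, one gets $covol(U')=e^{-t}$ iff $covol(U)=e^{-(t+\log\lambda)}$, so first I would deduce the identity
$$\cals_t(\lambda f)=\cals_{t+\log\lambda}(f).$$
In particular $\nu_t(\lambda f)=\nu_{t+\log\lambda}(f)$ whenever both sides are defined.

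\textbf{Translation of the averaging interval.} Let $c:=\log\lambda$, and assume without loss of generality that $c\geq 0$ (the case $c<0$ is symmetric, or follows by exchanging the roles of $f$ and $\lambda f$). By Remark~\ref{rk-covol}, the set $\cals_t(f)$ is nonempty for all $t\geq t_0$ for some $t_0$ depending on $f$. For $n\geq \max(t_0,c)$, both $\mu_n(f)$ and $\mu_n(\lambda f)$ are given by the integral formula, and a linear change of variables yields
$$\mu_n(\lambda f)=\frac{1}{n}\int_n^{2n}\nu_{t+c}(f)\,dt=\frac{1}{n}\int_{n+c}^{2n+c}\nu_s(f)\,ds.$$

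\textbf{Comparing the two averages.} Subtracting, the contributions over the overlap $[n+c,2n]$ cancel, so
$$\mu_n(\lambda f)-\mu_n(f)=\frac{1}{n}\left(\int_{2n}^{2n+c}\nu_s(f)\,ds-\int_n^{n+c}\nu_s(f)\,ds\right).$$
Since each $\nu_s(f)$ is a probability measure on $\Simp$, each of these two integrals has $\ell^1$-norm at most $c$. Therefore
$$\|\mu_n(\lambda f)-\mu_n(f)\|_1\leq \frac{2c}{n}=\frac{2|\log\lambda|}{n},$$
which goes to $0$ as $n\to\infty$. There is no real obstacle here; the only thing to watch is the large-$n$ regime guaranteeing nonemptiness of $\cals_t(f)$ on both intervals, which is handled by Remark~\ref{rk-covol}.
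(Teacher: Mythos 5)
Your proof is correct and follows essentially the same approach as the paper: the change of variables $\nu_t(\lambda f)=\nu_{t+\log\lambda}(f)$, followed by the observation that the overlapping parts of the two averaging integrals cancel, leaving two tails each of mass at most $|\log\lambda|/n$. Your added justification of the identity $\cals_t(\lambda f)=\cals_{t+\log\lambda}(f)$ via the covolume rescaling is a useful elaboration, but the core argument is the one the paper gives.
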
 

\begin{proof}
We have 
\begin{eqnarray*}
\mu_n(\lambda.f)&=&\frac{1}{n}\int_{n}^{2n}\nu_t(\lambda.f)d\text{Leb}(t)\\
& =&\frac{1}{n}\int_{n}^{2n} \nu_{t+\log\lambda}(f)d\text{Leb}(t). 
\end{eqnarray*}
\noindent Therefore 
$$\mu_n(\lambda.f)-\mu_n(f) =  
\frac{1}{n}\int_{2n}^{2n+\log\lambda} \nu_{t'}(f)d\text{Leb}(t') -\frac{1}{n}\int_n^{n+\log\lambda}\nu_{t'}(f)d\text{Leb}(t'),$$
\noindent so  $$||\mu_n(f)-\mu_n(\lambda.f)||_1\le  \frac{2|\log\lambda|}{n}$$ and the conclusion follows.
\end{proof}

As a consequence of Corollary~\ref{cor-deco} and Lemma~\ref{shift}, we obtain the following fact.

\begin{cor}\label{cor-shift}
Let $T\in \AT$ and consider $f,g\in\Opt_{\ra T}$  two morphisms having  
the same turning class, and let $\lambda>0$. 
\\ Then $||\mu_n(f)-\mu_n(\lambda.g)||_1\to 0$ as $n$ goes to $+\infty$.
\qed 
\end{cor}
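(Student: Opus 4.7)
The plan is to simply combine the two results that immediately precede the statement, using the triangle inequality in $\ell^1(\Simp)$. First I would introduce the auxiliary morphism $g \in \Opt_{\to T}$ itself (without the rescaling by $\lambda$) and write
$$\|\mu_n(f) - \mu_n(\lambda.g)\|_1 \;\le\; \|\mu_n(f) - \mu_n(g)\|_1 \;+\; \|\mu_n(g) - \mu_n(\lambda.g)\|_1.$$

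For the first summand, the hypothesis says that $f$ and $g$ are both optimal morphisms with range $T$ and have the same turning class, so Corollary~\ref{cor-deco} applies directly and gives $\mu_n(f) = \mu_n(g)$ for all sufficiently large $n$. Hence this term is actually zero for $n$ large enough, which is stronger than what we need.

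For the second summand, Lemma~\ref{shift} applied to the morphism $g$ (noting that $\lambda.g \in \Opt_{\to \lambda.T}$ is the rescaling of $g$) asserts exactly that $\|\mu_n(g) - \mu_n(\lambda.g)\|_1 \to 0$ as $n \to \infty$. Adding the two bounds yields the desired conclusion.

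There is really no obstacle here: the statement is a formal consequence of the two lemmas, and the only thing to notice is that one should compare $f$ with $\lambda.g$ via the intermediate morphism $g$ rather than directly, because Corollary~\ref{cor-deco} requires the two morphisms to have the \emph{same} range tree $T$ (not $T$ and $\lambda.T$). All the substantive work — the factorization argument underlying Corollary~\ref{cor-deco}, and the change-of-variables estimate $\|\mu_n(f)-\mu_n(\lambda.f)\|_1 \le 2|\log\lambda|/n$ from Lemma~\ref{shift} — has already been carried out.
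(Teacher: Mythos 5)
Your proof is correct and is exactly the argument the paper intends (it labels the corollary as "a consequence of Corollary~\ref{cor-deco} and Lemma~\ref{shift}" and gives no further details): interpose $g$, apply Corollary~\ref{cor-deco} to the pair $(f,g)$ and Lemma~\ref{shift} to the pair $(g,\lambda.g)$, and conclude by the triangle inequality.
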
 

\begin{lemma}\label{sphere-borel}
For every $n\in\mathbb{N}$, the map $\mu_n:\Opt_{\ra\AT}\to \mathrm{Prob}(\Simp)$ is Borel.
\end{lemma}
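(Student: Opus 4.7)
Since $\Simp$ is countable and $\text{Prob}(\Simp)$ carries the pointwise-convergence topology (agreeing with the subspace topology from $\ell^1(\Simp)$), the map $\mu_n$ is Borel if and only if the coordinate function $f\mapsto\mu_n(f)(\{\Delta\})$ is Borel for every $\Delta\in\Simp$. Unwinding the definition,
\[
\mu_n(f)(\{\Delta\}) \;=\; \frac{1}{n}\int_n^{2n}\frac{\mathbbm{1}_{\Delta\in\cals_t(f)}}{|\cals_t(f)|}\,dt
\]
on the set $\{f:\cals_n(f)\neq\emptyset\}$ (with the convention $0/0=0$), and the constant $\mu_0(\{\Delta\})$ on its complement. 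By Fubini--Tonelli, the whole problem reduces to showing that for every $\Delta\in\Simp$ the set
\[
E_\Delta \;:=\; \bigl\{(f,t)\in\Opt_{\ra\AT}\times\bbR \;:\; \Delta\in\cals_t(f)\bigr\}
\]
is Borel. Indeed, $|\cals_t(f)|=\sum_{\Delta\in\Simp}\mathbbm{1}_{E_\Delta}(f,t)$ is then a pointwise-finite (by Proposition~\ref{sphere-finite}) sum of Borel indicators, hence jointly Borel, and Borel-ness of $\{f:\cals_n(f)\neq\emptyset\}=\bigcup_{\Delta}\{f:(f,n)\in E_\Delta\}$ follows as a countable union of Borel slices.

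To establish that $E_\Delta$ is Borel, I would rewrite $(f,t)\in E_\Delta$ as the existence of a composable pair of morphisms $(g_1,g_2)\in\Mor\times\Mor$ with $s(g_1)\in\calo$, $r(g_1)=s(g_2)\in\tilde\Delta$ of covolume $e^{-t}$, $r(g_2)\in\AT$, and $g_2\circ g_1=f$. The set
\[
\calw_\Delta \;:=\; \bigl\{(g_1,g_2)\in\Mor\times\Mor : s(g_1)\in\calo,\ r(g_1)=s(g_2)\in\tilde\Delta,\ r(g_2)\in\AT\bigr\}
\]
is Borel in $\Mor\times\Mor$ by continuity of the source and range maps, Lemma~\ref{opt-bor}, and the fact that $\AT$ is Borel (\cite[Lemma~5.5]{Hor2}). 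Composition of morphisms is continuous on $\calw_\Delta$, so the map $\Psi:(g_1,g_2)\mapsto\bigl(g_2\circ g_1,\,-\log\text{covol}(r(g_1))\bigr)$ is continuous, and $E_\Delta$ is exactly its image.

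The main obstacle is that the image of a Borel set under a continuous map is, in general, only analytic rather than Borel. To upgrade, I would verify that $\calw_\Delta$ is $\sigma$-compact. The space $\calo$ decomposes as a countable union over $\Delta'\in\Simp$ of the open simplices $\tilde{\Delta'}$, each itself $\sigma$-compact as an open simplex in a finite-dimensional vector space. Once the simplex containing $s(g_1)$ and a compact range for $\text{covol}(r(g_1))$ are prescribed, the length functions of all trees involved are uniformly bounded (that of $r(g_2)\in\AT$ being dominated by that of $s(g_2)\in\tilde\Delta$), and standard equivariant Gromov--Hausdorff compactness, combined with the fact that morphisms are $1$-Lipschitz, makes the resulting set of composable pairs compact. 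Taking a countable union over these choices exhibits $\calw_\Delta$ as $\sigma$-compact, whence $E_\Delta=\Psi(\calw_\Delta)$ is $\sigma$-compact and in particular Borel. The technical heart of the argument is the bookkeeping for this $\sigma$-compactness claim, which is essentially an extension of Lemma~\ref{opt-bor} from single morphisms to composable pairs.
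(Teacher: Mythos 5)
Your overall strategy is the same as the paper's: reduce to Borelness of the slice set $E_\Delta=\{(f,t):\Delta\in\cals_t(f)\}$, express membership as the existence of a factorization $f=h\circ g$ through a tree of covolume $e^{-t}$ in $\tilde\Delta$, and then upgrade the resulting analytic set to Borel by exhibiting the relevant parameter space of composable pairs as $\sigma$-compact. However, the $\sigma$-compactness claim, which you yourself flag as the ``technical heart,'' has two genuine gaps.

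First, you build $\calw_\Delta$ with the constraint $r(g_2)\in\AT$, but $\AT$ is only Borel, not closed, in $\baro$; a set cut out by a non-closed condition will not in general be $\sigma$-compact, so as stated the reduction breaks. The paper avoids this by \emph{not} putting the arationality of the target into its compact sets $X_{\tau,\delta}$: it works with morphisms into arbitrary trees in $\baro$, and only at the very end intersects the resulting $\sigma$-compact set with the Borel set $[-n,n]\times\Opt_{\ra\AT}$. Second, and more seriously, your compactness argument only records upper bounds on length functions (source in a fixed simplex, covolume of $r(g_1)$ in a compact interval). But an upper bound does not give compactness of the morphism space: the source trees can still degenerate (edge lengths going to $0$), and even if you restrict to a compact piece of $\tilde{\Delta'}$, the length function of the target $T$ can accumulate at $0$, so that limits escape $\baro$ in a way that destroys the morphism. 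The paper fixes both issues by introducing a systole lower bound $\eps>0$ (the compact sets $\Tilde\sigma_{v,\eps}$) and by proving a nontrivial lemma that for any optimal morphism $f:S\to T$ with $S\in\Tilde\sigma_{v,\eps}$ there is an element from a \emph{finite} set $F\subset G$ whose axis is legal, forcing $l_T(g)=l_S(g)\ge\eps$; this is what makes $\Opt_{\Tilde\sigma_{v,\eps}}$ compact. Your proposal does not supply (or gesture at) any such lower-bound mechanism, so the reduction to a $\sigma$-compact set is not actually established. If you insert the systole parameter $\delta=1/k$ as an extra countable index, drop the $\AT$ constraint from $\calw_\Delta$ in favor of a final Borel intersection, and add the legal-axis argument for the lower bound on the target's length function, the proof closes and essentially reproduces the paper's.
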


\begin{proof}
 Given $\tau\in\Simp$, let $Z_\tau\subset \bbR\times \Opt_{\ra\AT}$ be defined by
$$Z_\tau=\{(t,f)\in \bbR\times \Opt_{\ra\AT}\,|\ \tau\in \cals_t(f)\}.$$ 
It suffices to prove that $Z_\tau$ is Borel
since this  implies that the map  
$$\nu:\bbR\times(\Opt_{\ra\AT})\to \mathrm{Prob}(\Simp)$$
sending $(t,f)$ to $\nu_t(f)$ is Borel, and the lemma follows.

Given a simplex $\sigma\in \Simp$ 
and $\eps,v >0$, denote by $\Tilde \sigma_{v,\eps}$ the subset of unprojectivized Outer space $\calo$ defined as the closure of the set of trees projecting to $\sigma$, with covolume at most $v$,
and systole at least $\eps$. This is a compact set and the space $\Opt_{\Tilde \sigma_{v,\eps}}$  of all optimal morphisms $f:S\ra T$ with  $S\in\Tilde\sigma_{v,\eps}$
and $T$ arbitrary is compact. Indeed, the bound on the covolume bounds the length functions of $S$ and $T$ from above, and it suffices to check that the length function
of $T$ cannot accumulate to $0$. Now there is a finite set $F\subset G$ of non-peripheral elements such that for all $f\in \Opt_{\Tilde \sigma_{v,\eps}}$, there exists some element $g\in F$ whose axis in $S$ is legal (see the proof of \cite[Theorem~4.7]{Hor3}), so $l_T(g)=l_S(g)$ is bounded from below by the systole of $S$ which is at least $\eps$.

Let us now prove that $Z_\tau$ is Borel.
Since there are countably many simplices, it suffices to prove that for each $n\in \bbN$ and every $v,\eps>0$, the set
$$Z'=Z_\tau\cap \left( [-n,n]\times \Opt_{\Tilde \sigma_{v,\eps}\ra\AT}\right)= \left\{(t,f)\in [-n,n]\times \Opt_{\Tilde \sigma_{v,\eps}\ra \AT}\,|\ \tau\in \cals_t(f)\right\}$$ 
is Borel. Let $\tilde \tau\subset \calo$ be the set of trees whose projective class lies in $\tau$.
Given $\delta>0$, let $X_{\tau,\delta}$ be the subset of $[-n,n]\times \Opt_{\Tilde \sigma_{v,\eps}}\times \Opt_{\Tilde \sigma_{v,\eps}\ra \Tilde\tau} \times \Opt_{\Tilde \tau}$ made of all tuples $(t,f,g,h)$ such that  
$f=h\circ g$, the covolume of the source of $h$ equals $e^{-t}$, and the systole of the source of $h$ is equal to or larger than $\delta$. 
We note that $f,g,h$ belong to the compact spaces $\Opt_{\Tilde \sigma_{v,\eps}}$, $\Opt_{\Tilde \sigma_{v,\eps}\ra \Tilde\tau}$ and $\Opt_{\Tilde\tau_{e^{-t},\delta}}$, respectively. Since composition is continuous, and since the covolume and systole are continuous functions on $\Tilde \tau$, the set $X_{\tau,\delta}$ is compact. Let $X'_{\tau,\delta}$ be the projection of $X_{\tau,\delta}$ to the first two coordinates. Then  $$Z'=\bigcup_{k\in\mathbb{N}}X'_{\tau,\frac{1}{k}}\cap ([-n,n]\times \Opt_{\ra\AT}),$$ and therefore $Z'$ is Borel.  
\end{proof}

\subsection{Measurability considerations}\label{sec-measurability}

This technical section gives tools to prove some measurability
properties. It can be omitted in a first reading if the reader wishes
to ignore all measurability considerations. 

\subsubsection{Measurable enumeration of directions in trees}\label{sec-dir}

Let $\{(g,h)_i\}_{i\in\mathbb{N}}$ be an enumeration of $G^2$. Given $T\in\baro$, we say that $(g,h)\in G^2$ is a \emph{disjoint pair} for $T$ if $g$ and $h$ are both hyperbolic in $T$, and their axes are disjoint. Given $n\in\mathbb{N}$ and $T\in \baro$, we let  $(g_n(T),h_n(T))$ be the $n^{\text{th}}$ pair of elements in the above enumeration that is a disjoint pair for $T$.

\begin{lemma}\label{B-mes}
For all $n\in\mathbb{N}$ and all $(g,h)\in G^2$, the set $$B_{n,g,h}:=\{T\in\baro|(g_n(T),h_n(T))=(g,h)\}$$ is a Borel subset of $\baro$.
\end{lemma}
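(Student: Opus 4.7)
The plan is to verify that each condition defining $B_{n,g,h}$ is Borel, by leveraging the continuity of translation length functions on $\baro$.

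First, I would observe that for every $\gamma \in G$, the translation length function $\ell_\gamma : T \mapsto \|\gamma\|_T$ is continuous on $\baro$. This is standard: the Gromov--Hausdorff topology on very small $(G,\calf)$-trees agrees with the topology pulled back from the length-function embedding $\baro \hookrightarrow \mathbb{R}^G$ (compare \cite{Hor}). In particular, for any finite Boolean combination of strict/non-strict inequalities in translation lengths of finitely many group elements, the resulting subset of $\baro$ is Borel.

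Next, I would show that for every fixed pair $(g',h') \in G^2$, the set
$$D_{g',h'} := \{T \in \baro \mid (g',h') \text{ is a disjoint pair for } T\}$$
is an open subset of $\baro$. The hyperbolicity conditions $\|g'\|_T > 0$ and $\|h'\|_T > 0$ are clearly open. Assuming hyperbolicity, a classical $\mathbb{R}$-tree computation gives that $A_{g'}$ and $A_{h'}$ are disjoint if and only if $\|g'h'\|_T > \|g'\|_T + \|h'\|_T$. The direction $(\Rightarrow)$ uses the identity $\|g'h'\|_T = \|g'\|_T + \|h'\|_T + 2\, d(A_{g'}, A_{h'})$, valid when the axes do not meet (two non-meeting closed convex subsets of an $\mathbb{R}$-tree are connected by a unique bridge of positive length). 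The converse follows from a short case analysis: if the axes meet at a single point $p$, then a direct computation using the formula $d(x, \gamma x) = \|\gamma\|_T + 2\,d(x,A_\gamma)$ yields $d(p, g'h'\cdot p) = \|g'\|_T + \|h'\|_T$; and if the axes share an arc, one checks similarly that $\|g'h'\|_T \le \|g'\|_T + \|h'\|_T$. Hence $D_{g',h'}$ is a Borel (in fact open) subset of $\baro$.

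Finally, I would deduce Borel-measurability of $B_{n,g,h}$ by bookkeeping. Let $i = i(g,h)$ be the unique index such that $(g,h)_i = (g,h)$ in the fixed enumeration. Then $(g_n(T), h_n(T)) = (g,h)$ holds if and only if $(g,h)$ itself is a disjoint pair for $T$ and, among the pairs $(g,h)_0, \dots, (g,h)_{i-1}$, exactly $n$ (up to indexing convention) are disjoint pairs for $T$. This yields the explicit formula
$$B_{n,g,h} \;=\; D_{g,h} \;\cap\; \bigcup_{\substack{S \subseteq \{0,\dots,i-1\}\\ |S|=n}} \Big( \bigcap_{j \in S} D_{(g,h)_j} \Big) \cap \Big( \bigcap_{j \in \{0,\dots,i-1\}\setminus S} (\baro \setminus D_{(g,h)_j}) \Big),$$
which is a finite Boolean combination of Borel subsets of $\baro$, hence Borel. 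The only genuinely non-formal step is the translation-length criterion for disjoint axes; modulo this classical fact, the proof is pure measure-theoretic bookkeeping.
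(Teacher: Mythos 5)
Your proposal is correct and follows essentially the same route as the paper: both reduce disjointness of axes to the translation-length inequality $\|gh\|_T > \|g\|_T + \|h\|_T$ (which the paper cites from \cite{CM} while you re-derive it), observe this is an open condition by continuity of length functions on $\baro$, and then express $B_{n,g,h}$ as a finite Boolean combination of such sets. The only difference is that you spell out the Boolean bookkeeping explicitly where the paper leaves it implicit.
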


\begin{proof}
It is well-known (see \cite[1.8]{CM}) that $(g,h)$ is a disjoint pair for $T$ if and only if $||g||_T,||h||_T>0$ and
$||gh||_T > ||g||_T + ||h||_T$, an open condition. The fact that $(g,h)$ is the   $n^{\text{th}}$ disjoint pair in the enumeration can be expressed a   Boolean combination of such open sets. Therefore $B_{n,g,h}$ is a Borel set.
\end{proof}

Given $g\in G$ and $T\in\baro$ such that $g$ is hyperbolic in $T$, we denote by $C_g(T)$ the axis of $g$ in $T$. Given a disjoint pair $(g,h)\in G^2$ for $T$, we define $v_{(g,h)}(T)$ as the endpoint in $C_{g}(T)$ of the bridge joining $C_{g}(T)$ to $C_{h}(T)$.
We also define $d_{(g,h)}(T)$ as the branch direction at $v_{(g,h)}(T)$ pointing towards $C_{h}(T)$. Given $n\in\mathbb{N}$, we then let $v_n(T):=v_{(g_n(T),h_n(T))}(T)$ and $d_n(T):=d_{(g_n(T),h_n(T))}(T)$, and we let $v_n'(T):=v_{(h_n(T),g_n(T))}(T)$. 
In particular $d_n(T)$ is the direction based at $v_n(T)$ pointing towards $v'_n(T)$.
Notice that for any branch direction $d$ in $T$, there exists $n\in \bbN$ such that $d=d_{n}(T)$. 
Similarly, any branch point arises as $v_n(T)$ for some $n\in\mathbb{N}$, see \cite{Pau}. 

\begin{lemma}\label{dist-mes}
  For all $n,m\in\bbN$, and $g\in G$, the maps $T\mapsto d_T(v_n(T),gv_m(T))$ and
 $T\mapsto d_T(v_n(T),C_g(T))$ are Borel.
\end{lemma}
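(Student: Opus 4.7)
The plan is to use Lemma~\ref{B-mes} to partition the domain into countably many Borel pieces on which the pairs $(g_n(T),h_n(T))$ and $(g_m(T),h_m(T))$ are both constant, and then argue that on each piece the distances depend \emph{continuously} on $T$ in the equivariant Gromov--Hausdorff topology. Since a function that is Borel on each of countably many Borel pieces of a partition is Borel, this will suffice. Concretely, for any $(g,h),(g',h')\in G^2$, the intersection $B_{n,g,h}\cap B_{m,g',h'}$ is Borel, and on this piece we have $v_n(T)=v_{(g,h)}(T)$ and $gv_m(T)=v_{(gg'g\m,gh'g\m)}(T)$. For the second assertion I further intersect with the open condition $\{T\,|\,\|g\|_T>0\}$ to separate the hyperbolic and elliptic cases for $g$.

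The problem thus reduces to two continuity statements. First: for fixed elements $a,b,c,d\in G$, the map $T\mapsto d_T(v_{(a,b)}(T),v_{(c,d)}(T))$ is continuous on the open subset of $\baro$ where both $(a,b)$ and $(c,d)$ are disjoint pairs. Second: for fixed elements $a,b,g\in G$, the map $T\mapsto d_T(v_{(a,b)}(T),C_g(T))$ is continuous on the relevant Borel pieces where $(a,b)$ is a disjoint pair, separately in the cases $\|g\|_T>0$ and $\|g\|_T=0$.

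Both statements follow from the classical fact, going back to Culler--Morgan \cite{CM}, that in the equivariant Gromov--Hausdorff topology the maps $T\mapsto \|w\|_T$ are continuous on $\baro$ for each $w\in G$, and any distance between canonically defined points in the convex hull of finitely many axes (bridge endpoints, projections, branch points, \dots) can be written as a tropical expression in finitely many translation lengths of words in the elements involved. For example, the length of the bridge between two disjoint axes $C_a(T)$ and $C_b(T)$ is an explicit combination of $\|a\|_T$, $\|b\|_T$ and $\|ab\|_T$ (and its variant with $b\m$), and the analogous formulas for four disjoint axes compute all the distances between bridge endpoints, giving continuity in statement~(i). For statement~(ii), in the hyperbolic subcase the projection of a canonically defined point onto the axis of $g$ admits a similar tropical description in terms of length functions of words in $a,b,g$; in the elliptic subcase I use the classical $\bbR$-tree formula $d_T(x,\text{Fix}(g))=\tfrac12 d_T(x,gx)$, which again reduces to continuous length-function expressions since $d_T(v_{(a,b)}(T),g\cdot v_{(a,b)}(T))$ is itself such an expression.

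The main potential obstacle is the book-keeping of tropical formulas in case~(i), since four disjoint axes in an $\bbR$-tree can be arranged in several combinatorial configurations, and the closed-form expression for the distance between the two bridge endpoints depends on which configuration occurs. This is handled by further partitioning each Borel piece according to the (finitely many possible) combinatorial types, each of which is cut out by Borel inequalities between translation lengths of suitable words, so that on each piece of this refined partition one has a single continuous formula.
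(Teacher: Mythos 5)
Your argument is correct in spirit and lands on the right reduction (partition by Lemma~\ref{B-mes}, then continuity of length functions), but it takes a genuinely different route at the crucial step, and is vaguer precisely there. You defer the hard part to the claim that distances between bridge endpoints of four disjoint axes are "tropical expressions in translation lengths," which you attribute to Culler--Morgan; in fact only the two-axes formula $d_T(C_\alpha,C_\beta)=\tfrac12\max\{0,\|\alpha\beta\|_T-\|\alpha\|_T-\|\beta\|_T\}$ is classical, and producing the four-axes version really does require the case analysis you flag as an obstacle. The paper sidesteps this entirely by a clean algebraic observation: setting $X_n(T)=\{g_n(T),\,g_n(T)h_n(T),\,h_n(T)g_n(T)\}$, the point $v_n(T)$ is the \emph{unique} common point of the three axes $C_\alpha(T)$ with $\alpha\in X_n(T)$ (since at $v_n(T)$ these three axes cover three distinct directions, pairwise overlapping in only two of them), which yields a single case-free identity
$$d_T(v_n(T),gv_m(T))=\max\bigl\{d_T(C_\alpha(T),C_\beta(T))\;\big|\;\alpha\in X_n(T),\ \beta\in gX_m(T)g^{-1}\bigr\},$$
and similarly $d_T(v_n(T),C_g(T))=\max\{d_T(C_\alpha(T),C_g(T))\mid\alpha\in X_n(T)\}$; combined with the two-axes formula and Lemma~\ref{B-mes}, Borel measurability is immediate. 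So your proof would work once the tropical casework is carried out, but the paper's triple-intersection trick buys you a uniform formula and avoids the configuration analysis altogether; note also that the elliptic/hyperbolic case split you introduce for $g$ is unnecessary, since the two-axes formula already handles $\|g\|_T=0$ with $C_g(T)$ the fixed-point set.
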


\begin{proof}
Let $X_n(T):=\{g_n(T), g_n(T)h_n(T), h_n(T)g_n(T)\}$. The branch point $v_n(T)$ can be defined as the unique point in the intersection of the axes of the elements in $X_n(T)$.
It follows that $$d_T(v_n(T),gv_m(T))=\max \{d_T(C_\alpha(T),C_\beta(T))|\alpha\in X_n(T),\beta\in X_m(T)^g\}$$ and that $$d_T(v_n(T),C_g(T))=\max \{d_T(C_\alpha(T),C_g(T))|\alpha\in X_n(T)\}.$$ 
Since $d_T(C_\alpha(T),C_\beta(T))=\frac{1}{2}\max\{0, ||\alpha\beta||_T-||\alpha||_T-||\beta||_T\}$, the lemma follows.
\end{proof}

\begin{lemma}\label{dir-mes}
For all $n,m,p,k,l\in\mathbb{N}$ and all $g\in G$, the following sets are Borel subsets of $\baro$:
\begin{enumerate}
\item $\{T\in\baro|v_n(T)=gv_m(T)\},$
\item $\{T\in\baro|v_n(T),v_m(T),v_p(T)\text{~are pairwise distinct and  aligned in this order}\},$ 
\item $\{T\in \baro| d_n(T)\text{ points towards }g.v_m(T)\},$ 
\item $\{T\in\baro|d_n(T)=gd_m(T)\},$
\item $\{T\in\baro|g(d_n(T),d_m(T))\text{~is a turn lying in the segment~} [v_k(T),v_l(T)]\}.$
\end{enumerate}
\end{lemma}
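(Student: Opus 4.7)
The plan is to reduce each of (1)--(5) to a Boolean combination of conditions involving Borel functions of $T$, so as to quote Lemmas~\ref{B-mes} and~\ref{dist-mes}. The mild subtlety is that although $v_n(T)$ is defined via the triple $X_n(T)=\{g_n(T),g_n(T)h_n(T),h_n(T)g_n(T)\}$ of elements depending on $T$, the auxiliary point $v_n'(T)=v_{(h_n(T),g_n(T))}(T)$ involves the swapped pair $(h_n(T),g_n(T))$, whose rank in the global enumeration of $G^2$ depends on $T$, so Lemma~\ref{dist-mes} does not directly apply to it. To handle this, I would partition $\baro$ by the countably many Borel pieces $B_{n,g_0,h_0}$ of Lemma~\ref{B-mes}; on $B_{n,g_0,h_0}$ one has $g_n(T)=g_0$, $h_n(T)=h_0$, $v_n(T)=v_{(g_0,h_0)}(T)$ and $v_n'(T)=v_{(h_0,g_0)}(T)$, both realized as the intersection of axes of a fixed triple of elements of $G$. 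Hence all distances among $v_{n_i}(T)$, $v_{n_i}'(T)$ and their $G$-translates become Borel on that piece by the very argument proving Lemma~\ref{dist-mes}. Refining further to finite intersections $\bigcap_i B_{n_i,g_i,h_i}$ suffices, and a countable union of Borel sets is Borel.

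On such a piece the five items become Boolean combinations of Borel relations, which I would dispose of in turn. For (1), $v_n(T)=gv_m(T)$ iff $d_T(v_n(T),gv_m(T))=0$. For (2), alignment of three pairwise distinct points is encoded by strict positivity of the three pairwise distances together with the additivity identity $d_T(v_n(T),v_p(T))=d_T(v_n(T),v_m(T))+d_T(v_m(T),v_p(T))$. For (3), since $d_n(T)$ is by definition the direction at $v_n(T)$ pointing towards $v_n'(T)$, the direction $d_n(T)$ points towards $gv_m(T)$ iff $v_n(T)\neq gv_m(T)$, $v_n(T)\neq v_n'(T)$, and the Gromov product at $v_n(T)$ is positive:
\[ d_T(v_n(T),v_n'(T))+d_T(v_n(T),gv_m(T))>d_T(v_n'(T),gv_m(T)). \]

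For (4), $d_n(T)=gd_m(T)$ iff their basepoints coincide, $v_n(T)=gv_m(T)$ (a condition of type~(1)), and $d_n(T)$ points towards $gv_m'(T)$ (a condition of type~(3), after a further refinement of the partition fixing $(g_m(T),h_m(T))$). For (5) the condition unwinds as: $v_n(T)=v_m(T)$ and $d_n(T)\neq d_m(T)$, so that $(d_n(T),d_m(T))$ is an actual turn at $v_n(T)$, handled by~(1) and the negation of~(4); the basepoint $gv_n(T)$ lies strictly between $v_k(T)$ and $v_l(T)$, a condition of type~(2); and the directions $gd_n(T),gd_m(T)$ point respectively towards $v_k(T)$ and $v_l(T)$ in one of the two orders, a disjunction of type~(3) conditions. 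The main obstacle throughout is the indexing issue for $v_n'(T)$ noted above; once the partition trick has absorbed it, every remaining step is a routine translation of a tree-geometric configuration into a distance identity, which is manifestly Borel by Lemma~\ref{dist-mes}.
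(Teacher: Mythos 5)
Your argument follows the same route as the paper's: reduce each assertion to a Boolean combination of conditions on distances among the points $v_n(T)$, $v'_n(T)$ and their $G$-translates, then invoke Lemma~\ref{dist-mes}. The indexing issue you flag for $v'_n(T)$ is real but mild: your partition by the Borel pieces $B_{n,g_0,h_0}$ absorbs it, and equivalently one may observe that the proof of Lemma~\ref{dist-mes} carries over verbatim with $X_n(T)$ replaced by $X'_n(T)=\{h_n(T),\,h_n(T)g_n(T),\,g_n(T)h_n(T)\}$, since $T\mapsto g_n(T)$ and $T\mapsto h_n(T)$ are Borel by Lemma~\ref{B-mes}; the paper treats this silently. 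Your Gromov-product inequality in (3) is an equivalent rephrasing of the paper's negation criterion (``$v'_n(T),v_n(T),gv_m(T)$ pairwise distinct and aligned in this order''), and the condition $v_n(T)\neq v'_n(T)$ you add is vacuous (the bridge endpoints are always distinct) but harmless. In (5) you explicitly include $d_n(T)\neq d_m(T)$, which the paper's stated equivalence actually omits: without it the right-hand side can hold with $d_n=d_m$, in which case $g(d_n,d_m)$ is not a turn at all. Your version is therefore slightly more careful, and as you note this extra condition is Borel by negating (4). Overall the proposal is correct and matches the paper's method.
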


\begin{proof}
The first two assertions follow from Lemma \ref{dist-mes}. For the third one, we note that $d_n(T)$ fails to point towards $gv_m(T)$ if and only if either $gv_m(T)=v_n(T)$, or else the points   $v'_n(T),v_n(T),gv_m(T)$ are pairwise distinct and aligned in this order. The fourth assertion follows from the third because 
$d_n(T)=gd_m(T)$ if and only if $v_n(T)=gv_m(T)$ and $d_n(T)$ points towards $gv'_m(T)$.
The last assertion also follows   as $g(d_n(T),d_m(T))$ is a turn lying in the segment $[v_k(T),v_l(T)]$ if and only if $v_n(T)=v_m(T)$, and either 
\begin{itemize}
\item $d_n(T)$ points towards $g^{-1}v_k(T)$, and $d_m(T)$ points towards $g^{-1}v_l(T)$, or
\item $d_m(T)$ points towards $g^{-1}v_k(T)$, and $d_n(T)$ points towards $g^{-1}v_l(T)$.\qed
\end{itemize}
\renewcommand{\qedsymbol}{}
\end{proof}

\subsubsection{Enumerating morphisms}\label{sec-enum-morphisms}

For every open simplex $\Delta\in\Simp$, every tree in $\Tilde\Delta$ is obtained by assigning positive lengths to the edges of some combinatorial tree $S_\Delta$. 
Choose a collection $v_1^\Delta,\dots, v^\Delta_{k(\Delta)}$  
of representatives of the orbits of vertices 
with trivial stabilizer in $S_\Delta$.
Given any tree $S$ in the closure of $\tilde \Delta$ in $\calo$,
we still denote by $v_i^\Delta$ the corresponding vertex of $S$ (there may exist $i\neq j$
such that $v_i^\Delta$ and $v_j^\Delta$ are in the same orbit when viewed in $S$).

We define a \emph{decorated simplex} as a pair $(\Delta,\theta)$, where $\Delta\in \Simp$, and where $\theta:\{1,\dots,k(\Delta)\}\to\mathbb{N}$ is a map. We denote by $\Simp^\ast$ the countable collection of all decorated simplices. Given a tree $T\in\baro$ and a decorated simplex $(\Delta,\theta)$, there is a 
unique  metric tree $S$ in the closure of $\tilde \Delta$,
and a unique $G$-equivariant morphism 
$f:S\ra T$ which is isometric on edges of $S$, and maps the vertex $v_i^\Delta\in S$ to the branch point $v_{\theta(i)}(T)$ in $T$ for all $i$.
We denote this unique morphism $f$ by $f_{(\Delta,\theta),T}$.

\begin{lemma}\label{opt-borel}
The map
\begin{displaymath}
\begin{array}{cccc}
\Simp^\ast\times\baro &\to &\Mor\\
((\Delta,\theta),T)&\mapsto & f_{(\Delta,\theta),T}
\end{array}
\end{displaymath}  
\noindent is Borel. Its image consists of all morphisms that are isometric on edges, and send vertices   with trivial stabilizer to branch points.
\end{lemma}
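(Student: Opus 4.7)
\smallskip

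\noindent\textbf{Proof plan.} Since $\Simp^\ast$ is countable, it suffices to show that for each fixed decorated simplex $(\Delta,\theta)\in\Simp^\ast$, the map $\varphi_{(\Delta,\theta)}:T\mapsto f_{(\Delta,\theta),T}$ is Borel on its natural Borel domain, namely the set of $T\in\baro$ on which $v_{\theta(i)}(T)$ is defined for all $i\in\{1,\dots,k(\Delta)\}$ (outside this set one extends by any constant).

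Fix such $(\Delta,\theta)$. By Lemma~\ref{B-mes}, the collection of sets $B_{n,g,h}$ partitions $\baro$ into countably many Borel pieces on which, for each $n\in\theta(\{1,\dots,k(\Delta)\})$, the defining disjoint pair $(g_n(T),h_n(T))$ is constant in $G^2$. Intersecting these partitions over all $n$ in the (finite) image of $\theta$ yields a countable Borel partition of $\baro$ such that on each atom the pairs $(g_{\theta(i)}(T),h_{\theta(i)}(T))$ are fixed, independent of $T$. On such an atom I claim that $\varphi_{(\Delta,\theta)}$ is continuous for the Gromov--Hausdorff topology on $\Mor$, which will suffice. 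Indeed, as $T$ varies equivariantly Gromov--Hausdorff within the atom, the axes of the prescribed elements $g_{\theta(i)},h_{\theta(i)}$ converge, so the associated bridges, and hence the branch points $v_{\theta(i)}(T)$, converge in $T$; by Lemma~\ref{dist-mes}, all relevant distances $d_T(v_{\theta(i)}(T),g\cdot v_{\theta(j)}(T))$ (for the finitely many orbits of edges of $S_\Delta$) are continuous in $T$. These distances are precisely the edge lengths of $S(T)\in\overline{\tilde\Delta}$, so $T\mapsto S(T)$ is continuous. The morphism $f_{(\Delta,\theta),T}$ is then obtained by isometrically embedding each edge of $S(T)$ onto the geodesic in $T$ joining the prescribed branch-point images; this depends continuously on the edge endpoints and edge lengths, and hence on $T$, for the Gromov--Hausdorff topology on morphisms of \cite[Section~3.2]{GL} (pick a finite subtree of $S(T)$ and observe that its image can be read off from finitely many of the above continuous quantities).

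For the image description, every $f_{(\Delta,\theta),T}$ is by construction isometric on edges and sends each $v_i^\Delta$ to a branch point of $T$, so one inclusion is immediate. Conversely, let $f:S\to T$ be a morphism isometric on edges that sends every vertex of $S$ to a branch point of $T$. Let $\Delta$ be the open simplex of $\Simp$ whose combinatorial tree is that of $S$ (allowing $S$ to lie on the boundary of $\tilde\Delta$ when some edge is degenerate). Each chosen orbit representative $v_i^\Delta$ maps under $f$ to a branch point of $T$, which arises as $v_{n}(T)$ for some $n\in\mathbb N$ by the remark preceding Lemma~\ref{dist-mes}; set $\theta(i)=n$. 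Then by uniqueness in the definition, $f=f_{(\Delta,\theta),T}$.

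The delicate step is the continuity claim on each atom: one must verify that the simultaneous convergence of edge lengths of $S(T)$ and of the base points and directions determining the isometric embeddings of these edges into $T$ does produce convergence in the Gromov--Hausdorff topology on $\Mor$. This is a bookkeeping argument based on restricting to finite subtrees, entirely analogous to the compactness discussion for $\Opt_{\tilde\sigma_{v,\eps}}$ in Lemma~\ref{sphere-borel}; no new idea is required.
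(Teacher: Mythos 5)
Your proof is correct and follows essentially the same strategy as the paper's: reduce to a fixed decorated simplex, partition $\baro$ into countably many Borel pieces (the fibers of $T\mapsto(g_{\theta(i)}(T),h_{\theta(i)}(T))_i$, which you correctly identify via Lemma~\ref{B-mes}) on which the relevant disjoint pairs are constant, and observe that the assignment $T\mapsto f_{(\Delta,\theta),T}$ is continuous on each such fiber; the image description is handled identically. The only difference is one of exposition: the paper records the continuity-on-fibers claim in a single sentence, while you unpack why it holds (convergence of axes $\Rightarrow$ convergence of bridges and branch points $\Rightarrow$ convergence of the edge-length data determining $S(T)$ and the morphism), which is a legitimate expansion of what the paper leaves implicit.
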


  Notice that any vertex $v$ of $S$ with nontrivial stabilizer has to be sent to the unique generalized branch point of $T$ fixed by $G_v$. 
 
\begin{proof}
The last statement in the lemma follows from the observation that every branch point in $T$ arises as $v_n(T)$ for some $n\in\mathbb{N}$, and a morphism which is isometric on edges is completely determined by the images of the vertices $v_1^\Delta,\dots,v_{k(\Delta)}^\Delta$   (the image of a vertex $v$ with nontrivial stabilizer $G_v$ has to be equal to the unique vertex of $T$ fixed by $G_v$). To show that the map in the lemma is Borel, it is enough to show that for each decorated simplex $(\Delta,\theta)$, the map  
\begin{displaymath}
\begin{array}{cccc}
\baro &\to &\Mor\\
T &\mapsto & f_{(\Delta,\theta),T}
\end{array}
\end{displaymath}
\noindent is Borel. This is because it is continuous when restricted to each of the countably many fibers of the map 
\begin{displaymath}
\begin{array}{cccc}
\baro &\to & (G^2)^{k(\Delta)}\\
T & \mapsto & (g_{\theta(n)}(T),h_{\theta(n)}(T))_{1\le n\le k(\Delta)}
\end{array}
\end{displaymath}
\noindent and these fibers are Borel in view of Lemma~\ref{dir-mes}.
\end{proof}

\begin{rk}\label{rk_enum_morphisms}
  Given an enumeration of the set of decorated simplices, we obtain for each $T\in \baro$ an enumeration 
of all morphisms $F_{n,T}:S_{n,T}\ra T$ from all simplicial trees to $T$ that send vertices to branch points, and are isometric on edges. 
Moreover, the maps $T\mapsto S_{n,T}$ and $T\mapsto F_{n,T}$ are Borel for every $n$.
\end{rk}

Given $S\in\calo$ and $T\in\baro$, we say that a morphism $f:S\to T$ is \emph{very optimal} if $f$ is optimal and sends vertices of $S$
with trivial stabilizer to branch points in $T$. We denote by $\VOpt$ the subspace of $\Opt$ made of very optimal morphisms.

\begin{rk}\label{rk_exist_vopt}
  We note for future use that for every tree $T\in \baro$, there exists $S\in\calo$ and a very optimal morphism $f:S\ra T$.
  Indeed, if $\calf\neq \es$, one may take for $S$ a tree such that all vertices have non-trivial stabilizer, with the unique metric
  so that the unique equivariant map $f:S\ra T$ which is linear on edges is a morphism, necessarily very optimal.
  If $(G,\calf)=(F_N,\es)$, one may take $S$ homeomorphic to the Cayley graph of $F_N$ with respect to a free basis $\{a_1,\dots,a_N\}$,
and choose $f$ sending the base point of $S$ to a branch point in the axis of $a_1$ in $T$.
\end{rk}

\begin{lemma}\label{vopt-borel}
The preimage of $\VOpt$ in $\Simp^\ast\times\baro$  under the map of Lemma \ref{opt-borel}, is Borel.
\end{lemma}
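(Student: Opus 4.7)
The strategy is simply to express the preimage of $\VOpt$ as the preimage of the Borel set $\Opt$ under the Borel map of Lemma~\ref{opt-borel}.

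More precisely, let
\[
\Phi:\Simp^\ast\times\baro\longrightarrow\Mor,\qquad ((\Delta,\theta),T)\longmapsto f_{(\Delta,\theta),T}
\]
be the map provided by Lemma~\ref{opt-borel} (defined wherever the branch points $v_{\theta(i)}(T)$ are defined, which is a Borel subset of $\Simp^\ast\times\baro$ by Lemma~\ref{B-mes}). That lemma asserts both that $\Phi$ is Borel and that its image consists exactly of those morphisms which are isometric on edges and send vertices to branch points.

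The key observation is that the preimage $\Phi^{-1}(\VOpt)$ coincides with $\Phi^{-1}(\Opt)$: indeed, any morphism in the image of $\Phi$ automatically sends vertices to branch points, so it belongs to $\VOpt$ as soon as it belongs to $\Opt$. By Lemma~\ref{opt-bor}, $\Opt$ is a Borel subset of $\Mor$, and since $\Phi$ is Borel, the set $\Phi^{-1}(\Opt)$ is a Borel subset of $\Simp^\ast\times\baro$. This proves the lemma.

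There is no real obstacle here; the argument is a routine measurability bookkeeping that only combines the two previous lemmas (Lemma~\ref{opt-bor} and Lemma~\ref{opt-borel}) with the tautological remark that morphisms produced by the $\Phi$ construction already satisfy the ``sends vertices to branch points'' part of the definition of $\VOpt$.
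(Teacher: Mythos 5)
Your approach is genuinely different from the paper's, and once a small imprecision is repaired it is arguably cleaner. The paper proves the statement directly: it first partitions $\Simp^\ast\times\baro$ into finitely many Borel pieces according to which edges of $S_\Delta$ get collapsed in the source of $f_{(\Delta,\theta),T}$ (a Borel condition by Lemma~\ref{dir-mes}); on the piece with no collapse it then translates very optimality into a purely combinatorial alignment condition on the triples $(gv_{\theta(p)}(T),v_{\theta(n)}(T),hv_{\theta(q)}(T))$, which is Borel again by Lemma~\ref{dir-mes}. Your proof instead avoids re-proving optimality as a Borel condition by pulling it back through the Borel map $\Phi$ of Lemma~\ref{opt-borel}. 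Both routes work; yours buys less bookkeeping by reusing Lemma~\ref{opt-bor}, while the paper's is more self-contained at the level of this lemma.

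There is, however, one gap you should close. You assert that $\Phi^{-1}(\VOpt)=\Phi^{-1}(\Opt)$ because the ``vertices to branch points'' clause of very optimality is automatic for morphisms in the image of $\Phi$. But very optimality also requires the \emph{source to lie in $\calo$}, and morphisms in the image of $\Phi$ need not have that property: when some edges of $\Delta$ are collapsed (that is, when $d_T(v_{\theta(i)}(T),gv_{\theta(j)}(T))=0$ for some adjacent pair), the source $S$ of $f_{(\Delta,\theta),T}$ lies in the closure of $\tilde\Delta$ in $\baro$ and may have a non-peripheral vertex stabilizer, hence $S\notin\calo$. Such a morphism can still be optimal, so it belongs to $\Opt$ but not to $\VOpt$, and the claimed equality fails. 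Relatedly, Lemma~\ref{opt-bor} only states that $\Opt_{\sigma\to\tau}$ is Borel for Borel $\sigma\subseteq\calo$, $\tau\subseteq\baro$; it does not assert that the full set $\Opt$ (with source anywhere in $\baro$) is Borel. Both issues disappear simultaneously if you replace $\Opt$ by $\Opt_{\calo\to\baro}$ throughout: the equality $\Phi^{-1}(\VOpt)=\Phi^{-1}(\Opt_{\calo\to\baro})$ is correct (the ``source in $\calo$'' and the automatic ``vertices to branch points'' together characterize very optimality within the image of $\Phi$), Lemma~\ref{opt-bor} applied with $\sigma=\calo$ and $\tau=\baro$ gives Borelness of $\Opt_{\calo\to\baro}$, and the rest of your argument goes through unchanged.
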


\begin{proof}
It is a consequence of Lemma~\ref{dir-mes} that given a finite set $E$ of edges of the quotient graph $S_\Delta/G$, the collection of all $((\Delta,\theta),T)\in\Simp^\ast\times\baro$ such that the edges projecting to $E$ are collapsed in the source of $f_{(\Delta,\theta),T}$ is Borel. By restricting to each of these finitely many subsets, we can assume that no edge in $S_\Delta$ is collapsed. Then $f_{(\Delta,\theta),T}$ is very optimal if and only if for every $n\in\{1,\dots,k(\Delta)\}$, there exist $p,q\in\{1,\dots,k(\Delta)\}$ and $g,h\in G$ such that $gv_p^\Delta$ and $hv_q^\Delta$ are adjacent to $v_n^\Delta$ in $\Delta$, and the points $gv_{\theta(p)}(T),v_{\theta(n)}(T),hv_{\theta(q)}(T)$ are aligned in this order in $T$. The conclusion thus follows from Lemma~\ref{dir-mes}.
\end{proof}

\subsubsection{A $\sigma$-algebra on the set of turning classes}\label{sec_algebra}

We denote by $\oturn$ the collection of all turning classes on trees in   $\overline{\mathcal{O}}$.
There is an embedding   $\Phi:\oturn\hookrightarrow \overline{\calo}\times \{0,1\}^{\bbN^2}$
sending $(T,\calt)$ to   the couple consisting of the tree $T$ and of the map taking value $1$ on $\{(n,m)\in \bbN^2| (d_n(T),d_m(T))\in\calt\}$.
This defines a $\sigma$-algebra on $\oturn$ by pulling back the standard Borel $\sigma$-algebra.

The group $\Out(G,\calf)$ has a natural action on $\oturn$, defined as follows. Let $\calb$ be a turning class   on a tree $T\in\baro$, and let $\Phi\in\Out(G,\calf)$. Then the tree $\Phi.T$ is the same metric space as $T$ (equipped with a twisted $G$-action), and we let $\Phi.\calb$ be the turning class on $\Phi.T$ consisting of the same turns as $\calb$. 
One easily checks that this action is measurable.

\subsection{The case of free actions}\label{sec_FN}

In this subsection, we consider the case free arational actions of the free group,
which occurs in the case where  $(G,\calf)=(F_N,\es)$.
This can be seen as a warm up for the more general situation that will be tackled in the next subsection.

In this case, $\calo$ is the original Culler--Vogtmann  Outer space, consisting of free actions of $F_N$ on simplicial trees.
Moreover, any arational action $T\in\AT$ is either a free action of $F_N$, or $T$ is dual
to an  arational measured foliation on a surface with exactly one boundary component and with fundamental group $F_N$ (\cite{Rey}, see Section \ref{sec-arat-back}).
We denote by $\ATf$ the subset of free arational actions of $(F_N,\es)$.
The interesting case is when $N\geq 3$ since $\ATf$ is empty otherwise.

Because of the freeness of the action, for any $T\in\ATf$, there are only finitely many directions at each branch point
(not just finitely many \emph{orbits} of directions).
In particular, there are only finitely many turns at each branch point, hence
finitely many orbits of turns.
This is the main difference with the general case. 

We now prove the following result, which is a particular case of the more technical Proposition \ref{name-enough}.

\begin{theo}\label{thm:FN-case}
  The action of $\Out(F_N)$ on $\bbP\ATf$ is Borel amenable.
\end{theo}

\begin{proof}
Since $(G,\calf)=(F_N,\es)$, each simplex of $\bbP\calo$
 has finite stabilizer. Thus by Proposition \ref{amen}, it suffices
  to define a sequence of Borel maps $\mu_n$ associating to $T\in\bbP\ATf$
  a probability measure $\mu_n^T\in\Prob(\Simp)$ such that for all $\Phi\in\Out(F_N)$
  and all $T\in \bbP\ATf$, one has $$||\Phi.\mu_n^T-\mu_n^{\Phi.T} ||_1\xrightarrow{n\ra\infty} 0.$$
  
  We fix a continuous section $\alpha:\mathbb{P}\baro\to\baro$.
  Given $T\in\ATf$, let $k(T)$ be the number of turning classes of all very optimal
  morphisms $f:S\ra T$,  with $S$ varying in $\calo$. This number is finite because being a free action, $T$ has only finitely many orbits of turns.
  It is non-zero because very optimal morphisms exist (Remark \ref{rk_exist_vopt}).

  Let $f_1^T,\dots,f_{k(T)}^T$ be a collection of very optimal morphisms $f_i^T:S_i^T\ra \alpha(T)$
representing all these turning classes.
We choose this collection that is smallest for the lexicographic order, relative to a measurable enumeration of morphisms with range $T$ 
as in Remark \ref{rk_enum_morphisms}.

We now consider the sequence of probability measures $\mu_n(f)$ associated to an optimal morphism $f$ with arational target
as in Section \ref{sec-finite-width}, and we define
 $$\mu_n^T:=\frac{1}{k(T)}\sum_{i=1}^{k(T)}\mu_n(f_{i}^T).$$  
Applying $\Phi\m$ to the morphism $f_i^{\Phi.T}:S_i^{\Phi.T}\ra \alpha(\Phi.T)$,
we get a morphism $f'_i: S'_i\ra \lambda \alpha(T)$ with $S'_i=\Phi\m. S_i^{\Phi.T}$, and $\lambda>0$ such that $\Phi\m\alpha(\Phi.T)=\lambda \alpha(T)$.
The morphism $f'_i$ has the same turning class as $f_{\sigma(i)}^T$ for some permutation $\sigma$ of $\{1,\dots,k(T)\}$.
Since the two target trees are homothetic, Corollary~\ref{cor-shift} implies that for all $i\in\{1,\dots,k(T)\}$, we have
$$||\mu_n(f^T_{\sigma(i)})-\mu_n(f'_i)||_1\to 0$$ as $n$ goes to $+\infty$. Averaging over $i\in \{1,\dots,k(T)\}$, it then follows that 
$$||\mu_n^T-\Phi^{-1}.\mu_n^{\Phi.T}||_1\to 0$$ as $n$ goes to $+\infty$.   

The measurability of the maps  $T\mapsto\mu_n^T$ is easy, we refer to the proof of Proposition~\ref{name-enough}
for details.
\end{proof}

\subsection{Namable turning classes}

 The notion of namable turning class defined below is a technical notion
that allows us to extend the argument given in the previous subsection in the case where $(G,\calf)=(F_N,\es)$.

Recall that $\oturn$ denotes the collection of all turning classes on trees in $\mathcal{O}$. More generally, given $X\subseteq\baro$, we denote by $\turn{X}$ the collection of all turning classes on trees in $X$. Given a turning class $\calb$ on a tree $T\in\baro$, and $\lambda>0$, we denote by $\lambda\calb$ the turning class on $\lambda T$ consisting of the same turns.

\begin{de}[Having namable turning classes]\label{de-namable}
An $\Out(G,\calf)$-invariant Borel subset $X\subseteq\baro$ \emph{has namable turning classes} if there exists a map $$\Name:\turn{X}\to\mathbb{N}\cup\{\perp\}$$ such that 
\begin{enumerate}
\item $\Name$ is measurable,
\item for all $\lambda\in\mathbb{R}_+^\ast$ and all turning classes $\calb\in\turn{X}$, we have $\Name(\lambda\calb)=\Name(\calb)$,
\item $\Name$ is $\text{Out}(G,\calf)$-invariant, i.e.\ $\Name(\Phi.\calb)=\Name(\calb)$ for all $\calb\in\turn{X}$ and all $\Phi\in\text{Out}(G,\calf)$,
\item for all $T\in X$ and all $n\in\mathbb{N}$, there are only finitely many turning classes on $T$ whose name is $n$, and
\item for all $T\in X$, there exists a very optimal morphism with range $T$ whose turning class 
 has a name different from $\perp$.
\end{enumerate} 
\end{de}

\begin{rk}
  In the case where $(G,\calf)=(F_N,\es)$, then the set $X=\ATf$ of free arational trees has namable turning classes.
  Indeed, define $\Name(T)=0$ for all $T\in\ATf$. Assertion 4 is a consequence of the fact
  that being free, $T$ has only finitely many turning classes. Assertion 5 follows from Remark \ref{rk_exist_vopt}.
\end{rk}

\begin{prop}\label{name-enough}
  Let $X\subseteq\AT$ 
  be an $\Out(G,\calf)$-invariant Borel subset which has namable turning classes.
\\ Then there exists a sequence of Borel maps 
\begin{displaymath}
\begin{array}{cccc}
\mathbb{P}X&\to &\mathrm{Prob}(\Simp)\\
T &\mapsto & \mu_n^T
\end{array}
\end{displaymath}
\noindent such that for all $\Phi\in\text{Out}(G,\calf)$ and all $T\in \mathbb{P}X$, one has $$||\Phi.\mu_n^T-\mu_n^{\Phi.T}||_1\to 0$$ as $n$ goes to $+\infty$.
\end{prop}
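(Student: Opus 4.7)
The plan is to define, for each $T \in X$, a probability measure on $\Simp$ by averaging the measures $\mu_n(f)$ from Section~\ref{sec-finite-width} over a canonical finite family of very optimal morphisms with range $T$, singled out using the naming function, and then to descend to $\mathbb{P}X$ via a Borel section.

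More precisely, for $T \in X$ I will set
$$n(T) := \min\bigl\{\Name(\calb(f)) : f \in \VOpt_{\ra T},\ \Name(\calb(f)) \neq \perp\bigr\},$$
where $\calb(f)$ denotes the turning class of $f$. The minimum exists by the last bullet of Definition~\ref{de-namable}. Let $\mathcal{C}(T)$ be the set of turning classes on $T$ with name $n(T)$ that are realized by some very optimal morphism; this set is finite by the fourth bullet of Definition~\ref{de-namable}, and non-empty by construction. Using the enumeration of morphisms from Remark~\ref{rk_enum_morphisms}, I select Borelly one representative $f_\calb^T \in \VOpt_{\ra T}$ for each $\calb \in \mathcal{C}(T)$ (say, the first in the enumeration whose turning class equals $\calb$), and set
$$\mu_n^T := \frac{1}{|\mathcal{C}(T)|} \sum_{\calb \in \mathcal{C}(T)} \mu_n(f_\calb^T).$$
Borel measurability of $T \mapsto \mu_n^T$ will follow by combining Lemmas~\ref{vopt-borel} and \ref{sphere-borel} with measurability of $\Name$ and of the turning class map $\VOpt \ra \oturn$ (the latter being straightforward from the enumeration of directions and legality conditions in Section~\ref{sec-dir}).

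Next, I verify two asymptotic invariance properties. For $\lambda > 0$, the scale-invariance of $\Name$ yields $n(\lambda T) = n(T)$ and $\mathcal{C}(\lambda T) = \lambda\,\mathcal{C}(T)$. The chosen representatives $f_{\lambda\calb}^{\lambda T}$ and $\lambda\, f_\calb^T$ are then two very optimal morphisms with range $\lambda T$ sharing a common turning class, so Corollary~\ref{cor-shift} gives $\|\mu_n(f_{\lambda\calb}^{\lambda T}) - \mu_n(\lambda\,f_\calb^T)\|_1 \to 0$ for each $\calb$, hence $\|\mu_n^{\lambda T} - \mu_n^T\|_1 \to 0$. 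For $\Phi \in \Out(G,\calf)$, the $\Out(G,\calf)$-invariance of $\Name$ gives $\mathcal{C}(\Phi.T) = \Phi.\mathcal{C}(T)$; a direct check that $\cals_t(\Phi.f) = \Phi.\cals_t(f)$ (using that the $\Out$-action preserves covolumes and factorizations) yields $\mu_n(\Phi.f) = \Phi.\mu_n(f)$. Comparing the chosen representative $f_{\Phi.\calb}^{\Phi.T}$ with $\Phi.f_\calb^T$ via Corollary~\ref{cor-shift} (same turning class on the same target tree) then gives $\|\Phi.\mu_n^T - \mu_n^{\Phi.T}\|_1 \to 0$.

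Finally, to descend to $\mathbb{P}X$, I fix a Borel section $s : \mathbb{P}X \ra X$ (for instance by normalizing via a convergent weighted sum of translation lengths of a fixed countable family of elements of $G$) and set $\mu_n^{[T]} := \mu_n^{s([T])}$. For $\Phi \in \Out(G,\calf)$ and $[T] \in \mathbb{P}X$, the trees $s(\Phi.[T])$ and $\Phi.s([T])$ lie in the same projective class, so they differ by some scalar $\lambda_{\Phi,T} > 0$, and combining the scaling and equivariance estimates above yields the required convergence. The main obstacle is not geometric --- all the geometric content has already been packaged into the factorization lemma, the finiteness of $\cals_t$, and the naming function --- but rather the bookkeeping needed to reconcile the non-equivariant Borel section with the desired $\Out(G,\calf)$-equivariance; this is precisely what Lemma~\ref{shift} and Corollary~\ref{cor-shift} were developed for.
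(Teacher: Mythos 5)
Your proof is correct and follows essentially the same strategy as the paper's: use the naming function to single out the smallest name realized by a very optimal morphism, collect the (finitely many) turning classes with that name, Borel-select one realizing morphism per class, average the measures $\mu_n(f)$, and invoke Corollary~\ref{cor-shift} for the asymptotic $\Out(G,\calf)$-equivariance. The only cosmetic differences are that the paper fixes a section $\mathbb{P}\baro\to\baro$ upfront and works directly with morphisms having range $\alpha(T)$, rather than working unprojectivized and descending via a Borel section at the end, and that it chooses the lexicographically smallest collection of representatives rather than one per turning class --- neither change affects the argument.
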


\begin{proof}
Fix a continuous section $\alpha:\mathbb{P}\baro\to\baro$. To any $T\in \mathbb{P}X$, we are going to associate a finite set of very optimal morphisms $f_i^T$ 
with range $\alpha(T)$ as follows.
Let $i_0(T)$ be the smallest integer $i\in\mathbb{N}$ for which there exists a very optimal morphism with range $\alpha(T)$, whose turning class with respect to $T$  
has name $i$: this exists in view of the last hypothesis from Definition~\ref{de-namable}. 
Let $f_1^T,\dots,f_{k(T)}^T$ be a collection of very optimal morphisms $f_i^T:S_i^T\ra \alpha(T)$  such that 
the turning classes of $f_1^T,\dots,f_{k(T)}^T$ are exactly the turning classes named $i_0(T)$ (without repetition).
We choose this collection that is smallest for the lexicographic order, relative to a measurable enumeration of morphisms with range $T$ 
as in Remark \ref{rk_enum_morphisms}.

Consider the sequence of probability measures $\mu_n(f)$ associated to an optimal morphism $f$ with arational target
as in Section~\ref{sec-finite-width}.
We claim that the probability measures
$$\mu_n^T:=\frac{1}{k(T)}\sum_{i=1}^{k(T)}\mu_n( f_{i}^T)$$
satisfy the desired conclusion.

Notice that $\Out(G,\calf)$-invariance of $\Name$ implies that for all $T\in \mathbb{P}X$ and all $\Phi\in\Out(G,\calf)$, we have $i_0(\Phi.T)=i_0(T)$ and $k(\Phi.T)=k(T)$.
Applying $\Phi\m$ to the morphism $f_i^{\Phi.T}:S_i^{\Phi.T}\ra \alpha(\Phi.T)$,
we get a morphism $f'_i: S'_i\ra \lambda \alpha(T)$ with $S'_i=\Phi\m. S_i^{\Phi.T}$, and $\lambda>0$ such that $\Phi\m\alpha(\Phi.T)=\lambda \alpha(T)$.
The morphism $f'_i$ has the same turning class as $f_{\sigma(i)}^T$ for some permutation $\sigma$ of $\{1,\dots,k(T)\}$.
Since the two target trees are homothetic, Corollary~\ref{cor-shift} implies that
for all $i\in\{1,\dots,k(T)\}$, we have
$$||\mu_n(f^T_{\sigma(i)})-\mu_n(f'_i)||_1\to 0$$ as $n$ goes to $+\infty$. Averaging over $i\in \{1,\dots,k(T)\}$, it then follows that 
$$||\mu_n^T-\Phi^{-1}.\mu_n^{\Phi.T}||_1\to 0$$ as $n$ goes to $+\infty$.   

We finally check that the maps $T\mapsto\mu_n^T$ are Borel. 
Let $F_{n,T}$ be a Borel enumeration of morphisms as in Remark \ref{rk_enum_morphisms}.
Then $$i_0(T)=\min\{i\in\mathbb{N}|\exists n\in\bbN,\  F_{n,\alpha(T)} \text{ is optimal and its turning class has name~}i\}.$$  
Since optimality is a Borel condition (Lemma \ref{vopt-borel}) and 
the map $\Name$ is Borel, we deduce that the map $T\mapsto i_0(T)$ is Borel. Similarly, the maps $T\mapsto k(T)$ and $T\mapsto f_i^T$ are Borel. In view of Lemma~\ref{sphere-borel}, we thus get that $T\mapsto\mu_n^T$ is Borel.
\end{proof}

\section{Arational trees have namable turning classes.}\label{sec-name}

The goal of this section is the following result (see Definition \ref{de-namable} for definitions).

\begin{prop}\label{prop_namable}
The set $\AT$ of arational trees has namable turning classes.
\end{prop}

We will prove separately that the sets $\Geom$ and $\NonG$ made of geometric and nongeometric arational trees have namable turning classes. Thus Proposition \ref{prop_namable}
is a consequence of Corollary \ref{cor-nongeom} and Proposition \ref{prop_nom-geom}.

We start with the following observation.

\begin{lemma}\label{mes-geom}
The sets $\mathcal{G}eom$ and $\mathcal{NG}eom$ are Borel subsets of $\AT$.
\end{lemma}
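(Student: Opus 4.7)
The plan is to reduce the measurability to the characterization of nongeometricity by strong limits provided by Lemma~\ref{strong-cv}. Since $\AT$ is a Borel subset of $\baro$ (see Section~\ref{sec-arat-back}), and $\Geom$ and $\NonG$ partition $\AT$, it suffices to prove that $\NonG$ is Borel; then $\Geom = \AT \setminus \NonG$ will be Borel as well.

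By the third characterization in Lemma~\ref{strong-cv}, for $T\in\AT$ one has $T\in\NonG$ if and only if for every finite subset $F\subseteq G$ there exists $S\in\calo$ admitting a morphism $S\to T$ with $||g||_S = ||g||_T$ for all $g\in F$ (the requirement $S\neq T$ is automatic since $T\in\partial\calo$). Fix an enumeration $G = \{g_i\}_{i\in\bbN}$, set $F_n = \{g_1,\dots,g_n\}$, and decompose $\calo = \bigsqcup_{\Delta\in\Simp}\tilde\Delta$. Then the condition translates into
\[
\NonG\cap\AT \;=\; \AT \,\cap\, \bigcap_{n\in\bbN}\bigcup_{\Delta\in\Simp} E_{\Delta,n},
\]
where $E_{\Delta,n}$ denotes the set of trees $T\in\baro$ such that there exists $S\in\tilde\Delta$ with $||g_i||_S = ||g_i||_T$ for all $i\le n$ and admitting a morphism $S\to T$.

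For fixed $\Delta$, the existence of a morphism from $S\in\tilde\Delta$ to $T$ is equivalent to the finite system of inequalities $||g||_S \ge ||g||_T$ for $g$ in the finite set $F_\Delta$ of candidates of $\Delta$ (the standard Lipschitz/candidate characterization already used in the proof of Lemma~\ref{strong-cv}). Hence $E_{\Delta,n}$ is the projection onto $\baro$ of the closed subset of $\tilde\Delta\times\baro$ cut out by finitely many equalities and inequalities between the continuous translation-length functions in $(S,T)$. Since $\tilde\Delta\cong(0,\infty)^{E(\Delta)}$ is $\sigma$-compact, one may exhaust it by compact cubes $K_k = [1/k,k]^{E(\Delta)}$; the intersection of the closed set with each $K_k\times\baro$ projects to a closed subset of $\baro$ (projections along compact factors are closed maps). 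Thus $E_{\Delta,n}$ is an $F_\sigma$, hence Borel, subset of $\baro$, and a countable intersection of countable unions gives that $\NonG\cap\AT$ is Borel. The main point requiring care is this projection step, since ``$\exists S$ with the constraints'' would a priori only give an analytic set; the $\sigma$-compactness of the simplex handles it cleanly, sidestepping the alternative route through the enumeration of decorated simplices in Section~\ref{sec-enum-morphisms}, which would require matching every length-preserving morphism with one in that enumeration via a delicate subdivision procedure.
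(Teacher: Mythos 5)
Your proof is correct, and it takes a genuinely different route from the paper's. Both start from the third characterization in Lemma~\ref{strong-cv}, but the paper discretizes the existential quantifier over $S\in\calo$ by restricting to the countable set $\tilde\Delta_{\bbQ}$ of rational-edge-length trees and relaxing the equality $||g||_S=||g||_T$ to a pair of one-sided inequalities with $1/k$ slack (so the condition becomes a countable boolean combination of sets defined by closed conditions on $T$ alone); the implicit point there is that the $1/k$-approximants in a fixed simplex accumulate, via Lemma~\ref{morphism-arat-1}, on an honest tree in $\calo$ realizing the exact equalities. You instead keep the exact equalities and the full simplex, express the existence of a morphism through the finite candidate set $F_\Delta$ (which depends only on the combinatorial type $\Delta$, not on $S$, exactly as in the displayed use of candidates inside the proof of Lemma~\ref{strong-cv}), and dispose of the existential by the standard fact that projection from $K\times\baro$ to $\baro$ is a closed map when $K$ is compact, together with the $\sigma$-compactness of $\tilde\Delta\cong(0,\infty)^{E(\Delta)}$. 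This makes $E_{\Delta,n}$ an $F_\sigma$ directly, and it sidesteps the rational-approximation step and its implicit limiting argument. One small observation: the finiteness of the constraint set is not strictly necessary — even the infinite family of inequalities $||g||_S\ge||g||_T$, $g\in G$, cuts out a closed subset of $\tilde\Delta\times\baro$, so your projection argument would go through without invoking candidates at all; but using $F_\Delta$ is cleaner and matches the paper's own tools.
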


\begin{proof}
It suffices to show that $\mathcal{NG}eom$ is a Borel subset of $\AT$. Let $\{g_i\}_{i\in\mathbb{N}}$ be an enumeration of $G$. 

Using the third characterization of non-geometric trees given in Lemma~\ref{strong-cv}, a tree $T\in \AT$ is non-geometric if and only if 
for every finite subset $F\subset G$, there exists $S\in \calo$ with a morphism $S\ra T$
such that the length functions of $T$ and $S$ agree on $F$.
Given a simplex $\Delta\in \Simp$, denote by $\Tilde \Delta_{\bbQ}$ the set of trees in $\tilde \Delta$ with rational edge lengths.
It clearly follows that $T\in\AT$ is non geometric if and only if
for all finite subsets $F\subset G$, there exists a simplex $\Delta\in \Simp$ such that for all $k\geq 1$,
there exists $S\in \tilde \Delta_\bbQ$ such that 
\begin{itemize}
\item $\forall g\in G,$ $||g||_S\geq ||g||_T$ and
\item $\forall g\in F$, $||g||_S \leq ||g||_T + \frac1k$.
\end{itemize}
This expresses $\mathcal{NG}eom$ as countable intersections and unions of Borel sets.
\end{proof}

\subsection{Non-geometric arational trees}

Given a tree $T\in\baro$, the \emph{full turning class} in $T$ is the turning class consisting of all turns at all branch points in $T$.

\begin{prop}\label{nongeom}
For all $T\in\mathcal{NG}eom$, there exists a tree $S\in\calo$ and a very optimal morphism $f:S\to T$ whose turning class with respect to $T$ is the full turning class of $T$. 
\end{prop}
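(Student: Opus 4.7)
The main ingredient is the characterization of nongeometric arational trees as strong limits, given in the third item of Lemma~\ref{strong-cv}: for every finite $F \subseteq G$, there exists $S \in \calo$ together with a morphism $f : S \to T$ such that $\|g\|_S = \|g\|_T$ for every $g \in F$. Since a $1$-Lipschitz map preserving the translation length of a hyperbolic element is isometric on its axis, the axis in $S$ of each $g \in F$ is a legal segment for $f$ whose $f$-image is the axis of $g$ in $T$.

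The plan is to choose $F$ so that the axes of its elements collectively cross every turn at a branch point of $T$. For each orbit of turns $(d,d')$ at a branch point $x$, density of $G$-orbits in the arational tree $T$ lets me select a hyperbolic $g \in G$ whose axis in $T$ passes through $x$ using the directions $d$ and $d'$; including one such element per orbit of turns in $F$ places that orbit in the turning class of $f$, so that the turning class of $f$ becomes the full turning class. Once such an $f$ is obtained, I would upgrade it to a \emph{very optimal} morphism by a standard fold (which only enlarges the turning class) followed by a $G$-equivariant refinement of the vertex set of the resulting domain so that all vertices land on branch points of $T$, an operation that does not alter the underlying simplex in $\Simp$.

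The hard part will be to verify that the full turning class of $T$ has only finitely many $G$-orbits, which is what allows $F$ to be finite. This structural finiteness should combine two inputs: a Gaboriau--Levitt-style index formula for very small $(G,\calf)$-trees giving finitely many orbits of directions at branch points, together with the observation that in the nongeometric case branch point stabilizers must be finite. The latter is the central obstacle: an infinite peripheral stabilizer acting with infinitely many orbits on pairs of directions would organize into a surface-type transverse covering piece of $T$, hence make $T$ geometric, contradicting the hypothesis $T \in \NonG$. With this finiteness in hand, a single finite $F$ suffices and the construction outlined above yields the desired very optimal morphism realizing the full turning class.
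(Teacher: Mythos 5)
Your approach has a genuine gap. The central claim you rely on — that the full turning class of $T$ has only finitely many $G$-orbits of turns, justified by the assertion that in the nongeometric case branch point stabilizers must be finite — is false. In the general setting of the paper, a nongeometric arational tree $T\in\NonG$ is relatively free, which means that point stabilizers are \emph{peripheral} (conjugate into one of the $G_i$), and the peripheral groups $G_i$ are arbitrary countable groups and may well be infinite. When a branch point $v$ has infinite stabilizer $G_v$, it must have infinite valence (a finite-valence branch point fixed by $G_v$ forces $G_v$ to embed in a finite symmetric group, using triviality of arc stabilizers), and then some $G_v$-orbit of directions at $v$ is infinite. Since a direction at a branch point has trivial stabilizer in $G_v$, pairing a fixed direction $d$ with the elements $d'$ of such an infinite orbit produces infinitely many $G_v$-orbits of turns at $v$, hence infinitely many $G$-orbits of turns in the full turning class. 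So no finite $F\subseteq G$ can contribute axes crossing a representative of every orbit of turns. (The "would organize into a surface-type transverse covering piece" reasoning is also not correct: geometricity of $T$ is about the band complex resolving $T$ and has nothing to do with whether a peripheral stabilizer is infinite; the paper in fact builds its whole naming machinery precisely to deal with the situation where branch points have infinite stabilizer — see Remark~\ref{rk_FN}.)

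What the paper's proof does instead — and what your approach is missing — is exploit $G$-equivariance so that only a \emph{finite} amount of data needs to be lifted isometrically: namely, for each of the finitely many orbits of branch points $v$, one representative of each of the finitely many $G_v$-orbits of \emph{directions} (not turns) at $v$. The paper takes a finite subtree $X\subseteq T$ carrying this data, uses strong convergence (Lemma~\ref{strong-cv}) to find $S_n\in\calo$ and $f_n:S_n\to T$ isometric on a finite subtree $X_n$ containing a copy of $X$, and crucially arranges that, when $v$ has nontrivial stabilizer, the unique $X_n$-preimage of $v$ is the vertex $\tilde v_n\in S_n$ with the \emph{same} stabilizer $G_v$. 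Once one representative direction per $G_v$-orbit lifts isometrically at $\tilde v_n$, every other direction and hence every turn $(gd_i,g'd_j)$ with $g,g'\in G_v$ lifts automatically, because $g$ and $g'$ fix $\tilde v_n$ and $f_n$ is $G$-equivariant. This is the mechanism that makes the full (infinite) turning class achievable from a finite subtree, and it is absent from your argument. Your idea that a $1$-Lipschitz map preserving $\|g\|$ is isometric on the axis of $g$ is fine, but by itself it cannot cover infinitely many orbits of turns with finitely many axes.
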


\begin{proof}
Since $T$ is nongeometric, it is not an arational surface tree, and therefore it is relatively free. Let $X\subseteq T$ be a finite subtree that contains 
\begin{itemize}
\item a set of representatives $\mathcal{V}=\{v_1,\dots,v_l\}$ of the orbits of branch points of $T$,
\item for each branch point $v_i\in\mathcal{V}$, a set of representatives of the orbits of directions at $v_i$.
\end{itemize}
Since $T$ is nongeometric, Lemma~\ref{strong-cv} ensures that there exists a nonstationary direct system of trees $(S_n)_{n\in\mathbb{N}}\in\calo^\mathbb{N}$ that converges strongly to $T$, coming with morphisms $f_n:S_n\to T$ and $f_{nm}:S_n\to S_m$ for all $m\ge n$ (these morphisms can be chosen optimal). Let $X_0\subseteq S_0$ be a finite subtree of $S_0$ whose $f_0$-image contains $X$, and such that for every $v_i\in\mathcal{V}$ with nontrivial stabilizer in $T$, the vertex of $S_0$ with stabilizer $G_{v_i}$ is contained in $X_0$. By definition of strong convergence, there exists $n\in\mathbb{N}$ such that $f_n$ is isometric when restricted to $X_n:=f_{0n}(X_0)$. In particular $X_n$ contains an isometric copy of $X$, and the preimage in $X_n$ of every vertex $v$ of $X$ with 
non-trivial stabilizer 
in $T$, is the vertex of $S_n$ having the same peripheral point stabilizer $G_v$. Let now $(d,d')$ be a turn based at a vertex $v_i$ in $T$ (up to translating we can assume that $v_i\in\mathcal{V}$). If $v$ has trivial stabilizer in $T$, then the assumptions made on $X$ ensure that $(d,d')$ lifts to $S_n$. If $v$ has non-trivial stabilizer in $T$, then there exist $g,g'\in G_v$ such that $(gd,g'd')$ lifts at the vertex of $S_n$ with stabilizer $G_v$, and therefore $(d,d')$ also lifts. Therefore, every turn in $T$ lifts to a turn in $S$, so the turning class of $f_n$ is the full turning class of $T$. Notice that by slightly folding if needed, we can always arrange that $f_n$ maps vertices of $S_n$ to branch points of $T$, i.e.\ $f_n$ is very optimal.
\end{proof}

\begin{cor}\label{cor-nongeom}
The subset $\NonG\subseteq \baro$ has namable turning classes.
\end{cor}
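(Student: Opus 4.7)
My plan is to exploit Proposition~\ref{nongeom} in the most direct way possible. For a nongeometric arational tree, the \emph{full} turning class (consisting of all turns at all branch points) is already realized as the turning class of some very optimal morphism from a tree in $\calo$. I will therefore declare the full turning class of each $T \in \NonG$ to be the only ``nameable'' one, and send every other turning class to $\perp$.

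Concretely, I will define $\Name : \turn{\NonG} \to \bbN \cup \{\perp\}$ by setting $\Name(\calb) = 0$ if $\calb$ is the full turning class on its underlying tree, and $\Name(\calb) = \perp$ otherwise. The verifications of scale invariance and $\Out(G,\calf)$-invariance are then immediate, because rescaling $T$ and twisting the $G$-action by an automorphism both act bijectively on the set of turns at branch points of $T$, and hence preserve the property of being the full turning class. The finiteness condition is trivial since each $T$ carries exactly one full turning class, and the existence condition (the fifth item of Definition~\ref{de-namable}) is precisely the statement of Proposition~\ref{nongeom}.

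The one point that genuinely needs checking is measurability. Using the embedding $\turn{\baro} \hookrightarrow \baro \times \{0,1\}^{\bbN^2}$ from Section~\ref{sec_algebra}, the full turning class on $T$ corresponds to the characteristic function of the set $\{(n,m) \in \bbN^2 : v_n(T) = v_m(T) \text{ and } d_n(T) \neq d_m(T)\}$. By Lemma~\ref{dir-mes}, for each fixed pair $(n,m)$ both conditions on $T$ are Borel, so the locus where the $(n,m)$-coordinate of the embedding agrees with the indicator of ``$(d_n(T),d_m(T))$ is a turn'' is Borel; intersecting over the countable set of pairs $(n,m)$ shows that $\Name^{-1}(\{0\})$ is Borel. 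The main conceptual content of the corollary is therefore carried by Proposition~\ref{nongeom}, which has already been proved using strong approximation of nongeometric arational trees; the remainder is just formal packaging.
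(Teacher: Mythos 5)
Your proof is correct and takes essentially the same approach as the paper: define $\Name$ to send the full turning class to $0$ and all other turning classes to $\perp$, then invoke Proposition~\ref{nongeom} for the existence of a very optimal morphism realizing it. The paper states this in one sentence without elaborating on measurability; your verification of that point via Lemma~\ref{dir-mes} and the embedding from Section~\ref{sec_algebra} is a correct filling-in of that detail.
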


\begin{proof}
In view of Proposition~\ref{nongeom}, the map $\Name$ that sends the full turning class of any tree $T\in \NonG$ to  $0$, and any other turning class to $\perp$, satisfies the assumptions in Definition~\ref{de-namable}.
\end{proof}

\subsection{Geometric arational trees}

The goal of the present section is to prove the following fact.

\begin{prop}\label{prop_nom-geom}
The subset $\mathcal{G}eom\subseteq\baro$ has namable turning classes.
\end{prop}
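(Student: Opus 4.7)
The plan is to follow the strategy sketched in the introduction: for a geometric arational tree $T\in\Geom$, produce a canonical (scale- and $\Out(G,\calf)$-invariant) labeling of turning classes by integers through combinatorial data coming from the dual band complex.

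The first step is to develop the theory of \emph{ubiquitous turns}. A turn $(d,d')$ at a branch point $x\in T$ is ubiquitous if every nondegenerate segment $I\subseteq T$ has a translate $gI$ containing $(d,d')$. Using that $T$ is dual to a compact foliated $2$-complex $\Sigma/G$ whose minimal components have dense leaves, I would verify: (i) ubiquitous turns are exactly those realized by the germs of leaves meeting at lifts of interior points of the $2$-cells inside a single minimal component; (ii) each direction belongs to only finitely many ubiquitous turns (finiteness of orbits of bands incident to a fixed direction, together with density of leaves in each minimal component). The ubiquitous turns can therefore be organized as edges of a locally finite graph on the set of directions at each branch point $x$. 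I would then define the \emph{angle} $\angle_T(d,d')$ between two directions $d,d'$ at $x$ as the minimal length of a chain $d=d_0,d_1,\ldots,d_k=d'$ of directions at $x$ such that each consecutive pair $(d_i,d_{i+1})$ is ubiquitous. A Whitehead graph argument (applied inside each minimal component of the band complex) then shows that $\angle_T(d,d')<\infty$ whenever $d,d'$ come from the same minimal component, while the angle is declared $+\infty$ across distinct components.

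Next I would encode a turning class $\calb$ on $T$ into an integer label. Using the measurable enumeration of branch directions from Section~\ref{sec-dir} (the maps $T\mapsto d_n(T)$), assign to each orbit of branch points $[x]$ the finite collection of integer angles $\angle_T(d,d')$ ranging over representatives of the orbits of turns in $\calb$ based at $x$, together with the combinatorics linking these turns to ubiquitous ones at $x$. Since ubiquitous turns are canonically associated to $T$, and the angles are invariant under both scaling and precomposition by $\Out(G,\calf)$ (the tree $\lambda T$ has the same band complex up to rescaling, and $\Phi\in\Out(G,\calf)$ merely twists the $G$-action and permutes orbits of directions), the resulting label is both scale- and $\Out(G,\calf)$-invariant. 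To turn this finite-list-of-integers datum into a single natural number, I would use a fixed measurable bijection $\mathbb{N}^{<\omega}\to\mathbb{N}$ after enumerating orbits of branch points via the enumeration of Section~\ref{sec-dir}; I set $\Name(\calb):=\perp$ when some turn in $\calb$ lies outside every minimal component (equivalently has infinite angle to the neighboring ubiquitous turns), and otherwise declare $\Name(\calb)$ to be the resulting integer. Finiteness of turning classes sharing a given name will follow from finite local valence of the ubiquitous-turn graph at each branch point together with the fact that the angle data determines, up to finitely many choices, how the turns fit into the graph of ubiquitous turns.

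To produce a very optimal morphism whose turning class gets a name different from $\perp$, I would exploit the band complex $\Sigma_K(T,R)$ resolving $T$: a sufficiently subdivided base tree $S\in\calo$ coming with the canonical resolution morphism $f:S\to T$ has the property that every ubiquitous turn (and in particular every turn whose two directions lie in a common minimal component) lifts to $S$; hence the turning class of $f$ consists entirely of turns having finite angle to the ubiquitous skeleton, so its name lies in $\mathbb{N}$. Finally, measurability of $\Name$ reduces to measurability of: the set of minimal components (encoded through ubiquitous turns), the ubiquity predicate, and the angle function; each of these is built from countably many boolean combinations of the Borel predicates of Section~\ref{sec-dir} (directions, alignment, equality of branch points under $G$).

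The main obstacle I anticipate is the identification of ubiquitous turns with turns internal to a single minimal component of the band complex, and the Whitehead-graph argument giving connectedness (hence finite angle) within each component: this is where the geometric structure really enters, and where one must convert the topological ubiquity condition into a combinatorial graph-theoretic statement strong enough to both bound valence (for finiteness of the fiber of $\Name$) and guarantee connectivity (so that most turns actually receive a finite angle and the ``named'' morphism produced by the band complex exists).
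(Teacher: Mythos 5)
Your proposal follows the paper's high-level scheme and correctly identifies the central geometric tools: ubiquitous turns, the angle defined by chains of ubiquitous turns, and connectivity of the Whitehead graph (which in the paper are Lemma~\ref{ubi-0}, Corollary~\ref{ubi}, Definition~\ref{def_angle} and Lemma~\ref{lem_whitehead}). However, the proposed definition of $\Name$ has several genuine gaps and would not satisfy the requirements of Definition~\ref{de-namable}.

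First, you do not address what happens at a branch point $x$ whose $G$-stabilizer $G_x$ is infinite, which is the typical situation as soon as $\calf\neq\emptyset$. If $f:S\to T$ is a morphism with $S\in\calo$, the vertex $s\in f^{-1}(x)$ with $G_s=G_x$ contributes a whole $G_x\times G_x$-orbit of turns to the turning class: for a pair of $G_x$-orbits of directions $([d],[d'])$ at $x$ that is ``seen'' at $s$, the set $\calb_{[d],[d']}$ is \emph{full}, i.e.\ it is all of $[d]\times[d']$ minus the diagonal. Since the Whitehead graph of ubiquitous turns at $x$ is only locally finite, the angles of the turns in a full $\calb_{[d],[d']}$ are unbounded, so both the ``finite collection of integer angles'' and any supremum-type label you would extract from it are undefined or infinite. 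The paper handles this with the notion of turning class \emph{of finite type} (each $\calb_{[d],[d']}$ either full or $G_x$-finite), proves via Lemma~\ref{lem_finite_type} that turning classes of morphisms to relatively free geometric trees are of finite type, and explicitly sets $N_{[d],[d']}=0$ when $\calb_{[d],[d']}$ is full. Without this, your $\Name$ would return $\perp$ on essentially every morphism, so the last condition of Definition~\ref{de-namable} would fail.

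Second, your $\perp$-criterion (some turn in $\calb$ has infinite angle, i.e.\ crosses a minimal component) is too aggressive in the decomposable case. When $T\in\FGeom$ has a nontrivial transverse covering $\caly$ by indecomposable pieces, \emph{every} morphism $f:S\to T$ from a Grushko tree produces crossing turns at the vertices corresponding to the skeleton of $\caly$; under your rule they would all be sent to $\perp$. The paper instead imposes the \emph{acceptability} condition on crossing turns (they must occur in full $G_x$-sub-orbits) and then computes $\Name(\calb)$ by restricting $\calb$ to a single piece $Y\in\caly$; the crossing turns contribute only a finite choice of pairs of orbits of directions, which does not affect the integer.

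Third, you do not treat arational surface trees separately. These trees are geometric but not relatively free (the unused boundary curve is elliptic and non-peripheral), so Lemma~\ref{lem_finite_type} does not apply to them, and the structure of branch points is different. The paper isolates $\SAT$ and handles it directly (Proposition~\ref{sat-name}) via the slit approximation, assigning name $0$ to the unique canonical turning class $\calb_{slit}(T)$ and $\perp$ to everything else. Lumping $\SAT$ into the angle-based treatment of $\FGeom$ would require a separate justification that you do not supply.

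In short, you have the right geometric ingredients but a defective encoding: the full/finite-type dichotomy, the acceptability condition for crossing turns, and the surface/relatively-free dichotomy are all necessary for $\Name$ to be well-defined, have finite fibers, and assign a finite name to at least one morphism towards each geometric tree.
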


 We treat separately the cases of surface arational trees and of relatively free arational trees.
Thus Proposition \ref{prop_nom-geom} is a consequence of Propositions \ref{sat-name} and
\ref{name-dec}.

\subsubsection{Ubiquitous turns}

\begin{de}[Ubiquitous turns]
Let $T\in\baro$. A turn $U$ in $T$ is \emph{ubiquitous} if for every interval $I$, there exists $g\in G$ such that $gU$ is contained in $I$. 
\end{de}

Let $T\in\baro$ be a geometric tree, and let $G\actson\Sigma$ be a
band complex as in Section~\ref{sec-geom-back} to which $T$ is
dual. We denote by $V$ the set of \emph{vertices} of $\Sigma$,
i.e.\ points in $\Sigma$ belonging to a base tree $K_v$ which are
either extremal in one of the bands they belong to, or are at least
trivalent in $K_v$. Recall that all leaves of $\Sigma$ are trees. A leaf of $\Sigma$ is \emph{singular} if it
contains a vertex of $\Sigma$. Notice that if $x\in T$ is a branch point, then the leaf of $\Sigma$ that projects to $x$ in $T$ is
singular. Given a singular leaf $l$ of $\Sigma$ projecting to a point
$x\in T$, we denote by $l^0\subseteq l$ the union of the convex
hull
of $l\cap V$ in $l$ together with all finite connected components of $l\setminus V$. Notice that since the set $V$ is $G$-finite, the intersection $l\cap V$ is $G_l$-finite (where $G_l$ denotes the stabilizer of the leaf $l$). Since there are only finitely many
$G_l$-orbits of finite components in $l\setminus V$, we deduce that $l^0/G_l$ is compact (it is a finite graph). We call a turn at $x$ in $T$ \emph{regular} if it comes from a turn in a base tree at a point in $l\setminus l^0$, and \emph{singular} otherwise.

\begin{lemma}\label{ubi-0}
Let $T$ be a geometric tree which is mixing (i.e.\ dual to a band complex with a single orbit of minimal components), and let $x\in T$ be a branch point.
\\ A turn at $x$ is ubiquitous if and only if it is regular.
\\ Every segment in $T$ contains only finitely many singular turns based at a point in $G.x$.
\end{lemma}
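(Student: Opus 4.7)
The plan is to establish the two claims concurrently: the easy direction of the first (regular $\Rightarrow$ ubiquitous) is a direct consequence of indecomposability of $T$, while the converse (ubiquitous $\Rightarrow$ regular) will be deduced from the second claim via a pigeonhole argument.

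\emph{Regular implies ubiquitous.} Let $U = (d,d')$ be a regular turn at $x$, arising from a pair of directions $(\delta,\delta')$ at a point $p \in l \setminus l^0$ in a base tree $K_v$. Since $p$ is not a vertex of $\Sigma$ and $V \cap K_v$ is discrete in $K_v$, we may choose arbitrarily short subsegments $[p,p_-]$ and $[p,p_+] \subseteq K_v$ representing $\delta$ and $\delta'$ that are disjoint from $V$. Their concatenation $[p_-,p_+] \subseteq K_v$ projects to a non-degenerate arc $J = \pi([p_-,p_+]) \subseteq T$ passing through $x$, realizing $U$ as the pair of along-$J$ directions at $x$. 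Since $T$ is mixing, it is indecomposable, and by \cite[Proposition~1.25]{Gui} any segment $I \subseteq T$ is covered by finitely many $G$-translates of any sufficiently short arc. Choosing $J$ short enough, we obtain $g \in G$ with $gJ \subseteq I$, hence $gU \subseteq I$.

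\emph{Finite width of singular turns (second claim).} A singular turn at $y \in Gx$ corresponds to a triple $(q,\delta,\delta')$ where $q \in l_y^0$ and $\delta,\delta'$ are directions at $q$ in a base tree that project to distinct directions at $y$. Since $l^0/G_l$ is a finite graph and each vertex of $\Sigma$ has finite valence in its base tree, these triples form only finitely many $G$-orbits. Fix one such orbit $\mathcal{O}$ with representative $(q,\delta,\delta')$, and let $J_q \subseteq T$ be the non-degenerate arc through $x$ obtained by projecting a small arc in $K_v$ through $q$ along $\delta,\delta'$ (say, bounded by the first vertices of $l^0$ encountered along each direction). For a translate of $\mathcal{O}$ to give a singular turn contained in $I$, we need a translate $gJ_q$ aligned with $I$ (meaning $gJ_q \subseteq I$ with $gx$ interior and the two directions of $gU$ coinciding with those of $I$ at $gx$). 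Lifting $I$ to a transversal $\gamma$ of the foliation in $\Sigma$, such aligned translates correspond to intersections of $\gamma$ with $G$-translates of a specific finite sub-complex of $\Sigma$ associated to $\mathcal{O}$; compactness of $\Sigma/G$ and finiteness of the transversal $\gamma$ force these intersections to be finite. Summing over the finitely many orbits of singular triples yields Claim~2.

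\emph{Ubiquitous implies regular.} Suppose $U$ is a singular turn at $x$ and assume for contradiction that $U$ is ubiquitous. Pick any segment $I_0 \subseteq T$ and subdivide it into $N$ pairwise disjoint sub-segments $I_1,\dots,I_N$. By ubiquity, each $I_k$ contains a translate $g_k U \subseteq I_k$; the base points $g_k x \in I_k$ lie in disjoint pieces of $I_0$, hence are pairwise distinct, and so the translates $g_k U$ give $N$ distinct singular turns based at points of $Gx$ lying in $I_0$. For $N$ larger than the bound from Claim~2 applied to $I_0$, this is a contradiction.

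\emph{Main obstacle.} The heart of the proof is the finite width statement (Claim~2): one must show that translates of a fixed short arc $J_q$ can sit inside $I$ in only finitely many ways aligned with $I$. The key geometric inputs are the compactness of $l^0/G_l$, the finite-orbit structure of $V/G$, and the fact that $\Sigma/G$ is a finite 2-complex, all combined via a transversality argument.
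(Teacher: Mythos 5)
Your \emph{regular implies ubiquitous} argument has a genuine gap. You assert ``Since $T$ is mixing, it is indecomposable,'' but this is false in the sense ``mixing'' is used here: it means the transverse covering of $T$ by maximal indecomposable subtrees has a single $G$-orbit of pieces, which is strictly weaker than $T$ itself being indecomposable (if $T$ were indecomposable, the transverse covering would be the trivial one $\{T\}$). So you cannot apply the covering characterization of indecomposability to arbitrary arcs $I$ of $T$. Moreover, even restricted to a single indecomposable piece $Y$, the covering characterization (every arc $I$ is covered by a chain of overlapping translates $g_1 J,\dots,g_n J$) does not by itself yield the stronger conclusion you need, namely that some $g_iJ$ is \emph{contained} in $I$: a translate $g_iJ$ in the middle of the chain could meet $I$ in a short subinterval and then branch off at an interior branch point of $T$ rather than remain inside $I$. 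The paper avoids both issues by using [GLP, Theorem~3.1]: since $\Sigma/G$ has a unique minimal component, the $G$-orbit of the half-leaf $l_z$ is \emph{dense} in every base tree, so for any arc $I\subseteq T$ (a finite concatenation of projections of intervals in base trees), a translate of $l_z$ hits the interior of one such lifted interval, and the two transverse directions there project exactly to a translate of $U$ lying inside $I$. Density does what covering alone cannot.

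The other two parts of your proposal are sound in spirit. Your pigeonhole argument deriving ``ubiquitous implies regular'' from the second claim is a correct alternative to the paper's direct observation that singular turns are non-ubiquitous. Your Claim~2 reaches the same conclusion as the paper, but is looser where the paper is crisp: the paper observes directly that, for each band $B$, the intersection $G.l^0\cap B$ consists of only finitely many leaf segments (otherwise $B$ would have nontrivial stabilizer), from which both non-ubiquity and the finite-width statement follow immediately; your transversal argument sketches a similar idea without actually isolating this clean finiteness statement, and the claim that ``compactness of $\Sigma/G$ and finiteness of the transversal $\gamma$ force these intersections to be finite'' is not in itself a proof. Finally, your description of singular turns as those coming from a point $q\in l^0$ omits the case of turns at $x$ that do not arise from a turn in any single base tree at all; the paper's definition of singular is ``not regular,'' which also covers such turns.
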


\begin{proof}
Let $l\subset \Sigma$ be the leaf representing $x$.
Since $l^0$ is $G_l$-cocompact, for each band $B$, the orbit $G.l^0$ intersects $B$ in a finite set of leaf segments (otherwise $B$ would have nontrivial stabilizer). 
This shows  that singular turns at $x$ are non-ubiquitous. Moreover, it implies that for each segment $\Tilde I\subset K_v$ contained
in a band, only finitely many turns in the projection $I$ of $\tilde I$ to $T$ are singular turns based at a point in $G.x$. 
This obviously also holds if the interior of $\tilde I$ intersects no band (which in fact cannot happen since $T$ is mixing).
Since every arc in $T$ is contained in a finite union of such segments $I$, 
it also follows that every arc in $T$ contains only finitely many singular turns based at a point in  $G.x$.  

We will now prove that every regular turn $U$ at $x$ is ubiquitous. Since $U$ is regular, there exist a base tree $K_v$ intersecting $l\setminus l^0$ and a point $z\in K_v\cap (l\setminus l^0)$ having the following property: denoting by $l_z\subset l$ the connected component of $l\setminus l^0$ containing $z$, for every transverse interval $I\subset \Sigma$ intersecting $l_z$, the two directions at the point $I\cap l_z$ define the turn $U$.
Since the band complex $\Sigma/G$ has a unique minimal component, by \cite[Theorem~3.1]{GLP} the orbit of $l_z$ intersects each transverse tree $K_v\subset \Sigma$ in a dense set.  Since every arc in $T$ is a finite concatenation of arcs that are the image of intervals contained in base trees, it follows that $U$ is ubiquitous. This completes the proof of the lemma.
\end{proof}

Using the fact that there are only finitely many $G$-orbits of singular leaves and $l^0/G_l$ is a finite graph, we get the following as a consequence of Lemma~\ref{ubi-0}.
 
\begin{cor}\label{ubi}
Let $T$ be a geometric tree which is mixing (i.e.\ dual to a band complex with a single orbit of minimal components).
\\ Then there are only finitely many orbits of ubiquitous turns at branch points.
\\ Every interval contains only finitely many non-ubiquitous turns.
\qed
\end{cor}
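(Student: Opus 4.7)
The plan is to read both assertions off Lemma~\ref{ubi-0}, which identifies ubiquitous turns at branch points with regular turns, and then do some bookkeeping using the fact that the band complex $\Sigma/G$ is finite.

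First I would prove the finiteness of orbits of ubiquitous turns. By Lemma~\ref{ubi-0}, it suffices to count $G$-orbits of regular turns at branch points. Branch points of $T$ arise as projections of singular leaves of $\Sigma$, and since $\Sigma/G$ is compact there are only finitely many $G$-orbits of singular leaves; pick representatives $l_1,\dots,l_r$. For each $l_i$, a regular turn at the projection of $l_i$ corresponds to a point $z\in l_i\cap(l_i\setminus l_i^0)$ lying in some base tree $K_v$, and the proof of Lemma~\ref{ubi-0} (the characterization of the turn via an arbitrary transverse interval meeting $l_z$) shows that the turn depends only on the pair $(v,l_z)$, where $l_z$ is the component of $l_i\setminus l_i^0$ containing $z$. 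Since $l_i^0/G_{l_i}$ is a finite graph, only finitely many $G_{l_i}$-orbits of components of $l_i\setminus l_i^0$ arise, and only finitely many base trees $K_v$ are relevant modulo the stabilizer of $l_i$. Summing over $i$ gives finiteness of $G$-orbits of regular, hence ubiquitous, turns.

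For the second bullet, I would again invoke Lemma~\ref{ubi-0}: a turn at a branch point is non-ubiquitous precisely when it is singular. Let $I\subseteq T$ be an interval. There are only finitely many $G$-orbits of branch points of $T$, one for each $G$-orbit of singular leaf; pick representatives $x_1,\dots,x_r$. The second clause of Lemma~\ref{ubi-0} asserts that for each $i$, the interval $I$ contains only finitely many singular turns based at a point of $G.x_i$. Summing these $r$ finite contributions yields the claim.

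The main obstacle will be the bookkeeping claim that the $G_{l_i}$-orbits of components of $l_i\setminus l_i^0$ are finite. I expect to argue this by observing that each such component is attached to $l_i^0$ along a single frontier point, that the frontier of $l_i^0$ in $l_i$ is $G_{l_i}$-finite because $l_i^0/G_{l_i}$ is a finite graph (with only finitely many attaching points to its complement in the tree $l_i/G_{l_i}$), and hence the components themselves fall into finitely many $G_{l_i}$-orbits. Once this is granted, both assertions drop out of Lemma~\ref{ubi-0} without further work.
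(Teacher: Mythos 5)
Your overall strategy matches the paper's one‑line proof: both bullets are meant to follow from Lemma~\ref{ubi-0} together with the two finiteness facts the paper records, namely that singular leaves fall into finitely many $G$-orbits and that $l^0/G_l$ is a finite graph. Your treatment of the second bullet — summing the last clause of Lemma~\ref{ubi-0} over the finitely many $G$-orbits of branch points — is correct as stated.

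For the first bullet, the reduction to counting $G_l$-orbits of components of $l\setminus l^0$ is the right move (the proof of Lemma~\ref{ubi-0} in fact shows the turn is determined by the component $l_z$ alone, so the extra base-tree index $v$, together with your unjustified side claim that only finitely many $K_v$ are relevant modulo $G_l$, can be dropped from the bookkeeping). But your resolution of what you call the ``main obstacle'' has a genuine gap at the final inference. You correctly observe that the frontier of $l^0$ in $l$ falls into finitely many $G_l$-orbits of points, and that each component of $l\setminus l^0$ attaches to $l^0$ at a single frontier point. The conclusion ``hence finitely many $G_l$-orbits of components'' does not follow from this alone: a frontier point $p$ could, a priori, be a vertex of the tree $l$ with infinitely many $(G_l)_p$-orbits of edges of $l$ exiting $l^0$, in which case $p$ alone would carry infinitely many $G_l$-orbits of attached components. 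What is missing is a local finiteness statement for the leaf at such $p$ (finitely many $(G_l)_p$-orbits of directions of $l$ at $p$ pointing out of $l^0$); this is a structural fact about the band complex near a vertex, not a formal consequence of $l^0/G_l$ being a finite graph, and it has to be established separately before the frontier-point reduction yields the finiteness you want.
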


Given a branch point $x\in T$ and a collection $\mathcal{T}$ of turns at $x$, the \emph{Whitehead graph} at $x$ defined by $\mathcal{T}$ is the graph having one vertex for each direction at $x$, and an edge between two directions if the turn they form belongs to $\mathcal{T}$.

\begin{lemma}\label{lem_whitehead}
Let $T\in\mathcal{G}eom$. Let $x\in T$ be a branch point. If the Whitehead graph at $x$ defined by the collection of all ubiquitous turns is disconnected, then $T$ is not indecomposable.
\end{lemma}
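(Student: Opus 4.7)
The plan is to prove the contrapositive: assuming $T$ is indecomposable, we construct a nontrivial transverse family on $T$, yielding a contradiction. Since indecomposability implies that $T$ is mixing, its dual band complex has a single orbit of minimal components and Corollary~\ref{ubi} applies: the ubiquitous turns at branch points form finitely many $G$-orbits, and every arc of $T$ meets only finitely many non-ubiquitous turns.

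Let $D = D_1 \sqcup D_2$ be a nontrivial partition of the directions at $x$ witnessing the disconnection, so that every ubiquitous turn at $x$ has both of its directions in a single part. By $G$-equivariance of the notion of ubiquitous turn, translating $D$ yields a disconnecting partition $gD_1 \sqcup gD_2$ of the directions at every branch point $gx$ in the orbit $G \cdot x$.

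We define a subtree $Y \subseteq T$ as the union of $\{x\}$ together with all $y \in T$ such that the reduced arc $[x,y]$ enters $x$ through a direction of $D_1$ and, at every branch point $gx \in G\cdot x$ in its interior, crosses a turn both of whose directions lie in a single part of $gD_1 \sqcup gD_2$. Let $\mathcal{Y}$ be the collection obtained from $\{Y\}$ by closing under the $G$-action and by reselecting the role of $D_1$ vs $D_2$ at each base point. By construction, $Y$ is a subtree that is nondegenerate (it contains every sufficiently short arc from $x$ in a direction of $D_1$) and proper (no short arc from $x$ in a direction of $D_2$ lies in $Y$).

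The main step is to verify that any two distinct members of $\mathcal{Y}$ meet in at most one point, giving $\mathcal{Y}$ the structure of a nontrivial transverse family and thereby contradicting indecomposability. The argument is a tripod analysis: suppose $Y_1, Y_2 \in \mathcal{Y}$ with respective base points $y_1, y_2 \in G \cdot x$ share a nondegenerate arc $[a,b]$; examining the four reduced arcs $[y_1,a], [y_2,a], [y_1,b], [y_2,b]$ and the bridge $[y_1, y_2]$, and applying the compatibility condition at every branch point of $G \cdot x$ they cross, one forces $y_1 = y_2$ with coinciding chosen parts, contradicting $Y_1 \neq Y_2$. The main obstacle will be controlling the interference from branch points lying in orbits other than $G\cdot x$ (through which arcs are not cut) and handling the action of $\mathrm{Stab}(x)$ on the partition $D$; a likely cleaner route is to carry the argument out inside the dual band complex $\Sigma$, where the Whitehead disconnection at $x$ corresponds to a splitting of the singular leaf representing $x$ relative to its regular components, and this splitting extends, via the transverse foliation, to a $G$-invariant transverse covering of $\Sigma$ whose induced family of subtrees in the leaf space $T$ is the desired transverse family.
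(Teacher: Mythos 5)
Your high-level plan is correct and matches the paper's: use the Whitehead disconnection at $x$ to produce a nontrivial transverse family of subtrees. But your construction and the transversality verification have real gaps that the paper avoids with two simplifications you miss.

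First, your two-part partition $D=D_1\sqcup D_2$ is not required to be $\mathrm{Stab}(x)$-equivariant, and you correctly flag this as an obstacle. Without equivariance, the "allowed/disallowed" condition at a branch point $gx$ is not well-defined intrinsically (it depends on the choice of $g$), and your collection $\mathcal{Y}$ — which includes translates $Y_{gx,gD_i}$ for all $g$ — need not be a transverse family: for $g\in G_x$ you may get two different decorations of the same subtree at $x$. The paper resolves this at the outset by choosing a $G_x$-\emph{equivariant} partition into equivalence classes such that every ubiquitous turn joins equivalent directions (the partition into connected components of the Whitehead graph is the canonical choice); this replaces your two-block coloring by an actual equivalence relation, which is what the later argument needs.

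Second, your subtrees $Y$ are defined as sets of endpoints of "good" arcs rooted at $x$ in one chosen color. This is much more delicate to work with than what the paper does: it simply declares a subtree \emph{allowed} if all its turns at points of $G\cdot x$ join equivalent directions, and lets $\mathcal{Y}$ be the family of all nondegenerate \emph{maximal} allowed subtrees. Maximality is the key device that powers the transversality argument and that your sketch does not reproduce. In the paper, if $Y\neq Y'$ in $\mathcal{Y}$ shared a nondegenerate arc, then $Y\cup Y'$ would be a subtree containing a non-allowed turn $(d,d')$ (else $Y\cup Y'$ would be allowed, contradicting maximality); since $Y,Y'$ are allowed one may take $d\subset Y\setminus Y'$ and $d'\subset Y'\setminus Y$, their common base point $b$ lies in $Y\cap Y'$ and carries a direction $d''$ contained in $Y\cap Y'$, and then $(d,d'')$ and $(d',d'')$ are both allowed, whence $(d,d')$ is allowed by transitivity of the equivalence relation — a contradiction. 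Note that this automatically handles your other worry about branch points outside $G\cdot x$: if $b\notin G\cdot x$ then $(d,d')$ is allowed by definition, so no contradiction arises and such points simply do not interfere. Your proposal never makes this tripod argument precise, and as written the claim that "one forces $y_1=y_2$ with coinciding chosen parts" is not justified and is unlikely to survive the $G_x$-equivariance issue. The appeal to the band complex as a "likely cleaner route" is left entirely to the imagination. Nonempty-ness and nontriviality of $\mathcal{Y}$ (via Corollary~\ref{ubi}: arcs decompose into subarcs crossing only ubiquitous turns, and there are at least two classes at $x$) you essentially have, but the core transversality step is where your argument falls short.
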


\begin{proof}
Since the Whitehead graph defined by ubiquitous turns at $x$ is not connected, we have an equivariant partition of the directions at $x$ such that every ubiquitous 
turn joins two equivalent directions. We say that a turn at a point in the orbit of $x$ is \emph{allowed} if it is a translate of a
turn joining two equivalent directions at $x$.
We now construct a transverse covering of $T$: say that a subtree $Y$ is \emph{allowed} if all its turns at points in the orbit of $x$
are allowed. Let $\caly$ be  the set of all nondegenerate maximal allowed subtrees. The last assertion from Corollary~\ref{ubi} implies that $\caly$ is nonempty (and actually covers $T$): any arc $I$ can be covered by finitely many subarcs that only contain ubiquitous turns. In addition, is not the trivial covering (i.e.\ $\caly\neq\{ T\}$) because there are at least two equivalence classes at $x$.
 
We claim that the family $\caly$ is transverse;  this will show that $T$ is not indecomposable and complete the proof of the lemma. Otherwise, we can find $Y,Y'\in\caly$ two distinct subtrees with nondegenerate intersection. 
By maximality, there exists a turn $(d,d')$ in $Y\cup Y'$ which is not allowed. Since $Y$ and $Y'$ are allowed, up to exchanging $d$ and $d'$ 
we can assume that $d\subset Y\setminus Y'$ and $d'\subset Y'\setminus Y$. Let $b\in Y\cap Y'$ be the common base point of these directions, and let $d''$ be a direction at $b$ in $Y\cap Y'$: this exists because $Y\cap Y'$ is nondegenerate. Then $(d,d'')$ and $(d',d'')$ are allowed, hence so is $(d,d')$, a contradiction.
\end{proof}

We finish this section by proving measurability of the ubiquity property (we recall the definition of the directions $d_n(T)$ from Section~\ref{sec-dir}).

\begin{lemma}\label{ubi-mes}
For all $n,m\in\mathbb{N}$, the set $$U_{nm}:=\{T\in\baro|(d_n(T),d_m(T))\text{~is a ubiquitous turn}\}$$ is a Borel subset of $\baro$.
\end{lemma}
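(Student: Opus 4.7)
The plan is to express ubiquity of the turn $(d_n(T),d_m(T))$ as a countable conjunction of Borel conditions, by reducing the universal quantifier over all nondegenerate intervals of $T$ to quantification over a countable family of test intervals, and then to invoke Lemma~\ref{dir-mes}.

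First, for $(d_n(T), d_m(T))$ to be a turn at all, the base points must coincide, i.e. $v_n(T)=v_m(T)$; this is a Borel condition by Lemma~\ref{dir-mes}(1), and I would intersect $U_{nm}$ with this set. On the resulting Borel subset, I would claim that ubiquity is equivalent to the following: for every $k,l\in\bbN$ with $v_k(T)\neq v_l(T)$, there exists $g\in G$ such that $g(d_n(T),d_m(T))$ is a turn lying in the segment $[v_k(T),v_l(T)]$. Each such atomic condition is Borel by Lemma~\ref{dir-mes}(5) (combined with Lemma~\ref{dir-mes}(1) for the non-degeneracy of $[v_k(T),v_l(T)]$), so the countable intersection over $(k,l)$ of the countable union over $g\in G$ is a Borel subset of $\baro$.

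To justify the equivalence, the direction ``ubiquity implies the countable condition'' is trivial. For the converse, suppose the countable condition holds; I would first show that the branch-point set $\bigcup_k\{v_k(T)\}$ is dense in $T$. Indeed, if this density failed, then $T$ would admit a nontrivial Levitt decomposition, and assuming no $\bbZ/2\bbZ$ factors occur in $\calf$ (so that there are no inversion points), one of its simplicial edges is of the form $[v_k(T),v_l(T)]$ whose interior contains no branch point of $T$. Since the basepoint of any translate $g(d_n(T),d_m(T))$ is the branch point $gv_n(T)$, no such translate can lie inside this segment, contradicting the countable condition at that particular $(k,l)$. Once density is established, every nondegenerate interval $I\subset T$ contains a subinterval of the form $[v_k(T),v_l(T)]$, and the countable condition supplies $g\in G$ with $g(d_n(T),d_m(T))\subset[v_k(T),v_l(T)]\subset I$, yielding ubiquity.

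The main obstacle is the corner case where the Levitt decomposition has a simplicial edge with inversion-point endpoints, which can occur when $\calf$ contains factors isomorphic to $\bbZ/2\bbZ$; such endpoints are not listed in $\{v_k(T)\}_{k\in\bbN}$, so the naive test-interval family above may fail to detect non-ubiquity on these edges. I would resolve this by enlarging the countable family of test-interval endpoints to include, in addition to the branch points $v_k(T)$, the $G$-translates of a Borel choice of axis base point $p_g(T)$ (say, the point of $C_g(T)$ closest to $v_0(T)$) for each element $g\in G$ that is hyperbolic in $T$. Borel measurability of the enriched test intervals follows from Lemma~\ref{dist-mes}, and the enriched endpoint family is dense in $T$ in all cases (both branch-point-dense trees and trees whose Levitt decomposition has simplicial pieces, whose interiors are filled in by axis points), so the equivalence with ubiquity then holds uniformly over $\baro$.
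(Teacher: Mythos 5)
Your proposal follows the same route as the paper: cut down the universal quantifier over intervals to the countable test family $\{[v_k(T),v_l(T)]\}_{k,l}$ and invoke Lemma~\ref{dir-mes}. In fact you are slightly more careful than the printed proof on one point: the displayed identity $U_{nm}=\bigcap_{k,l}\bigcup_g D_{k,l,m,n,g}$ in the paper cannot be taken literally, since $D_{k,l,m,n,g}$ requires $v_k(T)\neq v_l(T)$, and for any tree there are many pairs $(k,l)$ with $v_k(T)=v_l(T)$, so the literal intersection would be empty. Restricting the intersection to $(k,l)$ with $v_k(T)\neq v_l(T)$, as you do, is the intended reading.

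Where your proposal goes wrong is the fix for the $\bbZ/2\bbZ$ inversion-point case. You propose to enlarge the endpoint family by the points $p_g(T)$ defined as the point of $C_g(T)$ closest to $v_0(T)$. But such a point is always a branch point: it is either $v_0(T)$ itself, or the foot of the bridge from $v_0(T)$ to $C_g(T)$, which has at least three directions (two along the axis, one along the bridge). So $p_g(T)\in\{v_k(T)\}_k$ already, and the "enrichment" is vacuous. In particular the claimed density of the enriched family does not hold: on a Grushko tree the enriched endpoint set is the same discrete vertex set as before, so it does not fill in the edge interiors, and your converse argument does not go through as written.

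The concern itself, however, can be discharged without any enrichment, because the original test intervals $[v_k,v_l]$ already suffice even in the presence of inversion points. If the branch-point set is not dense, consider a simplicial edge of the Levitt decomposition whose interior contains no generalized branch point. Follow the maximal "chain" of such edges across inversion points in both directions. If this chain were infinite in some direction, then (since there are only finitely many orbits of edges) some power of an element of $G$ would translate along it, so the chain would lie on a line with no branch points; any component of $T$ minus that line would attach at a branch point on the line, a contradiction unless $T$ itself is a line, which is excluded for non-sporadic $(G,\calf)$. Hence the chain terminates at branch points $v_k\neq v_l$, and the interior of $[v_k(T),v_l(T)]$ contains no branch point, so the countable condition fails at this $(k,l)$. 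This shows the condition forces branch-point density, and then the rest of your converse argument applies verbatim. In short: same approach as the paper, a useful correction to the displayed formula, a correctly identified corner case, but a fix that is vacuous; the simpler observation about termination of inversion-point chains is what closes the gap.
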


\begin{proof}
  By Lemma~\ref{dir-mes}, the set
 $$D_{k,l,m,n,g}:=\left\{T\in\baro\ \middle |
\begin{array}{l}
\bullet\ \text{ either } v_k(T)=v_l(T), \text{ or } \\
\bullet\ g(d_n(T),d_m(T))\text{~is a turn lying in the segment~} [v_k(T),v_l(T)]
\end{array}
\right\}$$ is a Borel subset of $\baro$. Since $$U_{nm}=\bigcap_{k,l\in\mathbb{N}}\bigcup_{g\in G}D_{k,l,m,n,g},$$ the result follows.
\end{proof}

\subsubsection{Surface arational trees}\label{sec_SAT}

\newcommand{\SAT}{\mathcal{SAT}}

We denote by $\SAT$ the subspace of $\AT$ made of surface arational trees. We recall from Section~\ref{sec-arat-back} that this coincides with the subspace of $\AT$ made of non relatively free trees: in other words, a tree $T\in\AT$ is in $\SAT$ if and only if there exists a nonperipheral element $g\in G$ such that $||g||_T=0$. This shows that $\SAT$ is a Borel subset of $\AT$.

\begin{figure}[htb!]
\begin{center}
\includegraphics[width=.5\textwidth]{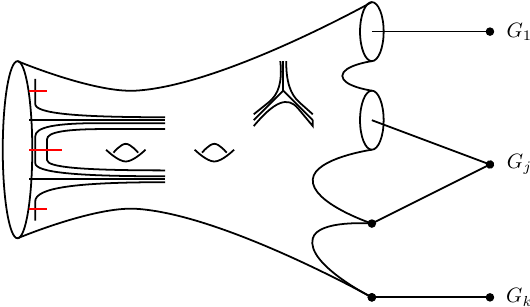}
\caption{The tree $T$ from the proof of Proposition~\ref{sat-name} is the arational surface tree dual to the foliation on the orbifold. The tree $S_t$ is obtained by cutting a segment of size $t$ along each of the red slits, and considering the simplicial tree dual to the foliation obtained in the cut surface.}
\label{fig-slit}
\end{center}
\end{figure}

\begin{prop}\label{sat-name}
The subset $\SAT$ has namable turning classes.
\end{prop}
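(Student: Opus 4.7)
Let $T \in \SAT$. Then $T$ is dual to an arational lamination on a 2-orbifold $\Sigma$ with distinguished unused boundary component $c_0$, as constructed in Section~\ref{sec-arat-back}. The branch points of $T$ come in two flavors: \emph{surface} branch points of finite valence, coming from branch points of the $\pi_1(\Sigma)$-tree dual to the lamination, and \emph{gluing} branch points of possibly infinite valence, where peripheral subgroups $G_{i_j}$ attach along the boundary curves $c_j$ and conical points for $j \geq 1$. My plan is to associate to each such $T$ a single canonical turning class $\calb^T$, and to define $\Name$ by naming exactly this class.

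The class $\calb^T$ will arise from a slit approximation: for small $t > 0$, I cut $\Sigma$ along a short transverse arc of length $t$ emanating from $c_0$, perpendicular to the lamination (the construction of Figure~\ref{fig-slit}). The cut orbifold carries a finite transverse measured foliation dual to a Grushko tree $S_t \in \calo$, and this gives a very optimal morphism $f_t : S_t \to T$. I then set $\calb^T := \calb(f_t)$ and verify that this is independent of $t$ for $t$ sufficiently small, by showing that further slitting only shortens legal intervals without ever removing a turn from the class. At a surface branch point, the Whitehead-graph argument of Lemma~\ref{lem_whitehead}, combined with indecomposability of the minimal component, will force $\calb^T$ to contain all ubiquitous turns there. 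At a gluing point $x$ with peripheral stabilizer $G_v$ and attaching cyclic subgroup $C$, $\calb^T$ will contain precisely the turns between the ``surface-facing'' direction at $x$ and the directions coming from the nontrivial cosets of $C$ in $G_v$.

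I would then set $\Name(\calb) := 0$ when $\calb = \calb^T$ for the tree $T$ supporting $\calb$, and $\Name(\calb) := \perp$ otherwise. Scale-invariance and $\Out(G,\calf)$-invariance are automatic since $\calb^T$ depends only on the underlying $(G,\calf)$-tree $T$; the fiber over $0$ on any $T$ is the singleton $\{\calb^T\}$; and realizability is witnessed by $f_t$ itself (modulo a small adjustment so that its source vertices land on branch points). Measurability reduces to showing that the relation $(d_n(T), d_m(T)) \in \calb^T$ is Borel in $T$, which I expect to follow by translating the slit description into conditions on length functions and branch-point enumerations using the machinery of Section~\ref{sec-measurability}.

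The main obstacle will be two-fold. First, proving that $\calb^T$ is truly intrinsic to $T$, i.e.\ independent of the particular slit, requires showing that any two admissible short slits from $c_0$ differ by an element of $\pi_1(\Sigma) \subseteq G$, so that the resulting $G$-invariant turning classes coincide. Second, the structural description of $\calb^T$ at gluing points of infinite valence demands a careful analysis: one must identify the ``surface-facing'' direction in a $G$-equivariantly well-defined way, which exploits the graph-of-actions structure of $T$ and in particular the canonical role of the unused boundary component $c_0$ in distinguishing the surface component from the peripheral attachments.
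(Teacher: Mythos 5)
Your plan matches the paper's in outline: construct the slit approximations $f_t:S_t\to T$ of Figure~\ref{fig-slit}, declare $\calb^T:=\calb(f_t)$ to be the unique turning class named $0$, and send every other class to $\perp$. That is the same approach, so most of your proposal is on track. But your structural description of $\calb^T$ contains errors that block the later steps (invariance, measurability), and you miss the observation that makes the paper's version of the argument clean.

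First, your taxonomy of branch points is off. The branch point that plays a special role is the point fixed by the \emph{unused} boundary curve $c_0$ (the one where the slits are made), and you have lumped it in with the generic interior surface branch points. Away from the orbit of that point, the slit morphism $f_t$ is a local isometry, so $\calb^T$ contains \emph{every} turn at every branch point not over $c_0$, including all turns at the gluing points $y_j$ with peripheral stabilizer $G_{i_j}$ for $j\ge 1$. Your claim that at such a gluing point $\calb^T$ contains ``precisely the turns between the surface-facing direction and the directions coming from the nontrivial cosets of $C$ in $G_v$'' is therefore wrong: the class there is the full set of turns. It is at the $c_0$-point, and only there, that $\calb^T$ is a proper subset. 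Likewise, at interior surface branch points your invocation of the Whitehead-graph Lemma~\ref{lem_whitehead} is a red herring: that lemma serves a different purpose (it is used in Section~\ref{sec_indecomposable} to show the angle is finite). The reason all turns there lie in $\calb^T$ is simply that $f_t$ is isometric off the slit.

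Second, you treat slit-independence and the identification of the ``surface-facing direction'' as the main obstacles, but these obstacles disappear once one notices (as the paper does via Lemma~\ref{ubi-0}) that the turns of $\calb^T$ at the $c_0$-point are exactly the \emph{ubiquitous} turns there, i.e.\ those formed by directions separated by a single singular leaf. Ubiquity is manifestly independent of the slit, of the scale, and of $\Out(G,\calf)$, and it is Borel by Lemma~\ref{ubi-mes}, so the required properties of $\Name$ follow at once. Without this intrinsic characterization, your plan to ``translate the slit description into conditions on length functions'' has no clear path, because the slit itself is an auxiliary choice and a direct combinatorial description of $\calb^T$ at the $c_0$-point does not obviously yield a Borel predicate in $T$.
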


\begin{proof}
Let $T\in\SAT$. The tree $T$ splits as a graph of actions where the only nontrivial vertex action is dual to a foliation on a $2$-orbifold with a single unused boundary component (see Section \ref{sec-arat-back}). Up performing Whitehead moves on the foliation 
(which does not change the dual tree), we can assume that half-leaves starting at the unused boundary curve do not contain singularities of the foliation, and that all singularities on the boundary are $3$-pronged. By putting slits of sufficiently small length $t>0$ between any two half-leaves of the foliation defining $T$ starting at the unused boundary component (see Figure~\ref{fig-slit}), and cutting along these slits, all leaves of the obtained foliation are now compact and the dual tree provides a simplicial approximation $S_t$ of $T$, which naturally comes with a morphism $f_t:S_t\to T$. We observe that all morphisms $f_t$ with $t>0$ sufficiently small have the same turning class $\calb_{slit}(T)$: indeed, any turn defined by two directions not based at the vertex corresponding to the unused boundary component is in $\calb_{slit}(T)$, and a turn based at the vertex corresponding to the boundary component is in $\calb_{slit}(T)$ if and only if it is made of two directions that are separated by only one singular leaf of the foliation. Using Lemma~\ref{ubi-0}, we see that $\calb_{slit}(T)$ is nothing but the collection of all ubiquitous turns in $T$.  
The map $\Name$ sending the turning class $\calb_{slit}(T)$ to  $0$, and any other turning class to $\perp$, satisfies the conditions from Definition~\ref{de-namable} (the fact that it is measurable follows from Lemma~\ref{ubi-mes}).
\end{proof}

\subsubsection{The turning class is of finite type.}

We are now left understanding relatively free geometric arational trees. 
We will now prove that turning classes of morphisms with such a tree as their target 
satisfy some finiteness properties. We make the following definition.

\begin{de}[The set $\calb_{[d],[d']}$]\label{sett}
Let $T\in \baro$, and let $\calb$ be a turning class in $T$.
Let $x\in T$ be a branch point, and let $[d],[d']$ be two $G_x$-orbits of directions based at $x$. 
\\ We denote by $\calb_{[d],[d']}\subseteq \calb$ the set of turns $(\delta,\delta')\in \calb$ with $\delta\in[d]$ and $\delta'\in [d']$.
\\ We say that $\calb_{[d],[d']}$ is \emph{full} if it is equal to $[d]\times [d']$ minus the diagonal.
\end{de}

Notice that the set $\calb_{[d],[d']}$ is $G_x$-invariant.

\begin{de}[Turning class of finite type]
A turning class $\calb$ of a tree $T\in\baro$ is \emph{of finite type} if for each branch point $x\in T$ and for each pair of $G_x$-orbits of directions $[d],[d']$ at $x$, the set $\calb_{[d],[d']}$ is either full or finite modulo $G_x$.
\end{de}

Notice that the set of all turning classes of finite type on trees in $\overline{\calo}$ forms a measurable subset of the set $\oturn$ of all turning classes
(see Section \ref{sec_algebra}).

\begin{lemma}\label{lem_finite_type}
Let $T\in \baro$ be geometric, with trivial arc stabilizers.
Assume that $T$ is relatively free. 
\\ Then for any $S\in \Os$ and any morphism $f:S\ra T$, the turning class of $f$ is of finite type.
\end{lemma}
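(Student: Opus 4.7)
The plan is, for each branch point $x \in T$ and each pair $[d_0], [d_0']$ of $G_x$-orbits of directions at $x$, to analyze the preimages $f^{-1}(x) \subseteq S$ and their stabilizers, in order to establish the required dichotomy for $\calb_{[d_0],[d_0']}$.

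First I would observe that $f^{-1}(x)$ meets only finitely many $G_x$-orbits. Indeed, since $f$ is a morphism, its preimage in any edge of $S$ is finite; combined with the finiteness of $G$-orbits of edges of $S$, this shows that $f^{-1}(G \cdot x)$ is a finite union of $G$-orbits in $S$. The identity $(G \cdot \tilde y) \cap f^{-1}(x) = G_x \cdot \tilde y$ (for any $\tilde y \in f^{-1}(x)$) then upgrades this to finiteness modulo $G_x$. Next I would classify the possible stabilizers $G_{\tilde x}$ for $\tilde x \in f^{-1}(x)$. Since $S \in \Os$ is a Grushko tree, $G_{\tilde x}$ is either trivial or a full conjugate of some $G_i \in \calf$. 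On the other hand, since $T$ is relatively free, every element of $G_x$ is peripheral; a standard free-product argument, relying on the fact that two non-trivial peripheral conjugates in $G = \ast_j G_j \ast F_N$ either coincide or intersect trivially, shows that $G_x$ itself is contained in a single peripheral conjugate $g G_j g^{-1}$. When $G_{\tilde x}$ is non-trivial, the inclusion $G_{\tilde x} \subseteq G_x \subseteq g G_j g^{-1}$ then forces $G_{\tilde x} = g G_j g^{-1}$ (the non-trivial intersection of two peripheral conjugates makes them coincide), and hence $G_{\tilde x} = G_x$.

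I would then analyze the contribution of each preimage $\tilde x \in f^{-1}(x)$ to $\calb_{[d_0],[d_0']}$. When $G_{\tilde x}$ is trivial, the Grushko structure gives that $\tilde x$ has finite valence in $S$, so only finitely many turns are based at $\tilde x$, which contributes only finitely many elements to $\calb$ modulo $G_x$. When $G_{\tilde x} = G_x$, the situation is entirely different: suppose directions $\tilde d, \tilde d'$ at $\tilde x$ satisfy $f(\tilde d) = d \in [d_0]$ and $f(\tilde d') = d' \in [d_0']$. Since $G_x = G_{\tilde x}$ fixes $\tilde x$, for every $g \in G_x$ the pair $(\tilde d, g \tilde d')$ is again based at $\tilde x$, and projects via $f$ to the turn $(d, g d')$ whenever $g d' \neq d$. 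As $g$ ranges over $G_x$, the direction $g d'$ exhausts $[d_0']$, so every turn $(d, d'')$ with $d'' \in [d_0'] \setminus \{d\}$ lies in $\calb$; $G$-invariance of $\calb$ then gives that $\calb_{[d_0],[d_0']}$ equals $[d_0] \times [d_0']$ minus the diagonal, i.e., is full.

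Combining these observations yields the dichotomy: either some $\tilde x \in f^{-1}(x)$ with $G_{\tilde x} = G_x$ admits directions mapping to both $[d_0]$ and $[d_0']$, in which case $\calb_{[d_0],[d_0']}$ is full by the previous paragraph; or else every preimage with stabilizer $G_x$ misses at least one of these two orbits, in which case those preimages contribute nothing to $\calb_{[d_0],[d_0']}$, so only trivially-stabilized preimages can contribute, and by the first paragraph this yields only finitely many $G_x$-orbits in $\calb_{[d_0],[d_0']}$. The main subtlety lies in the free-product rigidity used in the classification of stabilizers: identifying $G_{\tilde x}$ with $G_x$ in the non-trivial case is essential, since otherwise the $G_x$-action on directions at $\tilde x$ would not be available to spread a single lifted pair across the whole of $[d_0] \times [d_0']$.
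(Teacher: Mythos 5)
Your first paragraph contains a genuine error that invalidates the finiteness half of the dichotomy. You assert that $f^{-1}(x)$ is finite modulo $G_x$, and deduce this from the claim that ``$f^{-1}(G\cdot x)$ is a finite union of $G$-orbits in $S$''. This is false: arational trees have dense $G$-orbits, so $G\cdot x$ is dense in $T$, hence $f^{-1}(G\cdot x)$ meets the interior of every edge of $S$ in an infinite (dense) set. What your morphism argument actually shows is that $f^{-1}(gx)\cap e$ is finite for each \emph{fixed} $g$; it does not bound the number of translates $gx$ lying on $f(e)$, and indeed infinitely many do. Consequently, the ``contribution of trivially-stabilized preimages'' in your second half is taken over infinitely many $G_x$-orbits of points in $f^{-1}(x)$ (mostly interior points of edges of $S$), and adding up ``finitely many turns per preimage'' gives no finiteness conclusion.

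What is missing is precisely the geometric structure of $T$, which your proof never invokes even though it appears as a hypothesis of the lemma. The paper's argument shows that all but finitely many $G_x$-orbits of turns in $\calb_{[d],[d']}$ lift at a vertex of $S$ with nontrivial stabilizer, and this relies on Corollary~\ref{ubi}: there are only finitely many $G$-orbits of ubiquitous turns at branch points, and every interval in $T$ crosses only finitely many non-ubiquitous turns. Combined with (i) the observation that two distinct turns with the same first direction $d$ at $x$ lie in distinct $G_x$-orbits (because directions have trivial stabilizers), (ii) the finiteness of $G$-orbits of edges of $S$, and (iii) the finiteness of $G$-orbits of turns at trivially-stabilized vertices of $S$, this gives the required finiteness for the turns lifting away from the nontrivially-stabilized vertex. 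Both finiteness statements in Corollary~\ref{ubi} come out of the band-complex analysis of a geometric tree and fail without that hypothesis.

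The second half of your argument --- identifying the unique preimage $s$ with $G_s = G_x$, lifting one turn there, and using the $G_x$-action to spread it to all of $[d_0]\times[d_0']$ --- is correct and essentially identical to the paper's. Your classification of stabilizers via free-product rigidity is also fine in spirit (though the step ``$G_x$ is contained in a single peripheral conjugate'' deserves a short justification, e.g.\ all elements of $G_x$ are elliptic in the Bass--Serre tree of the free product, edge stabilizers there are trivial, so $G_x$ fixes a vertex). But the finiteness case needs to be reworked along the lines above, as the preimage-counting route does not go through.
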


\begin{proof}
Let $d,d'$ be a turn based at a branch point $x\in T$. We aim to show that $\calb_{[d],[d']}$ is either $G_x$-cofinite or full.

We first claim that all but finitely many $G_x$-orbits of turns in $\calb_{[d],[d']}$ lift at a vertex of $S$ with non-trivial stabilizer. To prove this claim, we first note that if two (ordered) turns $(\delta,\delta'_1)$, $(\delta,\delta'_2)$ at $x$ are in the same $G_x$-orbit, then they have to coincide since the stabilizer of any direction is trivial.
It follows that $\calb_{[d],[d']}$ contains at most finitely many $G_x$-orbits of ubiquitous turns since there are only finitely many orbits of ubiquitous turns at branch points in $T$. 
Now for each edge $e$ of $S$, the segment $f(e)$ crosses only finitely many non-ubiquitous turns. It follows that turns taken by the images of the edges of $S$
contribute only finitely many orbits of turns in $\calb_{[d],[d']}$. Since $S$ has only finitely many $G$-orbits of turns at vertices with trivial stabilizer, the claim follows. 

If no turn in $\calb_{[d],[d']}$ lifts at a vertex of $S$ with nontrivial stabilizer, then our claim shows that $\calb_{[d],[d']}$ is $G_x$-cofinite.
So consider a vertex $s\in f\m(x)$ with nontrivial stabilizer, and a turn $(\Tilde \delta_1,\Tilde \delta_2)$ at $s$ that maps to some $(\delta_1,\delta_2)\in \calb_{[d],[d']}$.
Since $T$ is relatively free, 
$s$ is the unique vertex with nontrivial stabilizer in $f\m(x)$, and $G_s=G_x$.
Then for all $g_1,g_2\in G_s=G_x$, the turn $(g_1\Tilde\delta_1,g_2\Tilde\delta_2)$ is mapped to $(g_1\delta_1,g_2\delta_2)$ so $\calb_{[d],[d']}$ is full.
\end{proof}

\subsubsection{Definition of $\Name$ in the indecomposable case} \label{sec_indecomposable}

We denote by $\FGeom$ the collection of all geometric, relatively free arational trees. This is a Borel subset of $\AT$, being the complement of the union $\mathcal{NG}eom \cup\SAT$. Every tree $T\in\FGeom$ has a natural decomposition given by a transverse covering by indecomposable trees \cite{Gui}.
Since $T$ is arational, it has to be mixing (see \cite[Proposition~8.3]{Rey} for free groups and \cite[Lemma~4.9]{Hor2} in general), i.e.\ 
there is exactly one orbit of indecomposable trees in the transverse covering. 
The skeleton of this transverse covering is a bipartite simplicial tree which has one orbit of vertices corresponding to the indecomposable pieces, whose stabilizer is finitely generated, while the stabilizer of every other vertex in $S$ is a peripheral group (because $T$ is relatively free). 
Edge stabilizers of this skeleton are finitely generated.
In particular, the set $\cals k$ of all possible skeleta of trees in $\FGeom$ is countable. We denote by $\mathcal{GI}$ the subset of $\FGeom$ made of all indecomposable geometric relatively free arational trees in $\baro$.

\begin{lemma}\label{comp-mes}
Let $S$ be a simplicial $G$-tree. Then the set of all trees in $\baro$ that are compatible with $S$ is a Borel subset of $\baro$.
\end{lemma}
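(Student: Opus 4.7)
The plan is to exhibit the set of $T\in\baro$ compatible with $S$ as a Borel subset by expressing compatibility through countably many Borel (indeed closed) conditions on the length function $g\mapsto ||g||_T$. Recall that the Gromov--Hausdorff topology on $\baro$ coincides with the topology of pointwise convergence of length functions, so any condition on finitely (or countably) many values $||g||_T$ yields a Borel subset.

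The characterization I would use is that $T$ is compatible with $S$ iff $T$ admits a graph-of-actions structure over $S$: for each vertex $v$ of $S$ there is a $G_v$-invariant subtree $T_v\subseteq T$; for each edge $e$ of $S$ the stabilizer $G_e$ acts elliptically on $T$; and the subtrees $T_v$ glue along $G_e$-fixed points according to the combinatorics of $S$. Since $S$ is simplicial, it has countably many orbits of edges and vertices, and this description should reduce the problem to countably many closed constraints on $T$: the ellipticity conditions $||c||_T=0$ for $c\in G_e$, and, for each edge $e=uv$, a ``segregation'' constraint on how the $G_u$- and $G_v$-minimal subtrees of $T$ meet through a single $G_e$-fixed point, expressible via inequalities relating $||a||_T$, $||b||_T$, $||ab||_T$, and $||ab^{-1}||_T$ for $a\in G_u$, $b\in G_v$. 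Each such inequality is closed in the length-function topology, so their countable conjunction is closed, and the compatibility set is then Borel.

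The hard part will be making the segregation constraint precise at the level of $\mathbb{R}$-trees in $\baro$ (not merely simplicial trees), and verifying that simultaneous compatibility with all one-edge collapses actually assembles into compatibility with $S$ in the very small setting. The cleanest route I see is through Guirardel's core $C(T,S)\subseteq T\times S$: compatibility is equivalent to the vanishing of the two-dimensional mass of the core, and one would argue that this mass is a Borel (preferably upper semi-continuous) function of $T\in\baro$ by writing it as a countable sum of local contributions indexed by pairs consisting of an edge of $S$ and an element of $G$. Once this is in place, the compatibility set is the preimage of $\{0\}$ under a Borel function, hence Borel. An alternative, more hands-on route would be to approximate a general $T\in\baro$ by Grushko trees in $\calo$ (for which compatibility with $S$ is a combinatorial condition) and take limits; however, showing that the limit set is Borel still ultimately comes down to a length-function criterion of the kind sketched above.
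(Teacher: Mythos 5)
The proposal does not actually prove the lemma; it sketches two possible routes, each with acknowledged and genuine difficulties, and misses the much shorter argument the paper uses. The paper invokes the characterization from \cite[Theorem~A.10]{GL2}: two $G$-trees $T$ and $S$ are compatible if and only if the \emph{sum} $\|\cdot\|_T + \|\cdot\|_S$ of their translation length functions is itself the translation length function of some $G$-tree. Combined with the Culler--Morgan fact that the set of translation length functions of (minimal, irreducible, etc.) $G$-trees is closed in $\mathbb{R}^G$, the compatibility set is the preimage of a closed set under the continuous map $T\mapsto \|\cdot\|_T + \|\cdot\|_S$, hence closed, hence Borel. This is a one-line proof; your sketch replaces it by a much heavier apparatus without completing it.

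Two concrete gaps in your proposal. First, the graph-of-actions route implicitly treats compatibility as something that can be checked ``edge by edge'' or ``one-edge-collapse by one-edge-collapse,'' but pairwise compatibility with all one-edge collapses of $S$ is strictly weaker than compatibility with $S$: there is in general no common refinement even when all pairwise refinements exist, so the assembly step (``verifying that simultaneous compatibility with all one-edge collapses actually assembles into compatibility with $S$'') would fail. Moreover, your proposed ``segregation'' inequalities on $\|a\|_T$, $\|b\|_T$, $\|ab\|_T$, $\|ab^{-1}\|_T$ with $a\in G_u$, $b\in G_v$ are vacuous when the relevant vertex stabilizers are trivial, which does happen, and are not obviously sufficient in the non-simplicial, dense-orbits setting of $\baro$. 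Second, the core route is the more promising one, but you would still need to establish measurability (or semi-continuity) of the $2$-dimensional mass of $C(T,S)$ as $T$ ranges over $\baro$, which is a nontrivial project; you do not carry it out. The length-function criterion sidesteps all of this.
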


\begin{proof}
This is because a tree $T$ is compatible with $S$ if and only if the sum of the translation length functions of $T$ and $S$ is a translation length function of some $G$-tree \cite[Theorem~A.10]{GL2}, and the space of translation length functions of $G$-trees is closed (hence a Borel subset) in $\mathbb{R}^G$, see \cite{CM}.
\end{proof}

\begin{cor}\label{gi-mes}
The set $\mathcal{GI}$ is a Borel subset of $\mathcal{G}eom$.
\end{cor}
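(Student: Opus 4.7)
Since $\FGeom = \mathcal{G}eom \setminus \SAT$ is Borel (by Lemma~\ref{mes-geom} and the Borel description of $\SAT$ at the start of Section~\ref{sec_SAT}), it suffices to show that $\mathcal{GI}$ is Borel inside $\FGeom$. The plan is to prove the characterization
\[
T \in \FGeom \setminus \mathcal{GI} \iff T \text{ is compatible with some nontrivial } S \in \cals k.
\]
Granting this, $\FGeom \setminus \mathcal{GI}$ is the intersection of the Borel set $\FGeom$ with a countable union (over nontrivial $S\in\cals k$) of sets of the form $\{T\in\baro : T \text{ is compatible with } S\}$, each of which is Borel by Lemma~\ref{comp-mes}.

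The ``only if'' direction is essentially by construction: if $T \in \FGeom$ is decomposable, then its canonical transverse covering by indecomposable subtrees is non-trivial, so its skeleton is a non-trivial element of $\cals k$, and $T$ is compatible with this skeleton (a common refinement is obtained by blowing up each vertex of the skeleton by the corresponding indecomposable subtree, which is just $T$ itself).

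For the ``if'' direction I will argue by contradiction. Suppose $T \in \mathcal{GI}$ is compatible with some non-trivial simplicial $G$-tree $S$ via a common refinement $\hat T$, with collapse maps $\pi_T : \hat T \to T$ and $\pi_S : \hat T \to S$. For each edge $e$ of $S$, the corresponding rigid arc $\tilde e \subseteq \hat T$ has image $\pi_T(\tilde e)$ which is either a point or a non-degenerate arc in $T$. In the latter case, set $J = \pi_T(\tilde e)$: by indecomposability, any arc $I \subseteq T$ is covered by finitely many translates $g_1 J,\dots,g_n J$ whose consecutive intersections are non-degenerate; but distinct $S$-edges $g_i \tilde e$ and $g_{i+1} \tilde e$ have disjoint interiors in $\hat T$, and since $\pi_T$ is a collapse (so its fibers are subtrees of $\hat T$ that meet any $S$-edge only at its endpoints), $g_i J \cap g_{i+1} J$ can only be a single point unless $g_i \tilde e = g_{i+1} \tilde e$. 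The chain therefore degenerates to a single translate of $J$, forcing $I \subseteq g_1 J$. Since this holds for all arcs $I$, all of $T$ would be contained in a single compact translate of $J$, contradicting that $T$ has arbitrarily long arcs (as it has dense orbits with trivial arc stabilizers). In the remaining case every $\tilde e$ collapses to a point in $T$, so $T$ is obtained from $\hat T$ by collapsing the entire forest of $S$-edges, and the images in $T$ of the blown-up vertex subtrees of $\hat T$ form a non-trivial transverse covering of $T$, again contradicting indecomposability.

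The main obstacle is the ``if'' direction, which requires the careful case analysis on the common refinement, crucially exploiting that $\pi_T$ is a collapse (so no folding) and that $T$ has dense orbits with trivial arc stabilizers (by arationality and very smallness). Once this equivalence is established, the rest is a routine countable union of Borel sets argument.
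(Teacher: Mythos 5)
Your proof follows the same route as the paper: reduce to the equivalence ``$T\in\FGeom$ is indecomposable iff $T$ is not compatible with any nontrivial $S\in\cals k$,'' then apply Lemma~\ref{comp-mes} and countability of $\cals k$. The paper simply asserts this equivalence, so the substance of your proposal is your attempted proof of it. That attempt is in the right spirit but glosses over some geometry of common refinements. In the ``only if'' direction, the common refinement of $T$ with its skeleton is not $T$ itself but is obtained by blowing up the attaching points of the transverse covering into genuine edges (this is the standard compatibility of a tree with the skeleton of any of its transverse coverings). In the ``if'' direction, the assertion that fibers of $\pi_T$ meet each $S$-edge $\tilde e$ only at its endpoints need not hold for an arbitrary common refinement, since $\pi_T$ may collapse an interior subarc of $\tilde e$; moreover $\tilde e$ itself need not be an arc when $\pi_S$ has nondegenerate fibers over interior points of $e$. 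The fact you actually need --- that $g_iJ\cap g_{i+1}J$ is degenerate whenever $g_i\tilde e\neq g_{i+1}\tilde e$ --- is correct, but deriving it cleanly requires normalizing $\hat T$ and, for example, a bridge argument applied between the disjoint closed $\pi_T$-fibers over two distinct points of $g_iJ\cap g_{i+1}J$, noting that the bridge must lie in $g_i\tilde e\cap g_{i+1}\tilde e$. In short: same strategy as the paper, with a creditable but somewhat imprecise attempt to supply the key fact that the paper leaves unproved.
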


\begin{proof}
This is a consequence of Lemma~\ref{comp-mes} because a geometric tree is indecomposable if and only if it is not compatible with any simplicial tree in $\cals k$, and $\cals k$ is countable. In addition, as already observed in the previous section, being relatively free is a Borel condition. 
\end{proof}

The map $\Name$ will be defined in terms of the following notion of angle.

\begin{de}\label{def_angle}
Let $T\in\GI$, and $d,d'$ be two directions in $T$ based at the same branch point $x\in T$.
\\ The \emph{angle} $\angle (d,d')$ between $d$ and $d'$
is the minimal number $k$ of ubiquitous turns $U_1,\dots,U_k$ such that $d$ is a direction in $U_1$, $d'$ is a direction in $U_k$, and for all $i\in\{1,\dots,k-1\}$, the turns $U_i$ and $U_{i+1}$ have a direction in common.  
\end{de}

The angle can be interpreted as a distance in the Whithead graph at $x$ defined by ubiquitous turns. It is finite because this Whitehead graph is connected (Lemma~\ref{lem_whitehead}). Obviously, the angle is $G$-invariant. Moreover, since by Corollary \ref{ubi} any direction is part
of only finitely many ubiquitous turns, the Whitehead graph is locally finite. In other words,  
for any direction $d$, and $k\in \bbN$,
there are only finitely many directions $d'$ based at the same point as $d$ such that $\angle(d,d')\leq k$. 

We now define the name of a turning class $\calb$ on $T$ as follows.
If $\calb$ is not of finite type, we define its name as $\perp$. 
We now assume that $\calb$ is of finite type.
Given a branch point $x\in T$ and
a pair $([d],[d'])$ of $G_x$-orbits of directions based at $x$, we let $N_{[d],[d']}=0$ if $\calb_{[d],[d']}$ is full, and otherwise we let 
$$N_{[d],[d']}=\max_{(\delta,\delta')\in \calb_{[d],[d']}}\angle(\delta,\delta').$$
This is well defined because $\calb$ is of finite type.
We then let $$\Name(\calb)=\max_{([d],[d'])}(N_{[d],[d']}),$$ where the maximum is taken over all pairs of orbits of directions at branch points.

\begin{prop}\label{name-indec}
The set $\GI$ has namable turning classes: 
the map $$\Name:\turn\GI\ra \mathbb{N}\cup\{\perp\}$$ satisfies all properties from Definition~\ref{de-namable}. 
\end{prop}

\begin{proof}
The fact that $\Name$ is  measurable can easily be checked using Lemmas~\ref{dir-mes} and~\ref{ubi-mes} (representatives of the pairs of orbits of directions in $T$ can be chosen in a measurable way). The second and third   conditions from Definition~\ref{de-namable} are clear from the definition of $\Name$. 
The fourth   condition follows from the fact that there are only finitely many orbits of pairs of directions making a given angle,
and there are finitely many possibilites for the pairs of orbits of directions for which $\calb_{[d],[d']}$  is full.
The fifth   condition follows from Lemma~\ref{lem_finite_type}.
\end{proof}

\subsubsection{Definition of $\Name$ in the decomposable case} 

The goal of the present section is to prove that the set of all geometric trees in $\AT$ has namable turning classes, without restricting to the indecomposable ones.

We start by defining a map $\Name$ on the set $\turn\FGeom$ of turning classes on relatively free, arational, geometric trees.
Let $T\in\FGeom$, and let $\calb$ be a turning class on $T$. 
Let $\caly$ be the transverse covering of $T$ by its maximal indecomposable subtrees (recall that all trees in $\caly$ are in the same orbit because $T$ is mixing). We say that a turn $(d,d')$ in $T$ \emph{crosses} $\caly$ if it is not contained in any $Y\in \caly$.
We say that the turning class $\calb$ is \emph{acceptable} if given any turn $(d,d')\in\calb$ based at $x\in T$ and $g\in G_x$ such that both $(d,d')$ and $(d,gd')$ cross $\caly$, then $(d,d')\in\calb$ if and only if $(d,gd')\in\calb$. 
We then say that $\calb$ \emph{crosses $\caly$ along $([d],[d'])$} when there exists a turn $(\Tilde d,\Tilde d')\in \calb$ with $\Tilde d\in [d]$, and 
$\Tilde d'\in [d']$ such that $(\Tilde d,\Tilde d')$ crosses $\caly$ (in which case, all turns in $[d]\times [d']$ crossing $\caly$ are in $\calb$ if $\calb$ is acceptable).

If $\calb$ is not acceptable or is not of finite type, we define its name as $\perp$. 
Otherwise, choose $Y\in \caly$ and let $\calb_{|Y}$ be the turning class of $Y$ obtained by restriction of $\calb$.
We then define $\Name(\calb)=\Name(\calb_{|Y})$. Since $\Name(\calb_{|Y})$ is defined as an angle when not full
this name does not depend on the choice of $Y\in \caly$.

\begin{prop}\label{name-dec} 
The set $\FGeom$ has namable turning classes: 
the map $$\Name:\turn\FGeom\ra \mathbb{N}\cup\{\perp\}$$ satisfies all properties from Definition~\ref{de-namable}. 
\end{prop}

\begin{proof}
The map $\Name$ clearly satisfies the second and third conditions.

Let us check that $\Name$ satisfies Assertion 4. 
Given $n\neq \perp$, and a turning class $\calb$  on $T$ with name $n$, $\calb$ is acceptable, so $\calb$ is completely determined by $\calb_{|Y}$ and by the (finite) collection of all pairs $([d],[d'])$ of orbits of directions along which $\calb$ crosses $\caly$ (with the above notations). Finiteness thus follows from finiteness in the indecomposable case (Proposition~\ref{name-indec}).

We now prove Assertion 5. 
Given $T\in \FGeom$, and $Y\in\caly$ as above,
let $G_Y\actson R$ be any Grushko tree for $(G_Y,\calf_{|G_Y})$ with a very optimal morphism $f_Y:R\ra Y$.
Let $S$ be the skeleton of $\caly$. 
It has a vertex $v_Y$ whose stabilizer is $G_Y$.
Blow-up in an equivariant way the vertex $v_Y$ of $S$ into $R$ and attach each edge $e$ incident on $v_Y$ to the unique point fixed by $G_e$ (recall that $G_e$ is non-trivial because $T$ is arational, and peripheral because $T$ is relatively free). 
Let $\hat S$ be the obtained blown-up tree, and let $S_0$ be the tree obtained from $\hat S$ by collapsing all the edges coming from $S$. 
The tree $S_0$ lies in $\Os$, 
and we denote by $\calr$ the transverse covering of $S_0$ by the translates of $R$.
The morphism $f_Y:R\ra Y$ extends uniquely to a map $\hat S\ra T$ which is constant on all edges coming from $S$.
This yields a morphism $f:S_0\ra T$ which is   very optimal because $f_Y$ is. 
The turning class $\calb$ of $f$ is of finite type by Lemma \ref{lem_finite_type}. 

To ensure that $\Name(\calb)\neq\perp$, we now check that $\calb$ is acceptable. 
Let $(d_1,d_2)$ be a turn in $\calb$ that crosses $\caly$. Let $x$ be the base point of this turn, and $Y_1,Y_2\in \caly$ the subtrees containing $d_1$ and $d_2$ respectively. Let $R_1,R_2\subset S_0$ be the preimages of $Y_1,Y_2$ in $S_0$ (these are translates of $R\subset S_0$). 
These two subtrees intersect in a single vertex $v$ coming from the copy of the point $x$ in the skeleton, so $G_x=G_v$ and $f(v)=x$.
Let $(\Tilde d_1,\Tilde d_2)$ be a lift of $(d_1,d_2)$ in $S_0$. Since $\tilde d_i$ is contained in $R_i$, $\tilde d_1$ and $\tilde d_2$ are necessarily based at $v$.
Then for all $g\in G_x=G_v$, the turn $(\Tilde d,g\Tilde d')$ is a lift of $(d,gd')$. In particular, all turns $(d,gd')$ (and in particular all which cross $ \caly$) are in $\calb$.

We finally establish that $\Name$ is measurable. 
We claim that crossing is a Borel condition, i.e.\ for all $n,m\in\mathbb{N}$, the set  
$$\{T\in\FGeom|(d_n(T),d_m(T))\text{~defines a turn that crosses~}\caly\}$$ is a Borel subset of $\FGeom$.
Indeed, if $d_n(T),d_m(T)$ are based at the same point, then $(d_n(T),d_m(T))$ does not cross
if and only if for every arc $I\subset T$, there exists $g_1,\dots,g_N\in G$ such that $d_n(T)$ is contained in $g_1.I$,
$d_m(T)$ is contained in $g_N.I$, and $g_i I\cap g_{i+1} I$ is non-degenerate for all $i<N$.
This condition does not change if we impose that the endpoints of $I$ are branch points of $T$.
The above characterization can therefore be expressed using the measurable enumeration of branch points, thus proving the claim.
Since the name of a turning class is expressed in terms of angles of such turns, the measurability of $\Name$ easily follows.
\end{proof}

\section{Proof of the main theorem and applications}\label{sec-appl}

\subsection{Proof of the main theorem} 

We start by establishing the following result.

\begin{theo}\label{arat-2}
Let $G$ be a countable group, and let $\calf$ be a free factor system of $G$.
\\ Then there exists a sequence of Borel maps $$\mu_n:\mathbb{P}\AT\to\mathrm{Prob}(\Simp)$$ such that for all $\Phi\in\Out(G,\calf)$ and all $T\in\mathbb{P}\AT$, one has $$||\Phi.\mu_n(T)-\mu_n(\Phi.T)||_1\to 0$$ as $n$ goes to $+\infty$.
\end{theo}

\begin{proof}
By Proposition \ref{prop_namable}, $\AT$ has namable turning classes.
Theorem~\ref{arat-2} thus follows from Proposition~\ref{name-enough}.
\end{proof}

We are now in position to complete the proof of our main theorem.

\begin{theo}\label{main-2}
Let $\{G_1,\dots,G_k\}$ be a finite   family of countable groups, and let $$G:=G_1\ast\dots\ast G_k\ast F_N,$$ where $F_N$ is a free group of rank $N$.
\\ Assume that for all $i\in\{1,\dots,k\}$, the group $G_i$ is   boundary amenable. 
\\ Then $\mathrm{Out}(G,\{G_i\}^{(\mathrm{t})})$ is   boundary amenable.
\end{theo}

\begin{proof}[Proof of Theorem~\ref{main-2}]
As usual, we denote by $\calf$ the finite   family of the $G$-conjugacy classes of the subgroups $G_i$. The proof is by induction on the complexity $\xi(G,\calf)=(N+k-1,N)$, where complexities are ordered with respect to the lexicographic order. We start with the sporadic cases.
\begin{itemize}
\item The result is obvious if either $\calf=\{[G]\}$ (i.e.\ $\xi(G,\calf)=(0,0)$) or $G=\mathbb{Z}$ (i.e.\ $\xi(G,\calf)=(0,1)$).
\item  If $G=G_1\ast G_2$ and $\calf=\{[G_1],[G_2]\}$ (i.e.\ $\xi(G,\calf)=(1,0)$), then the group $\Out(G,\calf^{(\mathrm{t})})$ is isomorphic to $G_1/Z(G_1)\times G_2/Z(G_2)$ (see \cite{Lev}), which is   boundary amenable, being a direct product of two groups that are   boundary amenable by Proposition \ref{prop_quotient_amenable}.  
\item  If $G=G_1\ast\mathbb{Z}$ and $\calf=\{[G_1]\}$ (i.e.\ $\xi(G,\calf)=(1,1)$), then  $\Out(G,\calf^{(\mathrm{t})})$ has an index $2$ subgroup which is isomorphic to $(G_1\times G_1)/Z(G_1)$, where $Z(G_1)$ embeds diagonally in $G_1\times G_1$ (see \cite{Lev}). The group $(G_1\times G_1)/Z(G_1)$ maps to $G_1/Z(G_1)\times G_1/Z(G_1)$ with central kernel, and is therefore   boundary amenable by Propositions~\ref{ext-exact} and  \ref{prop_quotient_amenable}. 
\end{itemize}
 We now assume that $(G,\calf)$ is non-sporadic. It was proved in \cite{Rey,Hor2} that to any (projective class of) non-arational tree, one can associate a canonical finite set of conjugacy classes of free factors: this means that there exists an $\text{Out}(G,\calf)$-equivariant map from $\mathbb{P}\overline{\calo}\setminus\mathbb{P}\AT$ to the countable collection   $\mathcal{F}(\mathrm{FF})$ of finite sets of   onjugacy classes of proper $(G,\calf)$-free factors. Notice also that there exists an $\Out(G,\calf)$-equivariant map $\pi:\Simp\to\calf(\mathrm{FF})$, sending a simplex $\Delta$ to the collection of all   conjugacy classes of proper free factors that are elliptic in some tree obtained by collapsing some edges in the underlying tree of $\Delta$. Combining these facts with Theorem~\ref{arat-2}, we get a sequence of Borel maps 
$$\mu_n:\mathbb{P}\baro\to\mathrm{Prob}(\mathrm{FF})$$ 
such that for all $\Phi\in\Out(G,\calf)$ and all $T\in\mathbb{P}\baro$, one has 
$$||\Phi.\mu_n(T)-\mu_n(\Phi.T)||_1\to 0$$ 
as $n$ goes to $+\infty$. This holds in particular for every $\Phi\in \Out(G,\calf^{(\mathrm{t})})$.
Since $\mathbb{P}\overline{\calo}$ is compact and $\mathrm{FF}$ is countable, using  Corollary~\ref{ozawa-kida}, it is enough to prove that the stabilizer in $\Out(G,\calf^{(\mathrm{t})})$ of   the conjugacy class of any proper $(G,\calf)$-free factor is   boundary amenable. Let $A$ be a proper $(G,\calf)$-free factor, and let $\calf'$ be the smallest $(G,\calf)$-free factor system that contains $A$. Then there is a morphism from the stabilizer of $A$ in $\Out(G,\calf^{(\mathrm{t})})$ to $\Out(A,\calf_{|A}^{(\mathrm{t})})$, whose kernel is contained in $\Out(G,\calf'^{(\mathrm{t})})$. An easy computation shows that $\xi(A,\calf_{|A})<\xi(G,\calf)$ and $\xi(G,\calf')<\xi(G,\calf)$, so arguing by induction on the complexity (and using the fact that extensions of   boundary amenable countable groups are   boundary amenable), we get that the stabilizer of $A$ in $\Out(G,\calf^{(\mathrm{t})})$ is   boundary amenable.
\end{proof}

\begin{cor}\label{cor-main}
Let $\{G_1,\dots,G_k\}$ be a finite   family of countable groups, and let $$G:=G_1\ast\dots\ast G_k\ast F_N,$$ where $F_N$ is a free group of rank $N$.
\\ Assume that for all $i\in\{1,\dots,k\}$, the groups $G_i$ and $\Out(G_i)$ are   boundary amenable. 
\\ Then $\text{Out}(G,\{G_i\})$ is   boundary amenable.  
\end{cor}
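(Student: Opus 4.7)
The plan is to derive Corollary~\ref{cor-main} from Theorem~\ref{main-2} by exhibiting $\Out(G,\{G_i\})$ as an extension of a group that embeds in $\prod_{i=1}^k \Out(G_i)$ by $\Out(G,\{G_i\}^{(t)})$. Concretely, I would first construct a natural homomorphism
$$\rho: \Out(G,\{G_i\})\longrightarrow \prod_{i=1}^k \Out(G_i).$$
Given $\Phi\in\Out(G,\{G_i\})$, pick a representative $\phi\in\Aut(G)$. For each $i$, the subgroup $\phi(G_i)$ is conjugate to $G_i$, so after composing $\phi$ with an inner automorphism we may assume $\phi(G_i)=G_i$ setwise and set $\rho_i(\Phi)=[\phi|_{G_i}]\in\Out(G_i)$. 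The fact that $G_i$ is malnormal in $G$ (as a free factor its own normalizer equals itself) makes $\rho$ well-defined and independent of all choices: any other representative preserving every $G_i$ setwise differs by conjugation by an element that lies in each $G_i$ up to adjusting by inner automorphisms, hence its restriction is inner in $G_i$.

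Next I would identify the kernel of $\rho$. By definition, $\Phi\in\ker\rho$ exactly when one can choose a representative that preserves each $G_i$ setwise and restricts to an inner automorphism on each, which is precisely the defining property of $\Out(G,\{G_i\}^{(t)})$. So we obtain a short exact sequence
$$1\longrightarrow \Out(G,\{G_i\}^{(t)})\longrightarrow \Out(G,\{G_i\})\longrightarrow Q \longrightarrow 1,$$
where $Q:=\rho(\Out(G,\{G_i\}))\subseteq \prod_i \Out(G_i)$.

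To conclude, I would assemble three exactness results already in the paper. Theorem~\ref{main-2} gives exactness of $\Out(G,\{G_i\}^{(t)})$ from the hypothesis that each $G_i$ is exact. The assumption that each $\Out(G_i)$ is exact, together with iterated application of Proposition~\ref{ext-exact} to the coordinate projections, yields exactness of the finite product $\prod_i \Out(G_i)$; then Corollary~\ref{cor-subgroup} gives exactness of the subgroup $Q$. Finally one more application of Proposition~\ref{ext-exact} to the displayed short exact sequence shows that $\Out(G,\{G_i\})$ is exact.

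The only real content beyond Theorem~\ref{main-2} is the well-definedness of $\rho$ and the identification of its kernel; both are standard consequences of the malnormality of free factors (so that the ambiguity of conjugation by elements of $G$ collapses to conjugation inside each $G_i$). No further geometric input is needed, so I do not expect a serious obstacle here.
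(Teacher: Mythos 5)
Your proof is correct and follows essentially the same route as the paper: the paper's own proof simply cites the short exact sequence $1\to\Out(G,\calf^{(t)})\to\Out(G,\calf)\to\prod_{i=1}^k\Out(G_i)\to 1$ and invokes closure of exactness under extensions (Proposition~\ref{ext-exact}). The only difference is that the paper asserts surjectivity of the restriction map onto $\prod_i\Out(G_i)$ (which indeed holds, since any tuple of automorphisms of the factors $G_i$ extends by the identity on $F_N$), whereas you sidestep surjectivity by instead applying subgroup-closure of exactness to the image $Q$; both variants are valid.
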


\begin{proof}
This follows from the fact that there is a short exact sequence $$1\to\Out(G,\calf^{(\mathrm{t})})\to\Out(G,\calf)\to\prod_{i=1}^k\Out(G_i)\to 1,$$ and   boundary amenability is stable under extensions (Proposition~\ref{ext-exact}). 
\end{proof}

\subsection{Automorphisms of relatively hyperbolic groups}\label{sec_RH}

\begin{cor}\label{cor-hyp}
Let $G$ be a torsion-free group which is hyperbolic relative to a finite   family of finitely generated groups $\mathcal{P}:=\{P_1,\dots,P_k\}$. 
\\ Assume that for all $i\in\{1,\dots,k\}$, the group $P_i$ is   boundary amenable. 
\\ Then $\Out(G,\calp^{(t)})$ is   boundary amenable.
\end{cor}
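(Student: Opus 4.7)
The plan is to reduce this corollary to Theorem~\ref{main-2} via a relative Grushko decomposition of $G$ with respect to $\mathcal{P}$, and then to handle the resulting freely indecomposable pieces via the relative JSJ theory of Guirardel--Levitt.

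\textbf{Reduction to the freely indecomposable case.} First I would take the Grushko decomposition of $G$ relative to $\mathcal{P}$, writing $G = H_1 \ast \cdots \ast H_m \ast F_N$ where each $H_i$ is either conjugate to some $P_j \in \mathcal{P}$, or is freely indecomposable relative to its induced parabolic structure $\mathcal{P}_{|H_i}$. This decomposition is canonical up to conjugation and permutation, so a finite-index subgroup $\Out^0(G,\mathcal{P}^{(t)}) \subseteq \Out(G, \mathcal{P}^{(t)})$ preserves the conjugacy class of each $H_i$, and restriction to the freely indecomposable factors yields a short exact sequence
$$1 \to \Out(G, \{H_i\}^{(t)}) \to \Out^0(G, \mathcal{P}^{(t)}) \to \prod_{H_i \text{ freely indec.}} \Out(H_i, \mathcal{P}_{|H_i}^{(t)}) \to 1$$
(parabolic factors contribute nothing since the restriction to them is conjugation by hypothesis). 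Each $H_i$ is exact: parabolic factors by assumption, and freely indecomposable factors because they are torsion-free relatively hyperbolic with parabolics contained in conjugates of the $P_j$'s, hence exact by Ozawa \cite{Oza}. Theorem~\ref{main-2} then gives exactness of $\Out(G, \{H_i\}^{(t)})$. By Propositions~\ref{ext-exact} and~\ref{fidx-exact}, the problem reduces to proving exactness of $\Out(H, \mathcal{P}_{|H}^{(t)})$ whenever $H$ is torsion-free, relatively hyperbolic with exact parabolics, and freely indecomposable relative to its parabolic structure.

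\textbf{The freely indecomposable case via JSJ.} For such $H$, I would invoke the canonical JSJ decomposition of $H$ relative to $\mathcal{P}_{|H}$ over elementary subgroups. By Guirardel--Levitt \cite{GL4}, a finite-index subgroup of $\Out(H, \mathcal{P}_{|H}^{(t)})$ fits into a short exact sequence whose kernel is a finitely generated abelian Dehn-twist group along the JSJ edges, and whose quotient is built from mapping class groups of the compact $2$-orbifolds attached to the QH (surface-type) vertices together with constrained automorphism groups at the rigid vertices. The abelian twist kernel is amenable hence exact; mapping class groups of compact surfaces are exact by \cite{Kid,Ham} (with stability under subgroups, Corollary~\ref{cor-subgroup}); and the rigid-vertex contribution is controlled by the exactness of the $P_j$'s, since such automorphisms factor through $\prod_j \Out(P_j)$-type pieces whose kernel is forced to preserve incident edge subgroups, and exactness passes to subgroups. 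Combining these via Propositions~\ref{ext-exact} and~\ref{fidx-exact} gives exactness of $\Out(H, \mathcal{P}_{|H}^{(t)})$.

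\textbf{Main obstacle.} The principal difficulty is the freely indecomposable case: extracting the correct JSJ-based short exact sequence in the generality of arbitrary exact parabolics, and controlling the rigid-vertex piece. In the toral relatively hyperbolic case (which suffices for Theorem~\ref{main-0}) rigid-vertex automorphism groups are arithmetic, hence exact as linear groups by \cite{GHW}. For arbitrary exact parabolics, one must instead leverage the hypothesis that each $P_j$ is exact together with stability of exactness under subgroups and extensions. A minor technical point is the passage to the finite-index subgroup $\Out^0(G, \mathcal{P}^{(t)})$ in the Grushko step, which is harmless thanks to Proposition~\ref{fidx-exact}.
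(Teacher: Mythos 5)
Your overall strategy is the same as the paper's: take a Grushko decomposition of $G$ relative to $\mathcal{P}$, pass to a finite-index subgroup of $\Out(G,\mathcal{P}^{(t)})$ that preserves the conjugacy classes of the factors, reduce to the kernel $\Out(G,\{H_i\}^{(t)})$ (handled by Theorem~\ref{main-2}) together with the groups $\Out(H_i,\mathcal{P}_{|H_i}^{(t)})$ for the freely indecomposable pieces, and then invoke the JSJ structure theorem of \cite{GL4} and exactness of mapping class groups for those pieces. The Grushko step in your proposal is fine.

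The gap is in your description of the JSJ-based short exact sequence. First, the kernel of the map onto mapping class groups is the twist group $\mathcal{T}$ of the canonical elementary JSJ decomposition, and this group is \emph{not} abelian in general: twists along parabolic edges involve centralizers inside the $P_i$'s. The correct statement, used in the paper, is that $\mathcal{T}$ maps with abelian kernel onto a direct product of copies of $P_i/Z(P_i)$ (by \cite[Appendix, Lemma~5]{Hae}); its exactness then follows from exactness of the $P_i$'s together with Proposition~\ref{prop_quotient_amenable}, and this is precisely where the hypothesis on the parabolics enters in the freely indecomposable case. Your claim that ``the abelian twist kernel is amenable hence exact'' only works in the toral setting, and since you are trying to prove the statement for arbitrary exact parabolics you must address the non-abelian twist group. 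Second, in the structure theorem of \cite{GL4} for $\Out(G,\mathcal{P}^{(t)})$ the quotient is just a product of mapping class groups of surfaces, with no separate rigid-vertex factor to worry about (rigid vertices only contribute finite data at the finite-index stage, because one is working with automorphisms acting by conjugation on the $P_i$'s). So the ``rigid-vertex contribution'' you flag as the principal obstacle is actually absent; the real difficulty you must handle is the twist group $\mathcal{T}$ described above. A minor additional point: you would need to observe, as the paper does, that the mapping class group of a non-orientable surface is exact (via passing to the orientation double cover and commensurability).
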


Before proving Corollary~\ref{cor-hyp}, we establish the following consequence.

\begin{cor}\label{cor-toral}
Let $G$ be a torsion-free group which is hyperbolic relative to a finite   family of finitely generated groups $\mathcal{P}:=\{P_1,\dots,P_k\}$. 
\\ Assume that for all $i\in\{1,\dots,k\}$, the groups $P_i$ and $\Out(P_i)$ are   boundary amenable.
\\ Then $\Out(G,\mathcal{P})$ is   boundary amenable.
\\ In particular, the outer automorphism group of any torsion-free Gromov hyperbolic group (or more generally any toral relatively hyperbolic group) is   boundary amenable.
\end{cor}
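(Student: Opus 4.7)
The plan is to deduce the first statement from Corollary~\ref{cor-hyp} via exactly the short exact sequence argument that was used for Corollary~\ref{cor-main}, and then to handle the ``In particular'' clause by specialization.

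For the first statement, I would construct the short exact sequence
\[
1 \to \Out(G,\mathcal{P}^{(t)}) \to \Out(G,\mathcal{P}) \to \prod_{i=1}^{k} \Out(P_i),
\]
where the right-hand map sends $\Phi \in \Out(G,\mathcal{P})$ to the induced outer automorphism of each $P_i$, obtained by choosing a representative $\phi \in \Aut(G)$ and precomposing with an inner automorphism of $G$ that conjugates $\phi(P_i)$ back onto $P_i$. Since $G$ is torsion-free and relatively hyperbolic, the peripheral subgroups are malnormal, so this conjugating element is well-defined modulo $P_i$, and the map descends to $\Out(P_i)$. The kernel is $\Out(G,\mathcal{P}^{(t)})$ by definition, which is exact by Corollary~\ref{cor-hyp}. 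The image lies inside the finite direct product $\prod_i \Out(P_i)$, which is exact as an iterated extension of exact groups via Proposition~\ref{ext-exact}, so the image is exact by Corollary~\ref{cor-subgroup}. A final application of Proposition~\ref{ext-exact} then gives exactness of $\Out(G,\mathcal{P})$.

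For the ``In particular'' clause I would split into two cases. A torsion-free Gromov hyperbolic group is relatively hyperbolic with respect to the empty peripheral collection, so the first part applied with $\mathcal{P} = \emptyset$ gives exactness of $\Out(G,\mathcal{P}) = \Out(G)$ with vacuously satisfied hypotheses. If $G$ is torsion-free toral relatively hyperbolic with canonical peripheral family $\mathcal{P} = \{P_1,\dots,P_k\}$ of finitely generated abelian groups, then each $P_i$ is amenable and hence exact, and each $\Out(P_i) \cong GL_{r_i}(\mathbb{Z})$ is linear and hence exact by Guentner--Higson--Weinberger \cite{GHW}. The first part therefore gives exactness of $\Out(G,\mathcal{P})$. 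Since $\mathcal{P}$ is canonically determined by $G$ up to conjugacy, every automorphism of $G$ permutes the finite set of conjugacy classes $\{[P_i]\}$, so $\Out(G,\mathcal{P})$ has finite index in $\Out(G)$, and exactness lifts to $\Out(G)$ as an extension of a finite (hence exact) quotient by the exact kernel $\Out(G,\mathcal{P})$.

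The main obstacle will be the setup of the short exact sequence -- specifically, checking that the restriction maps descend to outer automorphisms, which relies on the malnormality of peripherals in the torsion-free case. Beyond that, the argument is a formal assembly of Corollary~\ref{cor-hyp} with the permanence properties of exactness compiled in Section~\ref{sec-amenability}, and essentially repeats the proof of Corollary~\ref{cor-main}.
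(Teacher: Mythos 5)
Your proposal matches the paper's argument: the first statement is obtained by mapping $\Out(G,\mathcal{P})$ to $\prod_i\Out(P_i)$ with kernel $\Out(G,\mathcal{P}^{(t)})$ (exact by Corollary~\ref{cor-hyp}) and invoking stability of exactness under extensions and subgroups, exactly as in the proof of Corollary~\ref{cor-main}; the hyperbolic case is the empty peripheral family; and the toral case follows from $\Out(\mathbb{Z}^n)\cong GL_n(\mathbb{Z})$ being linear plus finite index of $\Out(G,\mathcal{P})$ in $\Out(G)$. One small point of care, which the paper makes explicit: the canonicality argument (and hence the finite-index claim) only applies when $\mathcal{P}$ is taken to consist of the \emph{non-cyclic} maximal parabolics, since automorphisms of a toral relatively hyperbolic group need not permute cyclic parabolics; with that understood, your argument is correct and essentially identical to the paper's.
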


\begin{proof}
The first statement follows from Corollary~\ref{cor-hyp} as in Corollary~\ref{cor-main}. The application to torsion-free hyperbolic groups is immediate. The case of toral relatively hyperbolic groups follows from the fact that if some parabolic subgroup is isomorphic to $\mathbb{Z}^n$, then its outer automorphism group is linear, whence   boundary amenable by \cite{GHW}; in addition, every automorphism of a toral relatively hyperbolic group permutes the conjugacy classes of all non-cyclic parabolic subgroups.
\end{proof}

\begin{proof}[Proof of Corollary~\ref{cor-hyp}]
First assume that $G$ is freely indecomposable relative to the parabolic subgroups. 
This case follows easily from   boundary amenability of mapping class groups \cite{Kid,Ham} and JSJ theory \cite{Sela_JSJ,Lev,GL4} as we now explain.
By \cite{GL4}, the group $\text{Out}(G,\mathcal{P}^{(\mathrm{t})})$ has a finite index subgroup that maps onto a product of mapping class groups of   compact surfaces, with kernel contained in the group $\mathcal{T}$ of twists of the canonical elementary JSJ decomposition of $G$. The group $\mathcal{T}$ maps with abelian kernel to a direct product of copies of $P_i/Z(P_i)$ \cite[Appendix, Lemma~5]{Hae}, so $\calt$ is   boundary amenable. 
By \cite{Kid,Ham}, the mapping class group of an orientable   compact surface $S$ is   boundary amenable. This also holds if $S$ is non-orientable because $\MCG(S)$ is commensurable
to a subgroup of $\MCG(\Tilde S)$ where $\Tilde S$ is the orientation cover of $S$.
Therefore, $\text{Out}(G,\mathcal{P}^{(\mathrm{t})})$ is   boundary amenable.

We now assume that $G$ splits as a free product of the form $$G=G_1\ast\dots\ast G_k\ast F_N$$ relative to the parabolic subgroups, with $G_i$ freely indecomposable relative to $\calp_{|G_i}$. Then the group $\Out(G,\calp^{(\mathrm{t})})$ has a finite index subgroup $\Out^0(G,\calp^{(\mathrm{t})})$ which does not permute the conjugacy classes of the factors $G_i$. There is a morphism $$\Out^0(G,\mathcal{P}^{(\mathrm{t})})\to\prod_{i=1}^k\Out(G_i,\mathcal{P}_{|G_i}^{(\mathrm{t})}),$$ whose kernel is equal to $\Out(G,\{G_i\}^{(\mathrm{t})})$. The above paragraph shows that each group $\Out(G_i,\mathcal{P}_{|G_i}^{(\mathrm{t})})$ is   boundary amenable, and therefore their direct product is   boundary amenable. It is thus enough to check that the kernel $\Out(G,\{G_i\}^{(\mathrm{t})})$ is   boundary amenable. Since each $G_i$ is hyperbolic relative to $\mathcal{P}_{|G_i}$ and relatively hyperbolic groups with   boundary amenable parabolic subgroups are   boundary amenable \cite{Oza}, we deduce that each $G_i$ is   boundary amenable. Therefore, Theorem~\ref{main-2} shows that $\Out(G,\{G_i\}^{(\mathrm{t})})$ is   boundary amenable, which concludes the proof.
\end{proof}

We conclude this subsection with the following extension.

\begin{cor}\label{cor_virtually_tf}
Let $G$ be a virtually torsion-free hyperbolic group. Then $\Out(G)$ and $\Aut(G)$ are
boundary amenable.
\end{cor}

\begin{proof}
    Let $G_0\subset G$ be a characteristic subgroup of finite index with $G_0$ torsion-free.
    Since $\Aut(G_0)$ embeds as a subgroup of $\Out(G_0*\bbZ)$, both $\Aut(G_0)$ and $\Out(G_0)$ are boundary amenable by Corollary \ref{cor-toral}.
    
    We are going to use several times that if $H$ maps to $H'$ with finite kernel, then boundary amenability of $H'$ implies that of $H$ (Corollary \ref{cor-subgroup} and Proposition \ref{ext-exact}), and the converse holds if the map is onto (Proposition \ref{prop_quotient_amenable}).
    
    The restriction map $\Aut(G)\ra \Aut(G_0)$ has finite kernel so boundary amenability
    of $\Aut(G)$ follows from boundary amenability of $\Aut(G_0)$.
    
    This map induces a homomorphism $\Aut(G)/\Inn(G_0)\ra \Aut(G_0)/\Inn(G_0)=\Out(G_0)$
    whose kernel is finite, showing that $\Aut(G)/\Inn(G_0)$ is boundary amenable.
    Since the epimorphism $\Aut(G)/\Inn(G_0)\ra \Aut(G)/\Inn(G)=\Out(G)$ has finite kernel
    it follows that $\Out(G)$ is boundary amenable.
\end{proof}

\subsection{Automorphisms of right-angled Artin groups} \label{sec_RAAG}

We recall that given a   simplicial graph $X$, the \emph{right-angled Artin group} $A_X$ is the group whose generators are the vertices of $X$, in which the relations are given by commutation of any two generators corresponding to vertices of $X$ that are joined by an edge.   A right-angled Artin group $A_X$ is finitely generated if and only if $X$ is a finite graph. Using Theorem~\ref{main-2} and previous work of Charney--Vogtmann~\cite{CV}, we will derive the following statement.

\begin{cor}\label{raag}
For any   finitely generated right-angled Artin group $A$, the group $\Out(A)$ is   boundary amenable.
\end{cor}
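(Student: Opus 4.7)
The plan is to proceed by induction on the number $n$ of vertices of the defining graph $X$. The base case $n=1$ is immediate since then $A_X = \bbZ$ and $\Out(\bbZ) \cong \bbZ/2\bbZ$ is finite, hence exact. Assume now that $\Out(A_Y)$ is exact for every finite simplicial graph $Y$ with fewer than $n$ vertices, and fix $X$ with $n$ vertices.

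Suppose first that $A_X$ is freely decomposable, i.e.\ $X$ is disconnected. Writing the connected components of $X$ as $X_1, \dots, X_k$ (those with at least two vertices) and collecting the isolated vertices into a free group $F_N$, the Grushko decomposition of $A_X$ takes the form $A_X = A_{X_1} \ast \cdots \ast A_{X_k} \ast F_N$, with each $A_{X_i}$ freely indecomposable and based on a proper full subgraph of $X$. Each $A_{X_i}$ acts properly and cocompactly on its Salvetti cube complex and is therefore exact by Campbell--Niblo \cite{CN}, and by the inductive hypothesis each $\Out(A_{X_i})$ is also exact. Corollary~\ref{cor-main} then yields exactness of $\Out(A_X, \{A_{X_i}\})$, which has finite index in $\Out(A_X)$ by the uniqueness part of Grushko's theorem, so Proposition~\ref{fidx-exact} gives exactness of $\Out(A_X)$.

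Now suppose that $A_X$ is freely indecomposable. Here I invoke the structural results of Charney--Vogtmann \cite{CV}: they produce a finite-index subgroup $\Out^0(A_X) \le \Out(A_X)$ admitting a finite subnormal filtration whose successive quotients are either subgroups of $GL_k(\bbZ)$ for various $k$ --- arising from transvection and permutation subgroups attached to the equivalence classes of vertices under the commutation relation --- or outer automorphism groups $\Out(A_Y)$ of RAAGs on proper full subgraphs $Y \subsetneq X$. The linear quotients are exact by Guentner--Higson--Weinberger \cite{GHW}, and each $\Out(A_Y)$ is exact by the inductive hypothesis. Iterating stability of exactness under extensions (Proposition~\ref{ext-exact}) along the filtration shows that $\Out^0(A_X)$ is exact, and a final application of Proposition~\ref{fidx-exact} upgrades this to $\Out(A_X)$. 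The main obstacle in this outline lies in extracting from \cite{CV} a filtration of precisely that shape, with RAAG subquotients genuinely supported on strictly smaller graphs so that the induction closes; beyond that, the argument uses only general stability properties of exactness that are already recorded in the paper, together with Corollary~\ref{cor-main}.
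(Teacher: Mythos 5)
Your proposal follows the same overall plan as the paper: induction on the number of vertices, a disconnected case handled via Corollary~\ref{cor-main} (together with exactness of RAAGs from Campbell--Niblo and stability under finite-index overgroups), and a connected/freely indecomposable case handled via the Charney--Vogtmann structure theory. The disconnected case is correct as written and matches the paper. For the connected case, however, you explicitly leave the crucial step as an ``obstacle'' and your description of the needed filtration does not quite match what \cite{CV} actually provides.

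What the paper does to close this gap is split the connected case according to whether $A$ has nontrivial center. If the center is nontrivial, \cite[Proposition~4.4]{CV} gives a finite-index subgroup $\Out^0(A)$ with a homomorphism to $\Out(A_{[v]})\times\Out(A_{\mathrm{lk}([v])})$ whose kernel is \emph{abelian} (not merely linear); when $A$ is abelian this target is not smaller, but then $\Out(A)\cong GL_n(\bbZ)$ is linear and hence exact by \cite{GHW}, while otherwise $[v]$ and $\mathrm{lk}([v])$ are proper full subgraphs and the inductive hypothesis applies. If the center is trivial and $X$ is connected, \cite[Corollary~3.3, Theorem~4.2]{CV} give a finite-index subgroup mapping to $\prod \Out(A_{\mathrm{lk}([v])})$ (over maximal equivalence classes) again with abelian kernel, and each $\mathrm{lk}([v])$ is proper. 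In each subcase the kernel is abelian, hence exact, so one appeal to Proposition~\ref{ext-exact} suffices; no ``finite subnormal filtration with $GL_k(\bbZ)$ subquotients'' is needed or produced by \cite{CV}. Linear groups enter only in the terminal abelian case. So your outline is right in spirit, but the precise subcase division (nontrivial center versus trivial center) and the identification of the kernels as abelian are the ingredients you would need to supply to make the induction close.
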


\begin{proof}
The proof is by induction on the number of vertices of the defining graph $X$ of $A$. 

If $A$ has nontrivial center, then by \cite[Proposition 4.4]{CV}, there exists an equivalence class $[v]$ of vertices in $X$ (with the terminology from \cite{CV}), and a finite index subgroup $\text{Out}^0(A)$ of $\text{Out}(A)$, such that there is a morphism $$\text{Out}^0(A)\to\text{Out}(A_{[v]})\times\text{Out}(A_{\text{lk}([v])})$$ with abelian kernel. If $A$ is abelian, then $\text{Out}(A)$ is linear, whence   boundary amenable \cite{GHW}. Otherwise both $[v]$ and $\text{lk}([v])$ are proper subgraphs of $X$, so we can apply our induction hypothesis to deduce that $\text{Out}(A)$ is   boundary amenable.

If $X$ is connected, and the center of $A$ is trivial, then by \cite[Corollary 3.3]{CV}, the group $\text{Out}(A)$ has a finite index subgroup $\text{Out}^0(A)$ that admits a morphism $$\text{Out}^0(A)\to\prod\text{Out}(A_{\text{lk}([v])})$$ whose kernel is abelian \cite[Theorem~4.2]{CV}, where the product is taken over all maximal abelian equivalence classes of vertices in $X$. Again the result follows from our induction hypothesis.

We finally assume that $X$ is disconnected. Then $A$ splits as a free product $$A=A_{X_1}\ast\dots\ast A_{X_k}\ast F_N,$$ where $X_1,\dots,X_k$ are the connected components of $X$ that are not reduced to a point, and $X$ has $N$ connected components reduced to a point. 
The group $A_{X_i}$ is   boundary amenable \cite{CN}, and $\Out(A_{X_i})$ is   boundary amenable by induction hypothesis.
Corollary \ref{cor-main} implies that $\Out(A,\{A_{X_i}\})$ is   boundary amenable. Since it has finite index in $\Out(A)$, the group $\Out(A)$ is   boundary amenable.   
\end{proof}

\section{Complements}\label{sec-complements}

\subsection{An explicit compact space with an amenable action of $\Out(F_N)$}\label{sec-compact}

We will now give an explicit compact 
space with a topologically amenable $\Out(F_N)$-action. 
This could also be generalized to any free product given compact spaces on which the groups $G_i/Z(G_i)$ and $\Out(G_i)$ act amenably. 

\newcommand{\SF}{\mathrm{SF}}
Define a \emph{subfactor system} in $F_N$ as the conjugacy class of a pair $(A,\calf)$ where $A\subseteq F_N$ is a non-trivial free factor (possibly $A=F_N$), and $\calf$ is a proper free factor system of $A$ (possibly empty, but $\calf\neq \{A\}$).
We denote by $\SF$ the set of all subfactor systems of $F_N$.

To each subfactor system $(A,\calf)$ we associate a compact space $\Omega_{(A,\calf)}$.
These spaces will depend naturally on $(A,\calf)$ in the following sense:
    for each $\Phi\in \Out(F_N)$, there will be a natural homeomorphism $h_\Phi:\Omega_{(A,\calf)}\to \Omega_{\Phi(A,\calf)}$ 
    such that $h_{\Phi\circ\Psi}=h_\Phi\circ h_\Psi$ for all $\Phi,\Psi\in\Out(F_N)$. The space we consider is then the product space $$\Omega:=\prod_{(A,\calf)\in\SF}\Omega_{(A,\calf)},$$ 
on which the $\Out(F_N)$-action is defined by 
$$\Phi.(\omega_{(A,\calf)})_{(A,\calf)\in \SF}=(h_\Phi(\omega_{\Phi\m(A,\calf)}))_{(A,\calf)\in \SF}.$$
   
The spaces $\Omega_{(A,\calf)}$ are defined as follows. For non-sporadic subfactor systems $(A,\calf)$, we define $\Omega_{(A,\calf)}$ as the projectivization $\mathbb{P}\baro(A,\calf)$ of the closure of the associated Outer space. When $A=\mathbb{Z}$, we let $\Omega_{(A,\calf)}$ be a point. If  $A=A_1\ast A_2$ and $\calf=\{[A_1],[A_2]\}$, then we let $\Omega_{(A,\calf)}:=\partial_\infty A_1\times\partial_\infty A_2$. Finally, if
 $A=A_1\ast\mathbb{Z}$ and $\calf=\{[A_1]\}$, we let $\Omega_{(A,\calf)}:=(\partial_\infty A_1)^2$.

The naturality is obvious in the first two cases, let us make it explicit in the last two cases. We will only construct $h_\Phi$ when $\Phi$ belongs to the stabilizer of $(A,\calf)$, since this allows to compute $h_\Phi$ in general.

If $A=A_1\ast A_2$ and $\calf=\{[A_1],[A_2]\}$ then $\Out(A,\calf)$ is isomorphic to $\Aut(A_1)\times\Aut(A_2)$: this is shown by observing that every element $\Phi\in\Out(A,\calf)$ has a unique representative $\phi\in\Aut(A)$ that fixes both subgroups $A_1$ and $A_2$ (as opposed to just fixing the conjugacy classes of these two subgroups); the isomorphism from $\Out(A,\calf)$ to $\Aut(A_1)\times\Aut(A_2)$ then maps $\Phi$ to the pair $(\phi_1,\phi_2)$ made of the restrictions of $\phi$ to $A_1$ and $A_2$. Therefore, the group $\Out(A,\calf)$ acts by homeomorphisms on $\partial_\infty A_1\times \partial_\infty A_2=\Omega(A,\calf)$.  

If    $A=A_1\ast\mathbb{Z}$ with $\calf=\{[A_1]\}$, let   $T$ be the tree whose quotient graph has one vertex with vertex group $A_1$ and a single loop-edge with trivial edge group. 
We choose a base edge $e=uv$ where the stabilizer of $u$ is $A_1$, and we let $t$ be an element of $A$ sending $u$ to $v$ (thus $t$ is a stable letter of the HNN extension). The group $\Aut(A,\calf)$ of (not outer) automorphisms of $A$ preserving the conjugacy class of $A_1$ acts on $T$.
Any outer automorphism $\Phi\in\Out(A,\calf)$ has a unique representative $\phi\in \Aut(A,\calf)$ that preserves the edge $e$ (it might exchange its endpoints). Looking at the restriction of $\phi$ to $G_u=A_1$ and $G_v=tA_1t\m$, we get a map $\Out(A,\calf)\ra (\Aut(A_1)\times \Aut(A_1))\rtimes \bbZ/2\bbZ$.
This group acts by homeomorphisms on $\partial_\infty A_1\times\partial_\infty A_1=\Omega(A,\calf)$. 

\begin{theo}
The $\text{Out}(F_N)$-action on $\Omega$ is topologically amenable.
\end{theo}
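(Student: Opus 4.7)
We proceed by induction on the complexity $\xi(A,\calf)$ of subfactor systems, closely mirroring the proof of Theorem~\ref{main-2} while keeping track of compact spaces at each stage. The inductive statement is: for every subfactor system $(A,\calf)$ of $F_N$, the group $\Out(A,\calf^{(t)})$ acts topologically amenably on a compact Hausdorff space $\Omega^{(A,\calf)}$, where $\Omega^{(A,\calf)}$ is realized as a product of spaces of the form $\Omega_{(A',\calf')}$ for certain subfactor systems $(A',\calf')$ of $F_N$ whose first entry is a free factor of $A$. The theorem then follows by specializing to $(F_N,\emptyset)$: the space $\Omega^{(F_N,\emptyset)}$ is a product of such factors, and the natural projection $\Omega \to \Omega^{(F_N,\emptyset)}$ (selecting the relevant coordinates of $\Omega$, with duplication if necessary) is continuous and $\Out(F_N)$-equivariant. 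Since topological amenability of the target pulls back along any equivariant continuous map of compact spaces (via $\mu_n \mapsto \mu_n \circ \pi$), this will give the theorem.

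\textbf{Sporadic base cases.} One sets $\Omega^{(A,\calf)} = \Omega_{(A,\calf)}$ and checks topological amenability directly. When $A = \mathbb{Z}$, the space $\Omega_{(A,\calf)}$ is a single point. For $A = A_1 * A_2$ with $\calf = \{[A_1],[A_2]\}$, the group $\Out(A,\calf^{(t)}) \cong A_1/Z(A_1) \times A_2/Z(A_2)$ acts on $\Omega_{(A,\calf)} = \partial_\infty A_1 \times \partial_\infty A_2$ as a product of the natural boundary actions. For each $i$, the $A_i$-action on $\partial_\infty A_i$ is topologically amenable (trivially when $A_i = \mathbb{Z}$, and by Adams's theorem~\cite{Ada} when $A_i$ is free of rank $\geq 2$, in which case $Z(A_i)$ is trivial); the product of two topologically amenable actions by the product group is topologically amenable. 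The HNN case $A = A_1 * \mathbb{Z}$ with $\calf = \{[A_1]\}$ is analogous: an index-$2$ subgroup of $\Out(A,\calf^{(t)})$ is isomorphic to $(A_1 \times A_1)/Z(A_1)$ and acts factorwise on $(\partial_\infty A_1)^2$, and Proposition~\ref{fidx-exact} lifts amenability to the full group.

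\textbf{Inductive step.} Let $(A,\calf)$ be non-sporadic. Theorem~\ref{arat-2} yields a sequence of Borel maps $\mathbb{P}\AT(A,\calf) \to \Prob(\Simp(A,\calf))$ that is almost $\Out(A,\calf)$-equivariant, and the canonical $\Out(A,\calf)$-equivariant map of~\cite{Rey,Hor2} assigns to each non-arational tree in $\mathbb{P}\baro(A,\calf)$ a finite set of proper $(A,\calf)$-free factors. Gluing these two yields almost-equivariant Borel maps $\Omega_{(A,\calf)} = \mathbb{P}\baro(A,\calf) \to \Prob(K)$, where $K$ is the countable $\Out(A,\calf^{(t)})$-set of simplices and free factors. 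As in the proof of Theorem~\ref{main-2}, for each $k \in K$ the stabilizer $\Stab(k) \subseteq \Out(A,\calf^{(t)})$ fits into a short exact sequence whose outer terms are finite-index subgroups of $\Out(A',\calf'^{(t)})$'s for subfactor systems $(A',\calf')$ of strictly smaller complexity. By the induction hypothesis, each of these acts topologically amenably on the corresponding $\Omega^{(A',\calf')}$; combining Corollary~\ref{cor-xy} (stability under extensions) with the fact that topological amenability persists under multiplication by arbitrary compact $\Gamma$-spaces, we build a single compact $\Out(A,\calf^{(t)})$-space $Y$ on which every $\Stab(k)$ acts topologically amenably. Ozawa's Proposition~\ref{ozawa} then gives topological amenability of $\Out(A,\calf^{(t)}) \actson \Omega^{(A,\calf)} := \Omega_{(A,\calf)} \times Y$, which by construction remains a product of factors $\Omega_{(A'',\calf'')}$ for subfactor systems whose first entry is a free factor of $A$.

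\textbf{Main obstacle.} The principal difficulty is combinatorial bookkeeping at the inductive step: one must carefully assemble the spaces $\Omega^{(A',\calf')}$ arising from the various $k \in K$ into a single $\Out(A,\calf^{(t)})$-space $Y$ supporting amenable actions of all stabilizers simultaneously (this is where we use that topological amenability persists under multiplication by arbitrary compact $\Gamma$-spaces, so stray factors can be absorbed harmlessly), and one must also verify that the final unrolling at $(F_N,\emptyset)$ produces factors that are all of the form $\Omega_{(A',\calf')}$ for genuine subfactor systems of $F_N$, so that $\Omega$ genuinely surjects $\Out(F_N)$-equivariantly onto $\Omega^{(F_N,\emptyset)}$. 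These verifications are not substantive mathematical obstacles but require careful tracking of which subfactor systems appear at each stage and how the equivariant structures are compatible.
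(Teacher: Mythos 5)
Your proposal is correct and follows essentially the same route as the paper: inducting on the complexity of subfactor systems, handling sporadic bases via boundary amenability of hyperbolic groups, and running Ozawa's inductive procedure (Proposition~\ref{ozawa} together with Corollary~\ref{cor-xy}) while carrying the compact spaces along, exactly as the paper indicates and leaves to the reader. The one slight imprecision is calling the kernel and image in the exact sequence for $\Stab(k)$ ``finite-index subgroups'' of $\Out(A',\calf'^{(t)})$-type groups --- they are merely subgroups, and one passes to a finite-index subgroup of $\Stab(k)$ beforehand to stabilize each free factor in the finite set individually --- but this does not affect the argument since topological amenability restricts to arbitrary subgroups (Corollary~\ref{cor-subgroup}).
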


The proof is similar to the proof of Theorem \ref{main-2}, keeping track of the spaces when using Ozawa's inductive procedure (Proposition \ref{ozawa} and Corollary \ref{cor-xy}).
 The base case of the induction follows from the fact that when $(A,\calf)$ is sporadic,
the $\Out(A,\calf^{(\mathrm{t})})$-action on $\Omega_{(A,\calf)}$ is topologically amenable, because the natural action of a hyperbolic group on its boundary is topologically amenable. We leave the details to the reader.

\begin{rk}\label{poor-man}
If one takes for $\Omega'$ the product of Outer spaces of all non-elementary subgroups of $F_N$, the natural action
on this space (and even on the closure of the diagonal embedding of the projectivized Outer space) is not topologically amenable for $N\geq 3$.
Indeed, consider a decomposition $F_N=A*B$ with $A$ not cyclic. 
Let $T$ be the Bass--Serre tree of this action, and $T_A$, $T_B$ be two free simplicial actions
of $A$ and $B$ respectively. This defines a point in $\Omega'$ as follows: let $H<F_N$ be a non-elementary subgroup; if the action of $H$ on $T$
is non-trivial, we take the minimal $H$-invariant subtree for this action as the $H$-coordinate. If not, then up to conjugating, we can assume that $H$ is contained in $A$ or $B$, 
and we take as $H$-coordinate the action of $H$ on the corresponding
tree $T_A$ or $T_B$. We now note that the stabilizer of this point of $\Omega'$ contains a subgroup isomorphic to $A$, hence is non-amenable.
Indeed, for all $a\in A$, the automorphism $\phi$ of $F_N$ that restricts to $\ad_a$ on $A$ and to the identity on $B$
fixes this point. 
\end{rk}

\begin{question}
In \cite{Ham}, Hamenstädt describes a compact space equipped with a topologically amenable action of the mapping class group of a surface, in terms of  complete geodesic laminations on the surface. A possible analogue for $\Out(F_N)$ might be to consider free actions on general $\Lambda$-trees, instead of just actions on $\mathbb{R}$-trees; the space $\Omega'$ in the above remark can be viewed as a baby model for this space of $\Lambda$-trees. Is the $\Out(F_N)$-action on this space topologically amenable? 
\end{question}

\subsection{On the amenability of the $\Out(G,\calf^{(\mathrm{t})})$-action on $\AT$}\label{sec-action-amen}

In Theorem \ref{thm:FN-case},
we proved that the action of $\Out(F_N)$ on the set of free arational trees is Borel amenable (this is the case where $\calf=\es$).
In fact Theorem~\ref{arat-2} implies that the action of $\Out(F_N)$ on the set of all arational trees in $\bbP\baro(F_N,\es)$ is Borel amenable because
the stabilizer of every simplex in $\bbP\calo(F_N,\es)$ is finite.

We now explain how to refine the proof of Theorem~\ref{arat-2} in order to show that the action of $\Out(G,\calf^{(\mathrm{t})})$ on $\AT$ is amenable 
even though the stabilizer of a simplex may be non-amenable.
It is important here to restrict to the subgroup $\Out(G,\calf^{(\mathrm{t})})$ (whereas Theorem~\ref{arat-2}  applies to $\Out(G,\calf)$): indeed, there are arational
trees whose stabilizer in $\Out(G,\calf)$ is non-amenable (one easily constructs examples where $T$ is an arational surface tree).
For similar reasons, this will also require further assumptions on the peripheral groups $G_i$ (see Remark \ref{rk_centralizer}). 
The idea of the proof is to replace simplices in $\calo$ by larger collections of simplices whose common stabilizer in $\Out(G,\calf^{(\mathrm{t})})$ is amenable.

\begin{theo}\label{at-amen} Let   $\calf=\{G_1,\dots,G_k\}$ be a finite   family of countable groups, and let
$G=G_1*\dots *G_k*F_N$ with $(G,\{G_i\})$ non-sporadic.
Assume that in each group $G_i$, centralizers of non-trivial elements are amenable.
\\ Then the   $\Out(G,\calf^{(\mathrm{t})})$-action on $\AT$ is Borel amenable.
\end{theo}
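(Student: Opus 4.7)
We build on Theorem~\ref{arat-2}, which provides Borel maps $\mu_n:\mathbb{P}\AT\to\Prob(\Simp)$ that are almost $\Out(G,\calf)$-equivariant. In view of Proposition~\ref{amen}, it is enough to refine $\mu_n$ into Borel maps into $\Prob(K)$ for some countable $\Out(G,\calf^{(t)})$-set $K$ whose point stabilizers are amenable. Taking $K=\Simp$ does not work because the stabilizer in $\Out(G,\calf^{(t)})$ of a Grushko simplex $\Delta$ contains, at each peripheral vertex $v$ with stabilizer $G_v$, a partial-conjugation subgroup built from copies of $G_v/Z(G_v)$, which need not be amenable even under the hypothesis that centralizers of non-trivial elements in each $G_i$ are amenable.

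To circumvent this, we define a countable enriched set $\Simp^\bullet$ of \emph{marked} simplices $(\Delta,\tau)$, where $\tau$ assigns to each $G$-orbit $[v]$ of peripheral vertices of $\Delta$ a non-trivial element $g_{[v]}\in G_{[v]}$. The $\Out(G,\calf^{(t)})$-action on $\Simp$ lifts to $\Simp^\bullet$ because automorphisms in $\Out(G,\calf^{(t)})$ restrict to the conjugation action on each peripheral factor $G_i$ by an element of $G_i$ (using malnormality of free factors). The key lemma is that the stabilizer of $(\Delta,\tau)$ in $\Out(G,\calf^{(t)})$ is amenable: it is an extension of a finite graph-automorphism group by a subgroup of partial conjugations whose action is required to preserve each marker $g_{[v]}$, forcing the conjugating elements to lie in the centralizers $Z_{G_{[v]}}(g_{[v]})$. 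Since these centralizers are amenable by hypothesis, the twist kernel is a product of amenable groups and hence amenable, and extensions of amenable groups by finite groups remain amenable.

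To build $\tilde\mu_n:\mathbb{P}\AT\to\Prob(\Simp^\bullet)$, we first note that for each $i$, the conjugation action of $G_i$ on $G_i\setminus\{1\}$ has amenable point stabilizers (the centralizers), so being a countable action on a countable set it is Borel amenable; this yields a sequence of probability measures $\xi_n^i$ on $G_i\setminus\{1\}$ that are almost invariant under $G_i$-conjugation. Given $T\in\mathbb{P}\AT$, we then define $\tilde\mu_n(T)$ by sampling a simplex $\Delta$ according to $\mu_n(T)$ and, independently for each peripheral vertex orbit $[v]$, sampling a marker from $G_{[v]}\setminus\{1\}$ distributed as the pushforward of $\xi_n^{i([v])}$ through a fixed measurable identification of $G_{[v]}$ with the corresponding factor $G_{i([v])}$.

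The main obstacle is verifying that $\tilde\mu_n$ is almost $\Out(G,\calf^{(t)})$-equivariant. When $\Phi\in\Out(G,\calf^{(t)})$ sends a peripheral vertex $v$ to $\Phi.v$, the resulting identification of the two vertex stabilizers differs from the fixed one by conjugation by an element of the ambient group $G$, which on the level of markings produces an auxiliary $G_{i([v])}$-conjugation. The almost $G_i$-conjugation-invariance of $\xi_n^i$ is exactly what is needed to absorb this discrepancy. Measurability of the whole construction reduces to measurable enumerations of peripheral vertices and fixed identifications of conjugate subgroups, using the techniques developed in Section~\ref{sec-measurability}.
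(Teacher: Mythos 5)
The central claim---that the stabilizer of a marked simplex $(\Delta,\tau)$ in $\Out(G,\calf^{(t)})$ is amenable---is false, and the proposal collapses without it. Take $G=G_1*G_2*G_3$ with $G_1$ a non-abelian free group (centralizers of non-trivial elements are then cyclic, hence amenable), and let $\Delta$ be the simplex of the tripod Grushko tree, with leaves $v_1,v_2,v_3$ carrying $G_1,G_2,G_3$. For any $g\in G_1$, the partial conjugation $\phi_{1,g}$ (identity on $G_1$, conjugation by $g$ on $G_2$ and on $G_3$) lies in $\Out(G,\calf^{(t)})$, stabilizes $\Delta$, and is non-inner for $g\neq 1$; so the stabilizer of $\Delta$ already contains a copy of $G_1$, which is non-amenable. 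Your marking does not remove these twists: for each $j$, $\phi_{1,g}$ admits a representative in $\Aut(G)$ fixing $G_j$ pointwise, hence under any reasonable definition of the action it preserves any marking of $[v_j]$ by an element (or conjugacy class) of $G_j$. The underlying difficulty is that for $\Phi\in\Out(G,\calf^{(t)})$ there is no canonical conjugating element $c_i\in G_i/Z(G_i)$: the restriction of $\Phi$ to $G_i$ is a well-defined (trivial) element of $\Out(G_i)$, not a well-defined element of $\Inn(G_i)$, because different representatives of $\Phi$ change $c_i$ by arbitrary elements of $G_i$. What actually parametrizes the group of twists stabilizing $\Delta$, as in the proof of Lemma~\ref{nonamen}, is a tuple in $\prod_v\bigl(G_v^{E(v)}/Z(G_v)\bigr)$ with one entry per $G_v$-orbit of half-edges at each peripheral vertex $v$; a single marker per vertex constrains none of these entries. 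There is also a secondary gap: a sequence of probability measures on $G_i\setminus\{1\}$ that is almost invariant under $G_i$-conjugation need not exist (for a non-abelian free group this is precisely inner amenability, which fails); Borel amenability of the conjugation action only furnishes almost-equivariant maps $G_i\setminus\{1\}\to\Prob(G_i)$, not almost-invariant measures.

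The paper's proof sidesteps both problems by attaching data to pairs of simplices rather than decorating a single one. Using hyperbolicity of the $\calz$-splitting graph $FZ$, it arranges that a bounded window of simplices along a folding path toward $T$ contains two Grushko trees $S,S'$ with no common compatible free splitting, and Proposition~\ref{stab-pair}, proved via the tree of cylinders, then shows that the common stabilizer of $S$ and $S'$ in $\Out(G,\calf^{(t)})$ is amenable under the centralizer hypothesis. Amenability is obtained by intersecting the (individually large) twist groups of two sufficiently transverse Grushko trees, not by marking a single one.
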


\begin{rk}\label{rk_centralizer}
The hypothesis on centralizers is necessary. 
Indeed, assume that one of the groups $G_i$ contains an element $a$ whose centralizer $Z_{G_i}(a)$ is non-amenable. Let $T$ be an arational surface $(G,\calf)$-tree where $G_i$ is amalgamated to one of the boundary curves (or conical points) along $a$. Then the stabilizer of $T$ contains a subgroup of twists isomorphic to $Z_{G_i}(a)/Z(G_i)$. It is therefore non-amenable, which prevents the $\Out(G,\calf^{(\mathrm{t})})$-action on $\AT$ from being Borel amenable, as noticed in Remark~\ref{rk-subgroup}. This construction requires to write $G$ as in Figure \ref{fig-arat-surf} in which the surface (or orbifold) holds an arational foliation.
This can be achieved by taking a sphere with $k\geq 4$ punctures or conical points, or a projective plane with 3 punctures or conical points \cite{AY}.
\end{rk}

\paragraph*{Stabilizers of pairs of Grushko trees.}  In the proof of Theorem~\ref{at-amen}, we will make use of the following proposition.

\begin{prop}\label{stab-pair}
Let $G$ be a countable group, and let $\calf$ be a free factor system of $G$, such that $(G,\calf)$ is non-sporadic. 
Assume that in each peripheral group $G_i$, centralizers of non-trivial elements are amenable.
\\ Let $S,S'\in\calo$ be two trees, such that there is no $(G,\calf)$-free splitting that is compatible with both $S$ and $S'$. 
\\ Then the common stabilizer of $S$ and $S'$ in $\Out(G,\calf^{(\mathrm{t})})$ is amenable.
\end{prop}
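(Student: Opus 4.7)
The approach uses Bass--Serre theory for the stabilizer of a Grushko tree. Writing $\Gamma = S/G$ for the finite quotient graph of groups (with trivial edge stabilizers and vertex stabilizers that are peripheral or trivial), one has a short exact sequence
$$1 \to \calt(S) \to \Stab_{\Out(G,\calf^{(t)})}(S) \to F(S) \to 1,$$
where $\calt(S)$ is Levitt's group of Bass--Serre twists (\cite{Lev}) and $F(S)$ is the finite group of label-preserving graph automorphisms of $\Gamma$ induced by the stabilizer. Intersecting with $\Stab(S')$ yields an analogous sequence whose quotient is still contained in $F(S)$, hence finite, so by stability of amenability under extensions by finite groups it suffices to prove that $\calt(S) \cap \Stab(S')$ is amenable.

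The key step would be to show that if $\tau \in \calt(S) \cap \Stab(S')$ is non-trivial in $\Out(G,\calf^{(t)})$, then some non-trivial $(G,\calf)$-free splitting $T_0$ is a common collapse of $S$ and $S'$, contradicting the hypothesis. Since $S$ has trivial edge stabilizers, $\calt(S)$ is generated by edge twists $\tau_{e,g}$ with $e$ an oriented edge of $\Gamma$ and $g \in G_{o(e)}$; each $\tau_{e,g}$ preserves the one-edge collapse $T_e$ obtained from $S$ by collapsing all other edge orbits. For a lift $\phi \in \Aut(G)$ of $\tau$, the condition $\tau \in \Out(G,\calf^{(t)})$ yields elements $h_i \in G$ with $\phi|_{G_i} = \mathrm{Ad}_{h_i}$, well-defined up to simultaneous left multiplication and up to individual right multiplication by $Z(G_i)$. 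The equivalence $[G_i] \sim [G_j]$ iff $h_i h_j^{-1}$ is inner partitions the peripheral conjugacy classes, and this partition is realized geometrically by a free-splitting collapse $T_0$ of $S$ (the one obtained by collapsing only those edges of $\Gamma$ where the twist component is central). Since an isometry $\psi$ of $S'$ realizes $\tau$, tracking how $\psi$ permutes the peripheral vertex stabilizers of $S'$ forces the same partition to come from a collapse of $S'$, so $T_0$ is also a collapse of $S'$. Hence any $\tau \in \calt(S) \cap \Stab(S')$ has all $h_i$ equal modulo inner automorphisms; the residual ambiguity places $\tau$ in a subgroup controlled by centralizers $Z_{G_i}(g_i)/Z(G_i)$ of non-trivial peripheral elements, which is amenable by the hypothesis on peripheral centralizers.

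The main obstacle will be handling non-separating edges of $\Gamma$ cleanly, whose twists (HNN-type) may act non-trivially in $\Out(G,\calf^{(t)})$ even when the induced partition of peripheral classes is trivial (for instance when $k=1$, so there is a single peripheral vertex). I would address this by first passing to a finite-index subgroup of $\Stab(S) \cap \Stab(S')$ that preserves the orientation of each non-separating edge of $\Gamma$; on this subgroup the HNN-twist component can be read from its effect on a stable letter. Showing that such a twist preserves $S'$ only when $S'$ decomposes along the corresponding one-edge HNN free splitting of $S$ then produces a common compatible free splitting, again contradicting the hypothesis and reducing to the residual ``internal'' centralizer-controlled part.
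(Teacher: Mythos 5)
Your high-level reduction (passing to a finite-index subgroup that acts trivially on the quotient graphs, essentially the group of twists) is analogous to what the paper does, so the framing is reasonable. But the heart of your argument has a genuine gap.

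You want to produce, from a nontrivial twist $\tau$ stabilizing both $S$ and $S'$, a common collapse $T_0$. Your construction yields a collapse $T_0$ of $S$, and you then assert that ``tracking how $\psi$ permutes the peripheral vertex stabilizers of $S'$ forces the same partition to come from a collapse of $S'$, so $T_0$ is also a collapse of $S'$.'' That inference does not follow. What you can extract from $\psi$ is that $S'$ admits \emph{some} collapse $T_0'$ whose elliptic subgroups realize the same partition; but this does not make $T_0'$ equal to $T_0$, nor even compatible with $T_0$. Two Grushko trees can collapse to free splittings with identical vertex stabilizers (hence in the same deformation space) that nonetheless admit no common refinement, so nothing in your argument produces a single $(G,\calf)$-free splitting compatible with both $S$ and $S'$. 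You flag HNN-type edges as a remaining complication, but the separating-edge case already has this hole. (There is also a minor confusion of quantifiers: if every nontrivial $\tau$ yielded a common collapse, the intersection would be trivial, making the subsequent discussion of ``residual ambiguity'' superfluous; the two halves of your argument do not cleanly fit together.)

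The paper's proof sidesteps exactly this difficulty by not trying to extract a compatible tree from the algebraic twist data. It instead regards $S$ and $S'$ as trees for the extended group $\cala$ (the finite-index subgroup of automorphisms acting trivially on both quotient graphs), shows the two $\cala$-actions lie in the same deformation space with identical edge stabilizers, and then invokes the \emph{tree of cylinders} associated to that deformation space: since it is a canonical invariant, $S^c = (S')^c =: U$, and $U$ is automatically compatible with both $S$ and $S'$. The amenable-centralizer hypothesis enters only at the very end, to force some edge of $U$ to have trivial $G$-stabilizer: if a cylinder at $v$ contained two distinct $G_v$-translates $e$ and $ge$ of an edge, every $\phi$ in $\cala_e$ would fix $g$, so the non-amenable image $\rho_{v,e}(\cala_e)$ would land in $Z_{G_v}(g)/Z(G_v)$, contradicting the hypothesis. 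This canonical construction of $U$ is the essential technical ingredient your outline is missing; without something playing its role, there is no mechanism forcing the existence of a single compatible free splitting.
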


Notice that in the case where $G=F_N$ and $\calf=\emptyset$, the stabilizer of any point in Culler--Vogtmann's Outer space is finite, so the conclusion is obvious. From now on, we will assume that $\calf\neq\emptyset$. We denote by $\Aut(G,\calf^{(\mathrm{t})})$ the preimage in $\Aut(G)$ of $\Out(G,\calf^{(\mathrm{t})})$, by $\cala_S,\cala_{S'}\subseteq \Aut(G,\calf^{(\mathrm{t})})$ the preimage of the stabilizer of $S$ and $S'$ respectively, and we let $\cala_{S,S'}:=\cala_S\cap\cala_{S'}$. 
Given $g\in G$, we denote by $\ad_g$ the automorphism of $G$ given by the conjugation by $g$, i.e.\ $\ad_g(h)=ghg^{-1}$ for all $h\in G$.

\begin{lemma}
For all $\phi\in\cala_S$, there exists a unique isometry $H_\phi$ of $S$ which is $\phi$-equivariant, i.e.\ such that $H_\phi(gx)=\phi(g)H_\phi(x)$ for all $g\in G$.
\\ The map  
\begin{displaymath}
\begin{array}{cccc}
H:& \cala_{S} &\to &\mathrm{Isom}(S)\\
& \phi & \mapsto & H_\phi
\end{array}
\end{displaymath} 
\noindent is an injective group morphism, which sends $\ad_g$ to the isometry $x\mapsto gx$.
\end{lemma}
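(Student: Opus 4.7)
The plan is to unpack what $\phi\in\cala_S$ means and reduce everything to a single rigidity fact: the only $G$-equivariant self-isometry of $S$ is the identity.

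For existence of $H_\phi$: by definition of $\cala_S$, the outer class $[\phi]$ fixes the point $[S]\in\calo$, which says that the tree obtained from $S$ by twisting the $G$-action through $\phi$ is $G$-equivariantly isometric to $S$. Such a $G$-equivariant isometry is precisely a self-isometry $H_\phi:S\to S$ satisfying $H_\phi(gx)=\phi(g)H_\phi(x)$. Uniqueness is then immediate from the rigidity fact, since two such isometries differ by a $G$-equivariant self-isometry of $S$. Multiplicativity of $H$ and the formula $H_{\ad_h}(x)=hx$ follow from uniqueness together with a direct check: $H_{\phi_1}\circ H_{\phi_2}$ satisfies $(\phi_1\phi_2)$-equivariance, and $x\mapsto hx$ satisfies $\ad_h$-equivariance. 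For injectivity, if $H_\phi=\id$ then $gx=\phi(g)x$ for every $g\in G$ and $x\in S$, so $\phi(g)^{-1}g$ acts trivially on $S$; but $G$ acts faithfully on $S$ (any element fixing every vertex must lie in every vertex stabilizer, and distinct vertex stabilizers, being conjugates of distinct peripheral subgroups, intersect trivially), whence $\phi=\id$.

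The only non-formal step is the rigidity fact itself. To prove it, I would use the non-sporadicity of $(G,\calf)$ together with the ellipticity of peripheral subgroups in $S$ to produce two hyperbolic elements $g_1,g_2\in G$ whose axes $A_1,A_2$ in $S$ are disjoint. Any $G$-equivariant isometry $K:S\to S$ must preserve each $A_i$ (as the axis of $g_i$) and must therefore preserve the bridge between them; as this bridge is the unique shortest segment between $A_1$ and $A_2$, its endpoints are fixed by $K$. A fixed point on the axis of a hyperbolic element forces $K$ to act as the identity on that axis, and by minimality of the $G$-action the union of axes of hyperbolic elements generates $S$, so $K=\id_S$. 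This rigidity step is standard but relies essentially on the non-sporadicity assumption, and is where the real content of the argument sits.
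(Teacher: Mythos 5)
Your reduction to a rigidity fact---the only $G$-equivariant self-isometry of $S$ is the identity---is correct and clean, but the route you take for uniqueness is genuinely different from the paper's. The paper exploits the standing assumption (stated just before this lemma) that $\calf\neq\emptyset$: given a $\phi$-equivariant isometry $H_\phi$, it first observes that for each vertex $v$ with nontrivial stabilizer $G_v$, the image $H_\phi(v)$ is forced to be the unique point of $S$ fixed by $\phi(G_v)$, and then checks that every point of $S$ lies on a tripod spanned by three such vertices, so $H_\phi$ is determined everywhere. Your route instead uses non-sporadicity to produce two hyperbolic elements whose axes are disjoint, shows any $G$-equivariant self-isometry $K$ fixes the endpoints of the bridge between the axes, and then appeals to minimality. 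Both work in the relevant context; yours has the merit of not depending on $\calf\neq\emptyset$ (it would also handle the $\calf=\emptyset$ case, which the paper dismisses separately because point stabilizers in Culler--Vogtmann outer space are finite), while the paper's is slightly more direct once the tripod observation is made. The multiplicativity and $\ad_g$ computations via uniqueness, and the injectivity argument via faithfulness of $G\actson S$, match the paper's.

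One imprecision in your rigidity argument: after establishing that $K$ restricts to the identity on the axis $A_1$, you conclude $K=\id$ from ``the union of axes of hyperbolic elements generates $S$,'' but you have only shown $K$ is the identity on $A_1$ and its $G$-translates, not on every axis. The clean finish is to note that $\mathrm{Fix}(K)$ is a nonempty, closed, convex, $G$-invariant subtree of $S$ (nonempty since it contains $A_1$, $G$-invariant because $K$ is $G$-equivariant), so minimality of the $G$-action forces $\mathrm{Fix}(K)=S$. Also, your parenthetical justification of faithfulness (``distinct vertex stabilizers, being conjugates of distinct peripheral subgroups, intersect trivially'') is slightly off---two vertex stabilizers can be conjugates of the same peripheral factor---but the conclusion is of course correct: in a Grushko tree any element acting trivially fixes an edge, and edge stabilizers are trivial.
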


\begin{proof}
  Existence of $H_\phi$ follows from the definition of being in $\cala_S$, so we focus on uniqueness. Let $\phi\in\cala_S$, and let $H_\phi$ be a $\phi$-equivariant isometry of $S$. Let $v\in S$ be a vertex with nontrivial stabilizer. Then for all $g\in G_v$, we have $H_\phi(v)=H_\phi(gv)=\phi(g)H_\phi(v)$, so $H_\phi(v)$ is the only vertex of $S$ which is fixed by $\phi(g)$. Therefore the $H_\phi$-images of all vertices of $S$ with nontrivial stabilizer are completely determined. In addition, for every point $x\in S$, there exist three vertices $v_1,v_2,v_3$ of $S$ with nontrivial stabilizer so that $x$ belongs to the tripod spanned by $v_1,v_2,v_3$. Therefore, the $H_\phi$-image of every point in $S$ is determined. This shows that $H_\phi$ is unique. The fact that $H$ is a group morphism then follows from the observation that $$H_\phi\circ H_\psi(gv)=\phi\circ\psi(g)H_\phi\circ H_\psi(v)$$ for all $\phi,\psi\in\cala_S$, all $g\in G$ and all $v\in S$. Injectivity follows from the observation that if $H_\phi$ is the identity, then $\phi(g)x=gx$ for all $g\in G$ and all $x\in S$, so $\phi$ is the identity. The last statement of the lemma also follows because $x\mapsto gx$ is $\ad_g$-equivariant.
\end{proof}

For $\phi\in \cala_{S,S'}$, we will denote by $H_\phi$ (resp.\ $H'_\phi$) the isometry of $S$ (resp.\ $S'$) representing $\phi$. Then $\cala_{S,S'}$ acts on $S$ (resp.\ $S'$) by $\phi.x=H_\phi(x)$ (resp. $\phi.x=H'_\phi(x)$); these actions are denoted with a dot. We let $\cala\subseteq\cala_{S,S'}$ be the finite index subgroup made of all automorphisms acting as the identity on the quotient graphs $S/G$ and $S'/G$. Given a vertex $v$ of $S$ or $S'$, we denote by $G_v$ its stabilizer in $G$, and by $\cala_v$ its stabilizer for the $\cala$-action. 
Similarly,  we denote by $\cala_e$ the stabilizer of an edge $e$ for the $\cala$-action. 

\begin{lemma}\label{defo}
The actions $\cala\actson S$ and $\cala\actson S'$ belong to the same deformation space, i.e.\ there exist $\cala$-equivariant maps from $S$ to $S'$ and from $S'$ to $S$.
\end{lemma}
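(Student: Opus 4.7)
The plan is to deduce the lemma from the standard characterization of deformation spaces: two cocompact, minimal simplicial $\cala$-trees with the same elliptic subgroups admit $\cala$-equivariant morphisms between them (see \cite{GL}). To apply this I need (i) cocompactness and minimality of both $\cala$-actions and (ii) that the $\cala$-elliptic subgroups of $S$ and $S'$ coincide. Ingredient (i) is immediate: minimality is inherited from the $G$-actions since $\ad(G)\subseteq \cala$, and cocompactness follows from the definition of $\cala$, which forces $S/\cala=S/G$ and $S'/\cala=S'/G$ to be finite graphs.

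For (ii), by symmetry it suffices to show that every vertex stabilizer $\cala_v$ for $v\in S$ fixes a point in $S'$. If $v$ is peripheral with stabilizer $G_v=hG_ih\m$, the argument is direct: for any $\phi\in\cala_v$ the equation $H_\phi(v)=v$ is equivalent to $\phi(G_v)=G_v$, since $v$ is the unique fixed point of $G_v$ in $S$ (here one uses non-sporadicity, which gives $N_G(G_i)=G_i$, so the inclusion $\phi(G_v)\subseteq G_v$ forces equality). The same identity in $S'$ shows that $H'_\phi$ fixes the unique vertex $v'\in S'$ whose stabilizer is $G_v$, so $\cala_v$ is elliptic in $S'$. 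If instead $v$ is a regular vertex, so $G_v=1$, then $\cala_v\cap\ad(G)=1$ and $\cala_v$ injects into $\cala/\ad(G)\subseteq \Out(G,\calf^{(t)})$. Every element of this image stabilizes both $S$ and $S'$ while acting trivially on each quotient graph, so it lies in the intersection of the twist groups of $S$ and $S'$ in the sense of \cite{Lev}. The hypothesis of Proposition~\ref{stab-pair} enters precisely here: for a Grushko tree, non-trivial twists are supported on peripheral vertices and their support determines a collapse to a $(G,\calf)$-free splitting with respect to which the outer automorphism acts by a twist; a common non-trivial twist would then produce a $(G,\calf)$-free splitting compatible with both $S$ and $S'$, contradicting the assumption. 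Hence $\cala_v=1$ in the regular case.

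Combining the two cases, every $\cala_v$ (for $v\in S$) is elliptic in $S'$, and symmetrically every $\cala_{v'}$ is elliptic in $S$. Invoking the deformation space existence theorem then yields the desired $\cala$-equivariant maps in both directions. The main obstacle in the argument is the regular-vertex case, and specifically the claim that the intersection of the twist groups of $S$ and $S'$ is trivial: this requires the precise identification of the support of a common twist with a collapsing of both $S$ and $S'$ onto a common free splitting, which is where the hypothesis of Proposition~\ref{stab-pair} is used in an essential way.
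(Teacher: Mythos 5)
Your peripheral-vertex case matches the paper's argument, but your treatment of the regular-vertex case has a genuine gap, and it also needlessly drags in the hypotheses of Proposition~\ref{stab-pair}, which the paper's version of this lemma does not use at all.

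For a vertex $w$ with $G_w=\{1\}$, you try to show $\cala_w=1$ by arguing that $\cala_w$ injects into the intersection of the twist groups of $S$ and $S'$, and then asserting that a ``common non-trivial twist'' would yield a $(G,\calf)$-free splitting compatible with both $S$ and $S'$. That last step is exactly where the argument breaks: you give no justification for it, and it is far from obvious. In fact, it is suspiciously close to a strengthened form of the target Proposition~\ref{stab-pair}. Since $\cala=\Inn(G)\cdot\cala_w$ and $\cala_w\cap\Inn(G)=1$, the image $\overline{\cala}$ of $\cala$ in $\Out(G,\calf^{(t)})$ is isomorphic to $\cala_w$; so your claim $\cala_w=1$ would force $\overline{\cala}=1$, i.e.\ the common stabilizer of $S$ and $S'$ in $\Out(G,\calf^{(t)})$ would be finite. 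But the proposition only concludes that this stabilizer is amenable. Either your claim is false, or it would require an argument substantially stronger than anything in this section of the paper. Either way, it cannot be waved through as you do.

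The paper's proof of the regular-vertex case is both simpler and unconditional, and does not need to show $\cala_w$ is trivial -- only that it is elliptic in $S'$. Pick a peripheral vertex $v$ so that the segment $[w,v]$ meets no other vertex with nontrivial $G$-stabilizer. Since every $\phi\in\cala$ acts as the identity on $S/G$ and all interior vertices of $[w,v]$ have trivial $G$-stabilizer, any two edges at such a vertex lie in distinct $G$-orbits, so $H_\phi(w)=w$ forces $H_\phi$ to fix $[w,v]$ pointwise, one edge at a time. Hence $\cala_w\subseteq\cala_v$, and ellipticity of $\cala_w$ in $S'$ reduces to the peripheral case you already handled. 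Notice this uses nothing beyond the definition of $\cala$ (the finite-index subgroup acting trivially on both quotient graphs), which is why the paper states and proves this lemma without invoking the ``no common compatible free splitting'' hypothesis, deferring that hypothesis to the final step of Proposition~\ref{stab-pair}.

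One minor point on the peripheral case: the implication $\phi(G_v)\subseteq G_v\Rightarrow\phi(G_v)=G_v$ is most cleanly seen by applying the same inclusion to $\phi^{-1}$ (using $H_{\phi^{-1}}=H_\phi^{-1}$), so the normalizer/non-sporadicity detour is unnecessary, though not wrong.
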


\begin{proof}
By symmetry of roles of $S$ and $S'$, it is enough to show that every point stabilizer for the action  $\cala\actson S$ fixes a point in $S'$. 
If $v\in S$ is a vertex with $G_v\neq\{1\}$, then $\cala_v=\{\phi\in\cala|\phi(G_v)=G_v\}$, and $\cala_v$ fixes the unique point $v'$ fixed by $G_v$ in $S'$.
Let now $w\in S$ be a vertex with $G_w=\{1\}$. Let $v\in S$ be a point with $G_v\neq\{1\}$ such that the segment $[w,v]$ does not meet any other point with nontrivial $G$-stabilizer. Since every automorphism in $\cala$ acts as the identity on the quotient graph $S/G$, for every $\phi\in\cala_w$, the isometry $H_\phi$ fixes $[v,w]$ pointwise. Therefore $\cala_w\subset \cala_v$ so $\cala_w$ fixes a point in $S'$.     
\end{proof}

\begin{lemma}\label{stabequal}
Let $e,e'\subseteq S$ be two edges. If $\cala_e\subseteq\cala_{e'}$, then $\cala_e=\cala_{e'}$.
\end{lemma}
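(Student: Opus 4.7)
The plan is a short direct computation that exploits two features of the $\cala$-action on the Grushko tree $S$. First, by construction $\cala$ acts trivially on the quotient graph $S/G$, so for any $\phi\in\cala$ and any edge $e\subseteq S$, the image $\phi\cdot e$ lies in the $G$-orbit of $e$. Second, the $G$-action on $S$ has trivial edge stabilizers: arc stabilizers in a very small tree are either trivial or nonperipheral, while elliptic elements of the relatively free tree $S$ are peripheral, forcing arc stabilizers in $S$ to be trivial (and incidentally ruling out inversions, since an order-two peripheral element swapping the endpoints of an edge would also fix its own peripheral vertex, contradicting the analysis of the $g$-fixed subtree).

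Given $\phi\in\cala_{e'}$, the aim is to show $\phi\in\cala_e$. By the first fact above, write $\phi\cdot e = g\cdot e$ for some $g\in G$, and set $\psi := \ad_{g^{-1}}\circ\phi\in\cala$. Then $\psi\cdot e = g^{-1}\cdot(g\cdot e) = e$, so $\psi\in\cala_e$. The hypothesis $\cala_e\subseteq\cala_{e'}$ forces $\psi\in\cala_{e'}$, i.e.\ $\psi$ stabilizes $e'$. Since $\phi$ already stabilizes $e'$, the composition $\psi\circ\phi^{-1}=\ad_{g^{-1}}$, whose action on $S$ is left-multiplication by $g^{-1}$, must also stabilize $e'$. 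Triviality of edge stabilizers for the $G$-action on $S$ then yields $g=1$, whence $\phi\cdot e = e$ and $\phi\in\cala_e$.

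This gives the reverse containment $\cala_{e'}\subseteq\cala_e$, completing the proof. I do not anticipate any real obstacle: the argument is essentially one identity, and the only bookkeeping concerns distinguishing setwise from pointwise stabilization, which collapses because $G$ acts on the Grushko tree $S$ without inversions and with trivial edge stabilizers.
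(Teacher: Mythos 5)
Your proof is correct and follows essentially the same argument as the paper: given $\phi\in\cala_{e'}$, one writes $\phi\cdot e=ge$, observes that $\ad_{g^{-1}}\circ\phi\in\cala_e\subseteq\cala_{e'}$, and concludes that $g$ stabilizes $e'$ for the $G$-action, hence $g=1$ by triviality of edge stabilizers. The remark on excluding inversions is a nice bit of extra care the paper leaves implicit, but the route and the key identity are the same.
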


\begin{proof}
Let $\phi\in\cala_{e'}$. Since $\phi$ acts trivially on $S/G$, there exists $g\in G$ such that $\phi.e=ge$, i.e.\ $\phi.e=\ad_g.e$. Then $(\ad_{g^{-1}}\circ\phi).e=e$, so by assumption $(\ad_{g^{-1}}\circ\phi).e'=e'$, so $ge'=\phi.e'=e'$. Since $S$ has trivial arc stabilizers for the $G$-action, it follows that $g=1$, so $\phi.e=e$, i.e.\ $\phi\in\cala_e$. 
\end{proof}

\begin{cor}\label{edge-stab-same}
The actions $\cala\actson S$ and $\cala\actson S'$ have the same edge stabilizers.
\end{cor}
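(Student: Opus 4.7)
The plan is to combine the equivariant maps produced by Lemma~\ref{defo} with a cross-tree analog of Lemma~\ref{stabequal}. First I would prove the following extension of Lemma~\ref{stabequal}: if $e \subseteq S$ and $e' \subseteq S'$ are edges with $\cala_e \subseteq \cala_{e'}$, then $\cala_e = \cala_{e'}$. The proof is a word-for-word transcription of the one of Lemma~\ref{stabequal}: given $\phi \in \cala_{e'}$, triviality of the $\cala$-action on $S/G$ yields $g \in G$ with $\phi.e = ge$, so $\ad_{g^{-1}} \circ \phi$ fixes $e$, hence lies in $\cala_e \subseteq \cala_{e'}$, which forces $g.e' = e'$ and therefore $g = 1$ since $S'$ has trivial $G$-edge stabilizers, so $\phi \in \cala_e$. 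The only inputs are the triviality of $G$-edge stabilizers in both $S$ and $S'$ (both lying in $\calo$) and the triviality of the $\cala$-action on both quotient graphs.

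Next, using the $\cala$- and $G$-equivariant map $f : S \to S'$ provided by Lemma~\ref{defo}, taken to be a morphism not collapsing any edge of $S$ (possible since the two trees lie in the same deformation space with identical non-trivial vertex stabilizers, so a peripheral vertex is sent to the unique point with that stabilizer in $S'$ and edges can be realized isometrically), one argues as follows. Given an edge $e \subseteq S$, the image $f(e)$ is a non-degenerate segment of $S'$, preserved setwise by $H'_\phi$ for every $\phi \in \cala_e$. In the generic case where $H'_\phi$ fixes the endpoints of $f(e)$, it fixes $f(e)$ pointwise; picking any edge $e' \subseteq f(e)$ one obtains $\cala_e \subseteq \cala_{e'}$, and Step~1 gives $\cala_e = \cala_{e'}$. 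Running the symmetric argument with the morphism $f' : S' \to S$ exhibits every edge stabilizer of $S'$ as one for $S$, yielding equality of the two collections.

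The main obstacle is the inversion case, in which some $\phi \in \cala_e$ acts on $f(e)$ by reversal. Such a $\phi$ must also reverse $e$ itself (otherwise $H_\phi$ fixes $e$ pointwise, and equivariance forces $H'_\phi$ to fix $f(e)$ pointwise, contradicting reversal); consequently the two endpoints of $e$ lie in a common $G$-orbit, and $H'_\phi$ fixes the midpoint of $f(e)$. If this midpoint lies in the interior of some edge $e' \subseteq S'$, then $H'_\phi$ stabilizes $e'$ setwise and one still has $\phi \in \cala_{e'}$, from which the argument proceeds as before. If instead the midpoint is a vertex of $S'$, a more delicate treatment is required: one can either subdivide $S'$ before applying the procedure, or exploit that the set of inversions inside $\cala_e$ forms a coset of an index-$2$ subgroup, allowing the non-inverting subgroup to locate the desired edge while the inverting elements are dealt with separately by a direct computation using the cocycle $\phi \mapsto g(\phi)$ defined by $\phi.e = g(\phi) e$. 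Navigating this reversal case cleanly is the technical heart of the proof.
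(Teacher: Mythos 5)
Your overall strategy is the paper's: produce $\cala$-equivariant maps between $S$ and $S'$ via Lemma~\ref{defo} and upgrade inclusions of $\cala$-edge stabilizers to equalities via Lemma~\ref{stabequal}. The paper avoids your cross-tree version of Lemma~\ref{stabequal} by a sandwich: given $e'\subseteq S'$, a map $S\to S'$ yields $e\subseteq S$ with $\cala_e\subseteq\cala_{e'}$, and a map $S'\to S$ yields $e'_2\subseteq S'$ with $\cala_{e'_2}\subseteq\cala_e$; then $\cala_{e'_2}\subseteq\cala_{e'}$ with both edges in $S'$, and Lemma~\ref{stabequal} applied within $S'$ forces all three groups to coincide. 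Your cross-tree variant is correct, so this difference is cosmetic.

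The substantive comment is that the reversal case you call ``the technical heart of the proof'' does not arise. By definition $\cala$ acts as the identity on $S/G$ and on $S'/G$, so for $\phi\in\cala$ one has $\phi.e=g e$ for some $g\in G$ with $ge$ in the same oriented $G$-orbit as $e$ (the $G$-action on a Grushko tree has no edge inversions). If moreover $\phi\in\cala_e$, i.e.\ $\phi.e=e$ as a set, then $ge=e$ as oriented edges, so $g$ lies in the trivial $G$-edge stabilizer and $H_\phi$ fixes $e$ pointwise. Hence the $\cala$-actions on $S$ and $S'$ are without inversions and your entire paragraph on reversals can be deleted. This matters because the remedies you sketch there would not go through as written: subdividing $S'$ at midpoints of potential inversion edges replaces $\cala_{e'}$ by its index-$2$ non-inverting subgroup and so changes the collection of edge stabilizers, and the cocycle computation is only alluded to. Relatedly, you need not (and arguably cannot in general) arrange the $\cala$-equivariant map to be a morphism collapsing no edge: it suffices to take any $\cala$-equivariant map $h:S\to S'$, pick $y$ in the interior of $e'$ and $x\in h^{-1}(y)$, and let $e$ be any edge of $S$ containing $x$. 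Then every $\phi\in\cala_e$ fixes $e$ pointwise (no inversions), so $H'_\phi(y)=y$; since $\phi$ acts trivially on $S'/G$ and $G$-arc stabilizers in $S'$ are trivial, $H'_\phi(e')$ is a $G$-translate of $e'$ meeting $e'$ in an interior point, so $H'_\phi(e')=e'$, giving $\cala_e\subseteq\cala_{e'}$ with no case distinction.
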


\begin{proof}
By Lemma~\ref{defo}, there exists an $\cala$-equivariant map from $S$ to $S'$. Therefore, for every edge $e'\subseteq S'$, there exists an edge $e\subseteq S$ such that $\cala_e\subseteq\cala_{e'}$. By symmetry, there also exists an edge $e'_2\subseteq S'$ such that $\cala_{e'_2}\subseteq\cala_{e}$. We thus have $\cala_{e'_2}\subseteq\cala_e\subseteq\cala_{e'}$, and Lemma~\ref{stabequal} implies that these subgroups are all equal.
\end{proof}

Let $v\in S$ be a vertex, and $e\subseteq S$ be an edge incident on $v$. Then every automorphism in $\cala_e$ preserves $G_v$, and since $\cala\subset \Aut(G,\calf^{(\mathrm{t})})$, there is a restriction map $\rho_{v,e}:\cala_e\to\Inn(G_v)$. 
In the following statement, we denote by $\overline{\cala}$ the image of $\cala$ in $\Out(G,\calf^{(\mathrm{t})})$.

\begin{lemma}\label{nonamen}
Assume that $\overline{\cala}$ is non-amenable. Then there exist a vertex $v\in S$ and an edge $e$ incident on $v$ so that $\rho_{v,e}(\cala_e)$ is non-amenable.
\end{lemma}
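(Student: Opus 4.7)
The plan is to prove the contrapositive: assuming that $\rho_{v,e}(\cala_e)$ is amenable for every vertex $v\in S$ and every incident edge $e$, I will show that $\overline{\cala}$ is itself amenable.

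The first step is to use Bass--Serre theory to describe $\cala$ through its action on $S$. Since $\cala$ acts trivially on the finite quotient graph $S/G$, the quotient of $S$ by $\cala$ coincides with $S/G$, and $\cala$ is the fundamental group of a finite graph of groups over $S/G$ with vertex groups $\cala_v$ and edge groups $\cala_e$. Because $(G,\{G_i\})$ is non-sporadic we have $Z(G)=1$, so the inner-automorphism map $G\to\cala$ is injective and realises $I_G\cong G$ as a normal subgroup; moreover $I_G$ is itself the fundamental group of the sub-graph-of-groups with vertex groups $I_{G_v}\cong G_v$ and trivial edge groups (trivial because $S\in\calo$ has trivial $G$-edge-stabilisers). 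Passing to the quotient by $I_G$ therefore equips $\overline{\cala}=\cala/I_G$ with its own graph-of-groups decomposition over $S/G$, whose vertex groups are $\cala_v/I_{G_v}$ and whose edge groups are $\cala_e$ (using $\cala_e\cap I_G=1$).

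Second, I would identify $\cala_v/I_{G_v}$ with $K_v:=\ker R$. The inclusion $I_{G_v}\hookrightarrow\cala_v$ is a section of $R:\cala_v\to\Inn(G_v)$, giving the semidirect decomposition $\cala_v=K_v\rtimes I_{G_v}$. The edge-to-vertex inclusion $\cala_e\hookrightarrow\cala_v\twoheadrightarrow K_v$ fits in the short exact sequence
$$1\longrightarrow\cala_e\cap K_v\longrightarrow\cala_e\xrightarrow{\ \rho_{v,e}\ }\rho_{v,e}(\cala_e)\longrightarrow 1,$$
so the amenability hypothesis on $\rho_{v,e}(\cala_e)$ says that at each incident pair $(v,e)$, the image of $\cala_e$ inside the vertex group $K_v$ of $\overline{\cala}$ is an amenable extension of the subgroup $\cala_e\cap K_v$ fixing both the direction at $v$ and all of $e$.

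The main obstacle is to promote this local "amenable cokernel at each edge" into amenability of the whole $\overline{\cala}$. The approach is to view $\overline{\cala}$ as acting on the Bass--Serre tree of its quotient graph of groups, with vertex stabilisers $K_v$, edge stabilisers amenable-co-close to the adjacent vertex stabilisers, and finite quotient $S/G$. I would then induct on the combinatorial complexity of $S/G$ (number of edges plus first Betti number): at each step, collapse an edge of $S/G$; the amenable-cokernel hypothesis ensures that this collapse modifies $\overline{\cala}$ only by an amenable extension and strictly decreases the complexity of the quotient graph of groups. Iterating, one ends up with a single vertex group, and the total group $\overline{\cala}$ is an iterated amenable extension of a $K_v$; amenability of $K_v$ itself is controlled by the same hypothesis applied to the twists at $v$, which are organised along the incident edges through $R$. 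Stability of amenability under extensions and directed unions then yields amenability of $\overline{\cala}$, contradicting the assumption and establishing the lemma.
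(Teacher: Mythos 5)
The proposal does not work, and it is quite different from the paper's argument.

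The first serious gap is the claim that quotienting by $I_G=\Inn(G)$ ``equips $\overline{\cala}=\cala/I_G$ with its own graph-of-groups decomposition over $S/G$, whose vertex groups are $\cala_v/I_{G_v}$ and whose edge groups are $\cala_e$.'' This is false. The normal subgroup $I_G$ does not act trivially on $S$, so $\overline{\cala}$ does not act on $S$ and inherits no Bass--Serre decomposition from it. Concretely, if $t$ is a Bass--Serre generator of $G$ corresponding to a loop in $S/G$ (there is at least one whenever $N\geq 1$), then $\ad_t$ is a stable letter of the graph-of-groups decomposition of $\cala$; but $\ad_t$ lies in $I_G$, so it is killed in $\overline{\cala}$. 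Hence the relations in $\overline{\cala}$ are not those of a graph of groups over $S/G$: the topology of the quotient graph is collapsed.

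Even if one somehow had a graph-of-groups decomposition of $\overline{\cala}$ with amenable vertex and edge groups, the inductive ``collapse an edge'' strategy would still fail, because the fundamental group of a graph of amenable groups with amenable edge groups is generally \emph{not} amenable --- $\bbZ\ast\bbZ$ already contains $F_2$. Collapsing an edge replaces two vertex groups by an amalgamated product or an HNN extension, and that is not an ``amenable extension'' of anything; amenability of the pieces is nowhere near sufficient. So the claim that ``the total group $\overline{\cala}$ is an iterated amenable extension of a $K_v$'' is wrong in principle, independent of the first error.

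The paper's proof uses a completely different and much more direct mechanism. Rather than decomposing $\overline{\cala}$, it \emph{embeds} it (modulo a central, hence amenable, kernel) into a finite product. For a fixed vertex $v$, a class $\Phi\in\overline{\cala}$ has a representative $\phi$ fixing $v$ and restricting to the identity on $G_v$; for each incident edge $e$ the displacement $\phi.e=g_{v,e}e$ defines a tuple $(g_{v,e})_e\in G_v^{E(v)}$ well defined modulo $Z(G_v)$. Collecting these over all orbit representatives of vertices gives an injective map $\theta':\overline{\cala}\to\prod_v\Inn(G_v)^{E(v)}$ with central kernel, and one checks that its image lands in $\prod_v\prod_{e\in E(v)}\rho_{v,e}(\cala_e)$. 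If every $\rho_{v,e}(\cala_e)$ were amenable, this finite product would be amenable, and $\overline{\cala}$, being a central extension of a subgroup of it, would be amenable too --- contradiction. It is crucially the embedding into a \emph{product} (not the existence of a decomposition into pieces) that transports amenability here; your proposal does not have an analogue of that step, and no graph-of-groups route can supply one.
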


\begin{proof}
Given a vertex $v\in S$, we let $E(v)$ be a set of representatives of the $G_v$-orbits of edges incident on $v$. We denote by $Z(G_v)$ the center of $G_v$, which we diagonally embed into $G_v^{E(v)}$. There is an injective morphism $$\theta:\overline{\cala}\to\prod_v \left(G_v^{E(v)}/Z(G_v)\right),$$ where the product is taken over a set of representatives of the $G$-orbits of vertices of $S$, defined as follows: if $\Phi\in\overline{\cala}$ and $\phi\in\cala$ is a representative fixing $v$ and acting as the identity on $G_v$, then to any incident edge $e$, one associates the element $g_{v,e}\in G_v$ such that $\phi.e=g_{v,e}e$; since $\phi$ is well defined modulo conjugation by an element of $Z(G_v)$, the tuple $(g_{v,e})_{e\in E(v)}$ is well defined modulo $Z(G_v)$. Since $G_v^{E(v)}/Z(G_v)$ maps to $(G_v/Z(G_v))^{E(v)}$ with central kernel, we get a map $$\theta':\overline{\cala}\to\prod_v\text{Inn}(G_v)^{E(v)}$$ with central kernel. The image of this map is contained in $\prod_v\prod_{e\in E(v)}\rho_{v,e}(\cala_e)$: indeed, if the representative $\phi$ of $\Phi$ acting as the identity on $G_v$ sends $e$ to $g_{v,e}e$, then $\Phi$ has a representative that fixes $e$ and acts by conjugation by $g_{v,e}$ on $G_v$. Since $\overline{\cala}$ is nonamenable, there exists a pair $(v,e)$ with $e\in E(v)$, such that $\rho_{v,e}(\cala_e)$ is nonamenable.
\end{proof}

\begin{proof}[Proof of Proposition~\ref{stab-pair}]
We assume that $\overline{\cala}$ is nonamenable, and we will prove that there exists a $(G,\calf)$-free splitting which is compatible with both $S$ and $S'$. 

Denote by $\cale$ the collection of edge stabilizers for the action $\cala\actson S$ (equivalently $\cala\actson S'$, in view of Corollary~\ref{edge-stab-same}). By Lemma~\ref{stabequal}, the trivial equivalence relation on $\cale$ is admissible in the sense of \cite[Definition~3.1]{GL5}. Cylinders of $S$ are defined by $$\text{Cyl}_e=\bigcup_{\cala_{e'}=\cala_e}e',$$ and the tree of cylinders $S^c$ is the bipartite simplicial tree having one vertex for each cylinder $C$, one vertex for each point $x\in S$ belonging to at least two cylinders, and an edge joining $x$ to $C$ whenever $x\in C$. 
Since $S$ and $S'$ belong to the same deformation space (Lemma~\ref{defo}), it follows from \cite[Theorem~1]{GL5} that $S^c=(S')^c$. From now on, we let $U:=S^c$. By \cite[Proposition~8.1]{GL5}, the tree $U$ is compatible with both $S$ and $S'$. Moreover, $U$ is a minimal $\cala$-tree by \cite[Lemma~4.9]{Gui04}, 
hence a minimal $G$-tree (the minimal $G$-tree is $\cala$-invariant because $\Inn(G)$ is normal in $\cala$).
Thus, it suffices to show that some edge of $U$ has trivial stabilizer in $G$.

Let $(v,e)$ be a pair as in Lemma~\ref{nonamen}, so that $\rho_{v,e}(\cala_e)$ is non-amenable. We claim that $\text{Cyl}_e\cap G_v.e=\{e\}$. Indeed, let $e'=ge$ for some $g\in G_v$, with $\cala_{e'}=\cala_e$. Then for all $\phi\in\cala_e$, one has $ge=e'=\phi.e'=\phi.(ge)=\phi(g)\phi.e=\phi(g)e$. Since edge stabilizers of the $G$-action on $S$ are trivial, we deduce that $\phi(g)=g$. Therefore $g$ is fixed by all the inner automorphisms in $\rho_{v,e}(\cala_e)$.
Since every non-trivial element of $G_v$ has an amenable centralizer, and $\rho_{v,e}(\cala_e)$ is non-amenable, $g=1$ and $e'=e$. This proves the claim. 

Let now $\varepsilon=(v,\text{Cyl}_e)$ be the corresponding edge of $S^c$. The above claim shows that any element $g\in G$ that fixes $\epsilon$ must fix $e$, so $g=1$. This proves that $\epsilon$ has trivial stabilizer for the $G$-action on $U$, which concludes the proof of the proposition. 
\end{proof}

\paragraph*{End of the proof of Theorem~\ref{at-amen}.}

From now on, we assume that $(G,\calf)$ satisfies the assumptions from Theorem~\ref{at-amen}.

Recall that a \emph{$\calz$-splitting} of $(G,\calf)$ is a minimal, simplicial $(G,\calf)$-tree whose edge stabilizers are either trivial, or cyclic and nonperipheral. The \emph{$\calz$-splitting graph} 
$\ZS$
is the simplicial graph whose vertices are the $\calz$-splittings of $(G,\calf)$, where two splittings are joined by an edge if they are compatible. Its hyperbolicity was proved by Mann \cite{Man} for free groups, and extended to the context of free products in \cite{Hor3}. Since every optimal folding path projects to an unparameterized quasigeodesic in $\ZS$, there exists $R>0$ such that if $S_t$ is an optimal folding path and $t_1<t_2<t_3<t_4$ and $d_{\ZS}(S_{t_2},S_{t_3})>R$, then $S_{t_1}$ and $S_{t_4}$ are not compatible with any common free splitting.

Given an arational tree $T$, an optimal morphism $f:S\to T$ and positive real numbers $t<t'$, we let $[t,t']_f$ be the collection of all simplices $\Delta\in\Simp$ such that there exists a tree $S'\in\tilde \Delta$ through which $f$ factors, with $e^{-t'}\leq \vol(S'/G)\leq e^{-t}$. 
By Proposition~\ref{sphere-finite} and Remark~\ref{rk-covol}, the set $[t,t']_f$ is finite. Given $n\in\mathbb{N}$, we let $m_n(f)$ be the smallest integer  
such that the projection to $\ZS$ of the set $[n,n+m_n(f) ]_f$ has diameter at least $R$. Existence of $m_n(f)$ is justified by the following lemma.

\begin{lemma}
For all $n\in\mathbb{N}$, we have $m_n(f)<+\infty$.
\end{lemma}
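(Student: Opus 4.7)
My plan is to exhibit an optimal folding path factoring $f$ and converging to $T$, and to deduce unbounded displacement in $FZ$ along it from the arationality of $T$.

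First I would choose $t_0\ge n$ large enough that $\cals_{t_0}(f)\neq\emptyset$, which exists by Remark~\ref{rk-covol}, and pick a tree $S_{t_0}\in\calo$ of covolume $e^{-t_0}$ through which $f$ factors. Starting from $S_{t_0}$, I would then construct an optimal folding path $(S_t)_{t\ge t_0}$ parametrized so that $\operatorname{covol}(S_t/G)=e^{-t}$, with $f$ factoring as $S_{t_0}\to S_t\to T$ for every $t$, and $S_t\to T$ in the equivariant Gromov--Hausdorff topology as $t\to+\infty$. The path is built by iteratively folding initial segments at the illegal turns of the residual morphism $S_t\to T$; compare~\cite{BF}. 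For every $t\ge t_0$, the simplex $\Delta_t\in\Simp$ containing $S_t$ lies in $[n,t]_f$.

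Next I would invoke the fact, for arational targets $T\in\AT$, that the $FZ$-projection of any optimal folding path converging to $T$ is unbounded: $d_{FZ}(\Delta_{t_0},\Delta_t)\to+\infty$ as $t\to+\infty$. For free groups this is the Bestvina--Reynolds and Hamenst\"adt theorem that arational trees lie at infinity in the relevant hyperbolic complex, and the analogous statement for free products, phrased in terms of the $FZ$ graph, is proved in~\cite{Hor3, Man}. It follows that there exists $t_*>t_0$ with $d_{FZ}(\Delta_{t_0},\Delta_{t_*})>R$. Setting $m:=\lceil t_*-n\rceil$, the set $[n,n+m]_f$ contains both $\Delta_{t_0}$ and $\Delta_{t_*}$, so its $FZ$-diameter is at least $R$ and $m_n(f)\le m<+\infty$.

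The main technical obstacle is justifying the unboundedness of the $FZ$-projection. The conceptual reason is that an arational $T$ admits no nontrivial compatible $\calz$-splitting: a non-peripheral cyclic edge stabilizer would be elliptic in $T$ and stabilize an arc, contradicting that arc stabilizers in trees with dense orbits are trivial; and any non-peripheral vertex stabilizer would be a proper $(G,\calf)$-free factor whose minimal action in $T$ would fail to be Grushko, contradicting arationality. Coupled with the hyperbolicity of $FZ$ and the unparametrized quasi-geodesic property of optimal folding paths, this precludes $(\Delta_t)_{t\ge t_0}$ from remaining in a bounded region of $FZ$.
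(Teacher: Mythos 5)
Your proof is correct and follows essentially the same route as the paper: both arguments rest on the fact that an optimal folding path guided by $f$ projects to an unbounded unparameterized quasigeodesic in $FZ$ when the target $T$ is arational, which the paper obtains by combining the description of $\partial FZ$ in terms of $\calz$-averse trees \cite{Hor3} with the fact that arational trees are $\calz$-averse \cite[Proposition~4.7]{Hor2}. One small caveat on your ``conceptual reason'' paragraph: the fact that $T$ itself is not compatible with any $\calz$-splitting is strictly weaker than $\calz$-averseness (which is a condition on equivalence classes of trees, not a single tree), and it is the latter that is needed to conclude that the image of the folding path escapes to infinity in $FZ$; your citations do cover this, but the heuristic as stated does not by itself rule out the path lingering in a bounded region.
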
 

\begin{proof} 
Since the range of $f$ is arational, it follows from the description of the Gromov boundary of $\ZS$ in terms of $\calz$-averse trees given in \cite{Hor3}, together with the fact that arational trees are $\calz$-averse \cite[Proposition~4.7]{Hor2}, that every folding path guided by $f$ projects to an infinite unparameterized quasigeodesic ray in $\ZS$. Therefore, by choosing $m$ sufficiently large, we can ensure that $[k,k+m]_f$ contains two simplices whose projections to $\ZS$ are at distance larger than $R$ from one another, as required.
\end{proof}

Let $M_n(f):=\max_{n\leq k\leq 2n} m_k(f)$. 
By definition of $R$ and Proposition~\ref{stab-pair}, this implies that for all $t\in [n,2n]$, the set $[t,t+M_n(f)]_f$ contains two simplices 
whose common stabilizer is amenable.

\begin{proof}[Proof of Theorem~\ref{at-amen}]
The same argument as in the proof of Lemma~\ref{sphere-borel} shows that for all $n,m$, the set $[n,n+m]_f$ depends measurably on $f\in\Opt$. Therefore, for all $n\in\mathbb{N}$, the integers $m_n(f)$ and $M_n(f)$ depend measurably on $f$. Let $\calf^{\amen}(\Simp)$ be the countable collection of all finite sets in $\Simp$ whose stabilizer in $\Out(G,\calf^{(\mathrm{t})})$ is amenable. For all $n\in\mathbb{N}$ and all $f\in\Opt$, we then let $\mu_n(f)$ be the probability measure on $\calf^{\amen}(\Simp)$ defined as $$\mu_n(f):=\frac{1}{n}\int_n^{2n}\delta_{[t,t+M_n(f)]_f} d\text{Leb}(t),$$ where $\delta_{[t,t+M_n(f)]_f}$ is the Dirac measure on the set $[t,t+M_n(f)]_f$. Then the map $f \mapsto \mu_n(f)$ is Borel. Arguing as in the proof of Proposition~\ref{name-enough}, we can then associate to every tree $T\in\AT$ a sequence of probability measures $\mu_n(T)$ on $\calf^{\amen}(\Simp)$, depending measurably on $T$, so that for all $n\in\mathbb{N}$ and all $\Phi\in\Out(G,\calf^{(\mathrm{t})})$, we have $$||\mu_n(\Phi.T)-\Phi.\mu_n(T)||_1\to 0$$ as $n$ goes to $+\infty$. Since the stabilizer of every point in $\calf^{\amen}(\Simp)$ is amenable, one can apply Proposition~\ref{amen} with $K:=\calf^{\amen}(\Simp)$, and deduce that the $\Out(G,\calf^{(\mathrm{t})})$-action on $\AT$ is Borel amenable.
\end{proof}

\bibliographystyle{amsplain}
\bibliography{amenability-bib}

\begin{flushleft}
Mladen Bestvina\\
Department of Mathematics\\ 
University of Utah\\ 
155 South 1400 East, JWB 233\\ 
Salt Lake City, Utah 84112-0090, United States\\
\emph{e-mail:}\texttt{bestvina@math.utah.edu}
\\[8mm]

Vincent Guirardel\\
Institut de Recherche Math\'ematique de Rennes\\
Universit\'e de Rennes 1 et CNRS (UMR 6625)\\
263 avenue du G\'en\'eral Leclerc, CS 74205\\
F-35042  RENNES C\'edex\\
\emph{e-mail:}\texttt{vincent.guirardel@univ-rennes1.fr}\\[8mm]

Camille Horbez\\
CNRS\\ 
Laboratoire de Math\'ematique d'Orsay\\
Universit\'e Paris-Sud, Universit\'e Paris-Saclay\\ 
F-91405 ORSAY\\
\emph{e-mail:}\texttt{camille.horbez@math.u-psud.fr}
\\[8mm]

\end{flushleft}

\end{document}